\pgfplotsset{compat=1.14}
\numberwithin{equation}{section}
\newtheorem{thm}{Theorem}[section]
\newtheorem{lem}[thm]{Lemma}
\newtheorem{prop}[thm]{Proposition}
\newtheorem{cor}[thm]{Corollary}
\newtheorem{rem}[thm]{Remark}
\newcommand{\EE}{\mathbb{E}}
\newcommand{\HH}{\mathbb{H}}
\newcommand{\NN}{\mathbb{N}}
\newcommand{\PP}{\mathbb{P}}
\newcommand{\QQ}{\mathbb{Q}}
\newcommand{\RR}{\mathbb{R}}
\newcommand{\TT}{\mathbb{T}}
\newcommand{\ZZ}{\mathbb{Z}}
\newcommand{\BB}{\mathbb{B}}
\newcommand{\UU}{\mathbb{U}}
\newcommand{\TTN}{\mathbb{T}_N}
\newcommand{\BBN}{{\BB_N}}
\newcommand{\cA}{\mathcal A}
\newcommand{\cB}{\mathcal B}
\newcommand{\cC}{\mathcal C}
\newcommand{\cD}{\mathcal D}
\newcommand{\cE}{\mathcal E}
\newcommand{\cF}{\mathcal F}
\newcommand{\cH}{\mathcal H}
\newcommand{\cJ}{\mathcal J}
\newcommand{\cM}{\mathcal M}
\newcommand{\cN}{\mathcal N}
\newcommand{\cP}{\mathcal P}
\newcommand{\cR}{\mathcal R}
\newcommand{\cU}{\mathcal U}
\newcommand{\cV}{\mathcal V}
\newcommand{\cX}{\mathcal X}
\newcommand{\cY}{\mathcal Y}
\newcommand{\sZ}{\mathscr{Z}}
\def\fB{\mathfrak{B}}
\def\fm{\mathfrak{m}}
\def\rB{\mathrm{B}}
\def\rExt{\mathrm{Ext}}
\def\rInt{\mathrm{Int}}
\def\rnn{\mathrm{nn}}
\def\rext{\mathrm{ext}}
\def\bg{\mathbf{g}}
\newcommand{\nubn}{{\nu_{\beta,N}}}
\newcommand\vphi{\vec{\phi}}
\newcommand{\intx}{\int_{\TTN}}
\newcommand{\intt}{\int_0^K}
\newcommand{\hfbeta}{{\sqrt \beta}}
\newcommand{\dbar}{d\hspace*{-0.08em}\bar{}\hspace*{0.1em}}
\newsavebox{\@brx}
\newcommand{\llangle}[1][]{\savebox{\@brx}{\(\m@th{#1\langle}\)}%
  \mathopen{\copy\@brx\kern-0.5\wd\@brx\usebox{\@brx}}}
\newcommand{\rrangle}[1][]{\savebox{\@brx}{\(\m@th{#1\rangle}\)}%
  \mathclose{\copy\@brx\kern-0.5\wd\@brx\usebox{\@brx}}}
\colorlet{symbols}{black}
\colorlet{testcolor}{green!60!black}
\def\1{\mathbf{{1}}}
\definecolor{dblue}{rgb}{0.1, 0.1, 0.9}
\tikzset{
	root/.style={circle,fill=testcolor,inner sep=0pt, minimum size=2mm},		
	dot/.style={circle,fill=black,draw=black, solid,inner sep=0pt,minimum size=0.75mm},
	bdot/.style={circle,fill=blue,draw=dblue, solid,inner sep=0pt,minimum size=0.75mm},
		}
\colorlet{symbols}{blue!90!black}
\def\DeclareSymbol#1#2#3{\expandafter\gdef\csname MH@symb@#1\endcsname{\tikz[baseline=#2,scale=0.15]{#3}}%
\expandafter\gdef\csname MH@symb@#1s\endcsname{\scalebox{0.6}{\tikz[baseline=#2,scale=0.15]{#3}}}}
\def\<#1>{\csname MH@symb@#1\endcsname}
\newcommand{\Xib}{{\color{dblue}\Xi\color{black}}}
\newcommand{\Omegab}{{\color{dblue}\Omega\color{black}}}
\newcommand{\PPb}{{\color{dblue}\PP\color{black}}}
\newcommand{\EEb}{{\color{dblue}\EE\color{black}}}
\renewcommand{\Box}{{\<box>}}
\newcommand{\sBox}{{\<sbox>}}
\newcommand{\blackBox}{{\<blackbox>}}
\newcommand{\BoxBox}{{\<boxinbox>}}
\newcommand{\sBoxBox}{{\<sboxinbox>}}
\newcommand{\pe}{\mathbin{\scaleobj{0.7}{\tikz \draw (0,0) node[shape=circle,draw,inner sep=0pt,minimum size=8.5pt] {\footnotesize $=$};}}}
\newcommand{\pl}{\mathbin{\scaleobj{0.7}{\tikz \draw (0,0) node[shape=circle,draw,inner sep=0pt,minimum size=8.5pt] {\footnotesize $<$};}}}
\newcommand{\pg}{\mathbin{\scaleobj{0.7}{\tikz \draw (0,0) node[shape=circle,draw,inner sep=0pt,minimum size=8.5pt] {\footnotesize $>$};}}}
\definecolor{darkred}{rgb}{0.9,0.1,0.1}
\definecolor{darkergreen}{rgb}{0.0, 0.5, 0.0}
\begin{document}

\title{Phase transitions for $\phi^4_3$}
\author{\normalsize{Ajay Chandra$^1$, Trishen S. Gunaratnam$^2$, and Hendrik Weber$^2$}}
\institute{Imperial College London \\ \email{a.chandra@imperial.ac.uk} \and University of Bath \\
\email{t.gunaratnam@bath.ac.uk, h.weber@bath.ac.uk}}
\maketitle

\begin{abstract}
We establish a surface order large deviation estimate for the magnetisation of low temperature $\phi^4_3$. As a byproduct, we obtain a decay of spectral gap for its Glauber dynamics given by the $\phi^4_3$ singular stochastic PDE. Our main technical contributions are contour bounds for $\phi^4_3$, which extends 2D results by Glimm, Jaffe, and Spencer \cite{GJS75}. We adapt an argument by Bodineau, Velenik, and Ioffe \cite{BIV00} to use these contour bounds to study phase segregation. The main challenge to obtain the contour bounds is to handle the ultraviolet divergences of $\phi^4_3$ whilst preserving the structure of the low temperature potential. To do this, we build on the variational approach to ultraviolet stability for $\phi^4_3$ developed recently by Barashkov and Gubinelli \cite{BG19}. 
\end{abstract}

\tableofcontents

\section{Introduction}

We study the behaviour of the average 
 magnetisation
\begin{equs}
\fm_N(\phi)
=
\frac 1{N^3} \intx \phi(x) dx	
\end{equs}
for fields $\phi$ distributed according to the measure $\nubn$ with formal density
\begin{equs}\label{def: formal nubn}
d\nubn(\phi)
\propto
\exp \Big( - \intx \cV_\beta(\phi(x)) + \frac 12 |\nabla \phi(x)|^2 dx \Big)\prod_{x\in\TTN}d\phi(x)	
\end{equs}
in the infinite volume limit $N \rightarrow \infty$. Above, $\TTN = (\RR/N\ZZ)^3$ is the 3D torus of sidelength $N \in \NN$, $\prod_{x\in\TTN}d\phi(x)$ is the (non-existent) Lebesgue measure on fields $\phi:\TTN \rightarrow \RR$, $\beta > 0$ is the inverse temperature, and $\cV_\beta : \RR \rightarrow \RR$ is the symmetric double-well potential given by $\cV_\beta(a) = \frac 1\beta (a^2-\beta)^2$ for $a \in \RR$.

$\nubn$ is a finite volume approximation of a $\phi^4_3$ Euclidean quantum field theory \cite{G68, GJ73, FO76}. Its construction, first in finite volumes and later in infinite volume, was a major achievement of the constructive field theory programme in the '60s-'70s: Glimm and Jaffe made the first breakthrough in \cite{GJ73} and many results followed \cite{F74, MS77, BCGNOPS80, BFS83, BDH95, MW17, GH18, BG19}. The model in 2D was constructed earlier by Nelson \cite{N66}. In higher dimensions there are triviality results: in dimensions $\geq 5$ these are due to Aizenman and Fr\"ohlich \cite{A82, F82}, whereas the 4D case was only recently done by Aizenman and Duminil-Copin \cite{ADC20}. By now it is also well-known that the $\phi^4_3$ model has significance in statistical mechanics since it arises as a continuum limit of Ising-type models near criticality \cite{BS73, CMP95, HI18}. 

It is natural to define $\nubn$ using a density with respect to the centred Gaussian measure $\mu_N$ with covariance $(-\Delta)^{-1}$, where $\Delta$ is the Laplacian on $\TTN$ (see Remark \ref{rem: eta new} for how we deal with the issue of constant fields/the zeroeth Fourier mode). However, in 2D and higher $\mu_N$ is not supported on a space of functions and samples need to be interpreted as Schwartz distributions. This is a serious problem because there is no canonical interpretation of products of distributions, meaning that the nonlinearity $\intx \cV_\beta(\phi(x)) dx$ is not well-defined on the support of $\mu_N$. If one introduces an ultraviolet (small-scale) cutoff $K>0$ on the field to regularise it, then one sees that the nonlinearities $\cV_\beta(\phi_K)$ fail to converge as the cutoff is removed - there are divergences. The strength of these divergences grow as the dimension grows: they are only logarithmic in the cutoff in 2D, whereas they are polynomial in the cutoff in 3D. In addition, $\nubn$ and $\mu_N$ are mutually singular \cite{BG20} in 3D, which produces technical difficulties that are not present in 2D.

Renormalisation is required in order to kill these divergences. This is done by looking at the cutoff measures and subtracting the corresponding counter-term $\intx \delta m^2(K) \phi^2_K$ where $\phi_K$ is the field cutoff at spatial scales less than $\frac 1K$ and the renormalisation constant $\delta m^2(K) = \frac{C_1}{\beta} K - \frac{C_2}{\beta^2} \log K$ for specific constants $C_1, C_2 > 0$ (see Section \ref{sec: model}). If these constants are appropriately chosen (i.e. by perturbation theory), then a non-Gaussian limiting measure is obtained as $K \rightarrow \infty$. This construction yields a one-parameter family of measures $\nubn=\nubn(\delta m^2)$ corresponding to bounded shifts of $\delta m^2(K)$. 

\begin{rem}\label{rem: eta new}
For technical reasons, we work with a massive Gaussian free field as our reference measure. We do this by introducing a mass $\eta > 0$ into the covariance. This resolves the issue of the constant fields/zeroeth Fourier mode degeneracy. In order to stay consistent with \eqref{def: formal nubn}, we subtract $\intx \frac \eta 2 \phi^2 dx$ from $\cV_\beta(\phi)$.

Once we have chosen $\eta$, it is convenient to fix $\delta m^2$ by writing the renormalisation constants in terms of expectations with respect to $\mu_N(\eta)$. The particular choice of $\eta$ is inessential since one can show that changing $\eta$ corresponds to a bounded shift of $\delta m^2$ that is $O\Big(\frac 1\beta\Big)$ as $\beta \rightarrow \infty$. 
\end{rem}

The large-scale behaviour of $\nubn$ depends heavily on $\beta$ as $N \rightarrow \infty$. To see why, note that $a \mapsto \cV_\beta(a)$ has minima at $a = \pm \hfbeta$ with a potential barrier at $a=0$ of height $\beta$, so the minima become widely separated by a steep barrier as $\beta \rightarrow \infty$. Consequently, $\nubn$ resembles an Ising model on $\TTN$ with spins at $\pm \hfbeta$ (i.e. at inverse temperature $\beta > 0$) for large $\beta$. Glimm, Jaffe, and Spencer \cite{GJS75} exploited this similarity and proved phase transition for $\nu_\beta$, the infinite volume analogue of $\nubn$, in 2D using a sophisticated modification of the classical Peierls' argument for the low temperature Ising model \cite{P36, G64, D65}. See also \cite{GJS76-4, GJS76-3}. Their proof relies on contour bounds for $\nubn$ in 2D that hold in the limit $N \rightarrow \infty$. Their techniques fail in the significantly harder case of 3D. However, phase transition for $\nu_\beta$ in 3D was established by Fr\"ohlich, Simon, and Spencer \cite{FSS78} using a different argument based heavily on reflection positivity. Whilst this argument is more general (it applies, for example, to some models with continuous symmetry), it is less quantitative than the Peierls' theory of \cite{GJS75}. Specifically, it is not clear how to use it to control large deviations of the (finite volume) average magnetisation $\fm_N$. 

Although phase coexistence for $\nu_\beta$ has been established, little is known of this regime in comparison to the low temperature Ising model. In the latter model, the study of \textit{phase segregation} at low temperatures in large but finite volumes was initiated by Minlos and Sinai \cite{MS67,MS68}, culminating in the famous Wulff constructions: due to Dobrushin, Koteck\'y, and Shlosman in 2D \cite{DKS89,DKS92}, with simplifications due to Pfister \cite{P91} and results up to the critical point by Ioffe and Schonmann \cite{IS98}; and Bodineau \cite{B99} in 3D, see also results up to the critical point by Cerf and Pisztora \cite{CP00} and the bibliographical review in \cite[Section 1.3.4]{BIV00}. We are interested in a weaker form of phase segregation: \textit{surface} order large deviation estimates for the average magnetisation $\fm_N$. For the Ising model, this was first established in 2D by Schonmann \cite{S87} and later extended up to the critical point by Chayes, Chayes, and Schonmann \cite{CCS87}; in 3D this was first established by Pisztora \cite{P96}. These results should be contrasted with the volume order large deviations established for $\fm_N$ in the high temperature regime where there is no phase coexistence \cite{C86,E85, FO88, O88}.

Our main result is a surface order upper bound on large deviations for the average magnetisation under $\nubn$.
\begin{thm}\label{thm: ld}
	Let $\eta > 0$ and $\nubn = \nubn(\eta)$ as in Remark \ref{rem: eta new}. For any $\zeta\in (0,1)$, there exists $\beta_0 = \beta_0(\zeta,\eta) > 0$, $C=C(\zeta, \eta)>0$, and $N_0 = N_0(\zeta) \geq 4$ such that the following estimate holds: for any $\beta > \beta_0$ and any $N > N_0$ dyadic,
	\begin{equs} \label{eq: ld estimate}
	\frac{1}{N^2} \log \nu_{\beta,N} \Big( \fm_N \in (-\zeta \hfbeta, \zeta \hfbeta) \Big) \leq - C \hfbeta.
	\end{equs} 
\end{thm}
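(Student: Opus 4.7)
The plan is to adapt the Peierls-type contour argument of Glimm-Jaffe-Spencer \cite{GJS75} from 2D to the renormalised 3D field, and to combine it with the phase segregation mechanism of Bodineau-Velenik-Ioffe \cite{BIV00}. I would coarse-grain $\TTN$ into mesoscopic cubes of some fixed side length $L$, independent of $N$ and $\beta$, and classify each cube as \emph{plus}, \emph{minus}, or \emph{neutral} according to whether a mollified local average of $\phi$ on that cube lies close to $+\hfbeta$, close to $-\hfbeta$, or far from both. A \emph{contour} $\Gamma$ is then a connected component of the union of neutral cubes, and $|\Gamma|$ denotes its surface area. On the event $\{\fm_N\in(-\zeta\hfbeta,\zeta\hfbeta)\}$ both the plus and the minus regions must carry a fraction of the total volume of $\TTN$ that is bounded below in terms of $\zeta$, so a discrete isoperimetric inequality on the torus forces the total contour area to be at least $c(\zeta)N^2$.

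\textbf{From contour bounds to surface order LD.} The theorem then reduces, via the standard Peierls entropy bound $\#\{\Gamma \ni x : |\Gamma|=k\} \leq e^{C_0 k}$ on contours anchored at a point, to a uniform \emph{contour bound}
\begin{equs}\label{eq: plan contour}
\nubn(\Gamma \text{ is a contour}) \leq \exp\big(-c\hfbeta\, |\Gamma|\big).
\end{equs}
Summing \eqref{eq: plan contour} over all admissible families of contours whose total area exceeds $c(\zeta)N^2$ then produces a bound of the form $\exp(-C\hfbeta N^2)$ as soon as $\hfbeta > C_0/c$, i.e. for $\beta$ sufficiently large depending on $\zeta$ and $\eta$. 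The expected factor $\hfbeta$ is the surface tension of $\cV_\beta$: a 1D minimiser of $\int \tfrac12 (\phi')^2 + \tfrac 1\beta(\phi^2-\beta)^2\,dx$ connecting $\pm\hfbeta$ has energy of order $\hfbeta$, yielding an energy cost $\hfbeta|\Gamma|$ when this kink profile is laid transverse to $\Gamma$.

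\textbf{Main obstacle: UV renormalisation.} The difficult step is to prove \eqref{eq: plan contour} in 3D, where the construction of $\nubn$ requires the counterterm $\delta m^2(K)\phi_K^2$ with leading term of order $K/\beta$: naively adding this to $\cV_\beta$ destroys the double-well structure on which the Peierls mechanism relies. The plan is to use the Barashkov-Gubinelli variational representation \cite{BG19}, which writes the log of (ratios of) cutoff partition functions as the infimum over Cameron-Martin drifts $u$ of an expression in which the UV counterterms appear explicitly alongside a quadratic drift cost and renormalised Wick polynomials in the free field. To upper-bound the left-hand side of \eqref{eq: plan contour}, one would compare the partition function restricted to configurations carrying the contour $\Gamma$ with the full partition function: for the restricted one, choose a trial drift $u$ that implements a smoothed kink profile of thickness $O(1)$ placed transverse to $\Gamma$ and that otherwise sits in the wells; for the unrestricted one, any drift supported near a single phase suffices. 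The UV-divergent counterterms appear in structurally the same form in numerator and denominator and should cancel, leaving a classical double-well energy of order $\hfbeta|\Gamma|$ plus stochastic remainders that must be controlled uniformly in $N$ and in the cutoff $K$. Executing this cancellation while preserving the low-temperature potential --- rather than seeing it swallowed by the counterterm --- is the crux of the proof and the main technical novelty over the 2D argument of \cite{GJS75}.
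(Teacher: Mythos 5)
Your high-level plan --- reduce the surface-order LD to a Peierls-type contour bound with rate $\hfbeta|\Gamma|$, use a discrete isoperimetric inequality to force contour area of order $N^2$ on the event $\{\fm_N\in(-\zeta\hfbeta,\zeta\hfbeta)\}$, and attack the 3D renormalisation via the Bou\'e--Dupuis/Barashkov--Gubinelli variational representation --- correctly identifies the three main ingredients, and your heuristic that $\hfbeta$ is the surface tension of a 1D kink is the right intuition for the $\hfbeta|\Gamma|$ rate.

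However, your route to the contour bound \eqref{eq: plan contour} has a genuine gap. You propose to compare a partition function restricted to configurations carrying the contour $\Gamma$ against the unrestricted one, using a trial drift implementing a transverse kink. The Bou\'e--Dupuis formula represents Gaussian expectations of exponentials; inserting the indicator ``$\Gamma$ is a contour'' does not fit that framework, and even putting that aside, the variational analysis in the BG framework only produces bounds that are extensive in the total volume $N^3$, not in the surface area $|\Gamma|$ of a prescribed set. You have no mechanism for extracting a bound that scales with $|\Gamma|$ from an estimate that scales with $N^3$. The paper's mechanism for this is reflection positivity and chessboard estimates: it first proves, for moments of exponentials of observables $Q_1,Q_2,Q_3$ (which respectively measure closeness to the potential barrier, the block-averaging error, and nearest-neighbour jumps), a bound extensive in $N^3$ when the product runs over \emph{all} blocks; it then uses chessboard estimates to propagate this to a bound depending only on $|B|$ when the product runs over an arbitrary subset $B\subset\BBN$. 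Without reflection positivity --- or some replacement for it --- there is no way to pass from $O(N^3)$ bounds to $O(|B|)$ bounds, and your contour bound does not follow.

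Two further, smaller omissions: defining contours as connected components of \emph{neutral} cubes misses the configurations where a plus block sits adjacent to a minus block with no neutral block between them --- this is penalised by the gradient term (the observable $Q_3$) and requires a separate ``interface block'' classification. And adapting the BIV00 machinery to an unbounded-spin block-averaged model forces you to control large fields, i.e.\ the event that $\int_{\cB}|\vphi|$ is large on the bad region; this is not automatic and again relies on the $\cosh Q_i$ moment bounds.
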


\begin{proof}
See Section \ref{subsec: proof of thm ld}.	
\end{proof}

The condition that $N$ is a sufficiently large dyadic in Theorem \ref{thm: ld} comes from Proposition \ref{prop:discretetight} (we also need that $N$ is divisible by $4$ to apply the chessboard estimates of Proposition \ref{prop:chessboard_estimates}). Our analysis can be simplified to prove Theorem \ref{thm: ld} in 2D with $N^2$ replaced by $N$ in \eqref{eq: ld estimate}.

Our main technical contributions are contour bounds for $\nubn$. As a result, the Peierls' argument of \cite{GJS75} is extended to 3D, thereby giving a second proof of phase transition for $\phi^4_3$. The main difficulty is to handle the ultraviolet divergences of $\nubn$ whilst preserving the structure of the low temperature potential. We do this by building on the variational approach to showing ultraviolet stability for $\phi^4_3$ recently developed by Barashkov and Gubinelli \cite{BG19}. Our insight is to separate scales within the corresponding stochastic control problem through a coarse-graining into an effective Hamiltonian and remainder. The effective Hamiltonian captures the macroscopic description of the system and is treated using techniques adapted from \cite{GJS76-3}. The remainder contains the ultraviolet divergences and these are killed using the renormalisation techniques of \cite{BG19}.

Our next contribution is to adapt arguments used by Bodineau, Velenik, and Ioffe \cite{BIV00}, in the context of equilibrium crystal shapes of discrete spin models, to study phase segregation for $\phi^4_3$. In particular, we adapt them to handle a block-averaged model with unbounded spins. Technically, this requires control over \textit{large fields}.

\subsection{Application to the dynamical $\phi^4_3$ model}

The Glauber dynamics of $\nubn$ is given by the singular stochastic PDE
\begin{equs}\label{def: dynamical phi4}
\begin{split}
	(\partial_t - \Delta + \eta)\Phi 
	&= 
	-\frac{4}{\beta}\Phi^3 + (4+\eta +\infty)\Phi + \sqrt 2\xi
	\\
	\Phi(0,\cdot)
	&=
	\phi_0
\end{split}
\end{equs}
where $\Phi \in S'(\RR_+\times\TTN)$ is a space-time Schwartz distribution, $\phi_0 \in \cC^{-\frac 12 -\kappa}(\TTN)$, the infinite constant indicates renormalisation (see Remark \ref{rem: spde infinity}), and $\xi$ is space-time white noise on $\TTN$. The well-posedness of this equation, known as the dynamical $\phi^4_3$ model, has been a major breakthrough in stochastic analysis in recent years \cite{H14, H16, GIP15, CC18, K16, MW17, GH19, MW18}. 

In finite volumes the solution is a Markov process and its associated semigroup $(\cP_t^{\beta,N})_{t \geq 0}$ is reversible and exponentially ergodic with respect to its unique invariant measure $\nubn$ \cite{HMb18,HS19, ZZb18}. As a consequence, there exists a spectral gap $\lambda_{\beta,N}>0$ given by the optimal constant in the inequality:
	\begin{equs}
	\Big\langle \Big(\cP_t^{\beta,N} F \Big)^2 \rangle_{\beta,N} - \Big( \Big\langle \cP_t^{\beta,N} F \Big\rangle_{\beta,N} \Big)^2
	\leq
	e^{-\lambda_{\beta,N}t} \Big(\langle F^2 \rangle_{\beta,N} - \langle F \rangle_{\beta,N}^2 \Big)	
	\end{equs}
	for suitable $F \in L^2(\nubn)$. $\lambda_{\beta,N}^{-1}$ is called the relaxation time and measures the rate of convergence of variances to equilibrium. An implication of Theorem \ref{thm: ld} is the exponential explosion of relaxation times in the infinite volume limit provided $\beta$ is sufficiently large. 
\begin{cor}\label{cor: sg}
	Let $\eta > 0$ and $\nubn = \nubn(\eta)$ as in Remark \ref{rem: eta new}. Then, there exists $\beta_0=\beta_0(\eta)>0$, $C=C(\beta_0,\eta)$, and $N_0 \geq 4$ such that, for any $\beta > \beta_0$ and $N > N_0$ dyadic,
	\begin{equs}\label{eq: cor}
	\frac{1}{N^2} \log \lambda_{\beta,N} \leq - C\hfbeta.
	\end{equs}
\end{cor}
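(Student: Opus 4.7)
The plan is to use the variational (Dirichlet form) characterisation of the spectral gap, plugging in a test function that witnesses the bottleneck between the two phases; Theorem \ref{thm: ld} then quantifies the narrowness of this bottleneck. This is the standard ``bottleneck implies slow mixing'' argument, adapted to the singular SPDE setting via the fact that the invariant measure $\nubn$ and the associated Dirichlet form are well understood.

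Concretely, I would fix $\zeta \in (0,1)$ and choose a smooth, odd, bounded function $\chi : \RR \to [-1,1]$ with $\chi \equiv 1$ on $[\zeta \hfbeta, \infty)$, $\chi \equiv -1$ on $(-\infty,-\zeta\hfbeta]$, and $\|\chi'\|_\infty \le \frac{C}{\zeta \hfbeta}$. Define the cylinder test function $F(\phi) := \chi(\fm_N(\phi))$. Since $\fm_N(\phi) = N^{-3} \langle \phi, 1\rangle$ is a bounded linear functional of $\phi$, $F$ makes sense on the support of $\nubn$ and lies in the domain of the Dirichlet form associated to \eqref{def: dynamical phi4}. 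The $L^2(\TTN)$-gradient is
\begin{equs}
D F(\phi) = \chi'(\fm_N(\phi)) \cdot N^{-3} \cdot \mathbf{1}_{\TTN},
\qquad \|D F(\phi)\|_{L^2(\TTN)}^2 = N^{-3} \chi'(\fm_N(\phi))^2.
\end{equs}

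Next I would bound the two sides of the Rayleigh quotient. For the numerator, since the driving noise in \eqref{def: dynamical phi4} is $\sqrt 2 \xi$, the Dirichlet form is $\cE(F,F) = \int \|DF\|_{L^2(\TTN)}^2 d\nubn$, so
\begin{equs}
\cE(F,F) \;\le\; \frac{\|\chi'\|_\infty^2}{N^3} \, \nubn\!\left(|\fm_N| \le \zeta \hfbeta\right) \;\le\; \frac{C}{\zeta^2 \beta N^3}\, \exp\!\bigl(-C' \hfbeta\, N^2\bigr)
\end{equs}
by Theorem \ref{thm: ld}, once $\beta > \beta_0(\zeta,\eta)$ and $N > N_0(\zeta)$. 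For the denominator, the $\phi \mapsto -\phi$ symmetry of $\cV_\beta$ (and hence of $\nubn$, with our choice of $\eta$ preserving this symmetry) forces $\langle F\rangle_{\beta,N} = 0$. Combined with Theorem \ref{thm: ld}, which shows that the mass in the bottleneck region is exponentially small in $N^2$, one has $\nubn(\fm_N > \zeta\hfbeta) = \nubn(\fm_N < -\zeta\hfbeta) \ge \tfrac12 - o(1)$, so that
\begin{equs}
\mathrm{Var}_{\nubn}(F) \;\ge\; 1 - \nubn(|\fm_N| \le \zeta \hfbeta) \;\ge\; \tfrac12.
\end{equs}

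Combining these two bounds via $\lambda_{\beta,N} \le \cE(F,F)/\mathrm{Var}_{\nubn}(F)$, taking logarithms, dividing by $N^2$ and absorbing the polynomial prefactor $\tfrac{1}{N^2}\log(C \zeta^{-2}\beta^{-1} N^{-3})$ into a slightly smaller constant (this is harmless for $N$ large and only improves as $N \to \infty$), yields \eqref{eq: cor}.

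The main conceptual obstacle is justifying the Rayleigh-quotient inequality for the singular SPDE \eqref{def: dynamical phi4}: one needs that the semigroup $(\cP_t^{\beta,N})_{t\ge 0}$ constructed in \cite{HMb18,HS19,ZZb18} is the one generated by the natural Dirichlet form $\cE$ on a core containing cylinder functionals like $F$, and that the stated $L^2$-exponential ergodicity is equivalent to the standard Poincar\'e/spectral-gap inequality $\mathrm{Var}_{\nubn}(G) \le \lambda_{\beta,N}^{-1}\, \cE(G,G)$. Since $F$ depends on $\phi$ only through the bounded linear functional $\fm_N$, it lies in any reasonable such core and its $L^2$-gradient is the elementary expression above, so no further analysis of the renormalised dynamics is needed beyond invoking these existing well-posedness and ergodicity results.
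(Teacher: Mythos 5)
Your proof is correct and follows essentially the same route as the paper's: plug the cylinder test function $F(\phi)=\chi(\fm_N(\phi))$ into the Dirichlet-form characterisation of $\lambda_{\beta,N}$ from \cite{ZZb18}, compute that $\intx|\nabla F|^2\,dx=N^{-3}\chi'(\fm_N)^2$, use oddness of $\chi$ and the $\phi\mapsto-\phi$ symmetry of $\nubn$ to get $\langle F\rangle_{\beta,N}=0$ and $\operatorname{Var}(F)\ge 1-\nubn(|\fm_N|\le\zeta\hfbeta)$, and then invoke Theorem~\ref{thm: ld} for both numerator and denominator. The only cosmetic differences are that the paper writes the transition half-width as a free parameter $m\in[0,(1-\zeta)\hfbeta)$ rather than directly as $\zeta\hfbeta$, and that you make the Lipschitz bound $\|\chi'\|_\infty\lesssim(\zeta\hfbeta)^{-1}$ explicit (harmless since only the $e^{-C\hfbeta N^2}$ factor matters after taking $\frac1{N^2}\log$).
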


\begin{proof}
See Section \ref{sec: decay of gap}.	
\end{proof}

Corollary \ref{cor: sg} is the first step towards establishing phase transition for the relaxation times of the Glauber dynamics of $\phi^4$ in 2D and 3D. This phenomenon has been well-studied for the Glauber dynamics of the 2D Ising model, where a relatively complete picture has been established (in higher dimensions it is less complete). The relaxation times for the Ising dynamics on the 2D torus of sidelength $N$ undergo the following trichotomy as $N \rightarrow \infty$: in the high temperature regime, they are uniformly bounded in $N$ \cite{AH87,MO94}; in the low temperature regime, they are exponential in $N$ \cite{S87, CCS87,T89,MO94,CGMS96}; at criticality, they are polynomial in $N$ \cite{H91, LS12}. It would be interesting to see whether the relaxation times for the dynamical $\phi^4$ model undergo such a trichotomy. 

\subsection{Paper organisation}

In Section \ref{sec: model} we introduce the renormalised, ultraviolet cutoff measures $\nu_{\beta,N,K}$ that converge weakly to $\nubn$ as the cutoff is removed. In Section \ref{sec: surface order} we carry out the statistical mechanics part of the proof of Theorem \ref{thm: ld}. In particular, conditional on the moment bounds in Proposition \ref{prop: cosh}, we develop contour bounds for $\nubn$. These contour bounds allow us to adapt techniques in \cite{BIV00}, which were developed in the context of discrete spin systems, to deal with $\nubn$.

 In Section \ref{sec: bd} we lay the foundation to proving Proposition \ref{prop: cosh} by introducing the Bou\'e-Dupuis formalism for analysing the free energy of $\nubn$ as in \cite{BG19}. We then use a low temperature expansion and coarse-graining argument within the Bou\'e-Dupuis formalism in Section \ref{sec: free energy} to establish Proposition \ref{prop: q bound main} which contains the key analytic input to proving Proposition \ref{prop: cosh}. 
 
 In Section \ref{sec: chessboard estimates}, we use the chessboard estimates of Proposition \ref{prop:chessboard_estimates} to upgrade the bounds of Proposition \ref{prop: q bound main} to those of Proposition \ref{prop: cosh}. Chessboard estimates follow from the well-known fact that $\nubn$ is reflection positive. We give an independent proof of this fact by using stability results for the dynamics \eqref{def: dynamical phi4} to show that lattice and Fourier regularisations of $\nubn$ converge to the same limit. Then, in Section \ref{sec: decay of gap}, we prove Corollary \ref{cor: sg} showing that the spectral gaps for the dynamics decay in the infinite volume limit provided $\beta$ is sufficiently large.
 
 We collect basic notations and analytic tools that we use throughout the paper in Appendix \ref{appendix: toolbox}. 

\subsection*{Acknowledgements}
We thank Roman Koteck\'y for inspiring discussions throughout all stages of this project. We thank Nikolay Barashkov for useful discussions regarding the variational approach to ultraviolet stability for $\phi^4_3$. We thank Martin Hairer for a particularly useful discussion. AC and TSG thank the Hausdorff Research Institute for Mathematics for the hospitality and support during the Fall 2019 junior trimester programme \textit{Randomness, PDEs and Nonlinear Fluctuations}. AC, TSG, and HW thank the Isaac Newton Institute for Mathematical Sciences for hospitality and support during the Fall 2018 programme \textit{Scaling limits, rough paths, quantum field theory}, which was supported by EPSRC Grant No. EP/R014604/1. AC was supported by the Leverhulme Trust via an Early Career Fellowship, ECF-2017-226. TSG was supported by EPSRC as part of the Statistical Applied Mathematics CDT at the University of Bath (SAMBa), Grant No. EP/L015684/1. HW was supported by the Royal Society through the University Research Fellowship UF140187 and by the Leverhulme Trust through a Philip Leverhulme Prize.

\section{The model} \label{sec: model}
In the following, we use notation and standard tools introduced in Appendix \ref{appendix: sub: basic}.

Let $\eta > 0$. Denote by $\mu_N = \mu_N(\eta)$ the centred Gaussian measure with covariance $(-\Delta + \eta)^{-1}$ and expectation $\EE_N$. Above, $\Delta$ is the Laplacian on $\TTN$. As pointed out in Remark \ref{rem: eta new}, the choice of $\eta$ is inessential. We consider it fixed unless stated otherwise and we do not make $\eta$-dependence explicit in the notation.

Fix $\beta > 0$. Let $\cV_\beta\colon\RR\rightarrow\RR$ be given by
\begin{equs}
\cV_\beta(a) 
= 
\frac{1}{\beta}(a^2 - \beta)^2
=
\frac 1\beta a^4 - 2a^2 + \beta.
\end{equs}
$\cV_\beta$ is a symmetric double well potential with minima at $a = \pm \hfbeta$ and a potential barrier at $a=0$ of height $\beta$. 

Fix $\rho \in C^\infty_c(\RR^3;[0,1])$ rotationally symmetric; decreasing; and satisfying $\rho(x)=1$ for $|x| \in [0,c_\rho)$, where $c_\rho >0$. See Lemma \ref{lem:trident} for why the last condition is important. Note that many of our estimates rely on the choice of $\rho$, but we omit explicit reference to this.

For every $K>0$, let $\rho_K$ be the Fourier multiplier on $\TTN$ with symbol $\rho_K(\cdot) = \rho(\frac{\cdot}{K})$. For $\phi \sim \mu_N$, we denote $\phi_K = \rho_K \phi$. Note that $\phi_K$ is smooth. Let
\begin{equs} \label{def: tadpole}
\<tadpole>_K
=
\EE_N[\phi_K^2(0)]
=
\frac 1{N^3} \sum_{n \in (N^{-1}\ZZ)^3}	\frac{\rho_K^2(n)}{\langle n \rangle^2}
\end{equs}
where $\langle \cdot \rangle = \sqrt{\eta + 4\pi^2|\cdot|}$. Note that $\<tadpole>_K = O(K)$ as $K \rightarrow \infty$. The first four Wick powers of $\phi_K$ are given by the generalised Hermite polynomials:
\begin{equs}
	:\phi_K(x): 
	&= 
	\phi_K(x) 
	\\
	:\phi_K^2(x):
	&=
	\phi_K^2(x) - \<tadpole>_K
	\\
	:\phi_K^3(x):
	&=
	\phi_K^3(x) - 3 \<tadpole>_K \phi_K(x)
	\\
	:\phi_K^4(x):
	&=
	\phi_K^4(x) - 6 \<tadpole>_K \phi_K^2(x) + 3 \<tadpole>_K^2.
\end{equs}
We define the Wick renormalised potential by linearity:
\begin{equs}
:\cV_\beta(\phi_K): 
= 
\frac{1}{\beta} : \phi_K^4: - 2 :\phi_K^2: + \beta.
\end{equs}

Let $\nu_{\beta,N,K}$ be the probability measure with density
\begin{equs}\label{eq: cutoff density}
d\nu_{\beta,N,K}(\phi)
=
\frac{e^{-\cH_{\beta,N,K}(\phi_K)}}{\sZ_{\beta,N,K}}	d\mu_N(\phi).
\end{equs}
Above, $\cH_{\beta,N,K}$ is the renormalised Hamiltonian 
\begin{equs} \label{eq: renorm hamiltonian}
\cH_{\beta,N,K}(\phi_K)
&=
\intx :\cV_\beta(\phi_K):  - \frac{\gamma_K}{\beta^2} :\phi_K^2: - \delta_K - \frac \eta 2 :\phi_K^2: dx
\end{equs}
where $\gamma_K$ and $\delta_K$ are additional renormalisation constants given by \eqref{def: mass renorm} and \eqref{def: energy renorm}, respectively, and $\sZ_{\beta,N,K} = \EE_N e^{-\cH_{\beta,N,K}(\phi_K)}$ is the partition function.

\begin{prop}\label{prop: phi4 existence}
For every $\beta > 0$ and $N \in \NN$, the measures $\nu_{\beta,N,K}$ converge weakly to a non-Gaussian measure $\nubn$ on $S'(\TTN)$ as $K \rightarrow \infty$. In addition, $\sZ_{\beta,N,K} \rightarrow \sZ_{\beta,N}$ as $K \rightarrow \infty$ and satisfies the following estimate: there exists $C=C(\beta,\eta) > 0$ such that
\begin{equs}
-CN^3 
\leq
-\log\sZ_{\beta,N}
\leq
CN^3.
\end{equs}
\end{prop}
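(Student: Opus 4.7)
The plan is to deduce this proposition via the variational (Boué--Dupuis) approach of Barashkov and Gubinelli \cite{BG19}, which is the framework subsequently developed in Section \ref{sec: bd}. First, I would realise $\mu_N$ on a Brownian filtration by writing $\phi = W_\infty := \int_0^\infty J_s \, dX_s$ for a cylindrical Brownian motion $X$ and a smoothing kernel $J_s$ chosen so that $W_t$ is a Gaussian approximation of $\phi$ at ultraviolet scale comparable to $e^t$; under the identification $T \leftrightarrow \log K$, the cutoff field $\phi_K$ is replaced by $W_T$. The Boué--Dupuis formula then yields
\begin{equs}
-\log \sZ_{\beta,N,K}
=
\inf_u \EE_N\Big[\cH_{\beta,N,K}(W_T + I_T(u)) + \tfrac 12 \int_0^T \|u_s\|_{L^2}^2 \, ds\Big],
\end{equs}
where $I_T(u) = \int_0^T J_s u_s \, ds$ and the infimum ranges over adapted drifts.

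The upper bound $-\log \sZ_{\beta,N,K} \leq CN^3$ follows by evaluating the functional at $u\equiv 0$: every Wick-renormalised monomial has bounded one-point expectation, and translation invariance turns the spatial integral into a contribution of size $O(N^3)$. The lower bound is the harder direction. The strategy I would follow is to split the drift $I_T(u)$ into a low-frequency macroscopic part and a high-frequency remainder, expand $\cH_{\beta,N,K}(W_T + I_T(u))$ via multinomial Wick identities, and use the stabilising $\tfrac 1\beta \int :(W_T+I_T(u))^4:$ term to absorb every negative or singular contribution. The divergent second-order counter-term $\tfrac{\gamma_K}{\beta^2}\int :\phi_K^2:$ is designed precisely to cancel the logarithmic divergence produced by the sunset diagram appearing in this expansion; this is the characteristic $\phi^4_3$ renormalisation. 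The outcome is a uniform-in-$K$ lower bound of order $-CN^3$.

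With the two-sided bound on $\log \sZ_{\beta,N,K}$ in hand, convergence of $\sZ_{\beta,N,K}$ and weak convergence of $\nu_{\beta,N,K}$ follow by establishing uniform-in-$K$ moment bounds for $\phi$ under $\nu_{\beta,N,K}$ in a Besov space of regularity $-\tfrac 12 - \kappa$ — obtained by differentiating the variational formula against a test source and rerunning the same coarse-graining — giving tightness in $\cC^{-\frac 12 - \kappa}(\TTN)$. To identify the limit I would exploit the stochastic quantisation equation \eqref{def: dynamical phi4}: $\nu_{\beta,N,K}$ is reversible for a regularised version of this SPDE, and the global well-posedness and stability results of \cite{H14, MW17, GH19} ensure that the regularised dynamics converge to a unique Markov process whose invariant measure $\nubn$ is therefore the unique weak limit. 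Non-Gaussianity follows either from the singularity of $\nubn$ with respect to $\mu_N$ proved in \cite{BG20} (visible already from the lower bound construction) or from direct computation of a connected four-point function of $\phi$ under $\nubn$.

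The main obstacle is the uniform-in-$K$ lower bound on $-\log \sZ_{\beta,N,K}$: both the Wick square $:W_T^2:$ and the cross-term $W_T \cdot I_T(u)$ behave singularly, and the exact cancellations against $\gamma_K/\beta^2$ must be carried out with care, requiring the frequency split of $I_T(u)$ to be chosen so that the $L^2$ cost of the drift remains controllable while the bad diagrams are absorbed by the positive $\phi^4$ term. This is precisely the delicate analysis performed in Section \ref{sec: free energy} (culminating in Proposition \ref{prop: q bound main}); the extensive $O(N^3)$ free-energy bound stated here is a direct by-product.
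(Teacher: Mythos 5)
Your overall route is the right one — the paper proves this proposition simply by citing \cite{GJ73} for the classical ultraviolet stability and \cite[Theorem 1]{BG19} for the precise formulation with these particular choices of $\gamma_\bullet$ and $\delta_\bullet$, and what you are sketching is exactly the Bou\'e--Dupuis variational argument of \cite{BG19}. However, there is a genuine gap in your treatment of the upper bound. You claim that evaluating the variational functional at the zero drift $u\equiv 0$ gives $-\log\sZ_{\beta,N,K}\leq CN^3$ because "every Wick-renormalised monomial has bounded one-point expectation." It is true that each Wick monomial has \emph{zero} expectation under $\mu_N$, but the Hamiltonian $\cH_{\beta,N,K}$ also carries the constant term $-\delta_K$ (and the $+\beta$ from $\cV_\beta$), so that
\begin{equs}
\EE_N\bigl[\cH_{\beta,N,K}(\phi_K)\bigr] = N^3\,(\beta - \delta_K),
\end{equs}
and the energy renormalisation $\delta_K$ in \eqref{def: energy renorm} diverges to $-\infty$ as $K\to\infty$ (its leading term is $-\frac{8}{\beta^2}\EE\intx\int_0^K(\cJ_k\<3>_k)^2\,dk\,dx$, which is strictly negative and unbounded). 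Thus $u\equiv 0$ gives an upper bound that blows up with $K$, not $O(N^3)$. The upper bound requires a non-trivial drift: in \cite{BG19} (and in the $\beta$-uniform version in Section \ref{subsec: proof upper bound} of this paper), one first translates the field to a minimum of $\cV_\beta$ to kill the constant $\beta$, then constructs a drift $\check v$ via a fixed-point argument (Lemma \ref{lem: bg drift}) whose leading term $-\tfrac 4\beta \cJ_k\<3>_k$ produces, in the entropy cost $\tfrac 12\int\intt\check v_k^2$, exactly the divergent piece that cancels $\delta_K$, leaving an $O(N^3)$ remainder.

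For the lower bound your sketch is closer in spirit but still glosses over the key mechanism: it is not a soft "low/high frequency split" absorbed by the quartic, but a paracontrolled change of drift variables (the analogue of \eqref{eq: full paracontrolled ansatz}) followed by It\^o-formula decompositions across scales that identify the exact divergent contributions to be cancelled by $\gamma_K$ and $\delta_K$; the quartic (or, in the $\beta$-uniform version, the potential $\cV_\beta(V_K+g_K)$) then controls the bounded remainder. Your final paragraph acknowledges this lives in Section \ref{sec: free energy}, which is fine, but the paper's statement here is the weaker $\beta$-\emph{dependent} bound — it is \eqref{eq:zboundlower}--\eqref{eq:zboundupper} in Proposition \ref{prop:zbounds} that are $\beta$-uniform, and that distinction (and the additional low-temperature expansion it requires) is worth keeping in mind.
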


\begin{proof}
Proposition \ref{prop: phi4 existence} is a variant of the classical ultraviolet stability for $\phi^4_3$ first established in \cite{GJ73}. Our precise formulation, i.e. the choice of $\gamma_\bullet$ and $\delta_\bullet$, is taken from \cite[Theorem 1]{BG19}.  
\end{proof}

We write $\langle \cdot \rangle_{\beta,N}$ and $\langle \cdot \rangle_{\beta,N,K}$ for expectations with respect to $\nubn$ and $\nu_{\beta,N,K}$, respectively.

\begin{rem}\label{rem: renorm 0}
The constants $\<tadpole>_K, \gamma_K, \delta_K$ are, respectively, Wick renormalisation, (second order) mass renormalisation, and energy renormalisation constants. They all depend on $\eta$ and $N$. $\delta_K$ additionally depends on $\beta$ and is needed for the convergence of $\sZ_{\beta,N,K}$ as $K \rightarrow \infty$, but drops out of the definition of the cutoff measures \eqref{eq: cutoff density}. 
\end{rem}

\begin{rem}
In 2D a scaling argument \cite{GJS76} allows one to work with the measure with density proportional to
\begin{equs}
	\exp \Big( - \intx :\cV_\beta(\phi_K): dx \Big) d\tilde\mu_N(\phi)
\end{equs}
where $\tilde\mu_N$ is the Gaussian measure with covariance $(-\Delta + \hfbeta^{-1})^{-1}$, i.e. a $\beta$-dependent mass. This measure is significantly easier to work with due to the degenerate mass when $\beta$ is large. In particular, it is easier to obtain contour bounds which, although suboptimal from the point of view of $\beta$-dependence, are sufficient for the Peierls' argument in \cite{GJS75} and for the analogue of our argument in Section \ref{sec: surface order} carried out in 2D. In 3D one cannot work with such a measure.
\end{rem}

\section{Surface order large deviation estimate} \label{sec: surface order}

In this section we carry out the statistical mechanics part of the proof of Theorem \ref{thm: ld}. Recall that for large $\beta$, the the minima of potential $\cV_\beta$ at $\pm \hfbeta$ are widely separated by a steep potential barrier of height $\beta$, so formally $\nubn$ resembles an Ising model at inverse temperature $\beta$. We use this intuition to prove contour bounds for $\nubn$ (see Proposition \ref{prop:peierls}) conditional on certain moment bounds (see Proposition \ref{prop: cosh}). The contour bounds are then used to adapt arguments from \cite{BIV00} to prove Theorem \ref{thm: ld}.

\subsection{Block averaging} \label{subsec: block average}
Let $e_1, e_2, e_3$ be the standard basis for $\RR^3$. We identify $\TTN$ with the set
\begin{equs}
\big\{ a_1 e_1 + a_2 e_2 + a_3 e_3 : a_1, a_2, a_3 \in [0,N) \big\}.
\end{equs}
Define
\begin{equs}
\BBN
=
\Big\{ \prod_{i=1}^3 [a_i,a_i+1) \subset \TTN : a_1,a_2,a_3 \in \{0,\dots,N-1\} \Big\}.	
\end{equs}
We call elements of $\BBN$ blocks. For any $B \subset \BBN$, we overload notation and write $B=\bigcup_{\sBox \in B} \Box \subset \TTN$. Hence, $|B| = \int_B 1 dx$ is the number of blocks in $B$. In addition, we identify any $\vec f \in \RR^\BBN$ with the piecewise continuous function on $\TTN$ given by $\vec f(x) = \vec f(\Box)$ for $x \in \Box$.

Let $\phi \sim \nubn$. For any $\Box \in \BBN$, let $\phi(\Box) = \int_{\sBox} \phi dx$. Here, the integral is interpreted as the duality pairing between $\phi$ (a distribution) and the indicator function $\1_\sBox$ (a test function); we use this convention throughout. We let $\vphi = (\phi(\Box))_{\sBox \in \BBN} \in \RR^\BBN$ denote the block averaged field obtained from $\phi$. 

\begin{rem}\label{rem:block_av}
Testing $\phi$ against $\1_\sBox$, which is not smooth, yields a well-defined random variable on the support of $\nubn$. Indeed, $\phi$ belongs almost surely to $L^\infty$-based Besov spaces of regularity $s$ for every $s < -\frac 12$ (see Appendix \ref{appendix: sub: besov} for a review of Besov spaces and see Section \ref{sec: bd} for the almost sure regularity of $\phi$). On the other hand, indicator functions of blocks belong to $L^1$-based Besov spaces of regularity $s$ for every $s < 1$ or, more generally, $L^p$-based Besov spaces of regularity $s$ for every $s < \frac 1p$ (see, for example, Lemma 1.1 in \cite{FR12}). This is sufficient to test $\phi$ against indicator functions of blocks (using e.g. Proposition \ref{prop: tool duality}). We also give an alternative proof using a type of It\^o isometry in Proposition \ref{prop: testing phi4}.
\end{rem}

\subsection{Phase labels}

We define a map $\vphi \in \RR^\BBN \mapsto \sigma \in \{-\hfbeta, 0, \hfbeta \}^\BBN$ called a phase label. A basic function of $\sigma$ is to identify whether the averages $\phi(\Box)$ take values around the well at $+\hfbeta$, the well at $-\hfbeta$, or neither. We quantify this to a given precision $\delta \in (0,1)$, which is taken to be fixed in what follows.

\begin{itemize}
\item We say that $\Box \in \BBN$ is plus (resp. minus) valued if
\begin{equs}
	|\phi(\Box) \mp \hfbeta | 
	< 
	\hfbeta \delta.
\end{equs}
The set of plus (resp. minus) valued blocks is denoted $\cP$ (resp. $\cM$).
\item The set of neutral blocks is defined as $\cN = \BBN \setminus (\cP \cup \cM)$.
\end{itemize}

Each block in $\BBN$ contains a midpoint. Given two distinct blocks in $\BBN$, we say that they are nearest-neighbours if their midpoints are of distance $1$. They are $*$-neighbours if either they are nearest-neighbours or if their midpoints are of distance $\sqrt 3$. For any $\Box \in \BBN$, the $*$-connected ball centred at $\Box$ is the set $\rB^*(\Box) \subset \BBN$ consisting of $\Box$ and its $*$-neighbours. It contains exactly $27$ blocks.

\begin{itemize}
\item We say that $\Box \in \BBN$ is \textit{plus good} if every $\Box' \in \rB^*(\Box)$ is plus valued. The set of plus good blocks is denoted $\cP_G$.
\item We say that $\Box \in \BBN$ is \textit{minus good} if every $\Box' \in \rB^*(\Box)$ is minus valued. The set of minus good blocks is denoted $\cM_G$.
\item The set of \textit{bad} blocks is defined as $\cB = \BBN \setminus (\cP_G \cup \cM_G)$.  
\end{itemize}

Define the phase label $\sigma$ associated to $\vphi$ of precision $\delta > 0$ by
 \begin{equs}
 \sigma(\Box) 
 = 
 \begin{cases}
 +\hfbeta, \quad \Box \in \cP_G, \\
 -\hfbeta, \quad \Box \in \cM_G, \\
 0, \hspace{9.75mm} \Box \in \cB.
 \end{cases}
 \end{equs}

The following proposition can be thought of as an extension of the contour bounds developed for $\phi^4$ in 2D \cite[Theorem 1.2]{GJS75} to 3D.
 
\begin{prop}\label{prop:peierls}
Let $\sigma$ be a phase label of precision $\delta \in (0,1)$. Then, there exists $\beta_0=\beta_0(\delta, \eta) > 0$ and $C_P=C_P(\delta, \eta) > 0$ such that, for $\beta > \beta_0$, the following holds for any $N \in 4\NN$: for any set of blocks $B\subset\BBN$,
    \begin{equs} \label{eq:peierls}
    \nu_{\beta,N}(\sigma(\Box) = 0 \text{ for all } \Box \in B) 
    \leq 
    e^{-C_P\hfbeta |B|}.	
    \end{equs}
\end{prop}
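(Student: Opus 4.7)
The strategy combines the chessboard estimates of Proposition \ref{prop:chessboard_estimates} with the moment bound in Proposition \ref{prop: cosh}, following the Peierls-type framework of \cite{GJS75} adapted to block-averaged fields.

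The first step is to translate the event that every $\Box \in B$ is bad into a product event concerning block averages. By definition, if $\sigma(\Box) = 0$ then $\rB^*(\Box)$ either contains a neutral block $\Box_0$, in which case $\phi(\Box_0)$ is at distance at least $\hfbeta \delta$ from both wells $\pm \hfbeta$, or it contains both plus- and minus-valued blocks which are $*$-adjacent and whose averages differ by at least $2(1-\delta)\hfbeta$. In either case one can identify within a bounded enlargement of $\rB^*(\Box)$ a witness block (or pair of blocks) whose averages lie in a region that will be suppressed by Proposition \ref{prop: cosh}. Extracting a $*$-separated subset $B' \subset B$ with $|B'| \gtrsim |B|$ so that the enlarged $*$-balls are pairwise disjoint, together with a union bound over the finitely many possible positions and types of witness, reduces the event to a union of at most $C_1^{|B|}$ factorised events of the form
\[
\bigcap_{\sBox \in B''} A_\sBox ,
\]
where $B'' \subset \BBN$ is a set of pairwise disjoint blocks with $|B''| \gtrsim |B|$ and each $A_\sBox$ lies in a fixed finite list of single-block ``bad'' events.

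Next, I would invoke the chessboard estimate of Proposition \ref{prop:chessboard_estimates} (which is the reason for the divisibility assumption $N \in 4\NN$) to dominate each such product probability by the $|B''|/N^3$-th power of the probability that the corresponding bad event holds on \emph{every} block of the torus. On this fully constrained event, Proposition \ref{prop: cosh} provides a $\cosh$-type moment bound which, after Markov's inequality and optimisation of the exponential parameter, yields an exponential decay at rate of order $\hfbeta$ per block, uniformly in $N$. Raising this to the fractional power $|B''|/N^3$ therefore preserves the rate $\hfbeta$ per block on $B''$.

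Putting everything together produces \eqref{eq:peierls} with the combinatorial prefactor $C_1^{|B|}$ absorbed into a slightly smaller $C_P$ for $\beta$ sufficiently large. The main obstacle, and the key analytic input, lies in Proposition \ref{prop: cosh} itself: it must produce the correct $\hfbeta$-scaling of the free energy cost of pinning block averages away from the wells, despite the ultraviolet singularity of $\nu_{\beta,N}$. This is handled in Sections \ref{sec: bd}--\ref{sec: free energy}, via the Bou\'e-Dupuis variational representation of \cite{BG19} combined with a coarse-graining that isolates a smooth effective Hamiltonian encoding the low-temperature well structure from a remainder absorbing the ultraviolet divergences. The present argument is otherwise a fairly standard reflection-positivity contour bound suitably lifted to continuum block spins.
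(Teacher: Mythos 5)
Your proposal captures the paper's strategy correctly, but with one structural misattribution worth flagging: the paper's proof of this proposition does \emph{not} invoke the chessboard estimates (Proposition~\ref{prop:chessboard_estimates}) directly. Instead, it reduces to the case where $B$ contains no $*$-neighbours via a H\"older inequality over the $27$ sublattices $\BB_N^l = \BBN \cap (e_l + (3\ZZ)^3)$ (achieving what your pigeonhole extraction does, with a cleaner bookkeeping of constants), then bounds each indicator $\1_{\sBox\in\cB}$ \emph{pointwise} by $e^{-C_\delta\hfbeta}$ times a finite sum of $\cosh Q_i$'s via exponential Chebyshev (Lemma~\ref{lem:badsetbounds}), expands the resulting product, and finally applies Proposition~\ref{prop: cosh} as a black box; the chessboard estimate only appears \emph{inside} the proof of Proposition~\ref{prop: cosh}. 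Your alternative ordering --- chessboard the witness indicators first, then Chebyshev and the extensive-in-$N^3$ moment bound on the fully constrained whole-torus event --- would also close, but you would need an extra H\"older step to separate the single-block (neutral) witnesses from the pair-of-block (interface) witnesses before applying the chessboard estimate, since it is stated for a fixed block shape; this is exactly the splitting the paper performs when proving Proposition~\ref{prop: cosh} from Proposition~\ref{prop: q bound main}. So your proof is sound, but by inlining the chessboard step you essentially reprove Proposition~\ref{prop: cosh} rather than using it.
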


\begin{proof}
See Section \ref{subsec: proof of peierls}. The main estimates required in the proof are given in Proposition \ref{prop: cosh}, which extends \cite[Theorem 1.3]{GJS75} to 3D and improves the $\beta$-dependence. Assuming this, we then prove Proposition \ref{prop:peierls} in the spirit of the proof of \cite[Theorem 1.2]{GJS75}. 
\end{proof}

\subsection{Penalising bad blocks} \label{subsec: badblocks}
Given a phase label, we partition the set of bad blocks $\cB$ into two types.
\begin{itemize}
\item Frustrated blocks
 are blocks $\Box \in \BBN$ such that $\rB^*(\Box)$ contains a neutral block. We denote the set of frustrated blocks $\cB_F$.	
\item Interface block are blocks $\Box \in \BBN$ such that $\rB^*(\Box)$ contains no neutral blocks, but there exists at least one pair of \textit{nearest-neighbours} $\{ \Box',\Box'' \} \subset \rB^*(\Box)$ such that $\Box' \in \cP$ but $\Box'' \in \cM$. We denote the set of interface blocks $\cB_I$. 
\end{itemize}

For any $\Box \in \BBN$ and any nearest-neighbours $\Box',\Box'' \in \BBN$, define:
\begin{equs} \label{def: Q_i}
\begin{split}
Q_1(\Box)
&=
\frac 1{\hfbeta} \int_{\sBox} (\beta - :\phi^2(x):)dx 
\\
Q_2(\Box)
&=
\frac 1{\hfbeta} \int_{\sBox}(:\phi^2(x):-\phi(\Box)^2)dx 
\\
Q_3(\Box',\Box'')
&=
\phi(\Box') - \phi(\Box'').
\end{split}
\end{equs}

\begin{rem} \label{rem: testing wick square}
Note that testing $:\phi^2:$ against $\1_{\sBox}$ yields a well-defined random variable on the support of $\nubn$. We give a proof of this fact in Proposition \ref{prop: testing wick square}.
\end{rem}

We write $\rB^*_{\rnn}(\Box)$ for the set of unordered pairs of nearest-neighbour blocks $\{\Box', \Box'' \}$ in $\BBN$ such that $\Box',\Box'' \in \rB^*(\Box)$. There are $54$ elements in this set.

\begin{lem} \label{lem:badsetbounds}
Let $N \in \NN$ and fix a phase label of precision $\delta \in (0,1)$. Then, for every $\Box \in \BBN$, 
\begin{equs} \label{eq:badset1bound}
\1_{\sBox \in \cB_F} 
\leq 2
e^{-C_\delta\hfbeta} \sum_{\sBox' \in \rB^*(\sBox)}\Big(\cosh Q_1(\Box') + \cosh Q_2(\Box') \Big)
\end{equs}
\begin{equs} \label{eq:badset2bound}
\1_{\sBox \in \cB_I}
\leq
 2e^{-C_\delta \hfbeta} \sum_{\{\sBox',\sBox''\} \in \rB^{*}_{\rnn}(\sBox)}\cosh Q_3(\Box',\Box'')
\end{equs}
where $C_\delta = \min\Big( \frac \delta 2, 2-2\delta \Big) > 0$.
\end{lem}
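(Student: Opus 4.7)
The lemma is a pointwise (deterministic) inequality: both bounds hold for every field configuration, because the indicator on the left is either $0$ (nothing to prove) or $1$ (in which case we must exhibit an element of the sum on the right whose size exceeds $1$). So the plan is, separately for each type of bad block, to use the defining geometric/sign constraints to produce a $\Box'\in \rB^*(\Box)$ (respectively a nearest-neighbour pair in $\rB^*_{\rnn}(\Box)$) on which one of the $Q_i$'s is forced to have large absolute value, then invoke the elementary bound $\cosh x\ge \tfrac12 e^{|x|}$.

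\textbf{The frustrated bound \eqref{eq:badset1bound}.} The key algebraic observation is that $Q_1$ and $Q_2$ are designed so that their sum is a deterministic function of the block average:
\begin{equs}
Q_1(\Box')+Q_2(\Box')
\;=\;\frac{1}{\hfbeta}\int_{\sBox'}\bigl(\beta-\phi(\Box')^2\bigr)dx
\;=\;\hfbeta\Bigl(1-\tfrac{\phi(\Box')^2}{\beta}\Bigr),
\end{equs}
using $|\Box'|=1$ and that $\phi(\Box')$ is constant in $x$. If $\Box\in\cB_F$, there is a neutral $\Box'\in\rB^*(\Box)$, so $\phi(\Box')$ lies either in $[-\hfbeta(1-\delta),\hfbeta(1-\delta)]$ or in $(-\infty,-\hfbeta(1+\delta)]\cup[\hfbeta(1+\delta),\infty)$. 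In either case $|1-\phi(\Box')^2/\beta|\ge\delta(2-\delta)\ge\delta$, hence
\begin{equs}
\max\bigl(|Q_1(\Box')|,|Q_2(\Box')|\bigr)\;\ge\;\tfrac{\delta}{2}\hfbeta\;\ge\;C_\delta\hfbeta.
\end{equs}
Since $\cosh Q\ge \tfrac12 e^{|Q|}$, the corresponding cosh term is at least $\tfrac12 e^{C_\delta\hfbeta}$, and \eqref{eq:badset1bound} follows after summing over $\rB^*(\Box)$ and multiplying by $2$.

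\textbf{The interface bound \eqref{eq:badset2bound}.} If $\Box\in\cB_I$, by definition there is a nearest-neighbour pair $\{\Box',\Box''\}\in\rB^*_{\rnn}(\Box)$ with $\Box'\in\cP$ and $\Box''\in\cM$. Then $\phi(\Box')>\hfbeta(1-\delta)$ and $\phi(\Box'')<-\hfbeta(1-\delta)$, so
\begin{equs}
|Q_3(\Box',\Box'')|\;=\;|\phi(\Box')-\phi(\Box'')|\;>\;2(1-\delta)\hfbeta\;\ge\;C_\delta\hfbeta.
\end{equs}
Applying $\cosh x\ge \tfrac12 e^{|x|}$ yields $\cosh Q_3(\Box',\Box'')\ge\tfrac12 e^{C_\delta\hfbeta}$, and \eqref{eq:badset2bound} follows.

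\textbf{Obstacles.} There is essentially no obstacle: the lemma is a purely deterministic consequence of the definitions, and the two constants $\delta/2$ and $2-2\delta$ in $C_\delta$ arise, respectively, from the worst neutral configuration in the first bound and from the worst $\pm$ transition in the second. The only mildly non-trivial point is that $Q_1,Q_2,Q_3$ are a priori only almost surely defined; but this has already been addressed in Remarks \ref{rem:block_av} and \ref{rem: testing wick square}, so the deterministic argument applies $\nubn$-a.s.\ to each realisation.
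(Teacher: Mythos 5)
Your proof is correct and rests on the same underlying mechanism as the paper's — the algebraic identity $Q_1(\Box')+Q_2(\Box')=\hfbeta\bigl(1-\phi(\Box')^2/\beta\bigr)$, the split "if the sum is large, one summand is large", and the bound $\cosh x\ge\tfrac12 e^{|x|}$ — just organized differently: you argue directly that the indicator being $1$ forces one cosh on the right to be $\ge\tfrac12 e^{C_\delta\hfbeta}$, whereas the paper passes through a chain $\1_{X>t}\le e^{-t}e^X$ for each of $\pm Q_1,\pm Q_2$ (respectively $\pm Q_3$) and then recombines the four (two) exponentials into the cosh's. The two presentations are mathematically equivalent; yours makes the cancellation $Q_1+Q_2=\frac{1}{\hfbeta}(\beta-\phi(\Box')^2)$ explicit rather than leaving it implicit in the telescoping of integrals, which arguably makes the role of $Q_2$ as an error term clearer.
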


Frustrated blocks are penalised by the potential $\cV_\beta$ whereas interface blocks are penalised by the gradient term in the Gaussian measure. Lemma \ref{lem:badsetbounds} formalises this through use of the random variables $Q_1, Q_2$ and $Q_3$, which (up to trivial modifications) were introduced in \cite{GJS75}. $Q_1$ penalises frustrated blocks. $Q_2$ is an error term coming from the fact that the potential is written in terms of $\phi$ rather than $\vphi$. $Q_3$ penalises interface blocks.

\begin{proof}[Proof of Lemma \ref{lem:badsetbounds}]
For any $\Box \in \BBN$,
\begin{equs} \label{eq:badset1bound1}
\begin{split}
\1_{\sBox \in \cN} 
&= 
\1_{|\phi(\sBox)|<(1-\delta)\hfbeta}+\1_{|\phi(\sBox)|>(1+\delta)\hfbeta}
\\
&=
\1_{\frac 1\hfbeta (\beta - \phi(\sBox)^2)> (2\delta - \delta^2)\hfbeta} + \1_{\frac 1{\hfbeta} (\phi(\sBox)^2 - \beta) > (2\delta+\delta^2)\hfbeta} 
\\
&= 
\1_{\frac 1\hfbeta \int_{\sBox}\beta - :\phi^2(x):dx + \frac{1}{\hfbeta}\int_{\sBox}:\phi^2(x):-\phi(\sBox)^2 dx > (2\delta-\delta^2)\hfbeta} 
\\
& \quad\quad+
\1_{\frac 1\hfbeta \int_{\sBox} :\phi^2(x):- \beta dx + \frac 1\hfbeta \int_{\sBox}\phi(\sBox)^2 - :\phi^2(x): dx > (2\delta + \delta^2)\hfbeta} 
\\
& \leq
\1_{\frac 1\hfbeta \int_{\sBox} \beta - :\phi^2(x): dx > \frac{2\delta - \delta^2}{2}\hfbeta}+\1_{\frac 1\hfbeta \int_{\sBox}:\phi^2(x): - \phi(\sBox)^2 dx > \frac{2\delta - \delta^2}{2}\hfbeta}
\\
& \quad\quad +
\1_{\frac 1\hfbeta \int_{\sBox} :\phi^2(x):- \beta dx > \frac{2\delta + \delta^2}{2}\hfbeta}+\1_{\frac 1\hfbeta \int_{\sBox}\phi(\sBox)^2 - :\phi^2(x): dx > \frac{2\delta + \delta^2}{2}\hfbeta}
\\
&\leq
e^{-\frac{\delta}{2}\hfbeta} \Big( e^{Q_1(\sBox)} + e^{Q_2(\sBox)} + e^{-Q_1(\sBox)}  + e^{-Q_2(\sBox)} \Big)
\\
&=
2e^{-\frac{\delta}{2}\hfbeta} \Big(\cosh Q_1(\Box) + \cosh Q_2(\Box) \Big)
\end{split}
\end{equs}
where in the penultimate line we have used that $\delta^2 \leq \delta$.

By the definition of $\cB_F$,
\begin{equs} \label{eq:badset1entropy}
\1_{\sBox \in \cB_F} \leq \sum_{\sBox' \in \rB^*(\sBox)}	\1_{\sBox' \in \cN}.
\end{equs}
Using \eqref{eq:badset1bound1} applied to $\1_{\sBox' \in \cN}$ in \eqref{eq:badset1entropy} yields \eqref{eq:badset1bound}.

\eqref{eq:badset2bound} is established by the following estimates: by the definition of $\cB_I$,
\begin{equs}
\1_{\sBox \in \cB_I}
&\leq
\sum_{\{\sBox',\sBox''\} \in \rB^*_{\rnn}(\sBox)} (\1_{\sBox' \in \cP}\1_{\sBox'' \in \cM} + \1_{\sBox' \in \cM}\1_{\sBox'' \in \cP})
\\
&\leq
\sum_{\{\sBox',\sBox''\} \in \rB^*_{\rnn}(\sBox)} (\1_{\phi(\sBox') - \phi(\sBox'') > (2-2\delta)\hfbeta} + \1_{\phi(\sBox'') - \phi(\sBox') > (2-2\delta)\hfbeta})
\\
&\leq
\sum_{\{\sBox',\sBox''\} \in \rB^*_{\rnn}(\sBox)} e^{-(2-2\delta)\hfbeta}\Big(e^{Q_3(\sBox',\sBox'')} + e^{-Q_3(\sBox',\sBox'')}\Big)
\\
&=
\sum_{\{\sBox',\sBox''\} \in \rB^*_{\rnn}(\sBox)} 2e^{-(2-2\delta)\hfbeta}\cosh Q_3(\Box',\Box'').
\end{equs}
\end{proof}

In order to use Lemma \ref{lem:badsetbounds} to prove Proposition \ref{prop:peierls}, we want to control expectations of $\cosh Q_1, \cosh Q_2$ and $\cosh Q_3$ by the exponentially small (in $\hfbeta$) prefactor in \eqref{eq:badset1bound} and \eqref{eq:badset2bound}. Moreover, we want to control these expectations over a set of blocks as opposed to just single blocks.

Let $B_1, B_2 \subset \BBN$ and let $B_3$ be any set of unordered pairs of nearest-neighbours in $\BBN$. Define
\begin{equs} \label{eq:cosh}
\begin{split}
\cosh Q_1(B_1) 
&= 
\prod_{\sBox \in B_1} \cosh Q_1(\Box)
\\
\cosh Q_2(B_2)
&=
\prod_{\sBox \in B_2} \cosh Q_2(\Box)
\\
\cosh Q_3(B_3)
&=
\prod_{\{\sBox,\sBox'\} \in B_3} \cosh Q_3(\Box,\Box').
\end{split}
\end{equs}

\begin{rem}
Although the random variable $Q_3(\Box,\Box')$ does depend on the ordering of $\Box$ and $\Box'$, $\cosh Q_3(\Box,\Box')$ does not.
\end{rem}

\begin{prop}\label{prop: cosh}
For every $a_0 > 0$, there exist $\beta_0 = \beta_0(a_0,\eta)>0$ and $C_Q = C_Q(a_0,\beta_0,\eta)>0$ such that the following holds uniformly for all $\beta > \beta_0$, $a_1,a_2,a_3 \in \RR$ such that $|a_i| \leq a_0$, and $N \in 4\NN$: let $B_1, B_2 \subset \BBN$ and $B_3$ a set of unordered pairs of nearest-neighbour blocks in $\BBN$. Then, 
\begin{equs}\label{eq: cosh estimate}
\Big \langle \prod_{i=1}^3 \cosh \big(a_i Q_i(B_i)\big) \Big\rangle_{\beta,N}
\leq
e^{C_Q(|B_1|+|B_2|+|B_3|)}	
\end{equs}
where $|B_3|$ is given by the number of pairs in $B_3$. 
\end{prop}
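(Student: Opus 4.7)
The plan is to combine the translation-invariant moment bounds of Proposition \ref{prop: q bound main}, obtained from the Bou\'e-Dupuis variational analysis of Sections \ref{sec: bd}--\ref{sec: free energy}, with the chessboard estimate of Proposition \ref{prop:chessboard_estimates}, which encodes reflection positivity of $\nubn$. The first step will be a sign expansion: writing $\cosh(x) = \frac{1}{2}(e^x + e^{-x})$ and using \eqref{eq:cosh}, one obtains
\begin{equs}
\prod_{i=1}^3 \cosh\big(a_i Q_i(B_i)\big)
=
2^{-(|B_1|+|B_2|+|B_3|)} \sum_{\vec\epsilon} \exp\Big(\sum_{i=1}^3 \sum_{\alpha \in B_i} \epsilon_{i,\alpha}\, a_i Q_i(\alpha)\Big)
\end{equs}
where the sum runs over sign vectors $\vec\epsilon$ indexed by $B_1 \cup B_2 \cup B_3$ (with $\alpha$ a block for $i=1,2$ and a nearest-neighbour pair for $i=3$). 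It will suffice to bound each summand by $e^{C(|B_1|+|B_2|+|B_3|)}$ uniformly in $\vec\epsilon$, in $\beta \geq \beta_0$, and in $|a_i| \leq a_0$; the combinatorial prefactor is then absorbed into $C_Q$.

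Each summand is a product of exponential weights $e^{\pm a_i Q_i(\alpha)}$ supported on a single block (for $i=1,2$) or a single nearest-neighbour edge (for $i=3$). The hypothesis $N \in 4\NN$ should ensure compatibility with the reflections through coordinate hyperplanes aligned with the block lattice, so Proposition \ref{prop:chessboard_estimates} applies and bounds each summand by a product, over the factors in $B_1 \cup B_2 \cup B_3$, of $1/|\BBN|$-powers of tiled expectations: i.e.\ expectations of the translation-invariant functional obtained by reflecting the corresponding single-block or single-edge weight across every block boundary of $\BBN$. Blocks not appearing in any $B_i$ contribute trivial factors. This is precisely the setting in which I would invoke Proposition \ref{prop: q bound main}, which should provide a bound of the form $e^{C|\BBN|}$ on each tiled expectation, with $C = C(a_0, \eta)$ uniform in $\beta$ large. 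Taking $1/|\BBN|$-roots and multiplying over the $|B_1|+|B_2|+|B_3|$ nontrivial factors would then produce the claim.

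The hard part will be Proposition \ref{prop: q bound main} itself; the chessboard and sign-expansion steps are comparatively light and follow a standard template. The substantive task is to show that, uniformly in the ultraviolet cutoff, the exponential moments of translation-invariant sums of $Q_1$, $Q_2$ and $Q_3$ over all blocks (or edges) grow at most exponentially in $|\BBN|$, despite the appearance of the renormalised Wick squares in $Q_1$ and $Q_2$ and the need to exploit the low-temperature quartic structure of $\cV_\beta$ to compensate for the factor $\hfbeta^{-1}$ in their definitions. This will require the coarse-grained Bou\'e-Dupuis representation of the free energy carried out in Sections \ref{sec: bd}--\ref{sec: free energy}: the macroscopic scales should be controlled by effective-Hamiltonian estimates in the spirit of \cite{GJS76-3}, while the ultraviolet divergences in the remainder are cancelled using the renormalisation scheme of \cite{BG19}.
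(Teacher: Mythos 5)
Your overall template---sign-expand the $\cosh$, reduce to bounding a product of exponential weights, invoke chessboard, then apply Proposition \ref{prop: q bound main} to the resulting tiled expectations---is exactly the paper's strategy, and your intuition about where the real work lies (Proposition \ref{prop: q bound main}) is correct. However, there is a genuine gap in the step where you apply Proposition \ref{prop:chessboard_estimates} directly to the full summand $\exp\big(\sum_i \sum_\alpha \epsilon_{i,\alpha} a_i Q_i(\alpha)\big)$.

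The chessboard estimate requires a \emph{single} tile shape $B$ and one $\tilde B$-measurable function per translate $\tilde B \in \BB_N^B$. Your summand mixes observables that are measurable with respect to different tile geometries (unit blocks for $Q_1,Q_2$, nearest-neighbour pairs for $Q_3$), and moreover the supports may overlap: a given block $\Box$ may lie in $B_1$ and $B_2$ simultaneously, and an edge in $B_3$ may share a block with another edge in $B_3$. None of these configurations fits into a single application of Proposition \ref{prop:chessboard_estimates}. The paper resolves this with two Hölder steps that you omit. First, a Hölder inequality with exponent $5$ separates the five factor families $e^{5Q_1^\pm}$, $e^{5Q_2^\pm}$, $e^{5|Q_3|}$ so that each can be fed into chessboard on its own, with a single tile shape each time; the factor of $5$ is then absorbed because Proposition \ref{prop: q bound main} is stated uniformly for $|a| \le a_0$. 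Second, since arbitrary collections of nearest-neighbour pairs cannot all be embedded into one tiling of $\TTN$ by translates of a fixed pair, the set $B_3$ is further partitioned into six sub-collections of mutually disjoint pairs of a common orientation (three axis directions times a two-colouring within each), with another Hölder to recombine. Without these two reductions, the appeal to chessboard does not go through, so this is not a cosmetic difference but a missing step. Once inserted, the rest of your argument---trivial padding of unused tiles, taking $1/N^3$-roots, absorbing the $2^{|B_1|+|B_2|+|B_3|}$ combinatorial factor into $C_Q$---lines up with the paper.
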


\begin{proof}
Proposition \ref{prop: cosh} is established in Section \ref{subsec: proof cosh prop}, but its proof takes up most of this article. The overall strategy is as follows: the crucial first step is to obtain upper and lower bounds on the free energy $-\log\sZ_{\beta,N}$ that are uniform in $\beta$ and extensive in the \textit{volume}, $N^3$. We then build on this analysis to obtain upper bounds on expectations of the form $\langle \exp Q \rangle_{\beta,N}$ that are uniform in $\beta$ and extensive in $N^3$. Here, $Q$ is a placeholder for random variables that are derived from the $Q_i$'s, but that are supported on the whole of $\TTN$ rather than arbitrary unions of blocks. This is all done in Section \ref{sec: free energy}, where the key results are Propositions \ref{prop:zbounds} and \ref{prop: q bound main}, within the framework developed in Section \ref{sec: bd}.

The next step in the proof is to use the chessboard estimates of Proposition \ref{prop:chessboard_estimates} (which requires $N \in 4\NN$) to bound the lefthand side of \eqref{eq: cosh estimate} in terms of $|B_1|+|B_2|+|B_3|$ products of expectations of the form $\langle \exp Q \rangle_{\beta,N}^\frac{1}{N^3}$. Applying the results of Section \ref{sec: free energy} then completes the proof.
\end{proof}

Key features of the estimate \eqref{eq: cosh estimate} used in the proof of Proposition \ref{prop:peierls} are that it is \textit{uniform in $\beta$} and \textit{extensive in the support of the $Q_i$'s}.

\subsubsection{Proof of the Proposition \ref{prop:peierls} assuming Proposition \ref{prop: cosh}} \label{subsec: proof of peierls}	

We first show that we can reduce to the case where $B$ contains no $*$-neighbours, which simplifies the combinatorics later on. Identify $\BBN$ with a subset of $\ZZ^3$. For every $e_l \in \{-1,0,1\}^3$, let $\ZZ^3_l = e_l + (3 \ZZ)^3$. There are 27 such sub-lattices which we order according to $l \in \{1,\dots,27\}$. Note that $\ZZ^3 = \bigcup_{l=1}^{27} \ZZ^3_l$. Let $\BB_N^l = \BBN \cap \ZZ^3_l$. Each $*$-connected ball in $\BBN$ contains at most one block from each of these $\BB_N^l$.

Assume that \eqref{eq:peierls} has been established for sets with no $*$-neighbours with constant $C_P'$. Then, by H\"older's inequality,
\begin{equs} \label{eq:sublattices}
\begin{split}
\nubn(\sigma(\Box) = 0 \text{ for all }\Box \in B) 
&=
\Big\langle \prod_{\sBox \in B} \1_{\sBox \in \cB} \Big\rangle_{\beta,N} 
\\
&\leq
\prod_{l=1}^{27} \Big\langle \prod_{\sBox \in B \cap \BB_N^l} \1_{\sBox \in \cB} \Big\rangle_{\beta,N}^\frac{1}{27}
\\
&\leq
e^{-\frac{C_P'}{27}|B|}
\end{split}
\end{equs}
thereby establishing \eqref{eq:peierls} with $C_P = \frac{C_P'}{27}$.

Now assume that $B$ contains no $*$-neighbours. Fix any $A \subset B$. Let $\rB^*(A) = \bigcup_{\sBox \in A} \rB^*(\Box)$ and let $\rB^*_{\rnn}(A) = \bigcup_{\sBox \in A} \rB^*_{\rnn}(\Box)$. By our assumption, $A$ contains no $*$-neighbours. Hence, for any $\Box' \in \rB^*(A)$ there exists a unique $\Box \in A$ such that $\Box' \in \rB^*(\Box)$; we define the root of $\Box'$ to be $\Box$. Similarly, for any $\{ \Box', \Box'' \} \in \rB^*_{\rnn}(A)$ there exists a unique $\Box \in A$ such that $\{\Box',\Box''\} \in \rB^*_{\rnn}(\Box)$; we define the root of $\{ \Box', \Box'' \}$ to be $\Box$. Note that the definition of root is $A$-dependent in both cases.

By Lemma \ref{lem:badsetbounds}, there exists $C_\delta$ such that
\begin{equs} \label{eq: peierls indicator bounds}
\begin{split}
\prod_{\sBox \in B}\1_{\sBox \in \cB}
&=
\sum_{A \subset B} \Big( \prod_{\sBox \in A} \1_{\sBox \in \cB_F} \Big) \Big( \prod_{\sBox \in B\setminus A} \1_{\sBox \in \cB_I} \Big)
\\
&\leq
2^{|B|}e^{-C_\delta \hfbeta |B|} \sum_{A \subset B}	\Big( \prod_{\sBox \in A} \sum_{\sBox' \in \rB^*(\sBox)} \big( \cosh Q_1(\Box') + \cosh Q_2(\Box') \big) \Big) 
\\
&\quad\quad\quad
\times \Big( \prod_{\sBox \in B\setminus A} \sum_{\{\sBox',\sBox''\} \in \rB^*_{\rnn}(\sBox)} \cosh Q_3(\Box',\Box'') \Big)
\\
&=
2^{|B|}e^{-C_\delta \hfbeta |B|} \sum_{A \subset B} \sum_{A_1,A_2, A_3} \cosh Q_1(A_1) \cosh Q_2(A_2) \cosh Q_3(A_3)
\end{split}
\end{equs}
where the last sum is over all $A_1, A_2 \subset \rB^*(A)$ and $A_3 \subset \rB^*_{\rnn}(B \setminus A)$ such that: no two blocks in $A_1 \cup A_2$ share a root, and no two pairs of blocks in $A_3$ share a root; and, $|A_1| + |A_2| = |A|$ and $|A_3| = |B \setminus A|$. We note that there are $(2 \cdot 27)^{|A|}=54^{|A|}$ possible $A_1$ and $A_2$, and $54^{|B \setminus A|}$ possible $A_3$. 

By Proposition \ref{prop: cosh}, there exists $C_Q$ such that, after taking expectations in \eqref{eq: peierls indicator bounds} and using that $|A| + |B \setminus A| = |B|$, we obtain
\begin{equs}
\nubn(\sigma(\Box) = 0 \text{ for all } \Box \in B)
\leq
2^{|B|}e^{-C_\delta \hfbeta |B|} 2^{|B|} 54^{|B|} e^{C_Q|B|}.	
\end{equs}
Thus, choosing
\begin{equs}
\hfbeta > \frac{4\log 2 + 2\log 54 + 2C_Q}{C_\delta}	
\end{equs}
yields \eqref{eq:peierls} with $C_P= \frac {C_\delta} 2$. This completes the proof.

\subsection{Exchanging the block averaged field for the phase label}

We now show that Propositions \ref{prop:peierls} and \ref{prop: cosh} allow one to reduce the problem of analysing the block averaged field to that of analysing the phase label. The main difficulty here is dealing with \textit{large fields}, i.e. those $\vphi$ for which $\int_{\cB} |\vphi|$ is large. 

\begin{prop}\label{prop:discrete}
	Let $\delta, \delta' \in (0,1)$ satisfy $\delta' \leq \frac \delta 2$. Then, there exists $\beta_0 = \beta_0(\delta,\eta)>0$, $C=C(\delta, \beta_0, \eta)>0$ and $N_0=N_0(\delta)>0$ such that, for all $\beta > \beta_0$ and $N \in 4\NN$ with $N>N_0$,
	\begin{equs}\label{eq:lem:discrete}
	\frac{1}{N^3} \log \nubn \Big( \intx \Big| \sigma - \vphi\Big| dx > \delta \hfbeta N^3 \Big) 
	\leq 
	-C\hfbeta
	\end{equs}
	where $\sigma$ is the phase label of precision $\delta' \leq \frac \delta 2$. 
\end{prop}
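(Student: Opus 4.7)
The plan is to control $\intx |\sigma - \vphi|\, dx$ by decomposing the sum over blocks according to the three types $\cP_G, \cM_G, \cB$. On any good block the phase label and block-averaged field agree to within $\delta' \hfbeta$ by the definitions of $\cP_G, \cM_G$ and $\sigma$, while on a bad block $\sigma(\Box) = 0$, so $|\sigma(\Box) - \vphi(\Box)| = |\phi(\Box)|$. Since $|\Box| = 1$ and $\delta' \leq \delta/2$ by hypothesis,
\begin{equs}
\intx |\sigma - \vphi|\, dx
=
\sum_{\sBox \in \BBN} |\sigma(\Box) - \vphi(\Box)|
\leq
\tfrac{\delta}{2} \hfbeta N^3 + \sum_{\sBox \in \cB} |\phi(\Box)|.
\end{equs}
The event in \eqref{eq:lem:discrete} therefore implies $\sum_{\sBox \in \cB} |\phi(\Box)| > \tfrac{\delta}{2} \hfbeta N^3$, which is the event I will estimate.

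The surface-order bound reflects a dichotomy: either $\cB$ is large, which is expensive by the Peierls bound (Proposition \ref{prop:peierls}), or $\cB$ is small and the $L^1$-mass is carried by atypically large fields, which is controlled via Proposition \ref{prop: cosh}. Fix $\eta_0 = \eta_0(\delta) = \delta/(2\sqrt{2})$ and split on $\{|\cB| > \eta_0 N^3\}$ versus its complement. For the first piece, Proposition \ref{prop:peierls} combined with the union bound $\binom{N^3}{k} \leq (e/\eta_0)^k$ (valid for $k \geq \eta_0 N^3$) yields, once $C_P \hfbeta \geq 2 \log(e/\eta_0)$,
\begin{equs}
\nubn(|\cB| > \eta_0 N^3)
\leq
\sum_{k \geq \eta_0 N^3} \binom{N^3}{k} e^{-C_P \hfbeta k}
\leq
2\, e^{-\frac{C_P}{2} \eta_0 \hfbeta N^3}.
\end{equs}

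For the complementary event $\{|\cB| \leq \eta_0 N^3\} \cap \{\sum_{\sBox \in \cB} |\phi(\Box)| > \tfrac{\delta}{2} \hfbeta N^3\}$, the key observation is the algebraic identity $Q_1(\Box) + Q_2(\Box) = (\beta - \phi(\Box)^2)/\hfbeta$, which is immediate from \eqref{def: Q_i} and $|\Box| = 1$. Setting $S = \sum_{\sBox \in \BBN} |Q_1(\Box) + Q_2(\Box)|$, Cauchy--Schwarz and the identity give
\begin{equs}
\Big(\sum_{\sBox \in \cB} |\phi(\Box)|\Big)^2
\leq
|\cB| \sum_{\sBox \in \cB} \phi(\Box)^2
\leq
|\cB|^2 \beta + |\cB| \hfbeta S.
\end{equs}
On this event the left-hand side is at least $\tfrac{\delta^2}{4} \beta N^6$ while $|\cB|^2 \beta \leq \eta_0^2 \beta N^6 \leq \tfrac{\delta^2}{8} \beta N^6$, forcing $S \geq c_\delta\, \hfbeta N^3$ with $c_\delta = \tfrac{\sqrt{2}\, \delta}{4}$. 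An exponential Chebyshev inequality at any $\lambda \in (0, a_0]$, combined with the pointwise bound $e^{\lambda |Q_1 + Q_2|} \leq 4 \cosh(\lambda Q_1) \cosh(\lambda Q_2)$, then gives
\begin{equs}
\nubn(S > c_\delta \hfbeta N^3)
\leq
e^{-\lambda c_\delta \hfbeta N^3}\, 4^{N^3}\, \Big\langle \prod_{\sBox \in \BBN} \cosh(\lambda Q_1(\Box)) \cosh(\lambda Q_2(\Box)) \Big\rangle_{\beta,N},
\end{equs}
and Proposition \ref{prop: cosh} with $B_1 = B_2 = \BBN$ and $B_3 = \emptyset$ bounds the expectation by $e^{2 C_Q N^3}$. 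Taking $\hfbeta$ large enough that $\lambda c_\delta \hfbeta > 2(\log 2 + C_Q)$ yields a surface-order bound for this event, and combining with the first estimate proves \eqref{eq:lem:discrete}.

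The main obstacle I anticipate is the large-field piece: Peierls alone gives surface-order decay only when a positive fraction of blocks is bad, whereas a tiny set of bad blocks could a priori carry a huge $L^1$-mass. The clean resolution is the Cauchy--Schwarz step together with the identity for $Q_1 + Q_2$, which converts an a priori unbounded second moment of $\phi$ on $\cB$ into the uniformly controllable quantity $S$ directly estimated by Proposition \ref{prop: cosh}.
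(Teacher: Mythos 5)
Your proof is correct and takes a genuinely different route from the paper in the ``large-field'' regime. Both arguments start with the same deterministic reduction: on good blocks $|\sigma(\Box) - \vphi(\Box)| \leq \delta'\hfbeta \leq \tfrac{\delta}{2}\hfbeta$, so the event in \eqref{eq:lem:discrete} forces $\sum_{\sBox\in\cB}|\phi(\Box)| > \tfrac{\delta}{2}\hfbeta N^3$; and both split on whether $|\cB|$ is small or large, handling the latter by Proposition~\ref{prop:peierls}. The difference is how the ``$|\cB|$ small but mass large'' piece is treated. The paper runs a union bound over all candidate subsets $B$ of size at most $\tfrac{\delta}{8}N^3$, then for each fixed $B$ applies exponential Chebyshev and Young's inequality to pass from $\int_B|\vphi|$ to $\sum_B\phi(\Box)^2$ and finally to $\cosh Q_1(B)\cosh Q_2(B)$; the combinatorial cost $\binom{N^3}{m}e^{\hfbeta m}$ is absorbed by the $e^{-\frac{\delta}{2}\hfbeta N^3}$ prefactor. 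You instead observe that, since $|\Box|=1$, $Q_1(\Box)+Q_2(\Box) = \frac{\beta - \phi(\Box)^2}{\hfbeta}$, and then Cauchy--Schwarz on $\cB$ bounds $\bigl(\sum_{\cB}|\phi(\Box)|\bigr)^2$ by $|\cB|^2\beta + |\cB|\hfbeta S$ with $S = \sum_{\BBN}|Q_1+Q_2|$ \emph{not} depending on which set $\cB$ is. This converts the event into $\{S > c_\delta\hfbeta N^3\}$, handled by a single exponential Chebyshev and one application of Proposition~\ref{prop: cosh} with $B_1 = B_2 = \BBN$ --- no entropy counting over subsets. Your approach is a bit cleaner in that it eliminates the union bound; the paper's is closer in spirit to the Glimm--Jaffe--Spencer style expansion. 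Either way the input from Proposition~\ref{prop: cosh} is the same. One small numerical point: the exponent you end up with is $(-\lambda c_\delta\hfbeta + 2\log 2 + 2C_Q)N^3$ (the factor $4^{N^3}$ contributes $2\log 2$, and Proposition~\ref{prop: cosh} with $|B_1|=|B_2|=N^3$ contributes $2C_Q$), so to extract a surface-order bound you should take $\lambda c_\delta\hfbeta > 4(\log 2 + C_Q)$ rather than $2(\log 2 + C_Q)$; this is cosmetic and does not affect the argument.
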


\begin{proof}
Observe that
\begin{equs}\label{eq:splitting1}
\begin{split}
\nubn \Bigg( &\intx | \sigma - \vphi|dx > \delta \hfbeta N^3 \Bigg)
\\
&\leq 
\nubn \Bigg( \intx | \sigma - \vphi|dx> \delta \hfbeta N^3 , |\cB| < \frac {\delta}8 N^3 \Bigg) 
\\
&\quad\quad\quad 
+ \nubn \Bigg( |\cB| \geq \frac{\delta}8 N^3 \Bigg).
\end{split}
\end{equs}

By Proposition \ref{prop:peierls}, there exists $\beta_0>0$ and $C_P>0$ such that, for $\hfbeta> \max \Big(\sqrt {\beta_0}, \frac{16 \log 2}{C_P\delta} \Big) $,
\begin{equs} \label{eq:badset}
\begin{split}
\nubn\Bigg(|\cB|\geq \frac{\delta}8  N^3 \Bigg)
&\leq 
\sum_{m = \lceil \frac{\delta}8 N^3 \rceil}^{N^3} \nubn(|\cB| = m) 
	\\
&\leq
\sum_{m=\lceil \frac{\delta}8  N^3 \rceil}^{N^3}{N^3 \choose m}e^{-C_P\hfbeta m} 
	\\
&\leq
2^{N^3} e^{-\frac{C_P\delta}{8}\hfbeta N^3}
\\
&\leq 
e^{-\frac{C_P\delta}{16}\hfbeta N^3}.
\end{split}
\end{equs}

Now consider the first term on the right hand side of \eqref{eq:splitting1}. We decompose one step further:
\begin{equs}
\nubn \Bigg( \intx | \sigma - \vphi|dx> \delta \hfbeta N^3 , |\cB| < \frac{\delta}8 N^3 \Bigg) 
\leq
\nubn(T_1) + \nubn(T_2)
\end{equs}
where
\begin{equs}
T_1 
&=
\Bigg\{\intx | \sigma - \vphi|dx> \delta \hfbeta N^3 , \int_{\cB} | \vphi|dx \leq \frac\delta2 \hfbeta N^3 \Bigg\} \\
T_2 &= \Bigg\{ |\cB| < \frac{\delta}8 N^3, \int_{\cB} | \vphi|dx > \frac \delta 2 \hfbeta N^3 \Bigg\}.
\end{equs}

We show that $T_1 = \emptyset$ and that 
\begin{equs} \label{eq:t2est}
\nubn (T_2) \leq e^{-C\hfbeta N^3}
\end{equs}
for some constant $C=C(\delta)>0$ and for $\beta$ sufficiently large. Combining these estimates with \eqref{eq:badset} completes the proof.

First, we treat $T_1$. On good blocks $|\phi_i - \sigma|$ is bounded by the $\hfbeta$ multiplied by the precision of the phase label ($\delta' \leq \frac \delta 2$ in this instance) and $\sigma = 0$ on bad blocks. Therefore, on the set $\Big\{ \int_{\cB} |\vphi| dx \leq \frac \delta 2 \hfbeta N^3\Big\}$, we have:
\begin{equs}
\intx |\sigma -  \vphi |dx 
&=
 \int_{\cP_G \cup \cM_G}|\sigma - \vphi |dx + \int_{\cB}|\sigma - \vphi|dx
 	\\
&\leq
 \frac \delta 2 \hfbeta (|\cP_G| + |\cM_G|)+ \int_{\cB}|\vphi|dx
 	 \\
&\leq
 \delta  \hfbeta N^3
\end{equs}
which shows that the first condition in $T_1$ is inconsistent with the second, so $T_1 = \emptyset$.

We turn our attention to $T_2$. Fix $B \subset \BBN$. By Chebyschev's inequality, Young's inequality, and Proposition \ref{prop: cosh}, there exists $\beta_0>0$ and $C_Q>0$ such that, for $\beta > \beta_0$,
\begin{equs}
\nubn\Bigg( \int_B |\vphi| > \frac \delta 2 \hfbeta N^3 \Bigg)
&\leq 
e^{-\frac \delta 2 \hfbeta N^3} \langle e^{\sum_{\sBox \in B}|\phi(\sBox)|} \rangle_{\beta,N} 
	\\
&\leq
e^{-\frac \delta 2\hfbeta N^3} e^{\frac \hfbeta 2 |B|} \langle e^{\frac{1}{2\hfbeta} \sum_{\sBox \in B} \phi(\sBox)^2}\rangle_{\beta,N} 
	\\
&\leq
e^{-\frac\delta2 \hfbeta N^3} e^{\hfbeta|B|} \langle e^{\frac{1}{2\hfbeta}\sum_{\sBox \in B}(\phi(\sBox)^2 - \beta)} \rangle_{\beta,N} 
	\\
&\leq
e^{-\frac\delta2 \hfbeta N^3} e^{\hfbeta|B|} \langle \prod_{\Box \in B} e^{-\frac 12 Q_1(\sBox)}e^{-\frac 12 Q_2(\sBox)} \rangle_{\beta,N}
\\
&\leq
e^{-\frac\delta2 \hfbeta N^3} e^{\hfbeta|B|} 2^{|B|} \Big\langle \cosh \Big( \frac 12 Q_1(B) \Big) \cosh \Big( \frac 12 Q_2(B) \Big) \Big\rangle_{\beta,N}
\\
&\leq 
e^{-\frac \delta2  \hfbeta N^3}e^{\hfbeta|B|}2^{|B|}e^{C_Q|B|}.
\end{equs}

Therefore,
\begin{equs}\label{eq:t2estimate}
\begin{split}
\nubn(T_2) 
&\leq
\sum_{m = 1}^{\lfloor \frac{\delta}8 N^3 \rfloor} \sum_{B:|B|=m} \nubn\Bigg( \int_B |\vphi| dx > \frac\delta2 \beta N^3 \Bigg)
	\\
&\leq
 e^{-\frac\delta2 \hfbeta N^3}\sum_{m=1}^{\lfloor \frac{\delta}8 N^3 \rfloor} {N^3 \choose m} e^{\hfbeta m}e^{(C_Q+\log 2)m} \\
&\leq
e^{-\frac \delta2 \hfbeta N^3}2^{N^3} e^{\frac{\delta}8\hfbeta N^3}e^{\frac{(C_Q+\log 2)\delta}8 N^3}
\\
&=
e^{ \big( - \frac {3\delta} 8 \hfbeta + \log 2 + \frac{(C_Q+\log2) \delta}8  \big) N^3}.
\end{split} 
\end{equs}
Taking 
\begin{equs}
\hfbeta 
>
\frac{16\log 2}{3\delta} + \frac 23 (C_Q+\log2)	
\end{equs}
yields \eqref{eq:t2est} with $C=\frac{3\delta}{16}$.
\end{proof}

\subsection{Proof of the main result} \label{subsec: proof of thm ld}

Adapting an argument from \cite{B02}, we reduce the proof of Theorem \ref{thm: ld} to bounding the probability that $\vphi$ is far from $\pm \hfbeta$-valued functions on $\BBN$ whose boundary (between regions of opposite spins) is of certain fixed area. Proposition \ref{prop:discrete} then allows us to go from analysing $\vphi$ to the phase label, for which we use existing results from \cite{BIV00}.

For any $B \subset \BBN$, let $\partial B$ denotes its boundary, which is given by the union of faces of blocks in $B$. Let $|\partial B| = \int_{\partial B} 1 ds(x)$, where $ds(x)$ is the 2D Hausdorff measure (normalised so that faces have unit area). Thus, $|\partial B|$ is the number of faces in $\partial B$. 

For any $a > 0$, let $C_a$ be the set of functions $\vec f \in \{ \pm 1\}^\BBN$ such that $|\partial \{ \vec f = +1\}|\leq aN^2$. For any $\delta > 0$, let $\fB(C_a,\delta)$ be the set of integrable functions $g$ on $\TTN$ such that there exists $\vec f \in C_a$ that satisfies $\intx|g-\vec f| dx \leq \delta N^3$.

\begin{prop}\label{prop:discretetight}
Let $\delta, \delta' \in (0,1)$ satisfy $\delta ' \leq \delta$. Then, there exists $\beta_0 = \beta_0(\delta,\eta)>0$ and $C=C(\delta,\beta_0,\eta)>0$ such that, for all $\beta > \beta_0$, the following estimate holds: for all $a>0$, there exists $N_0 = N_0(a,\delta) \geq 4$ such that, for all $N > N_0$ dyadic,
\begin{equs}
\frac{1}{N^2} \log \nubn \Big( \frac 1\hfbeta \sigma \notin \fB(C_a, \delta) \Big) 
\leq 
-C\hfbeta a
\end{equs}
where $\sigma$ is the phase label of precision $\delta'$.
\end{prop}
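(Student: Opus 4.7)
The plan is to approximate $\frac{1}{\hfbeta}\sigma$ in $L^1$ by a $\{\pm 1\}$-valued block function whose interface has area at most $aN^2$, and to show via Peierls-type counting that failure of this approximation is a surface-order exponentially unlikely event.

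The key structural observation is that any two nearest-neighbor good blocks carry the same sign: if $\Box \in \cP_G$ and $\Box'$ is a nearest neighbor, then $\Box' \in \rB^*(\Box)$ is plus-valued, so $\Box' \notin \cM_G$. Thus every $\pm$-interface in the phase label must be screened by bad blocks. I would define $\vec f \in \{\pm 1\}^{\BBN}$ by $\vec f = +1$ on $\cP_G$, $\vec f = -1$ on $\cM_G$, and extend $\vec f$ to be constant on each nearest-neighbor connected component of $\cB$, choosing the sign to match an adjacent good block when possible. Then $\intx |\frac{1}{\hfbeta}\sigma - \vec f| dx \leq |\cB|$, and $|\partial\{\vec f = +1\}|$ is bounded by the total surface area $|\partial \cB|$, i.e. the sum of perimeters of the connected components of $\cB$.

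The event $\{\frac{1}{\hfbeta}\sigma \notin \fB(C_a,\delta)\}$ is therefore contained in $\{|\cB| > \delta N^3\} \cup \{|\partial \cB| > a N^2\}$. The first event is controlled by Proposition \ref{prop:peierls} and a crude union bound: $\nubn(|\cB| \geq \delta N^3) \leq \binom{N^3}{\lceil \delta N^3 \rceil} e^{-C_P \hfbeta \delta N^3} \leq e^{-C' \hfbeta N^3}$ for $\beta$ sufficiently large, which dominates $e^{-C \hfbeta a N^2}$ once $N \geq N_0(a,\delta)$. For the surface-order event, I would exploit the classical Peierls estimate that the number of finite connected subsets of $\ZZ^3$ with perimeter $L$ containing a given site is at most $C_0^L$ for some absolute $C_0$. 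Since any connected component $C$ of $\cB$ satisfies $|C| \geq |\partial C|/6$, Proposition \ref{prop:peierls} gives that the probability of a given site lying in a bad component of perimeter $\geq L$ is at most $C_0^L e^{-C_P \hfbeta L/6}$, which is $\leq e^{-c \hfbeta L}$ once $\hfbeta > 6 \log C_0 / C_P$. Summing over possible disjoint collections of bad components realising a prescribed total perimeter $\geq a N^2$, in the spirit of the cluster arguments of \cite{BIV00}, yields $\nubn(|\partial \cB| > a N^2) \leq e^{-C \hfbeta a N^2}$.

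The principal obstacle is the combinatorial sum over collections of disjoint bad components with prescribed total perimeter. This is a standard but delicate Peierls entropy estimate; one must carefully absorb the polynomial-in-$N$ factors (arising from the choice of a root for each component) into the exponential Peierls weight, which is what forces $N_0$ to depend on $a$ and $\delta$. Taking $\beta$ large then ensures that the Peierls weight dominates the lattice-animal entropy, and the two bounds combine to give the claimed surface-order estimate.
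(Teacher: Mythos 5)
Your construction of a $\{\pm 1\}$-valued $\vec f$ from the phase label, constant on bad components, is sound and closely mirrors the paper's $\sigma\to\sigma_1\to\sigma_2$ modification. However, the reduction to the event $\{|\partial\cB|>aN^2\}$ and the Peierls sum you then set up both have a genuine gap, and the gap is the same in both places: you have not separated scales, and without that the entropy of many \emph{small} bad components is volume order, not surface order.

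Concretely, the event $\{|\partial\cB|>aN^2\}$ is not surface-order unlikely for $\beta$ independent of $N$. If $\cB$ consists of roughly $aN^2/6$ isolated bad blocks, the per-configuration Peierls weight is $e^{-C_P\hfbeta\, aN^2/6}$ but the number of placements is $\binom{N^3}{aN^2/6}\approx e^{(aN^2/6)\log(N/a)}$; the weight beats the entropy only if $\hfbeta\gtrsim\log N$. The same obstruction hits your cluster sum: for a component of perimeter $L$ the per-component factor is $N^3 C_0^L e^{-C_P\hfbeta L/6}$, and when $L$ is bounded this is \emph{large} for $N$ large. Summing over collections then produces a factor $\exp\big(cN^3 e^{-c'\hfbeta}\big)$, which is volume order and cannot be absorbed by the target surface-order exponential for $\beta$ fixed. (Note also that many of the configurations in $\{|\partial\cB|>aN^2\}$ are in fact in $\fB(C_a,\delta)$: isolated bad blocks surrounded by good blocks of a single sign contribute nothing to the interface of any sensible $\vec f$, so the inclusion is not just unprovable — it is far too lossy.)

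The missing idea, which drives the paper's proof, is the defect-size dichotomy. Pick $\gamma\in(0,\tfrac13)$ and split the defects $\Gamma$ (the $*$-connected components of $\partial^*(\TTN\setminus\cM_G)$) into $\gamma$-large ($|\Gamma|>6N^\gamma$) and $\gamma$-small. Only the $\gamma$-large defects are allowed to contribute to the interface of the modified configuration $\sigma_2$: for these, the per-component root entropy $N^3$ is beaten by $e^{-c\hfbeta N^\gamma}$ once $N^\gamma>\log N$, so the Peierls sum in Lemma \ref{lem:largecont} closes with $\beta$ independent of $N$. The $\gamma$-small defects are \emph{erased} — filled in with the sign of their exterior — and the price is not surface area but the enclosed volume $\sum_{\Gamma\in\cD^{\gamma,\max}}|\rInt(\Gamma)|$, which is controlled in Lemma \ref{lem:smallcont} via a coarse-graining into $K$-blocks with $K\asymp N^\gamma$, again yielding a surface-order bound. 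Your argument would need both of these ingredients; as written, it quietly assumes the $\gamma$-large case applies to everything.
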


\begin{proof}
See \cite[Theorem 2.2.1]{BIV00} where Proposition \ref{prop:discretetight} is proven for a more general class of phase labels that satisfy a Peierls' type estimate such as the one in Proposition \ref{prop:peierls}. We give a self-contained proof for our setting in Section \ref{sec:proofdiscretetight}.
\end{proof}

The following lemma is our main geometric tool. It is a weak form of the isoperimetric inequality on $\TTN$, although it can be reformulated in arbitrary dimension. Its proof is a standard application of Sobolev's inequality and we include it for the reader's convenience.

\begin{lem}\label{lem:isoperimetry}
There exists $C_I>0$ such that the following estimate holds for every $N \in \NN$:
\begin{equs}
	\min(|\{ \vec f = 1 \}|, |\{ \vec f = - 1 \}| )
	\leq
	C_I
	|\partial \{ \vec f = 1 \}|^\frac 32
\end{equs}
for every $\vec f \in \{\pm 1 \}^\BBN$.
\end{lem}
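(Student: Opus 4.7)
\medskip

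\noindent\textbf{Proof proposal.} The plan is to apply a BV-Sobolev (a.k.a. Gagliardo-Nirenberg-Sobolev) inequality on $\TTN$ to the indicator function $u = \1_A$, where $A = \{\vec f = 1\} \subset \TTN$ (viewed as a union of unit blocks). Up to exchanging $\vec f \leftrightarrow -\vec f$ (which does not change $|\partial \{\vec f = 1\}|$ since the boundary is the same set of faces), we may assume $|A| \leq N^3/2$, so that $\min(|\{\vec f = 1\}|, |\{\vec f = -1\}|) = |A|$ and $\alpha := |A|/N^3 \in [0, 1/2]$.

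The core analytic input is the inequality
\begin{equs}
\| u - \bar u \|_{L^{3/2}(\TTN)} \leq C_S\, |Du|(\TTN),
\end{equs}
valid for every $u \in BV(\TTN)$ with mean $\bar u = \frac{1}{N^3}\intx u\, dx$, with a constant $C_S > 0$ independent of $N$. This is standard on $\TT^3$, and transfers to $\TTN$ with the same constant because both $\| u - \bar u \|_{L^{3/2}}$ and $|Du|(\TTN)$ scale as $N^2$ under the rescaling $u(x) \mapsto u(x/N)$. I would justify it either by citing e.g.\ Ambrosio–Fusco–Pallara, or by mollifying $u$ by a standard kernel, applying the classical Sobolev inequality for $W^{1,1}$ mean-zero functions, and passing to the limit using lower semicontinuity of the BV seminorm and Fatou.

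Applying this to $u = \1_A$, one has $\bar u = \alpha$ and $|Du|(\TTN) = |\partial A|$ (the BV seminorm of an indicator of a polyhedral set is exactly its $\cH^2$-perimeter, and since $A$ is a union of unit blocks the boundary faces between $A$ and $A^c$ each contribute unit area). Computing the left-hand side explicitly,
\begin{equs}
\| \1_A - \alpha \|_{L^{3/2}}^{3/2}
= |A|(1-\alpha)^{3/2} + (N^3 - |A|)\alpha^{3/2}
\geq |A| (1-\alpha)^{3/2}
\geq 2^{-3/2} |A|,
\end{equs}
using $\alpha \leq 1/2$ in the last step. Combining with the BV-Sobolev inequality gives $2^{-1}|A|^{2/3} \leq C_S |\partial A|$, i.e.\ $|A| \leq C_I |\partial A|^{3/2}$ with $C_I = (2 C_S)^{3/2}$, as claimed.

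The only mildly delicate point is making sure one is entitled to apply a Sobolev-type inequality to the rough function $\1_A$; this is the step I would spend the most care on, either by invoking the BV version directly or by approximating $\1_A$ by smooth mollifications, using the classical $W^{1,1}(\TTN) \hookrightarrow L^{3/2}(\TTN)$ Poincaré–Sobolev estimate, and passing to the limit. All other steps are routine.
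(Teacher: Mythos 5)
Your proof is correct and follows essentially the same route as the paper: both reduce the lemma to the $BV \hookrightarrow L^{3/2}$ Sobolev inequality applied to (a linear rescaling of) the spin field, with the regularity of the indicator handled by mollification and a limiting argument (which you sketch and the paper carries out explicitly, including the divergence-theorem computation identifying the limiting total variation with twice the perimeter). Working with $\1_A$ and the normalization $|A|\leq N^3/2$ makes your lower bound on the left-hand side slightly cleaner than the paper's manipulation in terms of $c$, but the substance is identical.
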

\begin{proof}

Let $\theta \in C^\infty_c(\RR^3)$ be rotationally symmetric with $\int_{\RR^3} \theta dx = 1$. By Sobolev's inequality, there exists $C$ such that, for every $\varepsilon$,
\begin{equs} \label{eq:sobolev}
	\intx |f_\varepsilon - c_\varepsilon|^\frac 32 dx
	\leq
	C \Big( \intx |\nabla f_\varepsilon| dx \Big)^\frac 32
\end{equs}
where $\vec f_\varepsilon = \vec f \ast \varepsilon^{-3} \theta ( \varepsilon^{-1} \cdot )$ and $c_\varepsilon = \frac{1}{N^3} \intx \vec f_\varepsilon dx$. Note that $C$ is independent of $N$ by scaling. 

Letting $\varepsilon \rightarrow 0$ in the left hand side of \eqref{eq:sobolev}, we obtain
\begin{equs} \label{eq:sobolevLHS0}
	\intx |\vec f_\varepsilon - c_\varepsilon|^\frac 32 dx
	\rightarrow
	\intx |\vec f-c|^\frac 32 dx
\end{equs}
where $c = \frac{ |\{ \vec f = 1 \}| - |\{ \vec f = -1 \}|}{N^3}$. Note that $c\in [-1,1]$. 

Without loss of generality, assume $c\geq 0$. This implies that $|\{ \vec f = 1 \}| \geq \{ \vec f = -1 \}|$. Then, evaluating the integral on the righthand side of \eqref{eq:sobolevLHS0}, we find that
\begin{equs} \label{eq:sobolevLHS1}
\begin{split}
\intx |\vec f-c|^\frac 32 dx
&=
(1-c)^\frac 32 |\{\vec f = 1\}| + (1+c)^\frac 32 |\{ \vec f = -1 \}|
\\
&=
(1-c)^\frac 32 cN^3 + \Big( (1-c)^\frac 32 + (1+c)^\frac 32 \Big)|\{ \vec f = -1\}|
\\
&\geq
2|\{\vec f = -1 \}|
\end{split}
\end{equs}
where we have used that the function 
\begin{equs}
c 
\mapsto
(1-c)^\frac 32 + (1+c)^\frac 32
\end{equs}
has minimum at $c=0$ on the interval $[0,1]$. 

For the term on the right hand side of \eqref{eq:sobolev}, using duality we obtain
\begin{equs}\label{eq:sobolevRHS0}
\intx |\nabla \vec f_\varepsilon| dx
=
\sup_{\bg \in C^\infty(\TTN,\RR^3) : |\bg|_\infty \leq 1} \Big| \intx \nabla \vec f_\varepsilon \cdot  \bg dx \Big|
\end{equs}
where $|\cdot|_\infty$ denotes the supremum norm on $C^\infty(\TTN,\RR^3)$.

For any such $\bg$, using integration by parts and commuting the convolution with differentiation,
\begin{equs} \label{eq:sobolevRHS1}
\Big| \intx \nabla \vec f_\varepsilon \bg dx \Big|
=
\Big| \intx \vec f_\varepsilon \nabla \cdot \bg dx \Big|
=
\Big| \intx \vec f \nabla \cdot \bg_\varepsilon dx \Big|
\end{equs}
where the $\bg_\varepsilon$ is interpreted as convolving each component of $\bg$ with $\varepsilon^{-3} \theta(\varepsilon^{-1}\cdot)$ separately. 

Hence, by the divergence theorem, Young's inequality for convolutions, and using the supremum norm bound on $\bg$,
\begin{equs} \label{eq:sobolevRHS2}
\eqref{eq:sobolevRHS1}
=
2\Big| \int_{\partial \{ \vec f = 1 \}} \bg_\varepsilon \cdot \hat n ds(x) \Big|
\leq
2 |\partial \{ \vec f = 1\}|
\end{equs}
where $\hat n$ denotes the unit normal to $\partial \{ \vec f = 1 \}$ pointing into $\{ \vec f = -1\}$. 

Inserting \eqref{eq:sobolevRHS2} in \eqref{eq:sobolevRHS0} implies that, for any $\varepsilon$, 
\begin{equs} \label{eq:sobolevRHS3}
\intx |\nabla \vec f_\varepsilon| dx
\leq
2 |\partial \{ \vec f = 1 \}|.	
\end{equs}

Thus, by inserting \eqref{eq:sobolevRHS3}, \eqref{eq:sobolevLHS0} and \eqref{eq:sobolevLHS1} into \eqref{eq:sobolev}, we obtain
\begin{equs}
	|\{ \vec f = -1 \}|
	\leq 
	\sqrt 2 C |\partial \{ \vec f = 1 \}|^\frac 32. 
\end{equs}
\end{proof}

\begin{proof}[Proof of Theorem \ref{thm: ld}]

Let $\zeta \in (0,1)$. Choose $a>0$ and $\delta \in (0,1)$ such that
\begin{equs}\label{eq:assumption_a_delta}
1 - 2 C_I a^\frac 32 - \delta 
= 
\zeta	
\end{equs}
where $C_I$ is the same constant as in Lemma \ref{lem:isoperimetry}. We first show that
\begin{equs} \label{eq:magnetisation_set_inclusion}
\{ \fm_N(\phi) \in (-\zeta\hfbeta, \zeta\hfbeta) \}
\subset
\Big\{ \frac{1}{\hfbeta} \vphi \notin \fB(C_a,\delta) \Big\}.	
\end{equs}

Assume $\frac{1}{\hfbeta} \vphi \in \fB(C_a,\delta).$ Then, there exists $\vec f \in C_a$ such that
\begin{equs}
\intx \Big| \frac 1\hfbeta \vphi	- \vec f \Big|dx 
\leq 
\delta N^3.
\end{equs}
This implies
\begin{equs}
\Biggr| \Big| \intx \frac {1}{\hfbeta} \vphi dx\Big| - \Big| \intx \vec f dx \Big|\Biggr| 
\leq 
\delta N^3
\end{equs}
from which we deduce, together with Lemma \ref{lem:isoperimetry},
\begin{equs}
\Big|\frac{1}{\hfbeta}\fm_N(\phi) \Big| 
&\geq 
1 - \frac{2\min\Big( |\{\vec f=+1\}|, |\{\vec f=-1\}|\Big)}{N^3} -\delta.
\\
&\geq 
1 - \frac{2C_I |\partial \{ \vec f =+1\}|^\frac 32}{N^3}- \delta.	
\end{equs}
Since $\vec f \in C_a$, we obtain
\begin{equs}
|\fm_N(\phi)| \geq \hfbeta(1-2C_Ia^\frac 32 - \delta) = \zeta\hfbeta	
\end{equs}
by \eqref{eq:assumption_a_delta}.

Hence, 
\begin{equs}
\Big\{ \frac{1}{\hfbeta}\vphi \in \fB(C_a,\delta) \Big\} \subset \{ |\fm_N(\phi)| \geq \zeta\hfbeta \}.	
\end{equs}
Taking complements establishes \eqref{eq:magnetisation_set_inclusion}.

Now let $\sigma$ be the phase label of precision $\frac \delta 2$. Note that
	\begin{equs}
	\Big\{ \frac 1\hfbeta \vphi \notin \fB(C_a,\delta) \Big\}
	 \subset 
	\Big\{ \frac 1\hfbeta \sigma \notin \fB(C_a, 2\delta) \Big\} \bigcup \Big\{ \intx | \vphi - \sigma |dx>\delta \hfbeta N^3\Big\}.
	\end{equs}
Applying Proposition \ref{prop:discrete}, Proposition \ref{prop:discretetight}, and using \eqref{eq:magnetisation_set_inclusion} finishes the proof.
\end{proof}

\subsection{Proof of Proposition \ref{prop:discretetight}} \label{sec:proofdiscretetight}

For any $B \subset \BBN$, let $\partial^* B$ be the set of blocks in $B$ with $*$-neighbours in $\TTN \setminus B$. Note that this is not the same as $\partial B$, which was defined earlier. Let $\cD$ be the set of $*$-connected components of $\partial^* (\TTN \setminus \cM_G)$. We call this the set of \textit{defects}. Necessarily, any $\Gamma \in \cD$ satisfies $\Gamma \subset \cB$. 

Fix $\gamma \in (0, 1)$. Let $\cD^\gamma \subset \cD$ be the set of $\Gamma \in \cD$ such that $|\Gamma| \leq 6 N^\gamma$. The elements of $\cD^\gamma$ are called $\gamma$-small defects and the elements of $\cD \setminus \cD^\gamma$ are called $\gamma$-large defects. 

Take any $\Gamma \in \cD^\gamma$. Recall that we identify $\Gamma$ with the subset of $\TTN$ given by the union of blocks in $\Gamma$. Write $\mathrm{Cl}(\Gamma)$ for its closure in $\TTN$. The condition $\gamma < 1$ ensures that, provided $N$ is taken sufficiently large depending on $\gamma$, any $\Gamma \in \cD^\gamma$ is contained in a (translate of a) sphere of radius $\frac N 4$ in $\TTN$. Let $\rExt(\Gamma)$ be the unique connected component of $\TTN \setminus \mathrm{Cl}(\Gamma)$ that intersects with the complement of this sphere. Let $\rInt(\Gamma) = \TTN \setminus \rExt(\Gamma)$. We identify $\rExt(\Gamma)$ and $\rInt(\Gamma)$ with their representations as subsets of $\BBN$. Note that $\Gamma \subset \rInt(\Gamma)$ and generically the inclusion strict, e.g. when $\Gamma$ encloses a region.

Let $\cD^{\gamma,\max}$ be the set of $\Gamma \in \cD^{\gamma}$ such that $\Gamma \bigcap \rInt(\tilde\Gamma) = \emptyset$ for any $\tilde \Gamma \in \cD^\gamma \setminus \{ \Gamma \}$. In other words, $\cD^{\gamma,\max}$ is the set of $\gamma$-small defects that are not contained in the interior of any other $\gamma$-small defects, and we call these maximal $\gamma$-small defects.

We define two events, one corresponds to the total surface area of $\gamma$-large defects being small and the other corresponding to the total volume contained within maximal $\gamma$-small defects being small. Let 
\begin{equs}
S_1 
&=
\Bigg\{ \sum_{\Gamma \in \cD \setminus \cD^\gamma} |\Gamma| \leq \frac{a}6N^2 \Bigg\} 
	\\
S_2
&=
\Bigg\{ \sum_{\Gamma \in \cD^{\gamma,\max}} |\rInt (\Gamma)| \leq \frac\delta4 N^3 \Bigg\}.
\end{equs}

We now show that for $\phi \in S_1 \cap S_2 \cap \{|\cB| < \frac\delta2 N^3\}$, we have $\frac 1\hfbeta \sigma \in \fB(C_a,\delta)$. 

We obtain a $\pm \hfbeta$-valued spin configuration from $\sigma$ by erasing all $\gamma$-small defects in two steps: First, we reset the values on bad blocks to $\hfbeta$. Define $\sigma_1 \in \{ \pm \hfbeta \}^\BBN$ by $\sigma_1(\Box) = \hfbeta$ if $\Box \in \cB$, otherwise $\sigma_1(\Box) = \sigma(\Box)$. Second, define $\sigma_2\in\{\pm \hfbeta \}^\BBN$ as follows: Given $\Box \in \rInt(\Gamma)$ for some $\Gamma \in \cD^{\gamma,\max}$, let $\sigma_2(\Box) = \sigma_1(\tilde\Box)$, where $\tilde\Box$ is any block in $\rExt(\Gamma)$ that is $*$-neighbours with a block in $\Gamma$. Note that the second step is well-defined since the first step ensures that every block in $\rExt(\Gamma)$ that is $*$-neighbours with $\Gamma$ has the same value. See Figure \ref{fig: small contour} for an example of this procedure. 

\begin{figure}[!htb]
\includegraphics{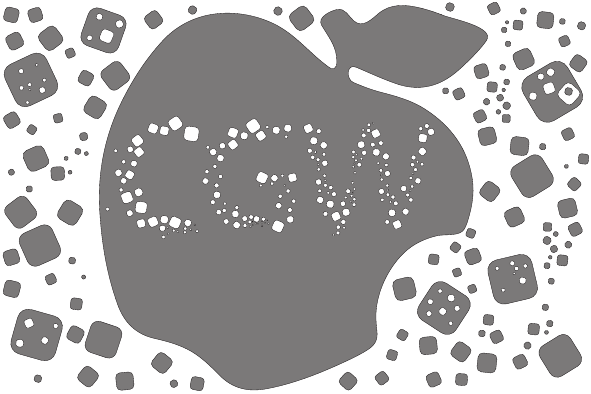}
\includegraphics{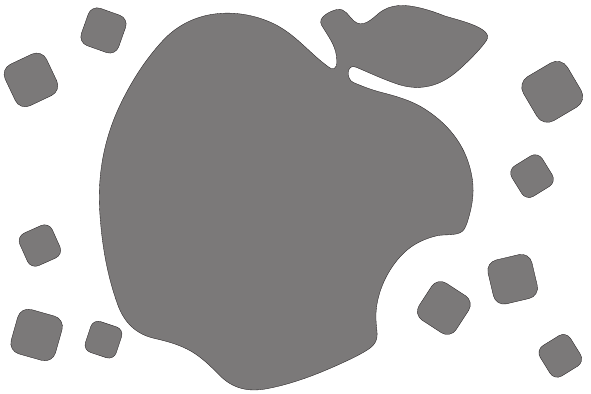}
\caption{An example of the $\sigma$ to $\sigma_2$ procedure (left to right). Image courtesy of J. N. Gunaratnam}
\label{fig: small contour}
\end{figure}

From the definition of $S_1$ and using that the factor $6$ in the definition of $\gamma$-small defects accounts for the discrepancy between $|\partial\cdot|$ and $|\partial^*\cdot|$,
\begin{equs}
|\partial \{ \sigma_2 = +\hfbeta \}|
\leq 
aN^2	
\end{equs}
yielding $\frac 1\hfbeta \sigma_2 \in C_a$. Then, from the definition of $S_2$ and using the smallness assumption on the number of bad blocks,
\begin{equs}
\intx \frac 1\hfbeta \Big|\sigma - \sigma_2 \Big|dx 
\leq 
2\sum_{\Gamma \in \cD^{\gamma,\max} } |\rInt(\Gamma)| + |\cB| 
< 
2 \frac\delta 4 N^3 + \frac\delta2 N^3
<
\delta N^3
\end{equs}
which establishes that $\frac 1\hfbeta \sigma \in \fB(C_a,\delta)$.

We deduce that the event $\Big\{ \frac 1\hfbeta \sigma \not\in \fB(C_a,\delta) \Big\}$ necessarily implies one of three things: either there are many bad blocks; or, the total surface area of $\gamma$-large defects is large; or, the density of $\gamma$-small defects is high. That is,
\begin{equs} \label{eq:phaselabel_to_contours}
\begin{split}
\nubn\Big( \frac 1\hfbeta \sigma \notin &\fB(C_a,\delta) \Big)
\\	
&\leq 
\nubn\Big(|\cB|\geq \frac\delta2 N^3\Big) + \nubn(S_1^c) + \nubn(S_2^c). 
\end{split}
\end{equs}

Proposition \ref{prop:peierls} gives control on the first event. The other two are controlled by the following lemmas.

\begin{lem}\label{lem:largecont}
Let $\gamma,\delta \in (0,1)$. Then, there exists $\beta_0=\beta_0(\gamma,\delta,\eta)>0$ and $C=C(\gamma,\delta,\beta_0,\eta)>0$ such that, for all $\beta > \beta_0$, the following holds: for any $a > 0$, there exists $N_0 = N_0(\gamma,a)>0$ such that, for any $N \in 4\NN$ with $N > N_0$,
\begin{equs}
\frac{1}{N^2}\log \nubn \Bigg( \sum_{\Gamma \in \cD \setminus \cD^\gamma}|\Gamma| > a N^2 \Bigg) 
\leq 
-C\hfbeta \Big( a + \frac{N^\gamma}{N^2} \Big)
\end{equs}
where the underlying phase label is of precision $\delta$.
\end{lem}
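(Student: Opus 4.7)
The proof is a Peierls-type argument built on Proposition \ref{prop:peierls}. Since every defect $\Gamma \in \cD$ is contained in $\cB$, the probability that any fixed $*$-connected subset $\Gamma \subset \BBN$ is realized as part of the defect family is controlled by $\nubn(\Gamma \subset \cB) \leq e^{-C_P\hfbeta|\Gamma|}$. My strategy is to union bound over all possible configurations of $\gamma$-large defects and then balance the resulting Peierls factor against a combinatorial bound on the number of such configurations.

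First, I would decompose according to the total size $m$ and the number of defects $k$:
\begin{equs}
\nubn\Bigg(\sum_{\Gamma \in \cD \setminus \cD^\gamma}|\Gamma| > aN^2\Bigg)
\leq
\sum_{m > aN^2}\sum_{k=1}^{\lfloor m/(6N^\gamma)\rfloor} \sum_{(\Gamma_1,\ldots,\Gamma_k)} \nubn\Bigg(\bigcup_{i=1}^k \Gamma_i \subset \cB \Bigg),
\end{equs}
where the inner sum is over unordered $k$-tuples of pairwise disjoint $*$-connected subsets of $\BBN$ with $|\Gamma_i| > 6N^\gamma$ and $\sum_i |\Gamma_i| = m$. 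Proposition \ref{prop:peierls} bounds each summand by $e^{-C_P\hfbeta m}$. For the counting, I would invoke the standard lattice animal estimate: the number of $*$-connected subsets of $\BBN$ of cardinality $j$ containing a fixed block is at most $C_*^j$ for some dimensional constant $C_* > 0$. Together with a stars-and-bars count for the compositions $m_1 + \cdots + m_k = m$, this yields
\begin{equs}
\#\{(\Gamma_1,\ldots,\Gamma_k)\}
\leq
\frac{(N^3)^k}{k!} \binom{m}{k} C_*^m.
\end{equs}

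Combining everything and using $\binom{m}{k}(N^3)^k/k! \leq (e^2 N^3 m/k^2)^k$, the supremum of the summand over the admissible range $k \in \{1,\ldots,\lfloor m/(6N^\gamma)\rfloor\}$ is at most $(36 e^2 N^{3+2\gamma}/m)^{m/(6N^\gamma)}$, whose logarithm is $O\big(m N^{-\gamma}\log N\big)$ for $m \geq aN^2$. Choosing $N_0(\gamma,a)$ large so that $N^{-\gamma}\log N$ is arbitrarily small, and then $\beta_0$ large so that $C_P\hfbeta$ dominates $\log C_*$ plus this residual contribution, I would obtain a per-$m$ bound of the form $e^{-C\hfbeta m}$. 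Summing the geometric series over $m > aN^2$ gives $e^{-C\hfbeta aN^2}$. The additional $N^\gamma/N^2$ term in the statement comes for free, since the event forces at least one defect of size strictly greater than $6N^\gamma$, so effectively $m > \max(aN^2, 6N^\gamma)$ in the sum.

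The main obstacle is controlling the combinatorial factor $(N^3)^k/k!$, which is sensitive to $N$. It is precisely the threshold $|\Gamma_i| > 6N^\gamma$ defining $\gamma$-small defects—forcing $k \leq m/(6N^\gamma)$—that tames this factor, converting what would be polynomial-in-$N$ explosion into the mild $N^{-\gamma}\log N$ correction that can be absorbed into the Peierls exponential provided $\beta$ and $N$ are taken large enough in the order prescribed by the statement. This is why the cutoff $6N^\gamma$ is essential and why $N^\gamma/N^2$ appears in the final estimate.
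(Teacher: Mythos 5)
Your proof is correct and rests on exactly the same ingredients as the paper's: the Peierls estimate of Proposition \ref{prop:peierls} applied to $\bigcup_i\Gamma_i\subset\cB$, the lattice animal bound, and the essential observation that the size threshold $6N^\gamma$ caps the number of $\gamma$-large defects at $m/(6N^\gamma)$ and thereby tames the per-defect volume factor $N^3$. What differs is the bookkeeping. The paper extracts the $aN^2$ contribution at the outset by writing $\sum_i|\Gamma_i|\ge\tfrac12 aN^2+\tfrac12\sum_i|\Gamma_i|$ on the event, then bounds the leftover sum by a geometric series in the \emph{number of defects}; a second extraction of $e^{-\frac{C_P}{8}\hfbeta N^\gamma}$ from that series produces the $N^\gamma/N^2$ term. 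You instead stratify by total size $m$ and defect count $k$, use a stars-and-bars/binomial count, and optimize the combinatorial prefactor over $k\in\{1,\dots,\lfloor m/(6N^\gamma)\rfloor\}$; the $N^\gamma/N^2$ term then comes from the observation that the event forces $m>\max(aN^2,6N^\gamma)$. Both routes lead to the same estimate; the paper's avoids the explicit maximization over $k$, while yours makes the role of the $6N^\gamma$ cutoff more transparent. One small point of presentation: the sentence ``Choosing $N_0(\gamma,a)$ large\dots, and then $\beta_0$ large\dots'' reads as if $\beta_0$ were chosen after (and in dependence on) $N_0$, which inverts the quantifier order in the statement. Since in fact the two requirements are independent — $\beta_0$ need only beat $\log C_*$ plus an $N$-independent constant, while $N_0$ need only make $N^{-\gamma}\log N$ and the $a$-dependent piece small — this is only a wording issue, but it is worth stating the choices in the order the lemma prescribes.
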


\begin{proof}
We give a proof based on arguments from \cite[Theorem 6.1]{DKS92} in Section \ref{sec:largecont}.	
\end{proof}

\begin{lem}\label{lem:smallcont}
Let $\gamma,\delta,\delta' \in (0,1)$. Then, there exists $\beta_0=\beta_0(\gamma,\delta,\delta',\eta)>0$, $C=C(\gamma,\delta, \delta',\beta_0,\eta)>0$ and $N_0=N_0(\gamma, \delta) \geq 4$ such that, for all $\beta > \beta_0$ and $N > N_0$ dyadic,
\begin{equs}
\frac{1}{N^2} \log \nu_{\beta,N} \Bigg( \sum_{\Gamma \in \cD^{\gamma,\max}} |\mathrm{Int}(\Gamma)| > \delta N^3 \Bigg) 
\leq 
-C\hfbeta \frac{N}{N^{3\gamma}}	
\end{equs}
where the underlying phase label is of precision $\delta'$.
\end{lem}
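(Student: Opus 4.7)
The strategy is to apply the Peierls-type contour bound of Proposition \ref{prop:peierls} to the bad blocks contained in maximal $\gamma$-small defects, combined with isoperimetric geometric estimates and entropy bounds for $*$-connected clusters.

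The first step is a geometric reduction. Since each $\Gamma \in \cD^{\gamma,\max}$ is $*$-connected with $|\Gamma| \leq M := 6N^\gamma$ blocks, $\Gamma$ fits inside a cube of side at most $|\Gamma|$, which gives the crude bound $|\rInt(\Gamma)| \leq |\Gamma|^3$. On the event $S_2^c = \big\{\sum_\Gamma |\rInt(\Gamma)| > \delta N^3\big\}$, summing yields $\sum_\Gamma |\Gamma|^3 > \delta N^3$, and the elementary inequality $|\Gamma|^3 \leq M^2|\Gamma|$ (valid as $|\Gamma| \leq M$) then produces the key lower bound $K := \sum_\Gamma |\Gamma| > \delta N^{3-2\gamma}/36$ on the total number of bad blocks contained in $\gamma$-small defects.

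The second step combines Peierls with entropy. The set $B := \bigcup_\Gamma \Gamma$ is contained in $\cB$, so Proposition \ref{prop:peierls} gives $\nubn(B \subset \cB) \leq e^{-C_P \hfbeta K}$ for each fixed configuration. To take a union bound, I would enumerate admissible $B$ by the number $n$ of its $*$-connected components and their sizes $(k_i)$ with $k_i \leq M$ and $\sum k_i = K$. The standard animal count (each $*$-connected subset of $\BBN$ of size $k$ has at most $C_0^k$ shapes, for a combinatorial constant $C_0$) combined with the positional count $\binom{N^3}{n}$ and the count $\binom{K-1}{n-1}$ of compositions bounds the number of configurations of a given $(n, K)$. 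The crucial structural constraint is $n \geq K/M$, forced by the upper bound on individual $|\Gamma_i|$; this replaces the naive positional entropy $\binom{N^3}{K}$ by the much sharper $\binom{N^3}{n}$ with $N^3/n \leq N^3 M/K$.

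Substituting $K \geq \delta N^{3-2\gamma}/36$ and $M = 6N^\gamma$ into this refined entropy, the positional factor is at most $(c_\delta N^{3\gamma})^n$, and the Peierls factor is $e^{-C_P \hfbeta K} \leq e^{-6 C_P \hfbeta n N^\gamma}$ in the dominant regime $k \sim M$. The ratio of log-entropy to log-Peierls is $\lesssim \gamma \log N/(C_P \hfbeta N^\gamma)$, which vanishes as $N \to \infty$. Hence for $\hfbeta$ larger than some $\beta_0$ depending on $\gamma, \delta, \delta', \eta$ (through Proposition \ref{prop:peierls} and Proposition \ref{prop: cosh}) and for $N$ larger than some $N_0$ depending on $\gamma, \delta$ so that $\log N \leq N^\gamma$, the Peierls factor dominates, yielding $\nubn(S_2^c) \leq e^{-c\hfbeta K} \leq e^{-c'\hfbeta N^{3-2\gamma}} \leq e^{-c''\hfbeta N^{3-3\gamma}}$, matching the lemma (after dividing by $N^2$). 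The regime of small defects ($k$ comparable to $1$), where the above argument loses sharpness, can be handled separately by noting that in that case $K \gtrsim \delta N^3$ so the naive entropy $\binom{N^3}{K}$ already is dominated by Peierls for $\hfbeta$ above an absolute constant; a dyadic pigeonhole over the range $[1, M]$ of defect sizes interpolates between the two regimes.

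The principal obstacle is precisely the entropy-Peierls balance, since the naive bound $\binom{N^3}{K}$ on subsets of size $K$ would require the Peierls constant $C_P \hfbeta$ to beat $3 \log N$, forcing $\hfbeta \sim \log N$. Exploiting that maximal $\gamma$-small defects have $|\Gamma| \leq M$ (hence $n \geq K/M$) is what replaces the factor $N^3$ per block by a factor polynomial in $N^\gamma$ per defect; this is what allows $\beta_0$ to depend only on $\gamma, \delta, \delta', \eta$ at the cost of taking $N$ large depending on $\gamma, \delta$.
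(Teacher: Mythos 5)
Your approach is genuinely different from the paper's: you union-bound directly over defect configurations, using $|\rInt(\Gamma)| \le |\Gamma|^3$ to force $K:=\sum|\Gamma_i| \gtrsim \delta N^{3-2\gamma}$, the cap $|\Gamma_i|\le M$ to force $n\ge K/M$, and then balance animal/positional entropy against the Peierls factor $e^{-C_P\hfbeta K}$. The paper instead coarse-grains into $K$-blocks of side $\sim N^\gamma$ and $27$ sub-lattices, exploits that each small defect's interior lies inside a single $K$-block, and so counts positions among the $\sim N^{3-3\gamma}$ coarse-grained sites; since $k\gtrsim \delta N^{3-3\gamma}$ of them are chosen, the per-position entropy is $O(\log(1/\delta))$ from the start. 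Both routes begin with the same geometric observation that $\rInt(\Gamma)$ fits in a cube of side $\sim|\Gamma|$.

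There is, however, a gap in your sketch: the two regimes you treat ($k\sim M$, and ``all size $\sim 1$, so $K\gtrsim\delta N^3$'') do not exhaust the mixed-size configurations, and the argument as written fails on one of these. Take $a\sim\delta N^3/M^3$ defects of size $M$ (which alone carry all of $\sum|\rInt|$), padded by $b\sim\delta N^3/M^2$ defects of size $1$ (which contribute essentially nothing to $\sum|\rInt|$ but inflate the positional count). Then $n=a+b\sim\delta N^3/M^2$ and $K=aM+b\sim 2\delta N^3/M^2$, so $K/n\sim 2$ while $N^3/n\sim M^2/\delta$. The positional entropy per defect is then $\log(eN^3/n)\sim 2\gamma\log N$, whereas the Peierls gain per defect is only $C_P\hfbeta K/n\sim 2C_P\hfbeta$; the ratio is $\sim\gamma\log N/(C_P\hfbeta)$, which is not bounded, so defeating this configuration requires $\hfbeta\gtrsim\log N$. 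The refinement $n\ge K/M$ that you emphasize only helps when the \emph{average} defect size $K/n$ is itself comparable to $M$. The ``dyadic pigeonhole'' you invoke can repair this, but only in weighted form: one must budget $V_{j_*}:=\sum_{|\Gamma|\sim 2^{j_*}}|\rInt(\Gamma)| > c\,\delta N^3\,2^{-\alpha j_*}$ for some $\alpha>0$, so that if a small scale is selected it must carry a constant fraction of $\delta N^3$ (hence $N^3/n_{j_*}\lesssim 1/\delta$ there), while at larger scales the per-defect Peierls gain $\hfbeta 2^{j_*}$ swamps the per-defect entropy $\sim j_*$. The natural equal-weight pigeonhole over the $\sim\gamma\log_2 N$ scales leaves a $\log\log N$ factor in the entropy at $j=0$, again forcing $\hfbeta$ to grow with $N$. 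So the repair is real but not automatic, and it, rather than the ``$n\ge K/M$'' refinement, is the ingredient that actually resolves the obstacle you identify in your last paragraph; the paper's $K$-block construction sidesteps the whole issue.
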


\begin{proof}
See \cite[Section 5.1.3]{BIV00} for a proof in a more general setting. We give an alternative proof in Section \ref{sec:smallcont} that avoids the use of techniques from percolation theory.
\end{proof}

As in \eqref{eq:badset}, by Proposition \ref{prop:peierls} there exists $C_P > 0$ such that
\begin{equs} \label{eq:badset_phaselabelsection}
\nubn(|\cB|\geq \delta N^3) 
\leq 
e^{-\frac{C_P\delta}{4}\hfbeta N^3}
\end{equs}
provided $\hfbeta > \frac{4\log 2}{\delta C_P}$.

Therefore, from \eqref{eq:phaselabel_to_contours}, \eqref{eq:badset_phaselabelsection}, Lemma \ref{lem:largecont} and Lemma \ref{lem:smallcont}, there exists $C>0$ such that
\begin{equs}
\frac{1}{N^2}\log\nubn \Big( \sigma \notin \fB(C_a,\delta) \Big) 
\leq 
-C\hfbeta \min \Big( N, a + \frac{N^\gamma}{N^2}, \frac{N}{N^{3\gamma}}\Big).
\end{equs}
Taking $\gamma < \frac 13$ and $N$ sufficiently large completes the proof. All that remains is to show Lemmas \ref{lem:largecont} and \ref{lem:smallcont}. 

\subsubsection{Proof of Lemma \ref{lem:largecont}} \label{sec:largecont}

By a union bound
\begin{equs} \label{eq:largecont:union}
\begin{split}
\nubn \Biggr( \sum_{\Gamma \in \cD \setminus \cD^\gamma}|\Gamma| > a N^2 \Biggr)
&=
\sum_{\substack{\{ \Gamma_i \}: |\Gamma_i| > N^\gamma \\ \sum_i |\Gamma_i| > aN^2 } }\nubn \Big(\cD\setminus\cD^\gamma = \{ \Gamma_i \} \Big)
	\\
&\leq
\sum_{\substack{\{ \Gamma_i \}: |\Gamma_i| > N^\gamma \\ \sum_i |\Gamma_i| > aN^2 } } \nubn \Big( \Gamma_i \subset \cB \text{ for all } \Gamma_i \in \{ \Gamma_i \} \Big),
\end{split}
\end{equs}
where$\{ \Gamma_i \}$ refers to a non-empty set  of distinct $*$-connected subsets of $\BBN$.

By Proposition \ref{prop:peierls} there exists $C_P$ such that, for any $\{ \Gamma_i \}$,
\begin{equs}
\nubn \Big( \Gamma_i \subset \cB \text{ for all } \Gamma_i \in \{\Gamma_i\} \Big)
&=
\langle \prod_{\Gamma_i \in \{ \Gamma_i \}} \prod_{\sBox \in \Gamma_i} \1_{\sBox \in \cB} \rangle_{\beta,N}
\\
&\leq
e^{-C_P\hfbeta \sum|\Gamma_i|}.
\end{equs}

Inserting this into \eqref{eq:largecont:union} and using the trivial estimate $\sum|\Gamma_i| \geq \frac12 aN^2  + \frac12 \sum|\Gamma_i|$,
\begin{equs} \label{eq:largecont_energyentropy} 
\begin{split}
\nubn \Biggr( \sum_{\Gamma \in \cD\setminus\cD^\gamma}|\Gamma| > a N^2 \Biggr)
&\leq 
\sum_{\substack{\{ \Gamma_i \}: |\Gamma_i| > N^\gamma \\ \sum_i |\Gamma_i| > aN^2 } }	e^{-C_P\hfbeta \sum|\Gamma_i|}
	\\
&\leq
e^{-\frac{C_P}2 \hfbeta aN^2} \sum_{\{\Gamma_i\}: |\Gamma_i|>N^\gamma} e^{-\frac{C_P}2 \hfbeta \sum|\Gamma_i|}
	\\
&=
e^{-\frac{C_P}2 \hfbeta aN^2} \sum_{\{ \Gamma_i \}: |\Gamma_i| > N^\gamma} \prod_{\Gamma_i \in \{ \Gamma_i \}} e^{-\frac {C_P} 2 \hfbeta |\Gamma_i|}.
\end{split}
\end{equs}

Summing first over the number of elements in $\{ \Gamma_i \}$ and then the number of $*$-connected regions containing a fixed number of blocks,
\begin{equs}\label{eq:largecont_entropy}
\begin{split}
\sum_{\substack{\{ \Gamma_i \} \\ |\Gamma_i| >N^\gamma }} \prod_{\Gamma_i \in \{ \Gamma_i \}} e^{-\frac{C_P}2 \hfbeta |\Gamma_i|}	
&=
\sum_{m=1}^\infty \sum_{\{\Gamma_i\}_{i=1}^m : |\Gamma_i| > N^\gamma} \prod_{i=1}^m e^{-\frac{C_P}2 \hfbeta |\Gamma_i|}
\\
&\leq
\sum_{m=1}^\infty \Big( \sum_{\Gamma \text{ $*$-connected }: |\Gamma| \geq N^\gamma} e^{-\frac{C_P}2 \hfbeta |\Gamma|} \Big)^m
\\
&\leq
\sum_{m=1}^\infty \Big( \sum_{n \geq N^\gamma} N^3 27 \cdot 26^{n-1} e^{-\frac{C_P}2\hfbeta n} \Big)^m
\\
&\leq
\sum_{m=1}^\infty e^{3m \log N - \frac{C_P}4 \hfbeta m N^\gamma} \Big( \sum_{n \geq 1} e^{-\frac{C_P}4 \hfbeta n} \Big)^m
\\
&\leq
\sum_{m=1}^\infty e^{\Big(3\log N - \frac{C_P}4 \hfbeta  N^\gamma\Big)m}
\\
&\leq
e^{-\frac{C_P}{8}\hfbeta N^\gamma}\sum_{m=1}^\infty e^{3m\log N - \frac{C_P}8 \hfbeta m N^\gamma}
\end{split}
\end{equs}
provided $\hfbeta > \max \Big( \frac{4\log 27}{C_P}, \frac{4\log 2}{C_P} \Big) = \frac{4 \log 27}{C_P} $ (note that the condition arises so that $e^{-\frac{C_P}4 \hfbeta} < \frac 12$, so that the geometric series with this rate is bounded by $1$). 

For any $\gamma > 0$, the final series in \eqref{eq:largecont_entropy} is summable provided $N^\gamma > \log N$ and $\hfbeta > \frac{24}{C_P}$, thereby finishing the proof.

\subsubsection{Proof of Lemma \ref{lem:smallcont}} \label{sec:smallcont}

Choose $ 2 N^\gamma \leq K \leq 4 N^\gamma$ such that $K$ divides $N$. Such a choice is possible since we take $N$ to be a sufficiently large dyadic. Let
\begin{equs}
\BB_N^K
=
\Big\{ \blackBox = \prod_{i=1}^3[n_i,n_i+K) \subset \TTN : n_1,n_2,n_3 \in \{0,K,\dots,N-K\} \Big\}.	
\end{equs}
Elements of $\BB_N^K$ are called $K$-blocks.

We say that two distinct $K$-blocks are $*_K$-neighbours if their corresponding midpoints are of distance at most $K\sqrt 3$. We define the $*_K$-connected ball around $\blackBox \in \BB_N^K$ to be the set containing itself and its $*_K$-neighbours. As in the proof of Proposition \ref{prop:peierls}, we can decompose $\BB_N^K = \bigcup_{l=1}^{27} \BB_N^{K,l}$ such that any $*_K$-connected ball in $\BB_N^K$ contains exactly one $K$-block from each element of the decomposition. 

For each $\blackBox = [n_1, n_1+K)\times[n_2,n_2+K)\times[n_3,n_3+K)$, distinguish the unit block $\BoxBox = [n_1,n_1+1)\times[n_2,n_2+1)\times[n_3,n_3+1)$. For every $h \in \{0,\dots,K-1\}^3$, let $\tau_h$ be the translation map on $\BBN$ induced from the translation map on $\TTN$. We identify $\blackBox = \bigcup_{h \in \{0,\dots,K-1\}^3} \tau_h\BoxBox$. Denote the set of distinguished unit blocks in $\BB_N^K$ (respectively, $\BB_N^{K,l}$) as $\UU\BB_N^K$ (respectively, $\UU\BB_N^{K,l}$).

By our choice of $K$, $\rInt(\Gamma)$ is entirely contained in a translation of a $K$-block for any $\Gamma \in \cD^\gamma$. As a result, $\rInt(\Gamma)$ intersects at most one $K$-block in $\BB_N^{K,l}$ for any fixed $l$.

Using the correspondence between $K$-blocks and unit blocks described above, we have
\begin{equs}
\sum_{\Gamma \in \cD^{\gamma,\max}} |\rInt(\Gamma)| 
&= 
\sum_{\sBox \in \BBN} \sum_{\Gamma \in \cD^{\gamma,\max}} \1_{\sBox \in \rInt(\Gamma)}
\\
&=
\sum_{\sBoxBox \in \UU\BB_N^K} \sum_{h \in \{0,\dots,K-1\}^3}  \sum_{\Gamma \in \cD^{\gamma,\max}} \1_{\tau_h \sBoxBox \in \rInt(\Gamma)}
\\
&=
\sum_{l=1}^{27}  \sum_{\sBoxBox \in \UU\BB_N^{K,l}} \sum_{h \in \{0,\dots,K-1\}^3}\sum_{\Gamma \in \cD^{\gamma,\max}} \1_{\tau_h \sBoxBox \in \rInt(\Gamma)}.
\end{equs}

Hence,
\begin{equs} \label{eq:smallcontourpain0}
\nubn \Bigg( &\sum_{\Gamma \in \cD^{\gamma,\max}} |\rInt(\Gamma)|>\delta N^3 \Bigg)
\\
&\leq
27 K^3 \max_{h,l} \nubn \Bigg(\sum_{\sBoxBox \in \UU\BB_N^{K,l}} \sum_{\Gamma \in \cD^{\gamma,\max}} \1_{\tau_h \sBoxBox \in \rInt(\Gamma)} > \frac{\delta}{27} \Big(\frac{N}{K}\Big)^3 \Bigg).	
\end{equs}
where the maximum is over $h \in \{0,\dots,K-1\}^3$ and $1\leq l \leq 27$.

Let $E_k$ be the event that precisely $k$ indicator functions appearing on the right hand side of \eqref{eq:smallcontourpain0} are nonzero. In other words, $E_k$ is the event that there are $k$ distinct defects of size at most $N^\gamma$ such that the $k$ distinct $\tau_h \BoxBox$, where $\BoxBox \in \UU\BB_N^{K,l}$, are contained in their interiors.

Given a block there are $27 \cdot 26^{n-1}$ possible defects of size $n$ that contain this block. Thus, by Proposition \ref{prop:peierls}, there exists $C_P$ such that
\begin{equs} \label{eq:smallcontourpain2}
\nubn(E_k)
&\leq
{ \frac{N^3}{27K^3} \choose k }  \sum_{1 \leq n_1, \dots, n_k \leq N^\gamma} \prod_{j=1}^k n_j\cdot 26\cdot27^{n_j-1}e^{-C_P\hfbeta n_j}
\\
&\leq
{ \frac{N^3}{27K^3} \choose k } e^{-\frac {C_P} 2 \hfbeta k} \Big(\sum_{n=1}^{N^\gamma} n \cdot 26 \cdot 27^{n-1} e^{-\frac{C_P}{2} \hfbeta n} \Big)^k
\\
&\leq
{ \frac{N^3}{27K^3} \choose k } e^{- \frac {C_P} 2 \hfbeta k}
\end{equs}
provided e.g. $\hfbeta > \max \Big( \frac{4\log 27}{C_P}, \frac{2\log 2}{C_P} \Big) = \frac{4\log 27}{C_P}$. This estimate is uniform over the choice of $h$ and $l$. 

By a union bound on \eqref{eq:smallcontourpain0}, using \eqref{eq:smallcontourpain2}, and that $2N^\gamma \leq K \leq 4N^\gamma$,
\begin{equs}
\nubn \Big( \sum_{\Gamma \in \cD^{\gamma,\max}} |\rInt(\Gamma)|>\delta N^3) 
&\leq
27 K^3 \sum_{k = \lfloor \frac{\delta N^3}{27 K^3} \rfloor + 1}^{\frac{N^3}{27K^3}}{ \frac{N^3}{27K^3} \choose k } e^{-\frac{C_P}2 \hfbeta k}
\\
&\leq
27K^3 \cdot 2^{\frac{N^3}{27K^3}}e^{-\frac{\delta C_P}{2\cdot 27}\hfbeta \frac{N^3}{K^3}} 
\\
&\leq
27 \cdot 64 e^{3 \gamma \log N + \frac{\log 2}{27 \cdot 8}\frac{N^3}{N^{3\gamma}} - \frac{\delta C_P}{27 \cdot 16}\hfbeta \frac{N^3}{N^{3\gamma}}}
\\
&\leq
27 \cdot 64 e^{-\frac{\delta C_P}{27 \cdot 32}\hfbeta \frac{N^3}{N^{3\gamma}}}
\end{equs}
provided $\gamma \log N < N^{3-3\gamma}$ and $\hfbeta > \frac{81 \cdot 32 + 4\log2}{\delta C_P}$. Taking logarithms and dividing by $N^2$ completes the proof.

\section{Bou\'e-Dupuis formalism for $\phi^4_3$}\label{sec: bd}

In this section we introduce the underlying framework that we build on to analyse expectations of certain random variables under $\nubn$, as required in the proof of Proposition \ref{prop: cosh}. This framework was originally developed in \cite{BG19} to show ultraviolet stability for $\phi^4_3$ and identify its Laplace transform. 

\sloppy In particular, we want to obtain estimates on expectations of the form $\langle e^{Q_K} \rangle_{\beta,N,K}$, where $Q_K$ are random variables that converge (in an appropriate sense) to some random variable $Q$ of interest. We always work with a fixed ultraviolet cutoff $K$ and establish estimates on $\langle e^{Q_K} \rangle_{\beta,N,K}$ that are uniform in $K$: this requires handling of ultraviolet divergences. The first observation is that we can represent such expectations as a ratio of Gaussian expectations: 
\begin{equs} \label{eq: tilde q expectation}
\langle e^{Q_K} \rangle_{\beta,N,K}
=
\frac{\EE_N e^{-\cH_{\beta,N,K}(\phi_K) + Q_K(\phi_K)}}{\sZ_{\beta,N,K}}	
\end{equs}
where we recall $\EE_N$ denotes expectation with respect to $\mu_N$ and $\sZ_{\beta,N,K} = \EE_N e^{-\cH_{\beta,N,K}(\phi_K)}$ is the partition function. 

We then introduce an auxiliary time variable that continuously varies the ultraviolet cutoff between $0$ and $K$, and use it to represent these Gaussian expectations in terms of expectations of functionals of finite dimensional Brownian motions. This allows us to use the Bou\'e-Dupuis variational formula given in Proposition \ref{prop: bd} to write these expectations in terms of a stochastic control problem. Hence, the problem of obtaining bounds is translated into choosing appropriate controls. An insight made in \cite{BG19} is that one can use methods developed in the context of singular stochastic PDEs, specifically the paracontrolled calculus approach of \cite{GIP15}, within the control problem to kill ultraviolet divergences. 

\begin{rem} \label{rem: appendices}
In the following, we make use of tools in Appendices \ref{appendix: sub: besov} and \ref{appendix: sub: paracontrolled} concerning Besov spaces and paracontrolled calculus. In addition, for the rest of Sections \ref{sec: bd} and \ref{sec: free energy}, we consider $N \in \NN$ fixed and drop it from notation when clear.
\end{rem}

\subsection{Construction of the stochastic objects} \label{subsec: construction of stochastic objects}
Fix $\kappa_0 > 0$ sufficiently small. We equip $\Omega = C(\RR_+; \cC^{-\frac 32 -\kappa_0})$ with its Borel $\sigma$-algebra. Denote by $\PP$ the probability measure on $\Omega$ under which the coordinate process $X_{\bullet}=(X_k)_{k \geq 0}$ is an $L^2$ cylindrical Brownian motion. We write $\EE$ to denote expectation with respect to $\PP$. We consider the filtered probability space $(\Omega, \cA,  (\cA_k)_{k \geq 0},\PP)$, where $\cA$ is the $\PP$-completion of the Borel $\sigma$-algebra on $\Omega$, and $(\cA_k)_{k \geq 0}$ is the natural filtration induced by $X$ and augmented with $\PP$-null sets of $\cA$.

Given $n \in (N^{-1}\ZZ)^3$, define the process $B^n_\bullet$ by $B^n_k = \frac 1{N^\frac 32} \intx X_k e_{-n} dx$, where $e_n(x) = e^{2\pi i n \cdot x}$ and we recall that the integral denotes duality pairing between distributions and test functions. Then, $\{B^n_\bullet : n \in (N^{-1}\ZZ)^3\}$ is a set of complex Brownian motions defined on $(\Omega, \cA,  (\cA_k)_{k \geq 0},\PP)$, independent except for the constraint $\overline{B_k^n} = B_k^{-n}$. Moreover,
\begin{equs}
X_k 
= 
\frac{1}{N^3} \sum_{n \in (N^{-1}\ZZ)^3} B_k^n N^\frac 32 e_n
\end{equs}
where $\PP$-almost surely the sum converges in $\cC^{-\frac 32 - \kappa_0}$.

Let $\cJ_k$ be the Fourier multiplier with symbol
\begin{equs}
\cJ_k(\cdot)
=
\frac{\sqrt{\partial_k \rho_k^2( \cdot)}}{\langle \cdot \rangle}
\end{equs}
where $\rho_k$ is the ultraviolet cutoff defined in Section \ref{sec: model} and we recall $\langle \cdot \rangle = \sqrt{\eta + 4\pi^2|\cdot|^2}$. $\cJ_k$ arises from a continuous decomposition of the covariance of the pushforward measure $\mu_N$ under $\rho_k$:
\begin{equs}
\int_0^k \cJ_{k'}^2(\cdot) d{k'}
=
\frac{\rho_k^2(\cdot)}{\langle \cdot \rangle^2}	
=
\cF \Big\{ \cF^{-1}(\rho_k) \ast (-\Delta + \eta)^{-1} \ast \cF^{-1}(\rho_k) \Big\} (\cdot)
\end{equs}
where $\cF$ denotes the Fourier transform and $\cF^{-1}$ denotes its inverse (see Appendix \ref{appendix: sub: basic}). Note that the function $\partial_k \rho_k^2$ has decay of order $\langle k \rangle^{-\frac 12}$ and the corresponding multiplier is supported frequencies satisfying $|n| \in (c_\rho k, C_\rho k)$ for some $c_\rho < C_\rho$. Thus, we may think of $\cJ_k$ as having the same regularising properties as the multiplier $\frac{\cF \{(-\Delta + \eta)^{-\frac 12}\}}{\langle k \rangle^\frac 12} \1_{c_\rho k \leq |\cdot| \leq C_\rho k}$; precise statements are given in Proposition \ref{prop: multiplier estimate}.

Define the process $\<1>_\bullet$ by
\begin{equs} \label{def: lollipop}
\<1>_k
=
\int_0^k \cJ_{k'} dX_{k'}
=
\frac{1}{N^\frac 32} \sum_{n \in (N^{-1}\ZZ)^3} \Bigg( \int_0^k  \frac{\sqrt{\partial_{k'} \rho_{k'}^2 ( n)}}{\langle n \rangle} dB_{k'}^n \Bigg) e_n.
\end{equs}
$\<1>_\bullet$ is a centred Gaussian process with covariance:
\begin{equs}
	\EE \Big[ \intx \<1>_k f dx \intx \<1>_{k'} g dx \Big]
	=
	\frac{1}{N^3} \sum_{n \in (N^{-1}\ZZ)^3} \frac{\rho^2_{\min(k,k')}}{\langle n \rangle^2} \cF f(n) \cF g(n)
\end{equs}
for any $f,g \in L^2$. Thus, the law of $\<1>_k$ is the law of $\rho_k \phi$ where $\phi \sim \mu_N$. As with other processes in the following, we simply write $\<1> = \<1>_\bullet$. 

\subsubsection{Renormalised multilinear functions of the free field} \label{subsec: diagrams}

The second, third, and fourth Wick powers of $\<1>$ are the space-stationary stochastic processes $\<2>, \<3>, \<4>$ defined by: 
\begin{equs}
\<2>_k
&=
\<1>_k^2 - \<tadpole>_k
\\
\<3>_k
&=
\<1>_k^3 - 3 \<tadpole>_k
\\
\<4>_k
&=
\<1>_k^4 - 6 \<tadpole>_k \<1>_k^2 + 3 \<tadpole>_k^2	
\end{equs}
where we recall from Section \ref{sec: model} that $\<tadpole>_k = \EE_N [\phi_k^2(0)] = \EE [\<1>_k^2(0)]$. Note that $\<2>_k, \<3>_k$, and $\<4>_k$ are equal in law to $:\phi_k^2:, :\phi_k^3:$, and $:\phi_k^4:$, respectively. 

The Wick powers of $\<1>$ can be expressed as iterated integrals using It\^o's formula (see \cite[Section 1.1.2]{N06}). We only need the iterated integral representation $\<3>$:
\begin{equs} \label{eq: iterated integral third wick}
\<3>_k
&=
\frac{3!}{N^\frac{9}{2}} \sum_{n_1,n_2,n_3} \int_0^k \int_0^{k_1} \int_0^{k_2} \prod_{i=1}^3 \frac{\sqrt{\partial_{k_i}\rho_{k_i}^2(n_i)}}{\langle n_i \rangle} dB^{n_3}_{k_3} dB^{n_2}_{k_2} dB^{n_1}_{k_1}
\end{equs}
where we have used the convention that sums over frequencies $n_i$ range over $(N^{-1}\ZZ)^3$. 

We define additional space-stationary stochastic processes $\<30>, \<31>, \<32>, \<202>$ by
\begin{equs}
\<30>_k
&=
\int_0^k \cJ_{k'}^2 \<3>_{k'} d{k'} 
\\
\<31>_k
&=
\<1>_k \pe \<30>_k
\\
\<32>_k
&=
\<2>_k \pe \<30>_k - \frac{12}{N^6}\<1>_k\sum_{n_1 + n_2 + n_3} \int_0^k \frac{\rho_{k'}^2(n_1) \rho_{k'}^2(n_2)\partial_{k'} \rho_{k'}^2 (n_3)}{\langle n_1 \rangle^2 \langle n_2 \rangle^2 \langle n_3 \rangle^2} d{k'}
\\
\<202>_k
&=
\cJ_k \<2>_k \pe \cJ_k \<2>_k - \frac{4}{N^6} \sum_{n_1 + n_2 + n_3 = 0} \frac{\rho_k^2(n_1) \rho_k^2(n_2)\partial_k \rho_k^2 (n_3)}{\langle n_1 \rangle^2 \langle n_2 \rangle^2 \langle n_3 \rangle^2}.
\end{equs}

We make two observations: first, a straightforward calculation shows that $\<3>_k$ diverges in variance as $k \rightarrow \infty$. However, due to the presence of $\cJ_k$, $\<30>_k$ can be made sense of as $k \rightarrow \infty$. See Lemma \ref{lem:trident}. 

Second, $\<31>_k$, $\<32>_k$, and $\<202>_k$ are renormalised resonant products of $\<1>_k \<30>_k$, $\<2>_k\<30>_k$, and $(\cJ_k \<2>_k)^2$, respectively. The latter products are classically divergent in the limit $k \rightarrow \infty$. We refer to Remark \ref{rem: resonant} for an explanation of why the resonant product is used. 

\begin{rem} \label{rem: resonant}
Let $f \in \cC^{s_1}$ and $g \in \cC^{s_2}$ for $s_1 < 0 < s_2$. Bony's decomposition states that, if the product exists, $fg = f \pl g + f \pe g + f \pg g$ and is of regularity $s_1$ (see Appendix \ref{appendix: sub: paracontrolled}). Since paraproducts are always well-defined (see Proposition \ref{prop:paraproduct}), the resonant product contains all of the difficulty in defining the product. However, the resonant product gives regularity information of order $s_1 + s_2$ (see Proposition \ref{prop: resonant}), which is strictly stronger than the regularity information of the product: i.e. the bound on $\| f \pe g \|_{\cC^{s_1 + s_2}}$ is strictly stronger than the bound on $\| fg \|_{\cC^{s_1}}$. This is the key property that makes paracontrolled calculus useful in this context \cite{GIP15}.
\end{rem}

The required renormalisations of $\<32>_K$ and $\<202>_K$ are related to the usual "sunset" diagram appearing in the perturbation theory for $\phi^4_3$,
\begin{equs} \label{def: sunset}
\<sunset>_k
=
\frac{1}{N^6}\sum_{n_1 + n_2 + n_3 = 0} \frac{\rho_k^2(n_1)\rho_k^2(n_2)\rho_k^2(n_3)}{\langle n_1 \rangle^2 \langle n_2 \rangle^2 \langle n_3 \rangle^2}.	
\end{equs}	
See \cite[Theorem 1]{F74}. We emphasise that $\<sunset>_k$ depends on $\eta, N$ and $k$. 

By the fundamental theorem of calculus, the Leibniz rule, and symmetry,
\begin{equs}
\<sunset>_k
&=
\frac{1}{N^6}\sum_{n_1 + n_2 + n_3 = 0} \int_0^k \frac{\partial_{k'} \Big( \rho_{k'}^2(n_1)\rho_{k'}^2(n_2)\rho_{k'}^2(n_3) \Big)}{\langle n_1 \rangle^2 \langle n_2 \rangle^2 \langle n_3 \rangle^2} dk'
\\
&=
\frac{3}{N^6} \sum_{n_1 + n_2 + n_3=0} \frac{\int_0^k \rho_{k'}^2(n_1)\rho_{k'}^2(n_2)\partial_{k'} \rho_k^2(n_3)}{\langle n_1 \rangle^2 \langle n_2 \rangle^2 \langle n_3 \rangle^2}.
\end{equs}

Thus, the renormalisations of $\<32>_K$ and $\<202>_k$ are given by $4\<sunset>_k \<1>_k$ and $\frac 43 \partial_k \<sunset>_k$, respectively.

\begin{rem} \label{rem: growth of renorm}
It is straightforward to verify that there exists $C=C(\eta)>0$ such that 
\begin{equs}
	\<sunset>_k
	\leq
	\frac{C(\eta)}{N^6}\log \langle k \rangle
	\quad \text{ and } \quad
	\partial_k \<sunset>_k
	\leq
	\frac{C(\eta)}{N^6} \frac{\log \langle k \rangle}{\langle k \rangle}.
\end{equs}
\end{rem}

Let $\Xi = (\<1>, \<2>, \<30>, \<31>, \<32>, \<202>)$. We refer to the coordinates of $\Xi$ as diagrams. The following proposition gives control over arbitrarily high moments of diagrams in Besov spaces.

\begin{prop}\label{prop: diagrams}
For any $p,p' \in [1,\infty)$, $q \in [1,\infty]$, and $\kappa > 0$ sufficiently small, there exists $C=C(p,p',q,\kappa,\eta)>0$ such that
\begin{equs} \label{eq: diagram norm estimates}
\begin{split}
\sup_{k > 0}\EE \Big[ &\| \<1>_k \|^p_{B^{-\frac 12 -\kappa}_{p',q}} + \| \<2>_k \|^p_{B^{-1-\kappa}_{p',q}} + \| \<30>_k \|^p_{B^{\frac 12-\kappa}_{p',q}}  
\\
&+ 
\| \<31>_k \|^p_{B^{-\kappa}_{p',q}} + \| \<32>_k \|^p_{B^{-\frac 12-\kappa}_{p',q}} + \Big(\int_0^k \| \<202>_{k'} \|_{B^{-\kappa}_{p',q}}\Big)^p dk' \Big]
\leq
C.
\end{split}
\end{equs}
\end{prop}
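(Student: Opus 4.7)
The plan is to prove each of the six moment bounds separately using a standard three-step approach: chaos decomposition, Gaussian hypercontractivity, and explicit second-moment computations via Littlewood-Paley blocks. The construction of the diagrams in Section \ref{subsec: diagrams} makes each of them a finite sum of elements from a low Wiener chaos over the white noise $X$. Inspection of the definitions, using the iterated integral representation of the type \eqref{eq: iterated integral third wick}, shows that $\<1>_k$ lies in the first chaos, $\<2>_k$ in the second, and $\<30>_k$ in the third; after expanding the resonant products and subtracting the counterterms, $\<31>_k$ decomposes into the zeroth, second and fourth chaoses, $\<32>_k$ into the first, third and fifth, and $\<202>_k$ into the zeroth, second and fourth. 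Crucially, the counterterms built into $\<32>_k$ and $\<202>_k$ are designed to cancel exactly the contribution from those Wick contractions that would otherwise produce divergent low-chaos pieces.

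For the reduction step, Nelson's hypercontractivity inequality implies $\EE |F|^p \leq C_{n,p} (\EE|F|^2)^{p/2}$ for $F$ in the $n$-th Wiener chaos. Combined with the Besov embedding $B^{s+\varepsilon}_{\tilde p,\tilde p} \hookrightarrow B^s_{p',q}$ for $\tilde p$ sufficiently large (see Appendix \ref{appendix: sub: besov}), it suffices to prove \eqref{eq: diagram norm estimates} with $p = q = p'$ arbitrarily large at the cost of an arbitrarily small loss of regularity, which is absorbed in the parameter $\kappa$. For any such $p$ and any diagram $D_k$ with target regularity $s$, the Littlewood-Paley definition of the Besov norm and the spatial stationarity of $D_k$ give
\begin{equs}
\EE \| D_k \|^p_{B^s_{p,p}}
=
\sum_j 2^{jsp} \int_{\TTN} \EE |\Delta_j D_k(x)|^p dx
\leq
C_p N^3 \sum_j 2^{jsp} \sigma_{j,k}^{p},
\end{equs}
with $\sigma_{j,k}^2 = \EE |\Delta_j D_k(0)|^2$. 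The target is then $\sigma_{j,k}^2 \lesssim 2^{-2j(s+\kappa)}$ uniformly in $k$, which makes the sum convergent.

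Each $\sigma_{j,k}^2$ is evaluated using Wick's theorem on the iterated integral representation together with the explicit Fourier symbol of $\cJ_k$; the relevant multiplier bounds come from Proposition \ref{prop: multiplier estimate}. For $\<1>_k, \<2>_k, \<30>_k$ these are direct: the $1/\langle n \rangle^2$ from the covariance gives the stated regularities. For the renormalised resonant products $\<31>_k, \<32>_k, \<202>_k$, expanding the resonant product in terms of iterated Wiener integrals and applying Wick's theorem produces two kinds of terms: higher-chaos pieces, which are bounded by the same procedure with the frequency-localisation of the resonant product enforcing that the Littlewood-Paley scales of the two factors are comparable, yielding the improved regularity $s_1 + s_2$ alluded to in Remark \ref{rem: resonant}; and deterministic low-chaos pieces whose kernels match exactly the subtracted counterterms and thus cancel. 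For $\<202>_k$ the second-moment bound is not uniform in $k$, and one instead uses the integrated formulation in \eqref{eq: diagram norm estimates}, exploiting the $\langle k \rangle^{-1}\log \langle k \rangle$ weight appearing through $\partial_k \rho_k^2$ (cf. Remark \ref{rem: growth of renorm}).

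The main obstacle is the combinatorics of Wick contractions for $\<32>_k$ and $\<202>_k$ and the precise matching of the divergent contractions with the prescribed counterterms. The non-trivial input is that the choice of $\rho$ (in particular the flatness property captured by Lemma \ref{lem:trident}) ensures that after cancellation, the leftover high-chaos variances actually integrate at the advertised regularity uniformly in $k$; without this, the logarithmically divergent sums over frequencies produced by the resonant contractions would fail to close at any finite regularity. Once these cancellations are established, the remaining estimates are standard applications of the multiplier bounds in Proposition \ref{prop: multiplier estimate} and Young's inequality for convolutions.
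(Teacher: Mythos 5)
Your approach is essentially the same as the paper's: the paper defers to \cite[Lemma 24]{BG19} for the full set of bounds, and its own Lemma~\ref{lem:trident} carries out exactly this chaos decomposition / Nelson hypercontractivity / Littlewood--Paley second-moment scheme in the representative case $\<30>_k$. Two small points to tidy up. First, $\<31>_k = \<1>_k \pe \<30>_k$ carries no counterterm and, being a resonant product of a first-chaos and a third-chaos element, decomposes into chaoses $2$ and $4$ only; the would-be zeroth-chaos piece $\EE[\<1>_k \<30>_k]$ vanishes identically by Wick's theorem, which is precisely why no subtraction is required for this diagram, so your grouping of $\<31>_k$ with the renormalised ones is not quite right (though this does not affect the argument). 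Second, the $N^3$ in your reduction step comes only from integrating against the unnormalised $dx$; the paper's $L^p$ and Besov norms are defined with the normalised Lebesgue measure $\dbar x = dx/N^3$ (Appendix~\ref{appendix: sub: basic} and Remark~\ref{rem: stochastic norms}), under which space-stationarity gives $\intx \EE|\Delta_j D_k(x)|^p\,\dbar x = \EE|\Delta_j D_k(0)|^p$ with no volume factor. This normalisation is what makes the constant in Proposition~\ref{prop: diagrams} genuinely $N$-independent, which is then the source of the $N^3$-extensivity in Remark~\ref{rem: N convention} and in the free energy bounds of Section~\ref{sec: free energy}; carrying the spurious extra $N^3$ through would overcount the volume dependence, so the normalisation has to be respected.
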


\begin{proof}
See \cite[Lemma 24]{BG19}.	
\end{proof}

\begin{rem} \label{rem: stochastic norms}
The constant on the righthand side of \eqref{eq: diagram norm estimates} is independent of $N$ because our Besov spaces are defined with respect to normalised Lebesgue measure $\dbar x = \frac{dx}{N^3}$ (see Appendix \ref{appendix: sub: besov}). For $p=\infty$, bounds that are uniform in $N$ do not hold. Indeed, for $L^\infty$-based norms, there is in general no chance of controlling space-stationary processes uniformly in the volume. Thus, we cannot work in Besov-H\"older spaces.  
\end{rem}

We prove the bound in \eqref{eq: diagram norm estimates} for $\<30>_k$ since it illustrates the role of $\cJ_k$, is used later in the proof of Proposition \ref{prop: testing phi4}, and gives the reader a flavour of how to prove the bounds on the other diagrams.

\begin{lem}\label{lem:trident}
	There exists $C=C(\eta)>0$ such that, for any $n \in (N^{-1}\ZZ)^3$,
	\begin{equs} \label{eq: trident fourier bound}
	\sup_{k > 0}\EE \Big|\cF \<30>_k(n)\Big|^2	
	\leq
	\frac{CN^3}{\langle n \rangle^4}.
	\end{equs}

	As a consequence, for every $p \in [1,\infty)$ and $s < \frac 12$, there exists $C=C(p,s,\eta)>0$ such that
	\begin{equs} 
		\sup_{k>0}\EE \Big[ \| \<30>_K \|^p_{B^s_{p,p}} \Big]
		\leq
		C.
	\end{equs}
\end{lem}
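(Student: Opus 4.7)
The plan is to reduce the Besov norm estimate to the Fourier mode bound \eqref{eq: trident fourier bound}, then establish the latter by a direct second-moment computation. Since $\<30>_K$ lies in the third inhomogeneous Wiener chaos, Nelson's hypercontractivity applied to each Littlewood--Paley block $\Delta_j\<30>_K$, together with space-stationarity of the law, gives $\EE\|\Delta_j \<30>_K\|_{L^p}^p\lesssim_p (\EE|\Delta_j\<30>_K(0)|^2)^{p/2}$. Parseval in the normalised measure $\dbar x$ rewrites the right-hand side as $N^{-6}\sum_n \psi_j(n)^2\,\EE|\cF\<30>_K(n)|^2$; combining with \eqref{eq: trident fourier bound} and the standard count $|\{n\in(N^{-1}\ZZ)^3:|n|\sim 2^j\}|\lesssim 2^{3j}N^3$ yields $\EE|\Delta_j\<30>_K(0)|^2\leq C\,2^{-j}$. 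Summing then gives $\EE\|\<30>_K\|_{B^s_{p,p}}^p\lesssim_p \sum_j 2^{jp(s-\frac12)}$, which is finite for all $s<\tfrac12$.

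To establish \eqref{eq: trident fourier bound}, I would first use that $\cJ_{k'}^2$ is the Fourier multiplier with symbol $\partial_{k'}\rho_{k'}^2/\langle\cdot\rangle^2$, so that
\begin{equs}
\cF\<30>_k(n)=\frac{1}{\langle n\rangle^2}\int_0^k \partial_{k'}\rho_{k'}^2(n)\,\cF\<3>_{k'}(n)\,dk'.
\end{equs}
Applying Isserlis' identity to the Wick cube $\<3>_{k'}$ identifies the time covariance with a three-fold convolution of $G_{k_1\wedge k_2}$ with itself:
\begin{equs}
\EE\big[\cF\<3>_{k_1}(n)\,\overline{\cF\<3>_{k_2}(n)}\big]=\frac{6}{N^3}\sum_{n_1+n_2+n_3=n}\prod_{i=1}^{3}\frac{\rho_{k_1\wedge k_2}^2(n_i)}{\langle n_i\rangle^2}.
\end{equs}
Inserting this into the double integral over $(k_1,k_2)$ will express $\EE|\cF\<30>_k(n)|^2$ as a weighted restricted lattice sum.

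The crucial structural input from $\cJ$ is that $\partial_{k'}\rho_{k'}^2(n)$ is supported on $|n|\sim k'$, so the $(k_1,k_2)$ integration is effectively localised at $k_1,k_2\sim|n|$. Since $\rho\in C^\infty_c$, $\rho_{k_1\wedge k_2}^2(n_i)$ then confines each internal frequency to $|n_i|\lesssim|n|$. Bounding $\prod_i\rho_{k_1\wedge k_2}^2(n_i)\leq \prod_i\1_{|n_i|\leq C|n|}$ and using symmetry together with the identity $\int_0^k \partial_{k_1}(\rho_{k_1}^4(n))\,dk_1=\rho_k^4(n)\leq 1$ reduces the $(k_1,k_2)$ integral to a bounded constant. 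What remains is the restricted sum $\sum_{n_1+n_2+n_3=n,\,|n_i|\leq C|n|}\prod_i\langle n_i\rangle^{-2}$, which standard 3D lattice convolution estimates bound by $CN^6$ via a region decomposition according to how many of the $|n_i|$ are comparable to $|n|$ (noting that the region where all three are much smaller than $|n|$ is geometrically empty). Dividing by $N^3\langle n\rangle^4$ then gives \eqref{eq: trident fourier bound}.

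The main obstacle is the marginal nature of the sunset-type sum in three dimensions: without the $\cJ^2$-induced restriction $|n_i|\lesssim|n|$, the sum would diverge logarithmically, mirroring the classical logarithmic divergence of the unrenormalised sunset diagram $\<sunset>_k$ (cf.\ Remark \ref{rem: growth of renorm}). The compactly-supported symbol $\partial_{k'}\rho_{k'}^2$ delivers exactly the UV cutoff on internal frequencies required to render the sum finite, and of the correct order $N^6$ so as to produce the claimed $N^3$-factor after the $1/N^3$ prefactor.
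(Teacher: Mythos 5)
Your reduction of the Besov-norm estimate to the Fourier-mode bound — Nelson hypercontractivity on each Littlewood--Paley block, space-stationarity to collapse to a single-point second moment, then Parseval — is the same as the paper's. The genuine difference is in how \eqref{eq: trident fourier bound} is established. The paper substitutes the iterated It\^o-integral representation of $\<3>$ into $\<30>_k=\int_0^k\cJ_{k'}^2\<3>_{k'}\,dk'$, applies It\^o's isometry, and then performs a small ``$\kappa$-shuffle'': on the support of $\partial_{k_1}\rho_{k_1}^2(n_1)\,\partial_{k'}\rho_{k'}^2(n)$ with $k_1\leq k'$ one has $\langle n_1\rangle\lesssim\langle k_1\rangle\lesssim\langle n\rangle$, so a factor $\langle n\rangle^{\kappa}/\langle n_1\rangle^{\kappa}\gtrsim 1$ can be inserted; this pushes the exponents off the marginal line so Lemma \ref{lem: discrete convolution bound} applies twice and the $\kappa$ cancels in the end, giving $\langle n\rangle^{-4}$ exactly. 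You instead compute the time covariance $\EE[\cF\<3>_{k_1}(n)\overline{\cF\<3>_{k_2}(n)}]=\tfrac{6}{N^3}\sum_{n_1+n_2+n_3=n}\prod_i\rho^2_{k_1\wedge k_2}(n_i)\langle n_i\rangle^{-2}$ directly via Wick's theorem (using that $\<1>_\bullet$ is an $L^2$-martingale so the time-covariance carries $\rho^2_{k_1\wedge k_2}$), integrate the $(k_1,k_2)$-dependence out via the symmetric FTC identity $\int_0^k\int_0^k\partial_{k_1}\rho_{k_1}^2\,\partial_{k_2}\rho_{k_2}^2\,dk_1dk_2=\rho_k^4\leq 1$ (valid since $\rho$ decreasing makes $\partial_k\rho_k^2\geq 0$), and then use that the frequency support of $\cJ$ localises \emph{all three} internal frequencies to $|n_i|\lesssim|n|$, which is what renders the marginal sunset sum finite. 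For the last step you need the region decomposition you sketch — in fact the simplest version is that $n_1+n_2+n_3=n$ forces some $|n_j|\geq|n|/3$, so $\langle n_j\rangle^{-2}\lesssim\langle n\rangle^{-2}$, and the remaining two free sums each contribute $\tfrac{1}{N^3}\sum_{|m|\lesssim|n|}\langle m\rangle^{-2}\lesssim\langle n\rangle$, which cancels the $\langle n\rangle^{-2}$. Both routes work; yours makes the mechanism defeating the logarithmic divergence (the hard UV cutoff on internal momenta) explicit and avoids the $\kappa$-trick, at the cost of having to carry the restricted-sum bound by hand rather than plugging into the ready-made convolution lemma.
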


\begin{proof}
Inserting \eqref{eq: iterated integral third wick} in the definition of $\<30>_k$ and switching the order of integration,
\begin{equs}
\cF \<30>_k(n)
&=	
\frac 6{N^\frac 32} \sum_{n_1 + n_2 + n_3 = n} \int_0^k \frac{\partial_{k'}\rho_{k'}^2( n )}{\langle n \rangle^2} \int_0^{k'} \int_0^{k_1} \int_0^{k_2} 
\\
&\quad\quad\quad\quad\quad\quad\quad\quad\quad
 \times 
 \Bigg( \prod_{i=1}^3 \frac{\sqrt{\partial_{k_i} \rho_{k_i}^2( n_i )}}{\langle n_i \rangle} \Bigg) dB_{k_3}^{n_3} dB_{k_2}^{n_2} dB_{k_1}^{n_1} dk'
\\
&=
\frac 6{N^\frac 32}\sum_{n_1 + n_2 + n_3 = n} \int_0^k \int_0^{k_1} \int_0^{k_2} \Bigg( \int_{k_1}^k \frac{\partial_{k'} \rho_{k'}^2(n)}{\langle n \rangle^2} dk' \Bigg)
\\
&\quad\quad\quad\quad\quad\quad\quad\quad\quad
\times\Bigg(\prod_{i=1}^3 \frac{\sqrt{\partial_{k_i}\rho_{k_i}^2(n_i)}}{\langle n_i \rangle} \Bigg) dB_{k_3}^{n_3} dB_{k_2}^{n_2} dB_{k_1}^{n_1}.
\end{equs}

Therefore, by It\^o's formula,
\begin{equs}\label{eq:trident1}
\begin{split}
\EE \Big| &\cF \<30>_K (n) \Big|^2
\\
&\leq
\frac {36}{N^3}\sum_{n_1 + n_2 + n_3 = n} \int_0^k \int_0^{k_1} \int_0^{k_2} \Bigg( \int_{k_1}^k \frac{\partial_{k'} \rho_{k'}^2( n )}{\langle n \rangle^2} dk' \Bigg)^2 
\\
&\quad\quad\quad\quad\quad\quad\quad\quad\quad
\times \Bigg(\prod_{i=1}^3 \frac{\partial_{k_i}\rho_{k_i}^2(n_i)}{\langle n_i \rangle^2}\Bigg) dk_3 dk_2 dk_1
\\
&\leq
\frac {36}{N^3}\sum_{n_1 + n_2 + n_3 = n} \frac{1}{\langle n_2 \rangle^2 \langle n_3 \rangle^2} \int_0^k \Bigg( \int_{k_1}^k \frac{\partial_{k'} \rho_{k'}^2(n)}{\langle n \rangle^2} dk' \Bigg)^2 \frac{\partial_{k_1}\rho_{k_1}^2(n_1)}{\langle n_1 \rangle^2} dk_1
\end{split}
\end{equs}
where we have performed the $k_2$ and $k_3$ integrations, and used that $|\rho_k| \leq 1$.

Recall that $\partial_{k'} \rho_{k'}^2$ is supported on frequencies $| n | \in (c_\rho k', C_\rho k')$. Hence, for any $\kappa > 0$,
\begin{equs} \label{eq:trident2}
\begin{split}
\eqref{eq:trident1}
&\lesssim
\frac 1{N^3}\sum_{n_1 + n_2 + n_3 = n} \frac{1}{\langle n_2 \rangle^2 \langle n_3 \rangle^2}
 \intt \Bigg( \int_{k_1}^k \frac{\partial_{k'} \rho_{k'}^2(n)}{\langle n \rangle^{2-\frac\kappa2}} dk' \Bigg)^2 \frac{\partial_{k_1}\rho_{k_1}^2(n_1)}{\langle n_1 \rangle^2 \langle k_1 \rangle^{\kappa}} dk_1
\\
&\leq
\frac 1{N^3}\sum_{n_1 + n_2 + n_3 = n} \frac{1}{\langle n \rangle^{4-\kappa}\langle n_2 \rangle^2 \langle n_3 \rangle^2} \int_0^k \frac{\partial_{k_1}\rho_{k_1}^2(n_1)}{\langle n_1 \rangle^{2+\kappa}} dk_1 
\\
&\lesssim
\frac 1{N^3}\sum_{n_1 + n_2 + n_3 = n} \frac{1}{\langle n \rangle^{4-\kappa}\langle n_1 \rangle^{2+\kappa} \langle n_2 \rangle^2 \langle n_3 \rangle^2} 
\lesssim 
\frac{N^3}{\langle n \rangle^4},
\end{split}
\end{equs}
where $\lesssim$ means $\leq$ up to a constant depending only on $\eta$, $c_\rho$ and $C_\rho$; the last inequality uses standard bounds on discrete convolutions contained in Lemma \ref{lem: discrete convolution bound}; and we have used that the double convolution produces a volume factor of $N^6$. Note that, as said in Section \ref{sec: model}, we omit the dependence on $c_\rho$ and $C_\rho$ in the final bound.

By Fubini's theorem, Nelson's hypercontractivity estimate \cite{N73} (or the related Burkholder-Davis-Gundy inequality \cite[Theorem 4.1]{RY13}), and space-stationarity
\begin{equs} \label{eq: trident 3}
\begin{split}
\EE \| \<30>_k \|_{B^s_{p,p}}^p
&=
\sum_{j \geq -1} 2^{jps} \EE \|\Delta_j \<30>_k \|_{L^p}^p
\\
&=
\sum_{j \geq -1} 2^{jps} \intx \EE |\Delta_j \<30>_k(x)|^p \dbar x
\\
&\lesssim
\sum_{j \geq -1} 2^{jps} \intx \Big( \EE |\Delta_j \<30>_k(x)|^2 \Big)^\frac p2 \dbar x
\\
&=
\sum_{j \geq -1} 2^{jps} \Big( \EE |\Delta_j \<30>_k(0)|^2 \Big)^\frac p2
\end{split}
\end{equs}
where $\Delta_j$ is the $j$-th Littlewood-Paley block defined in Appendix \ref{appendix: toolbox} and we recall $\dbar x = \frac{dx}{N^3}$.

We overload notation and also write $\Delta_j$ to mean its corresponding Fourier multiplier. Then, by space-stationarity, for any $j \geq -1$,
\begin{equs} \label{eq: trident 4}
\begin{split}
\EE |\Delta_j \<30>_k(0)|^2 
&=
\intx \EE |\Delta_j \<30>_k(x)|^2 \dbar x
\\
&=
\frac 1{N^6} \sum_{n} |\Delta_j(n)|^2 \EE | \cF \<30>_k (n)|^2
\\
&\lesssim
\frac 1{N^3} \sum_{n} \frac{\Delta_j(n)^2}{\langle n \rangle^4}
\lesssim
\frac{2^{3j}}{2^{4j}}
=
\frac{1}{2^j}.
\end{split}
\end{equs}

Inserting \eqref{eq: trident 4} into \eqref{eq: trident 3} we obtain
\begin{equs}
\EE \| \<30>_K \|_{B^s_{p,p}}^p
\lesssim
\sum_{j \geq -1} 2^{jps} 2^{-\frac p2 j}	
\end{equs}
which converges provided $s < \frac 12$, thus finishing the proof.
\end{proof}

\subsection{The Bou\'e-Dupuis formula}

Fix an ultraviolet cutoff $K$. Recall that we are interested in Gaussian expectations of the form
\begin{equs}
\EE_N e^{-\cH(\phi_K)}	
\end{equs}
where $\cH(\phi_K) = \cH_{\beta,N,K}(\phi_K) + Q_K(\phi_K)$. 

We may represent such expectations on $(\Omega, \cA, (\cA_k)_{k \geq 0}, \PP)$:
\begin{equs} \label{eq: expectation representation}
\EE_N e^{-\cH(\phi_K)}	
=
\EE e^{-\cH(\<1>_K)}.
\end{equs}
The key point is that the righthand side of \eqref{eq: expectation representation} is written in terms of a measurable functional of Brownian motions. This allows us to exploit continuous time martingale techniques, crucially Girsanov's theorem \cite[Theorems 1.4 and 1.7]{RY13}, to reformulate \eqref{eq: expectation representation} as a stochastic control problem.

Let $\HH$ be the set of processes $v_\bullet$ that are $\PP$-almost surely in $L^2(\RR_+;L^2(\TTN))$ and progressively measurable with respect to $(\cA_k)_{k \geq 0}$. We call this the space of drifts. For any $v \in \HH$, let $V_\bullet$ be the process defined by
\begin{equs}
V_k = \int_0^k \cJ_{k'} v_{k'} d{k'}.
\end{equs} 
For our purposes, it is sufficient to consider the subspace of drifts $\HH_K \subset \HH$ consisting of $v \in \HH$ such that $v_k = 0$ for $k > K$.

We also work with the subset of bounded drifts $\HH_{b,K} \subset \HH_K$, defined as follows: for every $M \in \NN$, let $\HH_{b,M,K}$ be the set of $v \in \HH_K$ such that
\begin{equs}\label{eq: almost sure l2 boundedness}
\int_0^K \intx	v_k^2  dx dk 
\leq
M
\end{equs}
$\PP$-almost surely. Set $\HH_{b,K} = \bigcup_{M \in \NN} \HH_{b,M,K}$.

The following proposition is the main tool of this section.
\begin{prop}\label{prop: bd}
Let $N \in \NN$ and $\cH:C^\infty(\TTN) \rightarrow \RR$ be measurable and bounded. Then, for any $K > 0$,
\begin{equs} \label{eq: boue dupuis}
	- \log \EE \Big[ e^{-\cH(\<1>_K)} \Big] 
	= 
	\inf_{v} \EE \Big[ \cH(\<1>_K + V_K) + \frac 12 \int_0^K \intx v_k^2 dx dk \Big]
\end{equs}
where the infimum can be taken over $v$ in $\HH_K$ or $\HH_{b,K}$. 
\end{prop}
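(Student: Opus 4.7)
The plan is to prove the upper and lower bounds for $-\log \EE[e^{-\cH(\<1>_K)}]$ separately, first with the infimum restricted to bounded drifts $\HH_{b,K}$, and then to show that enlarging the class to $\HH_K$ does not decrease the infimum. The key structural simplification is that the multiplier $\cJ_k$ is frequency-supported in $|n| \in (c_\rho k, C_\rho k)$, so $\<1>_K$ is measurable with respect to the $\sigma$-algebra generated by the finite collection $\{B^n_\bullet : |n| \lesssim K\}$. Modulo this, the identity is an infinite-dimensional rephrasing of the classical Bou\'e--Dupuis formula, the only novelty being that the shift in $\<1>_K$ induced by $v$ is mediated by the smoothing $\cJ_k$.

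For the upper bound, I would fix $v \in \HH_{b,M,K}$. The almost sure bound $\int_0^K \|v_k\|_{L^2}^2 dk \leq M$ verifies Novikov's condition, so $D_v := \exp\bigl(-\int_0^K \langle v_k, dX_k\rangle_{L^2} - \frac{1}{2}\int_0^K \|v_k\|_{L^2}^2 dk\bigr)$ is a mean-one $\PP$-martingale. Girsanov's theorem applied to $D_v \cdot \PP$ shows that $X_\cdot + \int_0^\cdot v_k dk$ is a cylindrical Brownian motion under this new measure, and hence $\EE[e^{-\cH(\<1>_K)}] = \EE[e^{-\cH(\<1>_K + V_K)} D_v]$. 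Taking $-\log$ and invoking Jensen's inequality for the convex function $y \mapsto -\log y$, combined with the vanishing $\PP$-mean of $\int_0^K \langle v_k, dX_k\rangle_{L^2}$ (again using boundedness of $v$), produces the upper bound.

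For the lower bound, I would define the bounded positive $\PP$-martingale $Z_t = \EE[e^{-\cH(\<1>_K)} \mid \cA_t]$ for $t \in [0,K]$, which is moreover bounded away from zero since $\cH$ is bounded. By the martingale representation theorem applied within the finite-dimensional Brownian filtration generated by the relevant $B^n_\bullet$'s, there is an adapted $L^2(\TTN)$-valued process $h$ with $dZ_t = Z_t \langle h_t, dX_t\rangle_{L^2}$. It\^o's formula applied to $-\log Z_t$ yields the $\PP$-almost sure identity $-\log \EE[e^{-\cH(\<1>_K)}] = \cH(\<1>_K) + \int_0^K \langle h_k, dX_k\rangle_{L^2} - \frac{1}{2}\int_0^K \|h_k\|_{L^2}^2 dk$. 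Taking expectation after performing a Girsanov change of measure with drift $h$ on this identity converts the stochastic integral into a mean-zero term and produces the matching lower bound at the candidate optimal drift $v^* = h$.

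The main obstacle is that $h$ need not be almost surely bounded in $L^2([0,K] \times \TTN)$, so $v^* \notin \HH_{b,K}$ in general, and one must also verify that the two stated infima agree. To handle both issues at once, I would truncate by $v^{*,M}_k := h_k \1_{A_M}$, where $A_M := \{\int_0^K \|h_s\|_{L^2}^2 ds \leq M\}$; then $v^{*,M} \in \HH_{b,M,K}$, and by boundedness of $\cH$ together with dominated convergence as $M \to \infty$, the costs for $v^{*,M}$ converge to the common optimal value, giving equality of the infima over $\HH_{b,K}$ and $\HH_K$. The effective finite-dimensionality from the cutoff $K$ ensures all required integrability is automatic and legitimises the martingale representation step.
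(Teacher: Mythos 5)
The paper does not actually prove Proposition~\ref{prop: bd}: it simply cites Bou\'e--Dupuis \cite{BD98} and the formulation in Budhiraja--Dupuis \cite[Theorem 8.3]{BDu19}. So your attempt to reprove the formula from scratch takes a genuinely different (and more ambitious) route, and the outline you give is indeed the skeleton of the standard Bou\'e--Dupuis argument: Girsanov + Jensen for one inequality, martingale representation + Girsanov for the other, and a truncation to pass between drift classes. That said, there is a concrete error that breaks the final step.

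The truncation you propose, $v^{*,M}_k := h_k \1_{A_M}$ with $A_M := \{\int_0^K \|h_s\|_{L^2}^2\,ds \le M\}$, is not progressively measurable: the event $A_M$ depends on the entire path of $h$ up to the terminal time $K$, so $v^{*,M}_k$ at time $k<K$ is not $\cA_k$-measurable. Consequently $v^{*,M}$ does not lie in $\HH_K$, let alone in $\HH_{b,M,K}$, and the claim that its cost can be compared to $-\log\EE[e^{-\cH(\<1>_K)}]$ via the class $\HH_{b,K}$ does not go through. The repair is standard: replace $A_M$ by the stopping-time truncation $\tau_M := \inf\{t\colon \int_0^t\|h_s\|_{L^2}^2\,ds \ge M\}$ and set $v^{*,M}_k := h_k\,\1_{\{k\le \tau_M\}}$, which is progressively measurable and satisfies the a.s.\ bound $\int_0^K\intx (v^{*,M}_k)^2\,dx\,dk\le M$, so $v^{*,M}\in\HH_{b,M,K}$ as required. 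One then still needs to argue that the costs converge; monotone convergence handles the entropy term (using $\EE_\PP\int_0^K\|h_k\|^2dk<\infty$, which follows from boundedness of $\cH$ and mutual absolute continuity of $\QQ$ and $\PP$), but passage to the limit in $\EE_\PP[\cH(\<1>_K+V^{*,M}_K)]$ uses only that $\cH$ is bounded \emph{measurable}, and hence requires more than the bare dominated-convergence appeal you make; the standard treatment in \cite{BD98,BDu19} handles this by first proving the formula for Lipschitz $\cH$ and then extending by monotone-class arguments.

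One further step you compress too much: after writing the a.s.\ identity via It\^o's formula, "taking expectation after performing a Girsanov change of measure with drift $h$" does not by itself produce a member of the admissible class $\HH_K$ on the right-hand side of \eqref{eq: boue dupuis}. Under the tilted measure $\QQ = (Z_K/Z_0)\,\PP$, the process $\tilde X := X - \int_0^\cdot h_s\,ds$ is a $\QQ$-Brownian motion, and one must re-express $h$ as a progressively measurable functional of $\tilde X$ (solving a forward fixed-point relation $X_t = \tilde X_t + \int_0^t h_s\,ds$) and then relabel $\tilde X$ as the driving Brownian motion under $\PP$. This adaptedness transfer is precisely the delicate part of the Bou\'e--Dupuis proof and is worth stating rather than absorbing into a single Girsanov step.

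Finally, a small precision: the definition of $\HH_{b,M,K}$ in the paper imposes the a.s.\ bound \eqref{eq: almost sure l2 boundedness}, which is exactly why a genuinely adapted truncation is needed for the claim $v^{*,M}\in\HH_{b,M,K}$; as written, your $v^{*,M}$ satisfies the bound but fails progressive measurability.
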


\begin{proof}
\eqref{eq: boue dupuis} was first established by Bou\'e and Dupuis \cite{BD98}, but we use the version in \cite[Theorem 8.3]{BDu19}, adapted to our setting.
\end{proof}

We cannot directly apply Proposition \ref{prop: bd} for the case $\cH = \cH_{\beta,N,K} + Q_K$ because it is not bounded. To circumvent this technicality, we introduce a \textit{total energy cutoff} $E \in \NN$. Since $K$ is taken \textit{fixed}, $\cH_{\beta,N,K} + Q_K$ is bounded from below. Hence, by dominated convergence
\begin{equs} \label{eq: total energy cutoff approx}
\lim_{E \rightarrow \infty}\EE_N e^{-\big(\cH_{\beta,N,K}(\phi_K) + Q_K(\phi_K)\big) \wedge E}
=
\EE_N e^{- \cH_{\beta,N,K}(\phi_K) + Q_K(\phi_K)}.	
\end{equs}

We apply Proposition \ref{prop: bd} to $\cH = \big(\cH_{\beta,N,K} + Q_K\big) \wedge E$. For the lower bound on the corresponding variational problem, we establish estimates that are uniform over $v \in \HH_{b,K}$. For the upper bound, we establish estimates for a specific choice of $v \in \HH_K$ which is constructed via a fixed point argument. All estimates that we establish are independent of $E$. Hence, using \eqref{eq: total energy cutoff approx} and the representation \eqref{eq: expectation representation}, they carry over to $\EE_N e^{-\cH_{\beta,N,K}(\phi_K) + Q_K(\phi_K)}$. We suppress mention of $E$ unless absolutely necessary.

\begin{rem} \label{rem: local martingale issue}
The assumption that $\cH$ is bounded allows the infimum in \eqref{eq: boue dupuis} to be interchanged between $\HH_K$ and $\HH_{b,K}$. The use of $\HH_{b,K}$ allows one to overcome subtle stochastic analysis issues that arise later on: specifically, justifying certain stochastic integrals appearing in Lemmas \ref{lem: perturb 1} and \ref{lem: perturb 2} are martingales and not just local martingales. See Lemma \ref{lem: martingales}. The additional boundedness condition is important in the lower bound on the variational problem as the only other a priori information that we have on $v$ there is that $\EE \intt \intx v_k^2 dx dk < \infty$, which alone is insufficient. On the other hand, the candidate optimiser for the upper bound is constructed in $\HH_K$, but it has sufficient moments to guarantee the aforementioned stochastic integrals in Lemma \ref{lem: martingales} are martingales. See Lemma \ref{lem: bg drift}.
\end{rem}

\begin{rem}
	A version of the Bou\'e-Dupuis formula for $\cH$ measurable and satisfying certain integrability conditions is given in \cite[Theorem 7]{U14}. These integrability conditions are broad enough to cover the cases that we are interested in, and it is required in \cite{BG19} to identify the Laplace transform of $\phi^4_3$. However, it is not clear to us that the infimum in the corresponding variational formula can be taken over $\HH_{b,K}$. Therefore, it seems that the stochastic analysis issues discussed in Remark \ref{rem: local martingale issue} cannot be resolved directly using this version without requiring some post-processing (e.g. via a dominated convergence argument with a total energy cutoff as above).
\end{rem}

\subsubsection{Relationship with the Gibbs variational principle}

Given a drift $v \in \HH_K$, we define the measure $\QQ$ whose Radon-Nikodym derivative with respect to $\PP$ is given by the following stochastic exponential:
\begin{equs}\label{eq: stochastic exponential}
d\QQ
=
e^{\intt v_k dX_k - \frac 12 \intx \intt v_k^2 dk dx }	d\PP.
\end{equs}
Let $\HH_{c,K}$ be the set of $v \in \HH_K$ such that its associated measure defined in \eqref{eq: stochastic exponential} is a probability measure, i.e. the expectation of the stochastic integral is $1$. Then, by Girsanov's theorem \cite[Theorems 1.4 and 1.7 in Chapter VIII]{RY13} it follows that the process $X_\bullet$ is a semi-martingale under $\QQ$ with decomposition:
\begin{equs}
	X_K
	=
	X_K^v + \int_0^K v_k dx
\end{equs}
where $X^v_\bullet$ is an $L^2$ cylindrical Brownian motion with respect to $\QQ$. This induces the decomposition
\begin{equs}\label{eq: girsanov decomp}
\<1>_K
=
\<1>_K^v + V_K	
\end{equs}
where $\<1>_K^v = \int_0^K \cJ_k dX^v_k$. 

\begin{lem}\label{lem: gibbs var principle}
Let $N \in \NN$ and $\cH:C^\infty(\TTN) \rightarrow \RR$ be measurable and bounded from below. Then, for any $K > 0$,
\begin{equs}\label{eq: gibbs var principle 0}
- \log \EE e^{-\cH(\<1>_K)}
=
\min_{v \in \HH_{c,K}} \EE_{\QQ} \Big[ \cH(\<1>_K^v + V_K) + \frac 12 \int_0^\infty \intx v_k^2 dx dk  \Big]
\end{equs}
where $\EE_\QQ$ denotes expectation with respect to $\QQ$.
\end{lem}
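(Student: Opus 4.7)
The plan is to deduce Lemma \ref{lem: gibbs var principle} from Proposition \ref{prop: bd} together with Girsanov's theorem and the standard Gibbs/Donsker--Varadhan entropy inequality. I first reduce to the case where $\cH$ is bounded above as well as below by applying the statement to $\cH \wedge E$ and sending $E \to \infty$, as in the total-energy-cutoff reduction around \eqref{eq: total energy cutoff approx}: the left-hand side converges by dominated convergence and the right-hand side by monotone convergence using $\cH \wedge E \uparrow \cH$.

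For the inequality $-\log \EE e^{-\cH(\<1>_K)} \leq \EE_\QQ[\cH(\<1>_K^v + V_K) + \tfrac 12 \int v^2]$ at any $v \in \HH_{c,K}$, Girsanov's theorem yields that $X^v_\bullet = X_\bullet - \int_0^\bullet v_k\,dk$ is an $L^2$ cylindrical Brownian motion under $\QQ$, so $\<1>_K = \<1>_K^v + V_K$ pathwise under $\QQ$. Substituting $dX = dX^v + v\,dk$ in $\log(d\QQ/d\PP)$ gives
\begin{equs}
H(\QQ|\PP)
= \EE_\QQ\Big[\tfrac{1}{2}\int_0^K\!\!\intx v_k^2\,dx\,dk + \int_0^K v_k\,dX^v_k\Big],
\end{equs}
and the stochastic-integral contribution vanishes provided $\int v\,dX^v$ is a true $\QQ$-martingale. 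Combining this with the classical Gibbs inequality $-\log \EE_\PP e^{-\cH} \leq \EE_\QQ \cH + H(\QQ|\PP)$ --- which is immediate from Jensen applied to $\EE_\PP e^{-\cH} = \EE_\QQ[e^{-\cH - \log(d\QQ/d\PP)}]$ --- and the identity $\EE_\QQ \cH(\<1>_K) = \EE_\QQ \cH(\<1>_K^v + V_K)$ yields the claim.

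For the matching lower bound and the fact that the infimum is attained, set $\QQ_* = \sZ^{-1} e^{-\cH(\<1>_K)}\,d\PP$ with $\sZ = \EE_\PP e^{-\cH(\<1>_K)}$. Since $\cH$ is bounded and $\<1>_K$ is $\cA_K$-measurable, $d\QQ_*/d\PP$ is a positive, $\cA_K$-measurable random variable bounded away from $0$ and $\infty$. Because $(\cA_k)_{k \geq 0}$ is the natural filtration of $X$, the positive martingale $M_k = \EE_\PP[d\QQ_*/d\PP \mid \cA_k]$ admits a martingale representation, and It\^o's formula applied to $\log M$ produces a progressively measurable $v_*$ supported on $[0,K]$ such that $M$ is the stochastic exponential of $v_*$. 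One verifies $v_* \in \HH_{c,K}$ and that equality holds at $v = v_*$ in the Jensen step above, so the minimum is attained. The main obstacle is justifying that $\int v\,dX^v$ is a true $\QQ$-martingale, rather than merely a local martingale, for general $v \in \HH_{c,K}$ --- precisely the subtlety flagged in Remark \ref{rem: local martingale issue} --- which I would address by first proving the identity for $v \in \HH_{b,K}$, where this integrability is automatic, and then extending via standard localisation and truncation using the boundedness of $\cH$.
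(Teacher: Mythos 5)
Your proof is correct in outline and follows essentially the same route as the reference the paper points to for a complete argument (\cite[Proposition~4.4]{GOTW18}): the classical Gibbs/Donsker--Varadhan variational principle, Girsanov's theorem to identify $R(\QQ\|\PP)=\tfrac12\,\EE_\QQ\int_0^K\!\intx v_k^2\,dx\,dk$, and a martingale-representation argument to realise the optimal measure $\QQ_*\propto e^{-\cH(\<1>_K)}\,d\PP$ as a drift change. Since the paper itself only cites, there is nothing in-paper to compare step by step, but your sketch is a faithful account of the cited proof. Two remarks. First, your opening claim that you are deducing the lemma ``from Proposition~\ref{prop: bd}'' is not borne out by what follows: the argument is self-contained and does not use the Bou\'e--Dupuis formula, which the paper treats as a logically separate statement with its own proof. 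Second, the reduction to bounded $\cH$ is both slightly loose and unnecessary. It is loose because a pointwise monotone limit of $E\mapsto\EE_\QQ[\cH\wedge E+\cdots]$ does not pass through the outer $\min_v$ without an argument; one only gets $\lim_E\min_v\leq\min_v\lim_E$ directly. It is unnecessary because the $\QQ_*$ construction already works when $\cH$ is merely real-valued and bounded below: the martingale $M_k=\EE_\PP[e^{-\cH(\<1>_K)}/\sZ_{}\mid\cA_k]$ is strictly positive since $e^{-\cH}>0$ pointwise, and $R(\QQ_*\|\PP)<\infty$ is automatic from $0\le R(\QQ_*\|\PP)=-\EE_{\QQ_*}\cH-\log\sZ\le C-\log\sZ$ with $-C$ a lower bound for $\cH$. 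You correctly flag the genuine crux, namely that $\int_0^\bullet v_k\,dX_k^v$ need only be a local $\QQ$-martingale for arbitrary $v\in\HH_{c,K}$; the standard resolution is to localise, apply Fatou, and conclude that either $\EE_\QQ\int v^2<\infty$ (whence the entropy identity holds) or $R(\QQ\|\PP)=+\infty$ (whence that $v$ is irrelevant to the minimum), rather than appealing to $\HH_{b,K}$, which is a different subclass than $\HH_{c,K}$.
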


\begin{proof}
\sloppy \eqref{eq: gibbs var principle 0} is a well-known representation of the classical Gibbs variational principle \cite[Proposition 4.5.1]{DE11}. Indeed, one can verify that $R(\QQ \| \PP) = \EE_{\QQ} \Big[ \int_0^\infty \intx v_k^2 dx dk\Big]$, where $R(\QQ \| \PP) = \EE_{\QQ} \log \frac{d\QQ}{d\PP}$ is the relative entropy of $\QQ$ with respect to $\PP$. A full proof in our setting is given in \cite[Proposition 4.4]{GOTW18}.
\end{proof}

 Proposition \ref{prop: bd} has several upshots over Lemma \ref{lem: gibbs var principle}. The most important for us is that drifts can be taken over a Banach space, thus allowing candidate optimisers to be constructed using fixed point arguments via contraction mapping. In addition, the underlying probability space is fixed (i.e. with respect to the canonical measure $\PP$), although this is a purely aesthetic advantage in our case. The cost of these upshots is that the minimum in \eqref{eq: gibbs var principle 0} is replaced by an infimum in \eqref{eq: boue dupuis}, and more rigid conditions on $\cH$ are required. We refer to \cite[Section 8.1.1]{BDu19} or \cite[Remark 1]{BG19} for further discussion. 
 
With the connection with the Gibbs variational principle in mind, we call $\cH(V_K)$ the drift (potential) energy and we call $\intt \intx v_k^2 dx dk$ the drift entropy.

\subsubsection{Regularity of the drift} \label{subsec: flat}

In our analysis we use intermediate scales between $0$ and $K$. As we explain in Section \ref{subsec: strategy}, this means that we require control over the process $V_\bullet$ in terms of the drift energy and drift entropy terms in \eqref{eq: boue dupuis}. 

The drift entropy allows a control of $V_\bullet$ in $L^2$-based topologies.
\begin{lem}\label{lem: drift bound}
For every  $v \in L^2(\RR_+;L^2(\TTN))$ and $K > 0$,
\begin{equs}\label{eq: l2 drift bound}
\sup_{0 \leq k \leq K} \|V_k \|^2_{H^1}
\leq
\intt \intx v_k^2 \dbar x dk.	
\end{equs}
\end{lem}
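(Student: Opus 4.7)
The plan is to reduce the estimate to a Fourier-space Cauchy--Schwarz computation exploiting the continuous decomposition of the covariance built into the definition of $\cJ_k$. The key structural input is the identity $\int_0^k \partial_{k'}\rho_{k'}^2(n)\,dk' = \rho_k^2(n) \leq 1$, which says precisely that the kernels $\cJ_{k'}$ sum up to the covariance $\langle n \rangle^{-2}$ modulated by the cutoff.

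First I would pass to Fourier modes. Since $V_k = \int_0^k \cJ_{k'} v_{k'}\,dk'$ and $\cJ_{k'}$ has symbol $\langle n \rangle^{-1} \sqrt{\partial_{k'}\rho_{k'}^2(n)}$, we have
\begin{equs}
\langle n \rangle \cF V_k(n) = \int_0^k \sqrt{\partial_{k'}\rho_{k'}^2(n)}\,\cF v_{k'}(n)\,dk'.
\end{equs}
By Cauchy--Schwarz in $k'$,
\begin{equs}
|\langle n \rangle \cF V_k(n)|^2 \leq \Big(\int_0^k \partial_{k'}\rho_{k'}^2(n)\,dk'\Big) \Big(\int_0^k |\cF v_{k'}(n)|^2\,dk'\Big) \leq \int_0^k |\cF v_{k'}(n)|^2\,dk',
\end{equs}
where the second inequality uses $\rho_k \leq 1$.

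Summing over $n \in (N^{-1}\ZZ)^3$ with the normalisation consistent with $\dbar x = N^{-3}dx$ (so that the Plancherel identity on $\TTN$ reads $\intx |f|^2 \dbar x = N^{-6}\sum_n |\cF f(n)|^2$, and the analogous identity defines $\|\cdot\|_{H^1}$ as the weighted $\ell^2$ norm with weight $\langle n\rangle^2$), this yields
\begin{equs}
\|V_k\|_{H^1}^2 \leq \int_0^k \intx v_{k'}^2\,\dbar x\,dk'
\end{equs}
for every $k \leq K$. Taking the supremum over $k \in [0,K]$ and extending the $k'$-integration to $[0,K]$ gives \eqref{eq: l2 drift bound}.

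There is no real obstacle here: the only subtlety is making sure the Fourier normalisations and the definition of $H^1$ are consistent with the $\dbar x$ convention used throughout the paper, but this is bookkeeping. The whole proof is essentially the observation that $\cJ_\bullet$ is, by construction, a square root (in $k$) of the covariance of $\rho_K$-regularised Gaussian free field, so integrating $v$ against it produces an isometry up to the contraction $\rho_k^2 \leq 1$.
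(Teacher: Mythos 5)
Your proof is correct and is the standard argument: pass to Fourier modes, apply Cauchy--Schwarz in the scale variable $k'$ exploiting $\int_0^k \partial_{k'}\rho_{k'}^2(n)\,dk' = \rho_k^2(n) \leq 1$, then sum over frequencies. This is precisely the "straightforward consequence of the definition of $\cJ_k$" that the paper defers to \cite[Lemma 2]{BG19}, so there is no substantive difference in approach; the bookkeeping with the $\dbar x$ normalisation is handled correctly.
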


\begin{proof}
\eqref{eq: l2 drift bound} is a straightforward consequence definition of $\cJ_k$, see \cite[Lemma 2]{BG19}.
\end{proof}

To control the homogeneity in our estimates, we also require bounds on $\|V_\bullet\|_{L^4}^4$. This is a problem: for our specific choices of $\cH$, the drift energy allows a control in $L^4$-based topologies at the \textit{endpoint} $V_K$. It is in general impossible to control the history of the path by the endpoint (for example, consider an oscillating process $V_\bullet$ with $V_K = 0$). We follow \cite{BG19} to sidestep this issue.

Let $\tilde\rho \in C^\infty_c(\RR_+;\RR_+)$ be non-increasing such that
\begin{equs}
\tilde\rho(x) =
\begin{cases}
	1 & \quad |x| \in \Big[ 0, \frac{c_\rho} 2 \Big] \\
    0 & \quad |x| \in \Big[ c_\rho, \infty \Big)	
\end{cases}
\end{equs}
and let $\tilde\rho_k(\cdot) = \tilde\rho(\frac{\cdot}{k})$ for every $k>0$.

Define the process $V^\flat_\bullet$ by
\begin{equs}
V_k^\flat
&=
\frac{1}{N^3} \sum_{n} \tilde\rho_k(n) \Bigg(\int_0^k \cJ_{k'}(n) \cF{v_{k'}}(n) dk'\Bigg)e^n.
\end{equs}
Note that $\cF(V_k^\flat)(n) = \cF(V_k)(n)$ if $|n| \leq \frac {c_\rho} 2$. Thus, $V^\flat_\bullet$ and $V_\bullet$ have the same low frequency/large-scale behaviour (hence the notation).

The two processes differ on higher frequencies/small-scales. Indeed, as a Fourier multiplier, $\tilde\rho_k \cJ_k = 0$ for $k' > k$. Hence, for any $k \leq K$,
\begin{equs}
V_k^\flat
=
\frac{1}{N^3} \sum_{n} \tilde\rho_k(n) \Bigg(\int_0^K \cJ_{k'}(n) \cF{v_{k'}}(n) dk'\Bigg)e^n
=
\tilde\rho_k V_K.
\end{equs}
This is sufficient for our purposes because $\tilde\rho_k$ is an $L^p$ multiplier for $p \in (1,\infty)$, and hence the associated operator is $L^p$ bounded for $p \in (1,\infty)$.

\begin{lem} \label{lem: flat drift bound}
	For any $p \in (1,\infty)$, there exists $C=C(p,\eta)>0$ such that, for every $v \in L^2(\RR_+;L^2(\TTN))$,
	\begin{equs} \label{eq: a priori flat drift}
	\sup_{0 \leq k \leq K}\|V_k^\flat\|_{L^p}
	\leq 
	C\|V_K\|_{L^p}.
	\end{equs}
	
	Moreover, for any $s,s' \in \RR$, $p \in (1,\infty)$, $q \in [1,\infty]$, there exists $C=C(s,s',p,q,\eta)$ such that, for every $v \in L^2(\RR_+;L^2(\TTN))$,
	\begin{equs} \label{eq: a priori derivative drift}
		\sup_{0 \leq k \leq K}\| \partial_k V_k^\flat \|_{B^{s'}_{p,q}} 
		\leq 
		C \frac{\|V_K\|_{B^s_{p,q}}}{\langle k \rangle^{1+s-s'}}.
	\end{equs}
\end{lem}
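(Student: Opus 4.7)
The starting point for both estimates is the identity $V_k^\flat = \tilde\rho_k V_K$ (for $k \leq K$) that was derived just before the statement of the lemma, where $\tilde\rho_k$ denotes the Fourier multiplier with symbol $n \mapsto \tilde\rho(n/k)$. Both bounds will then follow from standard estimates for smooth Fourier multipliers on $\TTN$; in the paper these are collected in Proposition \ref{prop: multiplier estimate}, and I will use them as a black box.

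For \eqref{eq: a priori flat drift}, the symbol $\tilde\rho(n/k)$ is a smooth, uniformly (in $k$) bounded function which together with all of its derivatives satisfies Mihlin-type bounds $|\partial^\alpha_n \tilde\rho(n/k)| \lesssim \langle n \rangle^{-|\alpha|}$ with constants independent of $k$. Equivalently, its inverse Fourier transform has an $L^1$-norm bounded uniformly in $k$ (by the scaling $\cF^{-1}\tilde\rho_k(x) = k^3 (\cF^{-1}\tilde\rho)(kx)$). Hence, by Young's inequality (or the Mihlin--H\"ormander multiplier theorem on $\TTN$), the operator $\tilde\rho_k$ is bounded on $L^p(\TTN)$ with norm depending only on $p \in (1,\infty)$ and $\tilde\rho$, and \eqref{eq: a priori flat drift} follows.

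For \eqref{eq: a priori derivative drift}, differentiate in $k$:
\begin{equs}
\partial_k V_k^\flat
=
(\partial_k \tilde\rho_k) V_K, \qquad \partial_k[\tilde\rho(n/k)] = -\frac{n}{k^2}\,\tilde\rho'(n/k).
\end{equs}
The symbol $m_k(n) := -\frac{n}{k^2}\tilde\rho'(n/k)$ is supported in the frequency annulus $|n| \in [c_\rho k/2,\, c_\rho k]$ and satisfies $|m_k(n)| \lesssim \langle k \rangle^{-1}$, with analogous bounds for derivatives. Writing $m_k(\xi) = k^{-1} \psi(\xi/k)$ for the fixed Schwartz function $\psi(y) = -y\,\tilde\rho'(y)$ makes this scaling transparent and confirms the Mihlin-type hypotheses uniformly in $k \geq 1$. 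Because $\partial_k V_k^\flat$ is frequency-localized at scale $\langle k \rangle$, a Bernstein-type argument combined with the multiplier estimate yields, for any $s, s' \in \RR$, $p \in (1,\infty)$, $q \in [1,\infty]$,
\begin{equs}
\|\partial_k V_k^\flat\|_{B^{s'}_{p,q}}
\lesssim
\langle k \rangle^{s' - s - 1} \|V_K\|_{B^s_{p,q}},
\end{equs}
which is exactly \eqref{eq: a priori derivative drift}.

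The mildly delicate point is the regime $k \lesssim 1$, where the continuum heuristic $|n| \asymp k$ only makes sense on $\TTN$ once $k \gtrsim N^{-1}$. However, for very small $k$ the support of $m_k$ contains at most the zero mode of $\TTN$, on which $m_k$ vanishes, so $\partial_k V_k^\flat \equiv 0$ and the estimate is trivial. For $k \gtrsim N^{-1}$, the bound $\langle k \rangle^{-1}$ on the symbol together with the uniform Mihlin estimates gives the result with constants independent of $N$, so one obtains \eqref{eq: a priori derivative drift} uniformly in $0 \leq k \leq K$. The main obstacle is thus purely bookkeeping: making sure that the multiplier constants are genuinely independent of $k$ and $N$, which is handled by the scaling $m_k(\xi) = k^{-1}\psi(\xi/k)$ and the Mihlin-type estimates from Proposition \ref{prop: multiplier estimate}.
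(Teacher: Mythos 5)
Your approach is the same as the paper's: both use the identity $V_k^\flat = \tilde\rho_k V_K$, the $L^p$-multiplier property of $\tilde\rho_k$ for \eqref{eq: a priori flat drift}, and the annular Fourier support of $\partial_k V_k^\flat$ combined with Bernstein for \eqref{eq: a priori derivative drift} --- exactly the route the paper sketches and attributes to \cite[Lemma 20]{BG19}. The argument for \eqref{eq: a priori flat drift} is fine.

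For \eqref{eq: a priori derivative drift} there is a gap in the small-$k$ regime. On the support $|n| \in [c_\rho k/2,\, c_\rho k]$ of $m_k = \partial_k\tilde\rho_k$ the correct pointwise bound is $|m_k(n)| = \frac{|n|}{k^2}|\tilde\rho'(|n|/k)| \lesssim k^{-1}$, and the scaling $m_k(\cdot) = -k^{-1}\cdot\frac{|\cdot|}{k}\tilde\rho'(|\cdot|/k)$ shows the $L^p$-multiplier norm of $m_k$ is likewise $\sim k^{-1}$, \emph{not} $\lesssim \langle k\rangle^{-1}$ as you assert. For $k \gtrsim 1$ these are comparable, and your argument correctly yields $k^{s'-s-1} \sim \langle k\rangle^{s'-s-1}$. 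But in the window $N^{-1} \lesssim k \ll 1$ the annulus does contain lattice frequencies, $k^{-1}$ can be as large as $\sim N$, and the Bernstein gain degenerates to a constant because every relevant frequency sits in $\Delta_{-1}$; what your argument actually delivers there is $\|\partial_k V_k^\flat\|_{B^{s'}_{p,q}} \lesssim k^{-1}\|V_K\|_{B^s_{p,q}}$, which is not controlled by $\langle k\rangle^{-(1+s-s')}\|V_K\|_{B^s_{p,q}}$ with an $N$-independent constant. Your observation that $m_k$ has empty support on the lattice only covers $k \lesssim N^{-1}$ and leaves this window untouched. To be fair, the paper's own one-line proof does not spell this out either, and in all applications the estimate is integrated in $dk$ against stochastic objects that vanish as $k \to 0$; but the blanket claim $|m_k| \lesssim \langle k\rangle^{-1}$ is not justified as written, and one should either carry the honest $k^{-1}$ bound through the subsequent integrals or argue separately that the window $N^{-1}\lesssim k\lesssim 1$ is benign.
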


\begin{proof}
\eqref{eq: a priori flat drift} and \eqref{eq: a priori derivative drift} are a consequence of the preceding discussion together with the observation that $\partial_k V_k^\flat$ is supported on an annulus in Fourier space and, subsequently, applying Bernstein's inequality \eqref{tool: bernstein annulus}. See \cite[Lemma 20]{BG19}.
\end{proof}

\section{Estimates on $Q$-random variables} \label{sec: free energy}

The main results of this section are upper bounds on expectations of certain random variables, derived from $Q_1,Q_2,$ and $Q_3$ defined in \eqref{def: Q_i}, that are uniform in $\beta$ and extensive in $N^3$.  

\begin{prop} \label{prop: q bound main}
For every $a_0 > 0$, there exist $\beta_0 = \beta_0(a_0,\eta) \geq 1$ and $C_Q = C_Q(a_0, \beta_0, \eta)>0$ such that the following estimates hold: for all $\beta > \beta_0$ and $a \in \RR$ satisfying $|a| \leq a_0$,
\begin{equs}
- \frac{1}{N^3} \log \Big\langle \prod_{\sBox \in \BBN} \exp (aQ_1(\Box)) \Big\rangle_{\beta,N}
&\geq
-C_Q
\\
- \frac{1}{N^3} \log \Big\langle \prod_{\sBox \in \BBN} \exp (aQ_2(\Box)) \Big\rangle_{\beta,N}
&\geq
-C_Q.
\end{equs}

In addition,
\begin{equs}
- \frac{1}{N^3} \log\Big\langle \prod_{ \{\sBox, \sBox'\} \in B} \exp (|aQ_3(\Box,\Box')|) \Big\rangle_{\beta,N}
\geq
-C_Q	
\end{equs}
where $B$ is any set of unordered pairs of nearest-neighbour blocks that partitions $\BBN$. 
\end{prop}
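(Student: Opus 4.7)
The plan is to apply Proposition \ref{prop: bd} to the ratio representation \eqref{eq: tilde q expectation} with an ultraviolet cutoff $K$, thereby reducing each assertion to a lower bound on the perturbed variational problem
\begin{equs}
\inf_{v \in \HH_{b,K}} \EE \Big[ \cH_{\beta,N,K}(\<1>_K + V_K) - a Q_i^K(\<1>_K + V_K) + \frac 12 \intt \intx v_k^2 dx dk \Big] \geq -C N^3,
\end{equs}
uniform in $K$, $|a| \leq a_0$, and the drift $v$; here $Q_i^K$ denotes the relevant sum with $:\phi^2:$ replaced by $:\phi_K^2:$. Since Proposition \ref{prop: phi4 existence} gives $-\log \sZ_{\beta,N,K} \leq C N^3$, such a bound implies the stated estimate. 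The overall argument closely parallels the proof of Proposition \ref{prop:zbounds}, with additional work needed to absorb the $Q_i$-perturbation into the double-well potential and the drift entropy.

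For $i=1$, I will exploit the telescoping identity $Q_1^K = \hfbeta N^3 - \frac{1}{\hfbeta} \intx :\phi_K^2: dx$: subtracting $aQ_1^K$ from $\cH_{\beta,N,K}$ shifts the coefficient of $:\phi_K^2:$ in $\cV_\beta$ from $-2$ to $-2 + a/\hfbeta$ and the constant from $\beta$ to $\beta - a\hfbeta$. For $\beta \geq \beta_0(a_0,\eta)$, completing the square shows that the resulting integrand is still a double-well whose minima shift by $O(1/\hfbeta)$ and whose minimum value changes by $O(\hfbeta)$; hence the low-temperature lower bound from Proposition \ref{prop:zbounds} applies almost verbatim, giving the required $-CN^3$ bound.

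For $i=2$, I use $Q_2^K = \frac{1}{\hfbeta}\bigl(\intx :\phi_K^2: dx - \sum_\sBox \phi_K(\Box)^2\bigr)$, decompose $\phi_K = \<1>_K + V_K$ via Girsanov, and split $\phi_K(\Box)^2 \leq 2\<1>_K(\Box)^2 + 2 V_K(\Box)^2$. The stochastic part $\sum_\sBox \<1>_K(\Box)^2$ has expectation controlled by Proposition \ref{prop: diagrams} together with the block-testing of Remark \ref{rem:block_av}, contributing $O(N^3)$. The drift part $\sum_\sBox V_K(\Box)^2 \lesssim \|V_K\|_{L^2}^2$ is absorbed by a small multiple of $\|V_K\|_{L^4}^4 + N^3$ via Young's inequality, and hence by the $\frac{1}{\beta}\intx :\phi_K^4: dx$ piece of $\cH_{\beta,N,K}$ as in the proof of Proposition \ref{prop:zbounds} using Lemma \ref{lem: flat drift bound}; the $1/\hfbeta$ prefactor ensures this absorption is viable for $\beta$ sufficiently large depending on $a_0$.

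For $i=3$, bounding $e^{|x|} \leq e^x + e^{-x}$ on each factor and applying a union bound over the $2^{|B|}$ sign patterns reduces the task to estimating
\begin{equs}
\sup_{\epsilon \in \{\pm 1\}^B} \Big\langle e^{a \intx \phi\, g_\epsilon dx} \Big\rangle_{\beta,N},
\end{equs}
where $g_\epsilon$ is piecewise constant on blocks with $|g_\epsilon|_\infty \leq 1$ and $\|g_\epsilon\|_{L^2}^2 \leq N^3$; the combinatorial factor costs at most $N^3 \log 2$ in the logarithm. Under the Girsanov decomposition, $\intx \<1>_K g_\epsilon dx$ is a centred Gaussian with variance $\lesssim \|g_\epsilon\|_{H^{-1}}^2 \lesssim N^3$, while $\intx V_K g_\epsilon dx$ is absorbed into a small fraction of the drift entropy via Cauchy--Schwarz and Lemma \ref{lem: drift bound}. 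The main obstacle throughout is the $i=2$ case: the non-translation-invariant term $\sum_\sBox \phi_K(\Box)^2$ must be integrated with the paracontrolled decomposition of $:\phi_K^4:$ and $:\phi_K^2:$ à la \cite{BG19} without damaging either the macroscopic double-well structure required for the low-temperature lower bound or the ultraviolet renormalisations.
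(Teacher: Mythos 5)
Your plan is essentially the paper's: represent the expectation as a ratio of Gaussian expectations, apply the Bou\'e--Dupuis formula, and repurpose the proof of Proposition~\ref{prop:zbounds} after absorbing the $Q_i$-perturbation. For $Q_1$ you identify exactly what the paper does (complete the square to see $\cV_\beta(\cdot) - \tfrac{a}{\hfbeta}(\beta-\cdot^2)$ is again a double well with the same quartic term, hence the same renormalisation constants and translation machinery carry over); two small arithmetic slips --- the minima shift by $O(1)$, not $O(1/\hfbeta)$, and the minimum value changes by $O(1)$, not $O(\hfbeta)$ --- do not affect the argument. For $Q_2$ the structure also matches the paper's Lemma~\ref{lem: Q2}, and your use of Young's inequality against $\|V_K\|_{L^4}^4$ directly (exploiting the $\tfrac{1}{\hfbeta}$ prefactor) is a perfectly viable route.

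For $Q_3$ you take a genuinely different reduction: linearise $\sum|Q_3|$ over sign patterns, accept the $2^{|B|}$ combinatorial cost (which contributes only $\tfrac{1}{2}\log 2$ per unit volume), and then test $\phi$ against the piecewise-constant, block-mean-zero $g_\epsilon$. The paper instead keeps the modulus inside the variational objective, applies the triangle inequality after the paracontrolled change of variables, and uses the fundamental theorem of calculus to convert nearest-neighbour block differences into $\int |\nabla R_K| + |\nabla g_K|\,dx$, absorbed by a small fraction of the drift entropy and of $F^\sigma_{\beta,N,K}$. Both are sound; yours is arguably more transparent, at the modest cost of the sign union bound.

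The one concrete gap in your write-up: for $Q_2$ and $Q_3$ you decompose $\phi_K = \<1>_K + V_K$ rather than $\phi_K = \<1>_K + V_K + g_K$, i.e.\ you omit the translation by the classical magnetisation from the low-temperature expansion of Section~\ref{sec: translation}. That translation is what makes the variational lower bound uniform in $\beta$: without it, the effective-Hamiltonian piece $\intx \tfrac{1}{2}\cV_\beta(V_K) - \tfrac{\eta}{2}V_K^2\,dx$ is bounded below only by $-O(\beta)N^3$ (take $V_K\equiv\hfbeta$), so the resulting $C_Q$ degrades with $\beta$. You clearly know this is needed --- you invoke it explicitly for $Q_1$ and you gesture at ``the macroscopic double-well structure required for the low-temperature lower bound'' in your closing remark --- but the $Q_2$ and $Q_3$ sketches should be read with $V_K + g_K$ in place of $V_K$ throughout, with the resulting $g_K$-dependent terms (e.g.\ $\intx g_K g_\epsilon\,dx$ in your $Q_3$ step) controlled by $\intx|\nabla \tilde g_K|^2\,dx$ via the block-mean-zero structure of $g_\epsilon$ and absorbed by $F^\sigma_{\beta,N,K}$.
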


\begin{proof}
See Section \ref{subsec: proof of q bound}. 
\end{proof}

Proposition \ref{prop: q bound main} is used in Section \ref{subsec: proof cosh prop}, together with the chessboard estimates of Proposition \ref{prop:chessboard_estimates}, to prove Proposition \ref{prop: cosh}. Indeed, chessboard estimates allow us to obtain estimates on expectations of random variables, derived from the $Q_i$, that are extensive in their support \textit{from} estimates that are extensive in $N^3$. Note that the latter are significantly easier to obtain than the former since these random variables may be supported on arbitrary unions of blocks.

\begin{rem} \label{rem: eta dependence final}
For the remainder of this section, we assume $\eta < \frac{1}{392 C_P}$ where $C_P$ is the Poincar\'e constant on unit blocks (see Proposition \ref{prop:poincare_blocks}). This is for convenience in the analysis of Sections \ref{subsec: proof z lower} and \ref{subsec: proof of q bound} (see also Lemma \ref{lem:pointwise}). Whilst this may appear to fix the specific choice of renormalisation constants $\delta m^2$, we can always shift into this regime by absorbing a finite part of $\delta m^2$ into $\cV_\beta$.
\end{rem}

Most of the difficulties in the proof of Proposition \ref{prop: q bound main} are contained in obtaining the following upper and lower bounds on the free energy $-\log\sZ_{\beta,N}$ that are uniform in $\beta$.

\begin{prop}\label{prop:zbounds}
There exists $C=C(\eta)>0$ such that, for all $\beta \geq 1$, 
\begin{equs} \label{eq:zboundlower}
\liminf_{K \rightarrow \infty} -\frac{1}{N^3}\log\sZ_{\beta,N,K} \geq - C	
\end{equs}
and
\begin{equs} \label{eq:zboundupper}
\limsup_{K \rightarrow \infty} -\frac{1}{N^3} \log \sZ_{\beta,N,K} \leq C.
\end{equs}
\end{prop}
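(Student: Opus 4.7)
My plan is to apply the Bou\'e--Dupuis variational formula (Proposition \ref{prop: bd}), together with a total-energy cutoff removed at the end by dominated convergence, to represent
\[
-\log\sZ_{\beta,N,K} \;=\; \inf_{v \in \HH_{b,K}} \EE\Big[\cH_{\beta,N,K}(\<1>_K + V_K) + \tfrac{1}{2}\intt\intx v_k^2 \,dx\,dk\Big],
\]
and then to bound the right-hand side from above by choosing a specific drift (for \eqref{eq:zboundupper}) and from below uniformly over all drifts (for \eqref{eq:zboundlower}). Expanding the renormalised potential $:\cV_\beta(\<1>_K + V_K):$ via the Wick-power binomial identity produces the positive quartic $\tfrac{1}{\beta}\intx V_K^4\,dx$, the sign-indefinite quadratic $-2\intx V_K^2\,dx$, the constant $\beta N^3$, several singular cross terms coupling Wick powers $:\<1>_K^n:$ with powers of $V_K$, and the counter-terms involving $\gamma_K, \delta_K, \eta$.

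The crucial step, following \cite{BG19}, is the paracontrolled change of drift
\[
V_K \;=\; -\tfrac{1}{\beta}\<30>_K + W_K,
\]
equivalently $v_k = -\tfrac{1}{\beta}\cJ_k \<3>_k + z_k$. The point is that the worst cross term $\tfrac{4}{\beta}\intx :\<1>_K^3:\,V_K$, whose na\"ive product makes no analytic sense, is rearranged (using $\partial_k\<30>_k = \cJ_k^2 \<3>_k$) into the renormalised diagrams $\<31>_K,\<32>_K$ plus a residual involving $W_K$; simultaneously the square $\tfrac{1}{2\beta^2}\intt\intx \cJ_k^2 \<3>_k^2$ from the expanded drift entropy produces precisely the sunset divergence $\<sunset>_K$, and the constants $\gamma_K, \delta_K$ in \eqref{eq: renorm hamiltonian} are designed so as to absorb this divergence exactly, leaving the renormalised resonant product $\<202>_K$. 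After these cancellations, all stochastic content is confined to $\Xi$, whose moments are controlled uniformly in $K$ by Proposition \ref{prop: diagrams}.

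For the upper bound \eqref{eq:zboundupper}, I plug the trivial choice $z \equiv 0$ into the post-substitution expression; what remains is an explicit polynomial in the diagrams of $\Xi$ plus the $\beta N^3$ constant, and Proposition \ref{prop: diagrams} yields $-\log\sZ_{\beta,N,K} \leq C(\eta) N^3$ uniformly in $\beta \ge 1$, since the $\beta^{-k}$ prefactors only help. The lower bound \eqref{eq:zboundlower} is the delicate direction: for arbitrary $z$, I retain the two positive quantities $\tfrac{1}{2}\intt\intx z_k^2$ and $\tfrac{1}{\beta}\intx W_K^4$ as reserves and absorb every cross term into them via Young's inequality, paying an additive penalty bounded by $C(\eta) N^3 \EE\|\Xi\|^p$. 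The algebraic identity that keeps the estimates uniform in $\beta$ is the completion of squares
\[
\tfrac{1}{\beta}W_K^4 - 2 W_K^2 + \beta \;=\; \tfrac{1}{\beta}(W_K^2-\beta)^2 \;\geq\; 0,
\]
so that the sign-indefinite $-2\intx W_K^2$ is compensated by the quartic up to $-\beta N^3$, which precisely cancels the constant $+\beta N^3$ in $\cV_\beta$. Cross terms such as $\tfrac{6}{\beta}\intx :\<1>_K^2:W_K^2$, $\tfrac{4}{\beta}\intx \<1>_K W_K^3$, and the residual from the paracontrolled rearrangement are controlled by Besov duality (Proposition \ref{prop: tool duality}) and the $H^1$-control $\|V_K\|_{H^1}^2 \leq \intt\intx v_k^2$ from Lemma \ref{lem: drift bound}, with the $\flat$-regularisation of Section \ref{subsec: flat} used to upgrade endpoint $L^4$-control of $V_K$ to interior control of $V_k^\flat$.

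The main obstacle is this lower bound, and within it the pairing of $:\<1>_K^3:$ with $V_K$: the stochastic factor has regularity only $-\tfrac{3}{2}-\kappa$ and no classical product is available. The paracontrolled substitution, the cancellation of the sunset divergence by $\gamma_K, \delta_K$, and the resonant-product bounds of Proposition \ref{prop: diagrams} work together to replace it by controllable objects, but every application of Young's inequality must additionally be balanced so that no negative power of $\beta$ multiplies a term that is only bounded by $N^3$. The double-well algebraic identity above, which is specific to the structure of $\cV_\beta$, is what makes this $\beta$-uniform accounting possible.
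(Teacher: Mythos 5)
Your proposal captures the BG19 paracontrolled renormalisation machinery correctly, but it misses the central obstacle this particular proposition must overcome, namely the $\beta$\emph{-uniformity} of the bounds. There are two concrete gaps.

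First, the lower bound. Your ``double-well algebraic identity'' $\tfrac{1}{\beta}W_K^4 - 2W_K^2 + \beta = \tfrac{1}{\beta}(W_K^2-\beta)^2 \geq 0$ ignores the extra negative mass $-\tfrac{\eta}{2}:\phi_K^2:$ sitting in the Hamiltonian \eqref{eq: renorm hamiltonian} (this term comes from subtracting back the $\eta$-mass introduced into the reference Gaussian, cf.\ Remark \ref{rem: eta new}). Including it, the $W_K$-only part of the energy is $\intx \cV_\beta(W_K) - \tfrac{\eta}{2}W_K^2\,dx$, and completing the square leaves a residual $\sim -\tfrac{\eta}{2}\beta N^3$ that diverges as $\beta \to \infty$. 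This is precisely the issue identified in Remark \ref{rem: beta dependence} and Section \ref{subsec: strategy}: a naive application of the BG19 framework gives $C = O(\beta)$, which is useless here. The paper's resolution is the low-temperature machinery of Sections \ref{sec: translation}--\ref{subsec: pointwise bound on effective}: expanding the partition function into $2^{N^3}$ terms indexed by $\sigma \in \{\pm 1\}^{\BBN}$ via the partition of unity $\sum_\sigma \prod_\Box \chi_{\sigma(\Box)}(\phi(\Box))=1$, translating by the macroscopic profile $g_K = \rho_K\,\eta(-\Delta+\eta)^{-1}(\hfbeta\sigma\cdot\rho_K)$ to recentre around one of the wells, coarse-graining into an effective Hamiltonian built on the block-averaged singular field $\vec{\<1>}_K$, and then invoking the pointwise bound on $\cH^{\rm eff}_K$ (Lemma \ref{lem:pointwise}) which trades the $-\tfrac{\eta}{2}(Z_K-h)^2$ deficit for a term in the fluctuation field $Z_K^\perp$ that a Poincar\'e inequality on unit blocks then swallows. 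None of this appears in your proposal, and without it your lower bound cannot be uniform in $\beta$.

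Second, the upper bound. Plugging $z\equiv 0$ leaves $V_K = -\tfrac{4}{\beta}\<30>_K = O(\beta^{-1})$, so $\intx \cV_\beta(V_K)\,dx \approx \beta N^3$; the constant $+\beta$ in $\cV_\beta$ does \emph{not} go away, contradicting your claim that only the $\beta^{-k}$ prefactors matter. The paper deals with this by translating the field globally to $+\hfbeta$ (Section \ref{subsec: proof upper bound}), which turns $\cV_\beta$ into $\cV_\beta^+(a) = \tfrac{1}{\beta}a^2(a+2\hfbeta)^2$ with no constant term, and then uses the fixed-point drift $\check{v}$ of Lemma \ref{lem: bg drift} (not $z\equiv 0$) to obtain moments of $\check V_K$ that are $O(N^3)$.

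In short: your proposal reproduces what \cite{BG19} already proves (bounds of order $\beta N^3$), but it does not contain the low-temperature expansion, translation, or coarse-graining that are the actual contribution of this proposition — and that are the entire reason the proof occupies Sections \ref{sec: translation} through \ref{subsec: proof upper bound}.
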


\begin{proof}
See Sections \ref{subsec: proof z lower} and \ref{subsec: proof upper bound} for a proof of \eqref{eq:zboundlower} and \eqref{eq:zboundupper}, respectively. These proofs rely on Sections \ref{sec: translation} - \ref{subsec: pointwise bound on effective}, and the overall strategy is sketched in Section \ref{subsec: strategy}.
\end{proof}

\begin{rem} \label{rem: beta dependence}
In \cite{BG19} estimates on $-\log\sZ_{\beta,N,K}$ are obtained that are uniform in $K > 0$ and extensive in $N^3$. However, one can show that these estimates are $O(\beta)$ as $\beta \rightarrow \infty$. This is insufficient for our purposes (compare with the uniform in $\beta$ estimates required to prove Proposition \ref{prop:peierls}). 
\end{rem}

\subsection{Strategy to prove Proposition \ref{prop:zbounds}}\label{subsec: strategy}

The lower bound on $-\log\sZ_{\beta,N,K}$, given by  \eqref{eq:zboundlower}, is the harder bound to establish in Proposition \ref{prop:zbounds}. Our approach builds on the analysis of \cite{BG19} by incorporating a low temperature expansion inspired by \cite{GJS76-4,GJS76-3}. This is explained in more detail in Section \ref{subsec: fixing beta}. 

On the other hand, we establish the upper bound on $-\log\sZ_{\beta,N,K}$, given by \eqref{eq:zboundupper}, by a more straightforward modification of the analysis in \cite{BG19}. See Section \ref{subsec: proof upper bound}.

We now motivate our approach to establishing \eqref{eq:zboundlower} by first isolating the the difficulty in obtaining $\beta$-independent bounds when using \cite{BG19} straight out of the box. The starting point is to apply Proposition \ref{prop: bd} with $\cH = \cH_{\beta,N,K}$, together with a total energy cutoff that we refrain from making explicit (see Remark \ref{rem: local martingale issue} and the discussion that precedes it), to represent $-\log\sZ_{\beta,N,K}$ as a stochastic control problem. 

For every $v \in \HH_{b,K}$, define
\begin{equs}
\Psi_K(v) 
= 
\cH_{\beta,N,K}(\<1>_K + V_K) + \frac 12 \intt \intx v_k^2 dx dk.
\end{equs}
Ultraviolet divergences occur in the expansion of $\cH_{\beta,N,K}(\<1>_K+V_K)$ since the integrals $\intx \<3>_K V_K dx$ and $\intx \<2>_K V_K^2 dx$ appear and cannot be bounded uniformly in $K$:
\begin{itemize}
\item For the first integral, there are difficulties in even interpreting $\<3>_K$ as a random distribution in the limit $K \rightarrow \infty$. Indeed, the variance of $\<3>_K$ tested against a smooth function diverges as the cutoff is removed.
\item On the other hand, one can show that $\<2>_K$ does converge as $K \rightarrow \infty$ to a random distribution of Besov-H\"older regularity $-1-\kappa$ for any $\kappa > 0$ (see Proposition \ref{prop: diagrams}). However, this regularity is insufficient to obtain bounds on the second integral uniform on $K$ . Indeed, $V_K$ can be bounded in at most $H^1$ uniformly in $K$ (see Lemma \ref{lem: drift bound}), and hence we cannot test $\<2>_K$ against $V_K$ (or $V_K^2$) in the limit $K \rightarrow \infty$.
\end{itemize}
This is where the need for renormalisation beyond Wick ordering appears.

To implement this, we follow \cite{BG19} and postulate that the small-scale behaviour of the drift $v$ is governed by explicit renormalised polynomials of $\<1>$ through the change of variables:
\begin{equs} \label{eq: bg ansatz}
v_k 
=
- \frac 4\beta \cJ_k \<3>_k - \frac {12}\beta \cJ_k(\<2>_k \pg V_k^\flat) + r_k	
\end{equs}
where the remainder term $r=r(v)$ is defined by \eqref{eq: bg ansatz}. Since $v \in \HH_K \supset \HH_{b,K}$, we have that $r \in \HH_K$ and, hence, has finite drift entropy; however, note that $r \not\in \HH_{b,K}$. The optimisation problem is then changed from optimising over $v \in \HH_{b,K}$ to optimising over $r(v) \in \HH_K$. 

The change of variables \eqref{eq: bg ansatz} means that the drift entropy of any $v$ now contains terms that are divergent as $K \rightarrow \infty$. One uses It\^o's formula to decompose the divergent integrals identified above into intermediate scales, and then uses these divergent terms in the drift entropy to mostly cancel them. Using the renormalisation counterterms beyond Wick ordering (i.e. the terms involving $\gamma_K$ and $\delta_K$), the remaining divergences can be written in terms of well-defined integrals involving the diagrams $\Xi = (\<1>, \<2>, \<30>, \<31>, \<32>, \<202>)$ defined in Section \ref{subsec: diagrams}.

One can then establish that, for every $\varepsilon>0$, there exists $C=C(\varepsilon,\beta,\eta)>0$ such that, for every $v \in \HH_{b,K}$,
\begin{equs}\label{eq: bg bd estimate}
\EE \Psi_K(v)
\geq
-CN^3 + (1-\varepsilon)\EE [G_K(v)]	
\end{equs}
where
\begin{equs}
G_K(v)
=
\frac 1\beta \intx V_K^4 dx + \frac 12 \intt \intx r_k^2 dk dx
\geq 
0.
\end{equs}
The quadratic term in $G_K(v)$ allows one to control the $H^{\frac 12 -\kappa}$ norm of $V_K$ for any $\kappa > 0$, uniformly in $K$ (see Proposition \ref{prop: u and v bound}). These derivatives on $V_K$ appear when analysing terms in $\Psi_K(v)$ involving Wick powers of $\<1>_K$ tested against (powers of) $V_K$. However, some of these integrals have quadratic or cubic dependence on the drift, thus the quadratic term in $G_K(v)$ is insufficient to control the homogeneity in these estimates; instead, this achieved by using the quartic term in $G_K(v)$. Note that the good sign of the quartic term in the $\cH_{\beta,N,K}$ ensures that $G_K(v)$ is indeed non-negative. 

Using the representation \eqref{eq: expectation representation} on $\EE_N e^{-\cH_{\beta,N,K}(\phi_K)}$ and applying Proposition \ref{prop: bd}, one obtains $-\log\sZ_{\beta,N,K} \geq - CN^3$ from \eqref{eq: bg bd estimate} and the positivity of $G_K(v)$. 

As pointed out in Remark \ref{rem: beta dependence}, this argument gives $C=O(\beta)$ for $\beta$ large and this is insufficient for our purposes. The suboptimality in $\beta$-dependence comes from the treatment of the integral
\begin{equs}\label{eq: bad v integral}
\intx \cV_\beta(V_K) - \frac\eta 2 V_K^2 dx
\end{equs}
in $\cH_{\beta,N,K}(\<1>_K + V_K)$. The choice of $G_K(v)$ in the preceding discussion is not appropriate in light of \eqref{eq: bad v integral} since the term $\intx V_K^4 dx$ destroys the structure of the non-convex potential $\intx \cV_\beta(V_K) dx$. On the other hand, replacing $\frac 1\beta \intx V_K^4 dx$ with the whole integral $\eqref{eq: bad v integral}$ in $G_K(v)$ does not work. This is because \eqref{eq: bad v integral} does not admit a $\beta$-independent lower bound.

\subsubsection{Fixing $\beta$ dependence via a low temperature expansion}\label{subsec: fixing beta}

We expand \eqref{eq: bad v integral} as two terms
\begin{equs}\label{eq: splitting bad v integral}
\eqref{eq: bad v integral} 
=
\intx \frac 12 \cV_\beta(V_K) dx	+ \intx \frac 12 \cV_\beta(V_K) - \frac \eta 2 V_K^2 dx.
\end{equs}

The first integral in \eqref{eq: splitting bad v integral} is non-negative so we use it as a stability/good term for the deterministic analysis, i.e. replacing $G_K(v)$ by
\begin{equs} \label{eq: a better coercive term}
\intx \frac 12 \cV_\beta(V_K) dx + \frac 12 \intt \intx r_k^2 dx dk.
\end{equs}
This requires a comparison of $L^p$ norms of $V_K$ for $p \leq 4$ on the one hand, and $\intx \cV_\beta(V_K) dx$ on the other. Due to the non-convexity of $\cV_\beta$, this produces factors of $\beta$; these have to be beaten by the good (i.e. negative) powers of $\beta$ appearing in $\cH_{\beta,N,K}(\<1>_K + V_K)$. We state the required bounds in the following lemma.

\begin{lem}\label{lem: potential}
For any $p\in[1,4]$, there exists $C=C(p) > 0$ such that, for all $a \in \RR$,
\begin{equs}\label{tool: potential bound}
|a|^p 
\leq 
C (\hfbeta)^\frac p2 \cV_\beta(a)^\frac p4 + C (\hfbeta)^p. 	
\end{equs}
Hence, for any $f \in C^\infty(\TTN)$,
\begin{equs}\label{tool: potential bound norm}
\begin{split}
\| f \|_{L^p}
&\leq
C (\hfbeta)^\frac 12 \Big( \intx \cV_\beta(f) \dbar x \Big)^\frac 14 + C\hfbeta
\end{split}
\end{equs}
where we recall $\dbar x = \frac{dx}{N^3}$.
\end{lem}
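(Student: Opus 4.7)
The plan is to first establish the pointwise bound \eqref{tool: potential bound} by an elementary triangle-inequality argument, and then derive \eqref{tool: potential bound norm} from it by applying Jensen's inequality together with a concavity bound on the map $x \mapsto x^{1/p}$.

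For the pointwise bound, I would start by rewriting the right-hand side using the explicit form $\cV_\beta(a) = \frac{1}{\beta}(a^2 - \beta)^2$, so that
\begin{equs}
(\hfbeta)^{p/2}\cV_\beta(a)^{p/4} = \beta^{p/4}\cdot \beta^{-p/4}|a^2 - \beta|^{p/2} = |a^2 - \beta|^{p/2},
\end{equs}
and similarly $(\hfbeta)^p = \beta^{p/2}$. The target inequality thus reduces to showing $|a|^p \leq C\bigl(|a^2 - \beta|^{p/2} + \beta^{p/2}\bigr)$. I would obtain this from the triangle inequality $a^2 \leq |a^2 - \beta| + \beta$, raise both sides to the power $p/2$, and use the standard convexity bound $(x+y)^{p/2} \leq 2^{(p/2-1)_+}(x^{p/2} + y^{p/2})$ valid for $x,y \geq 0$ and $p \in [1,4]$. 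This yields \eqref{tool: potential bound} with some $C = C(p)$.

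For the $L^p$-norm bound, I would apply \eqref{tool: potential bound} pointwise at $a = f(x)$ and integrate with respect to $\dbar x$, giving
\begin{equs}
\intx |f(x)|^p \dbar x
\leq
C(\hfbeta)^{p/2} \intx \cV_\beta(f(x))^{p/4} \dbar x + C(\hfbeta)^p.
\end{equs}
Since $p/4 \leq 1$, the map $x \mapsto x^{p/4}$ is concave on $[0,\infty)$, so Jensen's inequality (applied to the probability measure $\dbar x$) gives
\begin{equs}
\intx \cV_\beta(f(x))^{p/4} \dbar x
\leq
\Bigl(\intx \cV_\beta(f(x)) \dbar x\Bigr)^{p/4}.
\end{equs}
Combining the two displays and then taking the $p$-th root, using the subadditivity bound $(u+v)^{1/p} \leq u^{1/p} + v^{1/p}$ for $u,v \geq 0$ and $p \geq 1$, produces \eqref{tool: potential bound norm}.

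There is no real obstacle here: the lemma is essentially an elementary consequence of the explicit algebraic form of $\cV_\beta$ together with Jensen's inequality. The only point worth flagging is that the concavity step requires $p \leq 4$, which is precisely the stated range of applicability; for larger $p$ one would instead pick up a volume factor from applying Jensen the other way, which would be incompatible with the uniform-in-$N$ analysis elsewhere in the paper.
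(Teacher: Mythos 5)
Your proof is correct and matches the paper's (very terse) proof, which also derives the pointwise bound by direct algebraic manipulation of $\cV_\beta(a) = \frac{1}{\beta}(a^2-\beta)^2$ and then obtains the $L^p$ bound via Jensen's inequality using concavity of $x \mapsto x^{p/4}$ on the probability measure $\dbar x$. The final subadditivity step $(u+v)^{1/p} \leq u^{1/p} + v^{1/p}$ for $p \geq 1$ is exactly what is needed to peel apart the $p$-th root, and your observation about the role of $p\leq 4$ in the concavity step correctly identifies why the lemma is restricted to this range.
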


\begin{proof}
\eqref{tool: potential bound} follows from a straightforward computation. \eqref{tool: potential bound norm} follows from using \eqref{tool: potential bound} and Jensen's inequality.	
\end{proof}

The difficulty lies in bounding the second integral in \eqref{eq: splitting bad v integral} uniformly in $\beta$. In 2D an analogous problem was overcome in \cite{GJS76-4, GJS76-3} in the context of a low temperature expansion for $\Phi^4_2$. Those techniques rely crucially on the logarithmic ultraviolet divergences in 2D, and the mutual absolute continuity between $\Phi^4_2$ and its underlying Gaussian measure. Thus, they do not extend to 3D. However, we use the underlying strategy of that low temperature expansion in our approach. 

We write $\sZ_{\beta,N,K}$ as a sum of $2^{N^3}$ terms, where each term is a modified partition function that enforces the block averaged field to be either positive or negative on blocks. For each term in the expansion, we change variables and shift the field on blocks to $\pm \hfbeta$ so that the new mean of the field is small. We then apply Proposition \ref{prop: bd} to each of these $2^{N^3}$ terms. 

We separate the scales in the variational problem by coarse-graining the resulting Hamiltonian. Large scales are captured by an effective Hamiltonian, which is of a similar form to the second integral in \eqref{eq: splitting bad v integral}. We treat this using methods inspired by \cite[Theorem 3.1.1]{GJS76-3}: the expansion and translation allow us to obtain a $\beta$-independent bound on the effective Hamiltonian with an error term that depends only on the difference between the field and its block averages (the \textit{fluctuation field}). The fluctuation field can be treated using the massless part of the underlying Gaussian measure (compare with \cite[Proposition 2.3.2]{GJS76-3}). 

The remainder term contains all the small-scale/ultraviolet divergences and we renormalise them using the pathwise approach of \cite{BG19} explained above. Patching the scales together requires uniform in $\beta$ estimates on the error terms from the renormalisation procedure using an analogue of the stability term \eqref{eq: a better coercive term} that incorporates the translation, and Lemma \ref{lem: potential}.

\subsection{Expansion and translation by macroscopic phase profiles}\label{sec: translation}
Let $\chi_+,\chi_-:\RR \rightarrow \RR$ be defined as
\begin{equs}
    \chi_+(a) 
    = 
    \frac{1}{\sqrt \pi} \int_{-a}^\infty e^{-c^2} dc,
    \quad
    \chi_-(a) = \chi_+(-a).
\end{equs}
They satisfy
\begin{equs}
    \chi_+(a) + \chi_-(a) = 1
\end{equs}
and hence
\begin{equs}
\sum_{\sigma \in \{\pm 1\}^\BBN} \prod_{\sBox \in \BBN} \chi_{\sigma(\sBox)}\big(\phi(\Box)\big) 
=
1
\end{equs}
for any $\vphi = (\phi(\Box))_{\sBox \in \BBN} \in \RR^\BBN$.

For any $K>0$, we expand
\begin{equs} \label{partition expansion}
\begin{split}
    \sZ_{\beta,N,K} 
    &=
    \sum_{\sigma \in \{\pm 1\}^{\BBN}} \EE_N \Big[ e^{-\cH_{\beta,N,K}} \prod_{\sBox \in \BBN}\chi_{\sigma(\sBox)}\big(\phi_K(\Box)\big)  \Big]
    \\
    &= 
    \sum_{\sigma \in \{\pm 1\}^{\BBN}} \sZ_{\beta,N,K}^\sigma
  \end{split}
\end{equs}
where we recall $\phi_K(\Box) = \intx \phi_K \1_\sBox dx$. 

We fix $\sigma$ in what follows and sometimes suppress it from notation. Let $h = \hfbeta \sigma$. We then have
\begin{equs}
\sZ_{\beta,N,K}^\sigma
&=
\EE_N \exp \Bigg( - \intx :\cV_\beta(\phi_K): - \frac{\gamma_K}{\beta^2}:\phi_K^2: - \delta_K  
\\
&\quad\quad
 - \frac\eta2 :(\phi_K-h)^2: -\eta \phi_K h + \frac \eta 2 h^2 dx
\\
&\quad\quad
+\sum_{\sBox \in \BBN}\log\Big(\chi_{\sigma(\sBox)}\big(\phi_K(\Box)\big)\Big) \Bigg).
\end{equs}

We translate the Gaussian fields so that their new mean is approximately $h$. The translation we use is related to the \textit{classical magnetism}, or response to the external field $\eta h$, used in the 2D setting \cite{GJS76-4} and given by $\eta(-\Delta+\eta)^{-1} h$. 

\begin{lem}\label{lem: bounds on translation}
For every $K>0$, let $h_K = \rho_K h$. Define $\tilde g_K = \eta (-\Delta + \eta)^{-1} h_K$ and $g_K = \rho_K \tilde g_K$. Then, there exists $C=C(\eta)$ such that
\begin{equs}\label{tool: g bounds}
| g_K |_{\infty}, | \nabla g_K |_{\infty}
\leq
C\hfbeta
\end{equs}
where $| \cdot |_{\infty}$ denotes the supremum norm. Moreover,
\begin{equs} \label{tool: grad g g tilde}
\intx |\nabla g_K|^2 dx
\leq
\intx |\nabla \tilde g_K|^2 dx.	
\end{equs}

Finally, let
\begin{equs}
g_k^\flat
=
\sum_{n \in (N^{-1}\ZZ)^3} \frac{1}{N^3} \tilde \rho_k \int_0^k \cJ_{k'}(n) \cF g (n) dk
\end{equs}
where $\tilde\rho_k$ is as in Section \ref{subsec: flat}. Then, for any $s,s' \in \RR$, $p \in (1,\infty)$ and $q \in [1,\infty]$, there exists $C_1=C_1(\eta,s,p,q)$ and $C_2=C_2(\eta,s,s',p,q)$ such that 
\begin{equs} \label{tool: g flat}
	\| g_k^\flat \|_{B^s_{p,q}}
	\leq
	C_1\| g_K \|_{B^s_{p,q}}
\end{equs}
and
\begin{equs} \label{tool: partial g flat}
\| \partial_k g_k^\flat \|_{B^{s'}_{p,q}}
\leq
C_2\frac{1}{\langle k \rangle^{1+s-s'}} \| g_K \|_{B^s_{p,q}}.
\end{equs}
\end{lem}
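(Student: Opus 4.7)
The plan is to prove the four estimates in turn, with \eqref{tool: g bounds} and \eqref{tool: grad g g tilde} following from harmonic analysis on the Green's function $G_\eta$ of $-\Delta + \eta$ on $\TTN$, and \eqref{tool: g flat} and \eqref{tool: partial g flat} mirroring the proof of Lemma \ref{lem: flat drift bound}. For \eqref{tool: g bounds}, the starting point is that $h$ is block-constant with $|h|_\infty = \hfbeta$. Write $g_K = \rho_K \eta G_\eta \ast (\rho_K h)$, identifying the Fourier multipliers with convolutions against their inverse Fourier transforms; the kernel of $\rho_K$ has $L^1(\TTN)$ norm bounded by $\|\check\rho\|_{L^1(\RR^3)}$ uniformly in $K$ and $N$, while $\eta G_\eta \geq 0$ has $\|\eta G_\eta\|_{L^1(\TTN)} = \cF(\eta G_\eta)(0) = 1$. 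Young's inequality then gives $|g_K|_\infty \leq C\hfbeta$. For the gradient, commute $\nabla$ through the Fourier multipliers to rewrite $\nabla g_K = \eta \, \check\rho_K \ast \nabla G_\eta \ast (\rho_K h)$ and use the explicit three-dimensional bound $|\nabla G_\eta(x)| \leq \frac{\sqrt\eta |x| + 1}{4\pi|x|^2} e^{-\sqrt\eta|x|}$: the $|x|^{-2}$ singularity at the origin is integrable against the volume element $r^2\,dr$ and the exponential decay handles the tails, so $\|\nabla G_\eta\|_{L^1(\TTN)} \leq C(\eta)$ and Young's inequality produces $|\nabla g_K|_\infty \leq C(\eta) \hfbeta$.

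The estimate \eqref{tool: grad g g tilde} is immediate from Plancherel: since $\cF g_K(n) = \rho_K(n) \cF \tilde g_K(n)$ with $0 \leq \rho_K(n) \leq 1$,
\begin{equs}
\intx |\nabla g_K|^2\, dx = \frac{1}{N^3} \sum_n 4\pi^2 |n|^2 \rho_K(n)^2 |\cF \tilde g_K(n)|^2 \leq \intx |\nabla \tilde g_K|^2\, dx.
\end{equs}

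For \eqref{tool: g flat} and \eqref{tool: partial g flat}, the plan is to parallel the proof of Lemma \ref{lem: flat drift bound}. The central fact from Section \ref{subsec: flat} is the Fourier-support disjointness $\tilde\rho_k \cJ_{k'} \equiv 0$ for $k' > k$, since $\tilde\rho_k$ is supported in $\{|n| \leq c_\rho k\}$ while $\cJ_{k'}$ is supported in $\{|n| \geq c_\rho k'\}$. Consequently, the upper limit $k$ in the integral defining $g_k^\flat$ can be extended to $K$ without change, exhibiting $g_k^\flat$ as $\tilde\rho_k$ applied to a $k$-independent profile whose Besov norms are controlled by those of $g_K$. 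The estimate \eqref{tool: g flat} then follows because $\tilde\rho_k$ is a Mikhlin-type multiplier with symbol bounded by $1$ uniformly in $k$, hence acts boundedly on $B^s_{p,q}$ for $p \in (1,\infty)$. For \eqref{tool: partial g flat}, differentiate to obtain $\partial_k g_k^\flat$ as $(\partial_k \tilde\rho_k)$ applied to the same $k$-independent profile. The symbol $\partial_k \tilde\rho_k(n) = -k^{-2} n \cdot (\nabla\tilde\rho)(n/k)$ is supported in the annulus $|n| \asymp k$ and has size $O(k^{-1})$, so Bernstein's inequality \eqref{tool: bernstein annulus} on this dyadic shell converts the pointwise $k^{-1}$ loss together with the frequency localisation into the claimed weight $\langle k \rangle^{-(1+s-s')}$ when passing from the $B^s_{p,q}$ norm of $g_K$ to the $B^{s'}_{p,q}$ norm of $\partial_k g_k^\flat$.

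The main obstacle is the $L^\infty$ bound on $\nabla g_K$: the Riesz transform is not bounded on $L^\infty$, so a naive Fourier-multiplier argument fails, and one is forced to exploit the explicit exponential decay supplied by the mass $\eta > 0$ in order to extract the $L^1$ bound on $\nabla G_\eta$ that drives the whole estimate. The remaining assertions are much softer and amount to careful bookkeeping with Fourier supports in the spirit of the results already developed for $V_k$.
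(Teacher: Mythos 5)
Your proposal is correct and follows essentially the same route as the paper: the $L^\infty$ bounds via integrability of the Bessel kernel and its gradient on $\RR^3$ plus Young's inequality (with the extra $\rho_K$ convolutions handled by the uniform $L^1$ bound on $\check\rho_K$), the Plancherel identity for \eqref{tool: grad g g tilde}, and the identification $g_k^\flat = \tilde\rho_k g_K$ followed by multiplier/Bernstein estimates as in Lemma~\ref{lem: flat drift bound}. Your write-up supplies the explicit kernel computations that the paper's one-paragraph proof leaves implicit, but there is no methodological divergence.
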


\begin{proof}
The estimate \eqref{tool: g bounds} follows from the fact that $\eta (-\Delta + \eta)^{-1}$ and $\nabla \eta (-\Delta + \eta)^{-1}$ are $L^\infty$ bounded operators. This is because the ($\eta$-dependent) Bessel potential and its first derivatives are absolutely integrable on $\RR^3$. Hence, by applying Young's inequality for convolutions one obtains the $L^\infty$ boundedness. The uniformity of the estimate over $\sigma$ follows from $\| \sigma \|_{L^\infty}=1$ for every $\sigma \in \BBN$. The other estimates follow from standard results about smooth multipliers, the observation that $g_k^\flat = \tilde\rho_k g_K$ for any $K \geq k$, and Lemma \ref{lem: flat drift bound}.
\end{proof}

\begin{rem}
Note that $g_K$ is given by the covariance operator of $\mu_N$ applied to $\eta h$. Moreover, note that $g_K \neq \tilde g_K$ since $\rho_K^2 \neq \rho_K$, i.e. the Fourier cutoff is not sharp. 
\end{rem}

By the Cameron-Martin theorem the density of $\mu_N$ under the translation $\phi = \psi + \tilde g_K$ transforms as
\begin{equs}
    d\mu_N(\psi+\tilde g_K) 
    = 
    \exp \Big( - \intx \frac 12 \tilde g_K (-\Delta + \eta) \tilde g_K + \psi (-\Delta + \eta)\tilde g_K dx \Big) d\mu_N(\psi).
\end{equs}
Hence,
\begin{equs}
    \sZ_{\beta,N,K}^\sigma 
    = 
    \EE_N e^{-\cH_{\beta,N,K}^\sigma(\psi_K) - F_{\beta,N,K}^\sigma}(\psi)
\end{equs}
where
\begin{equs}
    \cH_{\beta,N,K}^\sigma(\psi_K)
    &=
    \intx : \cV_\beta(\psi_K + g_K): - \frac{\gamma_K}{\beta^2}:(\psi_K+g_K)^2: - \delta_K
    \\
    &\quad\quad\quad
    - \frac \eta 2 :(\psi_K + g_K - h)^2: dx - \sum_{\sBox \in \BBN} \log \Big( \chi_{\sigma(\sBox)}\big((\psi_K + g_K)(\Box)\big) \Big)
\end{equs}
and
\begin{equs}
	F_{\beta,N,K}^\sigma(\psi)
	=
	\intx -\eta (\psi_K + g_K) h + \frac \eta 2 h^2 + \frac 12 \tilde g_K (-\Delta + \eta) \tilde g_K + \psi (-\Delta + \eta)\tilde g_K dx.
\end{equs}

By integration by parts, the self-adjointness of $\rho_K$, and the definition of $\tilde g_K$
\begin{equs} \label{def: f}
\begin{split}
	F_{\beta,N,K}^\sigma(\psi)
	&=
	\intx - \eta (\psi + \tilde g_K) h_K + \frac \eta 2 h^2 + \frac 12 |\nabla \tilde g_K|^2 + \frac \eta 2 (\tilde g_K)^2 + \eta \psi h_K dx
	\\
	&=
	\intx \frac \eta 2 (\tilde g_K - h_K)^2 + \frac \eta 2 (1-\rho_K^2) h^2 + \frac 12 |\nabla \tilde g_K|^2 dx.
\end{split}
\end{equs}
Thus, $F^\sigma_{\beta,N,K}(\psi)$ is independent of $\psi$ and non-negative.

\begin{rem}
Let $g = \eta(-\Delta + \eta)^{-1} h$. Then,
\begin{equs} \label{eq: F term}
\lim_{K \rightarrow \infty} F^\sigma_{\beta,N,K}
=
\intx \frac \eta 2 (g-h)^2 + \frac 12 |\nabla g|^2 dx.	
\end{equs}
The second integrand on the righthand side of \eqref{eq: F term} penalises the discontinuities of $\sigma$. Indeed, $e^{-\intx \frac 12 |\nabla g|^2 dx}$ is approximately equal to $e^{-C\hfbeta |\partial \sigma|}$, where $\partial \sigma $ denotes the surfaces of discontinuity of $\sigma$, $|\partial \sigma|$ denotes the area of these surfaces, and $C>0$ is an inessential constant. Thus, for $\beta$ sufficiently large, $\sZ_{\beta,N}$ is approximately equal to
\begin{equs}
e^{- C \sqrt \beta |\partial \sigma|} \times O(1)
=
\prod_{\Gamma_i \in \sigma} e^{-C \hfbeta |\Gamma_i|} \times O(1)	
\end{equs}
where $\Gamma_i$ are the connected components of $\partial \sigma$ (called contours). It would be interesting to further develop this contour representation for $\nubn$ (compare with the 2D expansions of \cite{GJS76-4, GJS76-3}).
\end{rem}

\subsection{Coarse-graining of the Hamiltonian} \label{subsec: coarsegrain}
We apply Proposition \ref{prop: bd} to $-\log \EE_N e^{-\cH_{\beta,N,K}^\sigma(\<1>_K)}$. For every $v \in \HH_{b,K}$, define
\begin{equs}\label{def: psi}
    \Psi_K^\sigma(v)
    =
    \cH^\sigma_{\beta,N,K}(\<1>_K+V_K) + \frac 12 \intt \intx v_k^2\ dx dk.
\end{equs}

Let $Z_K = \vec{\<1>}_K + V_K + g_K$, where $\vec{\<1>}_K = ( \<1>_K(\Box) )_{\sBox \in \BBN}$. We split the Hamiltonian as
\begin{equs}\label{hamiltonian_split}
   \cH_{\beta,N,K}^\sigma(\<1>_K+V_K) 
   = 
   \cH^\mathrm{eff}_K(Z_K) + \cR_K + \frac 12 \intx \cV_\beta(V_K+g_K) dx
\end{equs}
where 
\begin{equs}
\cH_K^\mathrm{eff}(Z_K)
&=
\intx \frac 12 \cV_{\beta,N,K}(Z_K) - \frac \eta 2 (Z_K -h)^2dx - \sum_{\sBox \in \BBN}\log \Big(\chi_{\sigma(\sBox)}\big(Z_K(\Box)\big) \Big)
\end{equs}
is an effective Hamiltonian introduced to capture macroscopic scales of the system. The quantity $\cR_K$ is then determined by \eqref{hamiltonian_split} and is explicitly given by
\begin{equs}
\cR_K
&=
\intx :\cV_\beta(\<1>_K+V_K+g_K): - \frac{\gamma_K}{\beta^2}:(\<1>_K+V_K+g_K)^2: - \delta_{K} 
\\
&\quad\quad\quad
- \frac 12 \cV_\beta(\vec{\<1>}_K + V_K + g_K) - \frac 12 \cV_\beta(V_K+g_K)
\\
&\quad\quad\quad
- \frac \eta 2 :(\<1>_K+V_K+g_K - h)^2: + \frac \eta 2 (\vec{\<1>}_K + V_K + g_K - h)^2 dx.
\end{equs}
All analysis/cancellation of ultraviolet divergences occurs within the sum of $\cR_K$ and the drift entropy, see \eqref{eq: all the UV is here}. Finally, the last term in \eqref{hamiltonian_split} is a stability term which is key for our non-perturbative analysis, namely it allows us to obtain estimates that are uniform in the drift. 

The key point is that we coarse-grain the field by block averaging $\<1>_K$, the most singular term. This allows us to preserve the structure of the low temperature potential $\cV_\beta$ on macroscopic scales (captured in $\cH_K^\mathrm{eff}(Z_K)$), which is crucial to obtaining estimates independent of $\beta$ on the free energy. 

\subsection{Killing divergences} \label{subsec: killing divergences}

\subsubsection{Changing drift variables} \label{subsec: change of variables}
For any $v \in \HH_{b,K}$, define $r=r(v) \in \HH_{K}$ by
\begin{equs} \label{eq: full paracontrolled ansatz}
r_k
=
v_k + \frac{4}{\beta} \cJ_k \<3>_k + \frac{12}{\beta} \cJ_k(\<2>_k \pg (V_k^\flat+g_k^\flat)).
\end{equs}

In our analysis it is convenient to use an intermediate change of variables for the drift. Define $u=u(v) \in \HH_K$ by
\begin{equs} \label{eq: intermediate ansatz}
u_k = v_k + \frac 4\beta \cJ_k \<3>_k.	
\end{equs}

Inserting \eqref{eq: full paracontrolled ansatz} and \eqref{eq: intermediate ansatz} into the definition of the integrated drift, $V_k = \int_0^k \cJ_{k'} v_{k'} dk'$, we obtain
\begin{equs} \label{eq: intermediate integrated ansatz}
\begin{split}
V_k
&=
- \frac{4}{\beta} \<30>_k  - \frac{12}{\beta}\int_0^k \cJ_{k'}^2 (\<2>_{k'} \pg (V_{k'}^\flat+g_{k'}^\flat)) dk'+ R_k
\\
&=
-\frac 4\beta \<30>_k + U_k	
\end{split}
\end{equs}
where $R_k = \int_0^k \cJ_{k'} r_{k'} dk'$ and $U_k = \int_0^k \cJ_{k'} u_{k'} dk'$. 

The following proposition contains useful estimates estimates on $U_K$ and $V_K$.

\begin{prop}\label{prop: u and v bound}
For any $\varepsilon > 0$ and $\kappa > 0$ sufficiently small, there exists $C=C(\varepsilon,\kappa,\eta) > 0$ such that, for all $\beta > 1$,
\begin{equs}
\begin{split}\label{eq: u estimate}
\sup_{0 \leq k \leq K} \|U_k\|_{H^{1-\kappa}}^2
&\leq
\frac{C N^\Xi_K}{N^3} + \frac{\varepsilon}{\beta^3} \intx \cV_\beta(V_K+g_K) \dbar x 
\\
&\quad\quad\quad \quad\quad\quad
+C\intt \intx r_k^2 \dbar x dk
\end{split}
\\
\begin{split}\label{eq: v estimate}
\sup_{0 \leq k \leq K} \| V_k \|_{H^{\frac 12 - \kappa}}^2 
&\leq
\frac{CN^\Xi_K}{N^3} + \frac{\varepsilon}{\beta^3} \intx \cV_\beta (V_K+g_K) \dbar x 
\\
&\quad\quad\quad\quad\quad\quad
+ C\intt \intx r_k^2 \dbar x dk	
\end{split}
\end{equs}
where we recall $\dbar x = \frac{dx}{N^3}$; and $N^\Xi_K$ is a positive random variable on $\Omega$ that is $\PP$-almost surely given by a finite linear combination of powers of (finite integrability) Besov and Lebesgue norms of the diagrams $\Xi = \{ \<1>, \<2>, \<30>, \<31>, \<32>, \<202> \}$ on the interval $[0,K]$.
\end{prop}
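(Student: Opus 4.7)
The plan is to decompose
\begin{equs}
U_k = -\frac{12}{\beta}I_k + R_k, \qquad I_k := \int_0^k \cJ_{k'}^2\bigl(\<2>_{k'}\pg(V_{k'}^\flat+g_{k'}^\flat)\bigr)dk', \qquad R_k := \int_0^k \cJ_{k'}r_{k'}dk',
\end{equs}
and to bound $I_k$ and $R_k$ separately in $H^{1-\kappa}$. The $R_k$ contribution is immediate from Lemma \ref{lem: drift bound}: $\sup_{0\leq k\leq K}\|R_k\|_{H^1}^2 \leq \int_0^K\!\int r_k^2\,\dbar x\,dk$, which directly produces the last term on the right-hand side of \eqref{eq: u estimate}.

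For $I_k$, I would apply a paraproduct estimate at each time $k'$: for Lebesgue exponents $p_1,p_2$ with $\frac{1}{p_1}+\frac{1}{p_2}=\frac{1}{2}$, $p_1$ large and $p_2\in(2,4]$,
\begin{equs}
\|\<2>_{k'}\pg(V_{k'}^\flat+g_{k'}^\flat)\|_{H^{-1-\kappa}} \lesssim \|\<2>_{k'}\|_{B^{-1-\kappa}_{p_1,\infty}}\bigl(\|V_{k'}^\flat\|_{L^{p_2}}+\|g_{k'}^\flat\|_{L^{p_2}}\bigr).
\end{equs}
Since $\cJ_{k'}^2$ is a Fourier multiplier with symbol supported on $|n|\sim k'$ of magnitude $\lesssim\langle k'\rangle^{-3}$, a Littlewood--Paley computation in the spirit of Lemma \ref{lem:trident} (or a direct appeal to Proposition \ref{prop: multiplier estimate}) yields a two-derivative gain with a time-integrable weight, namely $\|\cJ_{k'}^2 h\|_{H^{1-\kappa}}\lesssim \langle k'\rangle^{-(1+\delta)}\|h\|_{H^{-1-\kappa}}$ for some $\delta=\delta(\kappa,p_2)>0$, at the cost of a small loss of integrability handled by Bernstein's inequality. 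Using Lemma \ref{lem: flat drift bound} and \eqref{tool: g flat} to replace the flat norms by $\|V_K\|_{L^{p_2}}$ and $\|g_K\|_{L^{p_2}}$, and Proposition \ref{prop: diagrams} to absorb $\sup_{k'}\|\<2>_{k'}\|_{B^{-1-\kappa}_{p_1,\infty}}$ into $N^\Xi_K$, produces
\begin{equs}
\sup_{0\leq k\leq K}\|I_k\|_{H^{1-\kappa}}^2 \lesssim D(\Xi)\bigl(\|V_K+g_K\|_{L^{p_2}}^2 + \|g_K\|_{L^{p_2}}^2\bigr),
\end{equs}
with $D(\Xi)$ a polynomial in diagram norms.

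Multiplying by the prefactor $(12/\beta)^2$ and applying Lemma \ref{lem: potential} at exponent $p_2\leq 4$, which gives $\|V_K+g_K\|_{L^{p_2}}^2 \leq C\sqrt{\beta}\bigl(\int\cV_\beta(V_K+g_K)\,\dbar x\bigr)^{1/2}+C\beta$, produces a schematic contribution
\begin{equs}
\frac{D(\Xi)}{\beta^{3/2}}\Bigl(\int\cV_\beta(V_K+g_K)\,\dbar x\Bigr)^{1/2}+\frac{D(\Xi)}{\beta}.
\end{equs}
The weighted Young's inequality $\frac{1}{\beta^{3/2}}\sqrt{y}\leq \frac{\varepsilon}{\beta^3}y+\frac{1}{4\varepsilon}$ applied with $y=\int\cV_\beta(V_K+g_K)\,\dbar x$ then extracts the desired $\frac{\varepsilon}{\beta^3}\int\cV_\beta$ factor, while the leftover constants (multiplied by polynomials in diagram norms) are absorbed into $CN^\Xi_K/N^3$; the $\|g_K\|_{L^{p_2}}^2$ piece is handled identically using Lemma \ref{lem: bounds on translation}. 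This establishes \eqref{eq: u estimate}.

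The bound \eqref{eq: v estimate} follows at once from the identity $V_k = -\frac{4}{\beta}\<30>_k + U_k$, together with the $H^{1/2-\kappa}$ bound on $\<30>_k$ from Lemma \ref{lem:trident}/Proposition \ref{prop: diagrams} (absorbed into $N^\Xi_K$) and the continuous embedding $H^{1-\kappa}\hookrightarrow H^{1/2-\kappa}$. The main difficulty I anticipate is the combined paraproduct--multiplier analysis for $I_k$: specifically, verifying the time-integrable weight $\langle k'\rangle^{-(1+\delta)}$ uniformly in the cutoff and calibrating the exponents $p_1,p_2$ so that the paraproduct bound, the multiplier gain, and Lemma \ref{lem: potential} align sharply enough to yield precisely the $\varepsilon/\beta^3$ coefficient rather than a weaker inverse power of $\beta$.
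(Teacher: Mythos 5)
Your proposal is correct and follows essentially the same route as the paper's proof: the same decomposition of $U_k$ into the paraproduct integral $I_k$ and the remainder $R_k$, the same treatment of $R_k$ via Lemma \ref{lem: drift bound}, the same combination of Minkowski, the paraproduct estimate, the $\cJ_k^2$-multiplier/Bernstein gain to produce a time-integrable weight, and the $\flat$-bounds for $I_k$, and the identical use of Lemma \ref{lem: potential} followed by weighted Young's inequality to extract the $\varepsilon\beta^{-3}\int\cV_\beta$ term. The paper simply fixes $p_1=p_2=4$ where you leave the exponents free, and uses the weight $\langle k\rangle^{-1-\kappa}$ where you anticipate $\langle k\rangle^{-1-\delta}$; the calibration you flag as a potential difficulty works out exactly as you sketched.
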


\begin{proof}
See Section \ref{subsec: estimates on drift}.
\end{proof}

\begin{rem} \label{rem: N convention}
As a consequence of Proposition \ref{prop: diagrams}, the random variable $N^\Xi_K$ satisfies the following estimate: there exists $C = C(\eta)>0$ such that
\begin{equs} \label{eq: extensive xi estimate}
\EE N^\Xi_K
\leq
C N^3.	
\end{equs}
In the following we denote by $N^\Xi_K$ \textit{any} positive random variable on $\Omega$ that satisfies \eqref{eq: extensive xi estimate}. In practice it is always $\PP$-almost surely given by a finite linear combination of powers of (finite integrability) Besov norms of the diagrams in $\Xi$ on $[0,K]$. Note that $N^\Xi_K$ includes constants of the form $C=C(\eta)>0$. 
\end{rem}

\subsubsection{The main small-scale estimates}

In the following we write $\approx$ to mean equal up to a term with expectation $0$ under $\PP$.

\begin{prop}\label{prop: killing}
Let $\beta > 0$. For every $K>0$, define
\begin{equs}\label{def: mass renorm}
\gamma_K 
&=
-4^2 \cdot 3 \cdot \<sunset>_K
\end{equs}
where $\<sunset>_K$ is defined in \eqref{def: sunset}, and
\begin{equs}
\begin{split} \label{def: energy renorm}
\delta_{K}
&=	
\EE \Biggr[ \intx \int_0^K - \frac{8}{\beta^2}(\cJ_k\<3>_k)^2 dk  - \frac{256}{\beta^4}\<1>_K \Big( \<30>_K \Big)^3 
\\
&\quad\quad\quad\quad\quad\quad 
+ \frac{96}{\beta^3}\Big(\<30>_K\Big)^2\<2>_K dx \Biggr].
\end{split}
\end{equs}

Then, for every $v \in \HH_{b,K}$,
\begin{equs}\label{eq: all the UV is here}
\cR_K + \frac 12 \intt \intx v_k^2 dk dx
\approx
\sum_{i=1}^4 \cR^i_K  + \frac 12 \int_0^K \int_{\TTN}  r_k^2 dx dk
\end{equs}
where
\begin{equs}
\cR^1_K
&=
\intx - \frac 1{2\beta}(\vec{\<1>}_K)^4 - \frac 2\beta (\vec{\<1>}_K)^3 (V_K + g_K)- \frac 3\beta (\vec{\<1>}_K)^2 (V_K+g_K)^2
\\
&\quad\quad\quad
- \frac 2\beta \vec{\<1>}_K V_K^3 - \frac 6\beta \vec{\<1>}_K V_K^2 g_K - \frac 6\beta \vec{\<1>}_K V_K g_K^2 
\\
&\quad\quad\quad
+ \frac{\eta + 2}2 (\vec{\<1>}_K)^2 + (\eta+2) \vec{\<1>}_K V_K dx
\\ 
\cR^2_K
&=
\intx \frac{192}{\beta^3}\<1>_K\<30>_K^2U_K  - \frac{48}{\beta^2}\<1>_K\<30>_KU_K^2 - \frac{96}{\beta^2}\<1>_K\<30>_Kg_KU_K
\\
&\quad\quad\quad
+ \frac{4}{\beta}\<1>_K U_K^3 + \frac{12}{\beta}\<1>_K g_KU_K^2 + \frac{12}{\beta}\<1>_K g_K^2 U_K - (4+\eta)\<1>_K U_K dx
\\ 
\cR^3_K
&=
\intx \frac{12}{\beta}(\<2>_K \pe g_K) U_K + \frac{6}{\beta}(\<2>_K \pe U_K - \<2>_K \pg U_K)U_K
\\
&\quad\quad\quad
- \frac{48}{\beta^2} (\<2>_K \pl \<30>_K) U_K + \frac{12}\beta (\<2>_K \pl g_K) U_K + \frac 6\beta (\<2>_K \pl U_K)U_K dx
\\
&\quad
+ \intx \intt \frac{12}\beta \Big(\<2>_k \pg (\partial_k V_k^\flat + \partial_k g_k^\flat)\Big)U_k 
\\
&\quad\quad\quad
+ \frac{12}{\beta} \Big(\<2>_K \pg (V_K+g_K-V_K^\flat-g_K^\flat)\Big) U_K 
\\
&\quad\quad\quad
- \frac{72}{\beta^2} \Bigg( \Big(\cJ_k (\<2>_k \pg (V_k^\flat+g_k^\flat)) \Big)^2 - \Big(\cJ_k \<2>_k \pe \cJ_k \<2>_k \Big)(V_k^\flat+g_k^\flat)^2 \Bigg) dk dx
\\ 
\cR^4_K
&=
-\intx \frac{48}{\beta^2} \<32>_KU_K + \frac{2\gamma_K}{\beta^2}(V_K^\flat+g_K)(V_K + g_K - V_K^\flat - g_K^\flat) 
\\
&\quad\quad\quad
+ \frac{\gamma_K}{\beta^2}(V_K + g_K-V_K^\flat-g_K^\flat)^2 dx
\\
&\quad
+ \intx \intt \frac{2\gamma_k}{\beta^2}( \partial_k V_k^\flat + \partial_k g_k^\flat)(V_k^\flat+g_k^\flat) + \frac{72}{\beta^2}\<202>_k (V_k^\flat+g_k^\flat)^2 dk dx.
\end{equs}

Moreover, the following estimate holds: for any $\varepsilon >0$, there exists $C=C(\varepsilon, \eta)>0$ such that, for all $\beta > 1$,
\begin{equs} \label{eq: remainder estimates}
\begin{split}
\max_{i=1,\dots,4}\Big| \cR^i_K \Big|
\leq 
CN^\Xi_K 
+ \varepsilon \Big( \intx \cV_\beta(V_K+g_K) dx + \frac 12 \int_0^K \int_{\TTN} r_k^2 dx dk \Big)
\end{split}
\end{equs}
where $N^\Xi_K$ is as in Remark \ref{rem: N convention}.
\end{prop}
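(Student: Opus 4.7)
The proof consists of two stages: an algebraic identity \eqref{eq: all the UV is here} obtained by expansion, substitution of the ansatz, and It\^o-based splitting across scales; and the estimate \eqref{eq: remainder estimates} for each $\cR^i_K$ by routine Besov/paraproduct analysis. My plan is to carry these out in turn, tracking divergences through the expansion and using the renormalisation constants $\gamma_K, \delta_K$ as counterterms in the precise pattern dictated by the definitions \eqref{def: mass renorm}, \eqref{def: energy renorm}.

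\textbf{Deriving the identity.} First expand the Wick polynomials $:(\<1>_K + V_K + g_K)^n:$ for $n = 2, 4$ via the multinomial Wick rule, and the classical polynomials $\cV_\beta(\vec{\<1>}_K + V_K + g_K)$ and $\cV_\beta(V_K + g_K)$ multinomially. The quartic terms in $V_K + g_K$ and the $\beta$ constants cancel exactly between $\frac 1\beta :(\<1>_K + V_K + g_K)^4:$ on the one hand and $-\frac 12 \cV_\beta(\vec{\<1>}_K + V_K + g_K) - \frac 12 \cV_\beta(V_K + g_K)$ on the other. Combine the resulting quadratic-in-$\vec{\<1>}_K$ polynomial with the $\eta$-dependent Wick terms $-\frac \eta 2 :(\phi_K - h)^2: + \frac\eta 2 (\vec{\<1>}_K + V_K + g_K - h)^2$; cross-terms such as $\intx \eta h(\<1>_K - \vec{\<1>}_K) dx$ vanish identically because $h$ is constant on blocks, and terms like $\intx \vec{\<1>}_K g_K^3 dx$ are mean-zero and absorbed into the ``$\approx$''. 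What remains is exactly $\cR^1_K$ plus divergent pieces involving $\<3>_K(V_K+g_K)$, $\<2>_K (V_K+g_K)^2$, $\<2>_K$, and $\<1>_K(V_K+g_K)$ which must be renormalised.

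\textbf{Itô expansions and renormalisations.} Substitute \eqref{eq: full paracontrolled ansatz} into $\frac 12 \int_0^K \int v_k^2 dx dk$, producing $\frac 12 r_k^2$ plus the squared ansatz and cross terms with $r_k$. The vacuum piece $\frac{8}{\beta^2}\int_0^K \int (\cJ_k \<3>_k)^2 dx dk$ has expectation equal to the first term in $\delta_K$ and so cancels against $-\delta_K \subset \cR_K$ up to a mean-zero remainder. For the divergent macroscopic integral $\frac 4\beta \int \<3>_K(V_K+g_K) dx$ use $V_K = -\frac 4\beta \<30>_K + U_K$ (cf.\ \eqref{eq: intermediate integrated ansatz}): the piece $-\frac{16}{\beta^2} \int \<3>_K \<30>_K dx = -\frac{16}{\beta^2} \int \int_0^K \<3>_k \cdot \cJ_k^2 \<3>_k \, dk \,dx$ produces, after Itô, the last two vacuum contributions to $\delta_K$, while the residual $\frac 4\beta \int \<3>_K(U_K + g_K)dx$ is rewritten via integration across scales using $\partial_k \<30>_k = \cJ_k^2 \<3>_k$ and collected together with $\<31>_K, \<32>_K$ into $\cR^2_K, \cR^3_K$. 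An analogous treatment for $\frac 6\beta \int \<2>_K(V_K+g_K)^2 dx$ generates the resonant product $\cJ_k \<2>_k \pe \cJ_k \<2>_k$, whose ultraviolet part is exactly $\partial_k \<sunset>_k$ and is cancelled by the $\partial_k \gamma_k$ piece via the definition $\gamma_K = -48 \<sunset>_K$; the renormalised remainder is $\<202>_k$, landing in $\cR^4_K$. Integration by parts in $k$ against $\partial_k V_k^\flat, \partial_k g_k^\flat$ (well-defined thanks to Lemma \ref{lem: flat drift bound} and \eqref{tool: partial g flat}) produces the time-integrated contributions visible in $\cR^3_K$ and $\cR^4_K$.

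\textbf{Remainder estimates.} Each $\cR^i_K$ is a finite sum of multilinear pairings of diagrams in $\Xi$ with $U_K, V_K, V_k^\flat, g_K, g_k^\flat$ and $r$. Bound these using duality in Besov spaces (Proposition \ref{prop: tool duality}), the paraproduct and resonant product estimates, and Sobolev embeddings, placing all stochastic factors in the Besov norms covered by Proposition \ref{prop: diagrams} and hence into $N^\Xi_K$. The $U_K$ and $V_K^\flat$ norms at sub-Sobolev regularity are controlled by Proposition \ref{prop: u and v bound}, which converts such norms into $\frac\varepsilon{\beta^3}\int \cV_\beta(V_K+g_K) dx$, $N^\Xi_K$, and a piece of the drift entropy $\int_0^K\int r_k^2 dx dk$; higher $L^p$ norms of $V_K + g_K$ that exceed Sobolev control are handled by Lemma \ref{lem: potential}. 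The $g_K$-dependent prefactors of $\hfbeta$ generated this way are beaten by the positive powers of $\beta^{-1}$ sitting in front of each of $\cR^2_K, \cR^3_K, \cR^4_K$. A final Young's inequality with appropriately chosen exponents splits each bound into $CN^\Xi_K$ plus $\varepsilon$ times the stability plus drift entropy, yielding \eqref{eq: remainder estimates}.

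\textbf{Main obstacle.} The critical difficulty is the bookkeeping in the Itô stage: every divergence produced by expanding $\cR_K$ must be tracked to a counterterm either inside $\gamma_K, \delta_K$ or inside the squared ansatz in the drift entropy, with the residual resonant products re-identified with the renormalised diagrams $\<31>_K, \<32>_K, \<202>_K$. This requires stability of signs and combinatorial prefactors ($4, 6, 12, 48, 72, \ldots$) matching across the three different sources of divergences, and the paraproduct decomposition $\<2>_k \cdot U_k = \<2>_k\pl U_k + \<2>_k \pe U_k + \<2>_k \pg U_k$ must be arranged so that the resonant piece always pairs with a factor of regularity gain (to enter $\cR^3_K$ as written) while the two other parts remain estimable by Proposition \ref{prop: u and v bound}.
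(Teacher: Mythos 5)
Your high-level strategy (multinomial expansion, drift ansatz substitution, Itô decomposition across scales, cancellation against $\gamma_K, \delta_K$, then Besov/paraproduct estimates) is the one the paper follows, and the estimate portion of your sketch is sound and matches Section~\ref{subsec: estimates}. However, your accounting of which divergences map to which counterterms in $\delta_K$ is wrong, and this is not a peripheral detail — it is precisely the bookkeeping that \eqref{eq: all the UV is here} is about.

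Concretely, you claim that substituting $V_K = -\frac 4\beta \<30>_K + U_K$ into $\frac 4\beta\intx \<3>_K V_K\,dx$ yields $-\frac{16}{\beta^2}\intx\<3>_K\<30>_K\,dx$, and that this ``produces, after It\^o, the last two vacuum contributions to $\delta_K$.'' It does not. Applying It\^o (with $\partial_k\<30>_k = \cJ_k^2\<3>_k$) to $\<3>_K\<30>_K$ gives a martingale plus $\int_0^K(\cJ_k\<3>_k)^2\,dk$, which — combined with the $+\frac 8{\beta^2}(\cJ_k\<3>_k)^2$ that comes from squaring the leading piece of the ansatz in the drift entropy — is exactly the \emph{first} contribution to $\delta_K$, not the last two. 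The terms $-\frac{256}{\beta^4}\<1>_K\<30>_K^3$ and $\frac{96}{\beta^3}\<30>_K^2\<2>_K$ in $\delta_K$ instead come from substituting the integrated ansatz $V_K = -\frac 4\beta\<30>_K + U_K$ into the cubic term $\frac 4\beta\intx \<1>_K(V_K+g_K)^3\,dx$ and the quadratic term $\frac 6\beta\intx\<2>_K(V_K+g_K)^2\,dx$ respectively, and then taking expectations of the highest-order-in-$\<30>_K$ pieces (compare Lemma~\ref{lem: perturb 1.5}). Relatedly, your statement that the drift-entropy piece $\frac 8{\beta^2}\int_0^K(\cJ_k\<3>_k)^2$ ``has expectation equal to the first term in $\delta_K$ and so cancels against $-\delta_K$'' is off by both a sign and a factor of two: the first term of $\delta_K$ is $-\frac 8{\beta^2}(\cJ_k\<3>_k)^2$, so $-\delta_K$ contributes $+\frac 8{\beta^2}$, the same sign as the squared-ansatz term. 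The cancellation only works once you include the $-\frac{16}{\beta^2}(\cJ_k\<3>_k)^2$ cross term arising from $\<3>_K V_K$ (after It\^o) paired against $-\frac 4\beta\cJ_k\<3>_k$ in the ansatz. You have omitted that cross term from your ledger.

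The paper avoids this tangling by staging the change of variables: the intermediate ansatz $v_k = -\frac 4\beta\cJ_k\<3>_k + u_k$ \eqref{eq: intermediate ansatz} is substituted first (so that $\<3>_K V_K + \frac 12 v_k^2$ telescopes cleanly to $-\frac 8{\beta^2}(\cJ_k\<3>_k)^2 + \frac 12 u_k^2$ in Lemma~\ref{lem: perturb 1}), the $\<1>_K(V_K+g_K)^3$ and $\<2>_K(V_K+g_K)^2$ divergences are killed by the remaining $\delta_K$ terms via the \emph{integrated} substitution $V_K = -\frac 4\beta\<30>_K + U_K$ (Lemma~\ref{lem: perturb 1.5}), and only then is the paraproduct piece of the full ansatz \eqref{eq: full paracontrolled ansatz} introduced against $\frac 12 u_k^2$ (Lemma~\ref{lem: perturb 2}) and the $\gamma_K$ counterterm exploited (Lemma~\ref{lem: mass renorm}). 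Collapsing all of this into a single substitution of \eqref{eq: full paracontrolled ansatz} is possible in principle but makes the ledger you are trying to balance considerably harder to track, and in the end your proposal gets the correspondence wrong. You should redo the counterterm accounting term-by-term, following the staged substitution, before the argument can be considered complete.
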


\begin{proof}
We establish \eqref{eq: all the UV is here} in Section \ref{subsec: perturb} by arguing as in \cite[Lemma 5]{BG19}. The remainder estimates \eqref{eq: remainder estimates} are then established in Section \ref{subsec: estimates}.
\end{proof}

\begin{rem}
The products $\<1>_K \<30>_K$ and $\<1>_K \<30>_K^2$ appearing above are classically ill-defined in the limit $K \rightarrow \infty$. However, (probabilistic) estimates on the resonant product $\<31>_K$ uniform in $K$ are obtained in Proposition \ref{prop: diagrams}. Hence, the first product can be analysed using a paraproduct decompositions \eqref{tool: bony decomp}. The second product is less straightforward and requires a double paraproduct decomposition (see \cite[Lemma 21 and Proposition 6]{BG19} and \cite[Proposition 2.22]{CC18}). 
\end{rem}

\subsection{Proof of \eqref{eq: all the UV is here}: Isolating and cancelling divergences}\label{subsec: perturb}

Using that $\<1>_K, \vec{\<1>}_K, \<2>_K, \<3>_K$ and $\<4>_K$ all have expectation zero,

\begin{equs}
	\cR_K
	&=
	\intx \frac 1\beta \<4>_K + \frac 4\beta \<3>_K (V_K+g_K) + \frac 6\beta \<2>_K (V_K+g_K)^2 + \frac 4\beta \<1>_K (V_K+g_K)^3
	\\
	&\quad\quad\quad
	- 2\<2>_K - 4\<1>_K (V_K+g_K) + \cV_\beta(V_K+g_K) 
	\\
	&\quad\quad\quad
	-\frac{\gamma_K}{\beta^2} \Big( \<2>_K + 2\<1>_K(V_K+g_K) + (V_K+g_K)^2 \Big) - \delta_K
	\\
	&\quad\quad\quad
	- \frac1{2\beta} (\vec{\<1>}_K)^4 - \frac{2}{\beta}(\vec{\<1>}_K)^3 (V_K+g_K) - \frac3{\beta} (\vec{\<1>}_K)^2(V_K+g_K)^2 
	\\
	&\quad\quad\quad
	- \frac2\beta \vec{\<1>}_K (V_K+g_K)^3 +(\vec{\<1>}_K)^2 + 2\vec{\<1>}_K(V_K+g_K) - \frac 12 \cV_\beta(V_K+g_K) 
	\\
	&\quad\quad\quad
	- \frac 12 \cV_\beta(V_K+g_K)
	\\
	&\quad\quad\quad
	-\frac\eta2 \<2>_K - \eta \<1>_K (V_K+g_K-h) - \frac\eta2(V_K+g_K-h)^2
	\\
	&\quad\quad\quad
	+\frac\eta2 (\vec{\<1>}_K)^2 + \eta \vec{\<1>}_K(V_K+g_K-h) + \frac \eta 2 (V_K+g_K-h)^2 dx
	\\
	&\approx
	\intx \frac 4\beta \<3>_K V_K + \frac 6\beta \<2>_K (V_K+g_K)^2 + \frac 4\beta \<1>_K(V_K+g_K)^3 -4\<1>_KV_K
	\\
	&\quad\quad\quad
	- \frac{2\gamma_K}{\beta^2}\<1>_KV_K -\frac{\gamma_K}{\beta^2} (V_K+g_K)^2 - \delta_K
	\\
	&\quad\quad\quad
	-\frac 1{2\beta} (\vec{\<1>}_K)^4 - \frac 2\beta (\vec{\<1>}_K)^3(V_K+g_K) - \frac 3\beta (\vec{\<1>}_K)^2 (V_K+g_K)^2 
	\\
	&\quad\quad\quad
	- \frac 2\beta \vec{\<1>}_K V_K^3 - \frac 6\beta \vec{\<1>}_K V_K^2 g_K - \frac 6\beta \vec{\<1>}_K V_K g_K^2 + (\vec{\<1>}_K)^2 + 2 \vec{\<1>}_K V_K
	\\
	&\quad\quad\quad
	-\eta \<1>_K V_K + \frac \eta 2 (\vec{\<1>}_K)^2 + \eta \vec{\<1>}_K V_K dx
\end{equs}

Hence, by reordering terms,
\begin{equs}
	\begin{split} \label{perturb0}
	\cR_K
	&\approx
	\cR_K^1 + \intx \frac 4\beta \<3>_K V_K + \frac 6\beta \<2>_K (V_K+g_K)^2 + \frac 4\beta \<1>_K(V_K+g_K)^3 
	\\
	&\quad\quad\quad
	- (4+\eta)\<1>_K V_K - \frac{2\gamma_K}{\beta^2}\<1>_K V_K - \frac{\gamma_K}{\beta^2}(V_K+g_K)^2 -\delta_K dx.
\end{split}
\end{equs}

Ignoring the renormalisation counterterms (i.e. those involving $\gamma_K$ and $\delta_K$), the divergences in \eqref{perturb0} are contained in the integrals $\intx \frac 4\beta \<3>_K V_K dx$ and $\intx \frac 6\beta (V_K+g_K)^2$. In order to kill these divergences, we use changes of variables in the drift entropy to mostly cancel them; the remaining divergences are killed by the renormalisation counterterms. We renormalise the leading order divergences, ie. those polynomial in $K$, in Section \ref{subsec: polynomial divergences}. The divergences that are logarithmic in $K$ are renormalised in Section \ref{subsec: log divergences}.

In order to use the drift entropy to cancel divergences, we decompose certain (spatial) integrals across ultraviolet scales $k \in [0,K]$ using It\^o's formula. Error terms are produced that are stochastic integrals with respect to martingales (specifically, with respect to $d\<3>_k$ and $d\<2>_k$). The following lemma allows us to argue that these stochastic integrals are $\approx 0$.
\begin{lem} \label{lem: martingales}
For any $v \in \HH_{b,K}$, the stochastic integrals
\begin{equs}\label{eq: stoch integral 1}
\intx \intt V_k d\<3>_k dx
\end{equs}
and
\begin{equs} \label{eq: stoch integral 2}
\sum_{i < j-1}\intx \intt U_k \Delta_i (V_k^\flat + g_k^\flat)d(\Delta_j \<2>_k) dx
\end{equs}
are martingales. We recall that, above, $\Delta_i$ denotes the $i$-th Littlewood-Paley block.
\end{lem}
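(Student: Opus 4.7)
The plan is to rewrite each of \eqref{eq: stoch integral 1} and \eqref{eq: stoch integral 2} as an It\^o integral against the cylindrical Brownian motion $X$, verify that the resulting integrand belongs to $L^2(\Omega \times [0,K]; L^2(\TTN))$, and then conclude via the classical fact that an It\^o integral of a square-integrable predictable process is a genuine martingale rather than merely a local one. The restriction $v \in \HH_{b,M,K}$ is used precisely here: combined with Lemma \ref{lem: drift bound} it supplies the a.s.\ bound $\sup_{k \leq K}\|V_k\|_{H^1}^2 \leq M$, so drift-built factors can be treated as deterministic multipliers when the expectation of the quadratic variation is computed.

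For \eqref{eq: stoch integral 1}, the iterated-integral representation \eqref{eq: iterated integral third wick} gives $d\<3>_k = 3\,\<2>_k\,\cJ_k\,dX_k$, and by self-adjointness of $\cJ_k$,
$$\int_{\TTN}\int_0^K V_k\, d\<3>_k\, dx \;=\; 3\int_0^K \bigl\langle \cJ_k(V_k \<2>_k),\, dX_k \bigr\rangle_{L^2(\TTN)}.$$
The martingale property reduces to $\EE\int_0^K \|\cJ_k(V_k \<2>_k)\|_{L^2}^2\, dk < \infty$. A paraproduct estimate (Bony's decomposition, cf.\ Appendix \ref{appendix: sub: paracontrolled}) bounds $\|V_k \<2>_k\|_{H^{-1-\kappa}} \lesssim \|V_k\|_{H^1}\|\<2>_k\|_{\cC^{-1-\kappa}}$; the frequency-localised, regularising action of $\cJ_k$ (discussed following \eqref{def: lollipop}) then provides a $k$-decaying factor when mapping $H^{-1-\kappa}$ to $L^2$, integrable over $[0,K]$. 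The a.s.\ bound on $\|V_k\|_{H^1}$ and the moment bounds of Proposition \ref{prop: diagrams} close the estimate.

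The argument for \eqref{eq: stoch integral 2} follows the same template. From $d\<2>_k = 2\<1>_k \cJ_k\, dX_k$ one rewrites the sum as an It\^o integral against $dX_k$; the constraint $i<j-1$ is precisely the low-high paraproduct structure, placing each $\Delta_i(V_k^\flat + g_k^\flat)$ at strictly lower frequency than the corresponding $\Delta_j$, so summing in $i,j$ reassembles into a controllable paraproduct. The required $L^2(\Omega \times [0,K]; L^2(\TTN))$ bound on the integrand then follows from: the $H^{1-\kappa}$ estimate on $U_k$ given by \eqref{eq: u estimate}, whose random part $N_K^\Xi$ has $\EE N_K^\Xi \lesssim N^3$ and whose remaining terms are a.s.\ bounded under $\HH_{b,M,K}$; the Besov estimates on $V_k^\flat$ and $g_k^\flat$ from Lemma \ref{lem: flat drift bound} and Lemma \ref{lem: bounds on translation}; and the moment bounds on $\<1>_k,\<2>_k$ from Proposition \ref{prop: diagrams}.

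The main obstacle is reconciling two different types of control: $v\in \HH_{b,M,K}$ yields deterministic a.s.\ bounds on $V$, $V^\flat$, and $U$, whereas the stochastic diagrams in $\Xi$ are controlled only in expectation via Proposition \ref{prop: diagrams}. It is the combination of the paraproduct decomposition with the frequency-localised regularising action of $\cJ_k$ that allows these two sources of information to be combined, producing an absolutely convergent Littlewood-Paley sum in \eqref{eq: stoch integral 2} and an integrable $k$-integrand in \eqref{eq: stoch integral 1}.
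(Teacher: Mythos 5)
The overall architecture you propose — rewrite as an It\^o integral against $dX_k$ (using $d\<3>_k = 3\<2>_k\,d\<1>_k$ and $d\<2>_k = 2\<1>_k\,d\<1>_k$) and show that the integrand lies in $L^2(\Omega\times[0,K];L^2)$ — is exactly the paper's, and your observation about the role of $\HH_{b,M,K}$ is correct. However, the central estimate you offer to verify the $L^2$ condition for \eqref{eq: stoch integral 1} does not hold: there is no bound of the form $\|V_k\<2>_k\|_{H^{-1-\kappa}}\lesssim \|V_k\|_{H^1}\|\<2>_k\|_{\cC^{-1-\kappa}}$. In Bony's decomposition the two paraproducts can be estimated along these lines, but the resonant product $V_k\pe\<2>_k$ requires the sum of regularities to be strictly positive (Proposition \ref{prop: resonant}), and $1+(-1-\kappa)=-\kappa<0$. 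So the estimate you want to feed into $\cJ_k$ is exactly the product that is ill-defined in the limit $K\to\infty$ and that all the renormalisation machinery in Section \ref{subsec: killing divergences} exists to handle. It cannot be recovered by a paraproduct bound in these norms, and Proposition \ref{prop: diagrams} is silent about it.

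The paper sidesteps this entirely. Because $K$ is \emph{fixed} in this lemma, $\<2>_k$ for $k\leq K$ is a smooth Gaussian field with moments of every order, and one does not need quantitative $K$-uniform control. The paper computes $\llangle \int_0^\bullet V_k\cdot d\<3>_k\rrangle_K = 9\int_0^K\int V_k^2\<2>_k^2\cJ_k^2\,dx\,dk$ directly, applies Young's inequality pointwise to get $V_k^2\<2>_k^2\cJ_k^2\leq \tfrac12(V_k^4+\<2>_k^4\cJ_k^4)$, and observes that $\EE\intt\intx V_k^4\,dx\,dk<\infty$ by the almost-sure $L^6$ (hence $L^4$) bound on $V_k$ that follows from Sobolev embedding and $\HH_{b,M,K}$, while $\EE\intt\intx\<2>_k^4\cJ_k^4\,dx\,dk<\infty$ by Gaussian moment bounds for the cutoff field. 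No paraproducts, no Besov-H\"older norms, and the product that you could not control never needs to be controlled as a distribution because it is estimated in $L^2$ of a smooth function. You should adopt this route.

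For \eqref{eq: stoch integral 2} the situation is similar. Your idea of reassembling the $i<j-1$ sum into a paraproduct and then invoking the $H^{1-\kappa}$ bound \eqref{eq: u estimate} on $U_k$ is risky in a second way: \eqref{eq: u estimate} involves the random variable $N^\Xi_K$, which is controlled only in expectation, and plugging such bounds \emph{inside} a quadratic variation integral requires care that your outline does not provide (you would end up with products of $N^\Xi_K$ with other stochastic factors). The paper instead substitutes the explicit decomposition $U_k=V_k+\tfrac 4\beta\<30>_k$ from \eqref{eq: intermediate integrated ansatz}, separates the resulting stochastic integral into a piece $Z^a$ containing only $V_k\Delta_iV_k^\flat\Delta_j\<1>_k$ (two drift factors, one noise factor) and a remainder $Z^b$, and again applies Young's inequality to land on $\EE\intt\intx V_k^6 + (\Delta_j\<1>_k)^6\cJ_k^6\,dx\,dk<\infty$, using the same a.s.\ $L^6$ bound. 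The point to internalise is that the only subtlety in this lemma is the \emph{martingale-versus-local-martingale} dichotomy, not uniform-in-$K$ regularity, and so $L^p$-based estimates that would degenerate as $K\to\infty$ are perfectly adequate here and substantially simpler.
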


\begin{proof}
In this proof, for any continuous local martingale $Z_\bullet$, we write $\llangle Z,Z\rrangle_\bullet$ for the corresponding quadratic variation process. Moreover, for any $Z$-adapted process $Y_\bullet$, we write $ \intt Y_k \cdot dZ_k$ to denote the stochastic integral $\intx \intt Y_k dZ_k dx$.

We begin with two observations: first, let $v \in \HH_{b,M,K}$ for some $M > 0$, i.e. those $v \in \HH_{b,K}$ satisfying \eqref{eq: l2 drift bound}. Then, by Sobolev embedding, there exists $C=C(M,N,K,\eta)>0$ such that
\begin{equs}
\sup_{0 \leq k \leq K} \| V_k \|_{L^6}^6	
\leq
C
\end{equs}
$\PP$-almost surely.

Second, recalling the iterated integral representation of the Wick powers $\<3>_k$ and $\<2>_k$ (see e.g. \eqref{eq: iterated integral third wick}), one can show $d\<3>_k = 3 \<2>_k d\<1>_k$ and $d(\Delta_j \<2>_k) = \Delta_j d\<2>_k = 2\Delta_j \<1>_k d\<1>_k$. Thus, we can write the stochastic integrals \eqref{eq: stoch integral 1} and \eqref{eq: stoch integral 2} in terms of stochastic integrals with respect to $d\<1>_k$. It suffices to show that their quadratic variations are finite in expectation.
 
Using that $d\llangle \<1>,\<1>\rrangle_k = \cJ^2_k(1) dk = \cJ^2_k dk$ and by Young's inequality,
\begin{equs}
\EE \Bigg[ \llangle \int_0^\bullet  V_{k} \cdot d\<3>_{k} \rrangle_K \Bigg]
&=
3^2 \EE \Bigg[ \intt \intx V_k^2 \<2>_k^2 \cJ_k^2 dx dk \Bigg]
\\
&\leq
3^2\EE \Bigg[ \frac 12 \intt \intx V_k^4 + \<2>_k^4 \cJ_k^4 dx dk \Bigg] < \infty.
\end{equs}
Hence, \eqref{eq: stoch integral 1} is a martingale.

Now consider \eqref{eq: stoch integral 2}. By  \eqref{eq: intermediate integrated ansatz},
\begin{equs}
\sum_{i < j-1} \intt \intx U_k \Delta_i(V_k^\flat + g_k^\flat) \cdot d(\Delta_j \<2>_k)
=
Z^a_K + Z^b_K
\end{equs}
where
\begin{equs}
Z^a_K
&=
2 \sum_{i < j-1} \intt \intx V_k \Delta_i V_k^\flat \Delta_j \<1>_k d\<1>_k
\\
Z^b_K
&= 
2\sum_{i < j-1} \intt \intx \Big(\frac 4\beta \<30>_k \Delta_i V_k^\flat  + U_k \Delta_i g_k^\flat \Big) \Delta_j \<1>_k d\<1>_k.
\end{equs}
Arguing as for \eqref{eq: stoch integral 1}, one can show $\EE \llangle Z^b_\bullet \rrangle_K < \infty$. 

By Young's inequality and using that Littlewood-Paley blocks and the $\flat$ operator are $L^p$ multipliers, we have
\begin{equs}
\EE \llangle Z^a_\bullet \rrangle_K
&=
2^2 \EE \Bigg[ \sum_{i < j-1} \intt \intx V_k^2 (\Delta_i V_k^\flat)^2 (\Delta_j \<1>_k)^2 \cJ_k^2 dk \Bigg]
\\
&\leq
2^2\EE \Bigg[ \frac 23 \intt \intx V_k^6 + \frac 13 (\Delta_k \<1>_k)^6 \cJ_k^6 dx dk \Bigg] 
< 
\infty	
\end{equs}
thus establishing that \eqref{eq: stoch integral 2} is a martingale.
\end{proof}

\subsubsection{Energy renormalisation} \label{subsec: polynomial divergences}

In the next lemma, we cancel the leading order divergence using the change of variables \eqref{eq: intermediate ansatz} in the drift entropy. The error term does not depend on the drift and is divergent in expectation (as $K \rightarrow \infty$); it is cancelled by one part of the energy renormalisation $\delta_K$ (see \eqref{def: energy renorm}).

\begin{lem}\label{lem: perturb 1}
\begin{equs}
\intx \frac 4\beta \<3>_K V_K dx + \frac 12 \intt \intx v_k^2 dx dk
\approx
\intt \intx - \frac{8}{\beta^2} (\cJ_k \<3>_k)^2 + \frac 12  u_k^2 dx dk.
\end{equs}
\end{lem}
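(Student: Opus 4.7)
\textbf{Proof plan for Lemma \ref{lem: perturb 1}.}

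The identity is essentially an It\^o-type expansion of the spatial integral $\intx \frac{4}{\beta}\<3>_K V_K\,dx$ across the ultraviolet scale $k \in [0,K]$, combined with completing the square in the drift entropy under the change of variables $v_k = u_k - \frac{4}{\beta} \cJ_k \<3>_k$. First, since $\<3>_0 = 0$ and $V_0 = 0$, I would write $\<3>_K V_K = \int_0^K d(\<3>_k V_k)$ and apply It\^o's product rule. Crucially, $V_k = \int_0^k \cJ_{k'} v_{k'} dk'$ is absolutely continuous in $k$ (of bounded variation), so the cross-variation with the martingale $\<3>_\bullet$ vanishes and
\begin{equs}
\intx \frac{4}{\beta}\<3>_K V_K\,dx
= \frac{4}{\beta}\intx \int_0^K V_k\,d\<3>_k\,dx
+ \frac{4}{\beta} \int_0^K \intx \<3>_k\, \cJ_k v_k\,dx\,dk.
\end{equs}

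Next, the stochastic integral $\intx \int_0^K V_k\,d\<3>_k\,dx$ is a true martingale with vanishing expectation by Lemma \ref{lem: martingales} (which uses that $v \in \HH_{b,K}$), so it is $\approx 0$. For the Lebesgue term, I use self-adjointness of the Fourier multiplier $\cJ_k$ to write $\intx \<3>_k \cJ_k v_k\,dx = \intx (\cJ_k \<3>_k) v_k\,dx$, and substitute $v_k = u_k - \frac{4}{\beta}\cJ_k\<3>_k$ to obtain
\begin{equs}
\frac{4}{\beta}\int_0^K\intx \<3>_k\, \cJ_k v_k\,dx\,dk
= \frac{4}{\beta}\int_0^K\intx (\cJ_k \<3>_k) u_k\,dx\,dk - \frac{16}{\beta^2}\int_0^K\intx (\cJ_k \<3>_k)^2\,dx\,dk.
\end{equs}

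Finally, I expand the drift entropy in the new variable:
\begin{equs}
\frac{1}{2} v_k^2 = \frac{1}{2} u_k^2 - \frac{4}{\beta} u_k \cJ_k \<3>_k + \frac{8}{\beta^2}(\cJ_k \<3>_k)^2.
\end{equs}
Adding this to the previous display, the cross terms $\pm \frac{4}{\beta}(\cJ_k \<3>_k) u_k$ cancel exactly, and the two $(\cJ_k\<3>_k)^2$ contributions combine as $-\frac{16}{\beta^2} + \frac{8}{\beta^2} = -\frac{8}{\beta^2}$, which produces the claimed identity.

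The only genuinely delicate point is the vanishing of the stochastic integral: a priori it is only a local martingale, and one must use the boundedness condition on $v \in \HH_{b,K}$ (together with the $L^6$ moment bounds on $V_k$ it implies on compact $k$-intervals and the finite $L^4$-in-time moments of $\<2>_k$ via $d\<3>_k = 3 \<2>_k\,d\<1>_k$) to upgrade it to a true martingale. This is precisely what Lemma \ref{lem: martingales} provides, so the argument goes through cleanly. All other manipulations are classical It\^o calculus and algebra.
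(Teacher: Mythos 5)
Your proof is correct and follows essentially the same route as the paper: decompose $\<3>_K V_K$ across scales via It\^o's formula (using that $V_\bullet$ has bounded variation so the cross-variation vanishes), discard the stochastic integral by Lemma \ref{lem: martingales}, move $\cJ_k$ onto $\<3>_k$ by self-adjointness, substitute $v_k = u_k - \tfrac{4}{\beta}\cJ_k\<3>_k$, and complete the square. Your explicit bookkeeping of the $-\tfrac{16}{\beta^2}+\tfrac{8}{\beta^2}$ coefficient and the cancelling cross terms, and your remark about upgrading the local martingale to a true one via $v \in \HH_{b,K}$, correctly spell out exactly what the paper compresses into its two-line computation.
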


\begin{proof}
By It\^o's formula, Lemma \ref{lem: martingales}, and the self-adjointness of $\cJ_k$,
\begin{equs}
\intx \frac 4\beta \<3>_K V_K dx
=
\intx \Big(\intt \frac 4\beta \<3>_k \partial_k V_k dk + \frac 4\beta V_k d\<3>_k \Big) dx	
\approx
\intx \intt \frac 4\beta \cJ_k \<3>_k v_k dk dx.
\end{equs}

Hence, by \eqref{eq: intermediate ansatz},
\begin{equs}
\intx \frac 4\beta \<3>_K V_K dx &+ \frac 12 \intt \intx v_k^2 dx dk
\\
&\approx
\intx \intt \frac 4\beta \cJ_k \<3>_k \big( -\frac 4\beta \<3>_k + u_k) + \frac 12 \big( -\frac 4\beta \<3>_k + u_k)^2 dk dx
\\
&=
\intx \intt -\frac{8}{\beta^2} (\cJ_k \<3>_k)^2 + \frac 12 u_k^2 dk dx. 	
\end{equs}
\end{proof}

As a consequence of \eqref{eq: intermediate ansatz}, the remaining (non-counterterm) integrals in \eqref{perturb0} acquire additional divergences that are independent of the drift. We isolate them in the next lemma; they are also renormalised by parts of the energy renormalisation (see \eqref{def: energy renorm}).
\begin{lem}\label{lem: perturb 1.5}
\begin{equs}\label{perturb1}
\intx \frac 4\beta \<1>_K (V_K+g_K)^3 - (4+\eta)\<1>_K V_K dx
\approx
\cR_K^2 - \intx \frac{256}{\beta^4}\<1>_K\<30>_K^3 dx
\end{equs}
and
\begin{equs}
\begin{split} \label{perturb1.5}
\intx \frac{6}{\beta} \<2>_K (V_K+g_K)^2 dx
&\approx	
\intx \frac{96}{\beta^3} \<2>_K \<30>_K^2 - \frac{48}{\beta^2}\<2>_K\<30>_K U_K
\\
&\quad\quad\quad
+ \frac 6\beta \<2>_K U_K^2 + \frac {12}\beta \<2>_K g_K U_K dx.
\end{split}
\end{equs}
\end{lem}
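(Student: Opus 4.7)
\textbf{Proof plan for Lemma \ref{lem: perturb 1.5}.}

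The plan is purely algebraic: substitute the identity $V_K = -\tfrac{4}{\beta}\<30>_K + U_K$ from \eqref{eq: intermediate integrated ansatz} into both sides, expand the resulting polynomial in $\<1>_K, \<2>_K, \<30>_K, U_K, g_K$, and check that the difference between each displayed line and its claimed right-hand side is a finite sum of terms of the form $\intx P_K \, dx$ where each $P_K$ has $\PP$-expectation zero. No stochastic integration or It\^o formula is needed here, in contrast to Lemma \ref{lem: perturb 1} --- the only tool beyond expansion is a parity/Wiener-chaos observation.

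For \eqref{perturb1}, first write $V_K + g_K = (U_K+g_K) - \tfrac{4}{\beta}\<30>_K$ and expand
\begin{equs}
(V_K+g_K)^3
=
(U_K+g_K)^3 - \tfrac{12}{\beta}\<30>_K (U_K+g_K)^2 + \tfrac{48}{\beta^2}\<30>_K^2 (U_K+g_K) - \tfrac{64}{\beta^3}\<30>_K^3.
\end{equs}
Multiplying by $\tfrac{4}{\beta}\<1>_K$, expanding the inner binomials, and adding the contribution $-(4+\eta)\<1>_K V_K = -(4+\eta)\<1>_K U_K + \tfrac{4(4+\eta)}{\beta}\<1>_K\<30>_K$, one reads off exactly the seven summands that define $\cR_K^2$, plus the term $-\tfrac{256}{\beta^4}\<1>_K\<30>_K^3$, plus a \emph{remainder}
\begin{equs}
\intx \tfrac{4}{\beta}\<1>_K g_K^3 - \tfrac{48}{\beta^2}\<1>_K\<30>_K g_K^2 + \tfrac{192}{\beta^3}\<1>_K\<30>_K^2 g_K + \tfrac{4(4+\eta)}{\beta}\<1>_K\<30>_K \, dx.
\end{equs}
Each integrand is a product of $\<1>_K$ (first Wiener chaos) with an even power of Gaussian factors, so the total Gaussian degree is odd; alternatively, $\<1>_K$ and $\<30>_K$ belong to orthogonal Wiener chaoses. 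Either observation gives that each term has $\PP$-expectation zero, proving the $\approx$ statement. Identity \eqref{perturb1.5} is handled the same way, now expanding
\begin{equs}
(V_K+g_K)^2 = (U_K+g_K)^2 - \tfrac{8}{\beta}\<30>_K(U_K+g_K) + \tfrac{16}{\beta^2}\<30>_K^2,
\end{equs}
multiplying by $\tfrac{6}{\beta}\<2>_K$, and reading off the four displayed summands on the right-hand side. The leftover terms $\tfrac{6}{\beta}\<2>_K g_K^2$ and $-\tfrac{48}{\beta^2}\<2>_K\<30>_K g_K$ again carry odd total Gaussian degree (or lie in orthogonal chaoses), hence have mean zero.

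The computation is straightforward; the main thing to be careful about is bookkeeping of the multinomial coefficients so that the definition of $\cR_K^2$ is matched exactly, and checking that $g_K$ being deterministic does not spoil the parity arguments (it does not, since $g_K$ contributes no Gaussian factors). There is no serious obstacle --- this is the purely polynomial step that prepares for the genuinely divergent integrals, which are then handled in the subsequent lemmas using It\^o's formula, the counterterms $\gamma_K,\delta_K$, and paracontrolled estimates.
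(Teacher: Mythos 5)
Your plan matches the paper's proof in substance: both substitute the change of variables $V_K = -\tfrac{4}{\beta}\<30>_K + U_K$ from \eqref{eq: intermediate integrated ansatz}, expand the resulting polynomial, match the surviving drift-dependent terms against the definition of $\cR^2_K$ (plus the extra $-\tfrac{256}{\beta^4}\<1>_K\<30>_K^3$), and discard the leftover drift-free terms as having vanishing $\PP$-expectation. Your multinomial bookkeeping is correct, your identification of the remainders is correct, and you are right that no It\^o decomposition or martingale argument is needed here, unlike Lemma \ref{lem: perturb 1}.

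One caveat on the justification that the leftovers vanish in mean: the "total Gaussian degree is odd" parity heuristic does not actually cover all your discarded terms. The integrands $\<1>_K\<30>_K g_K^2$ and $\<1>_K\<30>_K$ (the latter coming from expanding $-(4+\eta)\<1>_K V_K$) have total degree $1+3=4$, even; $\<2>_K g_K^2$ has degree $2$, even. What rescues the argument is precisely your fallback observation, which is also what the paper invokes under ``Wick's theorem and the fact that $\<30>_K$ is Wick ordered'': $\<1>_K$ (chaos $1$), $\<2>_K$ (chaos $2$), and $\<30>_K$ (chaos $3$) lie in mutually orthogonal Wiener chaoses, so $\EE[\<1>_K\<30>_K]=\EE[\<2>_K\<30>_K]=0$; the Wick ordering gives $\EE\<1>_K=\EE\<2>_K=0$, handling $\<1>_K g_K^3$ and $\<2>_K g_K^2$; and $\<30>_K^2$ decomposes into even chaoses only, so its product with $\<1>_K$ has no degree-$0$ component, giving $\EE[\<1>_K\<30>_K^2]=0$. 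With that repair, the argument is the same as the paper's.
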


\begin{proof}
By \eqref{eq: intermediate integrated ansatz},
\begin{equs} \label{perturb1 eq2}
\begin{split}
\intx \frac{4}{\beta} &\<1>_K (V_K+g_K)^3 dx
\\
&=
\intx \frac{4}{\beta} \<1>_K (- \frac{4}{\beta} \<30>_K + U_K + g_K)^3 dx
\\
&=
\intx \frac 4\beta \<1>_K \Bigg( - \frac{64}{\beta^3} \<30>_K^3 + \frac{48}{\beta^2} \<30>_K^2(U_K + g_K) 
\\
&\quad\quad\quad
- \frac{12}{\beta} \<30>_K U_K^2 - \frac{24}\beta \<30>_K U_K g_K - \frac{12}\beta \<30>_K g_K^2
\\
&\quad\quad\quad
+ U_K^3 + 3 U_K^2 g_K + 3 U_K g_K^2 + g_K^3 \Bigg) dx
\\
&\approx
\intx - \frac{256}{\beta^4}\<1>_K\<30>_K^3 + \frac{192}{\beta^3}\<1>_K\<30>_K^2U_K 
\\
&\quad - \frac{48}{\beta^2}\<1>_K\<30>_KU_K^2 - \frac{96}{\beta^2}\<1>_K\<30>_Kg_KU_K
\\
&\quad
+ \frac{4}{\beta}\<1>_K U_K^3 + \frac{12}{\beta}\<1>_K g_KU_K^2 + \frac{12}{\beta}\<1>_K g_K^2 U_K dx.
\end{split}
\end{equs}
Above we have used Wick's theorem and the fact that $\<30>_K$ is Wick ordered to conclude $\EE \big[\<1>_K \<30>_K^2 g_K\big] = \EE \big[\<1>_K \<30>_K g_K^2\big] = 0$.

Similarly, $\EE \<1>_K \<30>_K = 0$ $\<30>_K$. Hence, by \eqref{eq: intermediate integrated ansatz}
\begin{equs} \label{perturb1 eq3}
\intx (4+\eta)\<1>_K V_K 
\approx
\intx (4+\eta)\<1>_K U_K dx. 
\end{equs}
Combining \eqref{perturb1 eq2} and \eqref{perturb1 eq3} establishes \eqref{perturb1}.

By \eqref{eq: intermediate integrated ansatz},
\begin{equs}
\begin{split}
\intx \frac{6}{\beta} \<2>_K (V_K+g_K)^2 dx
&=
\intx \frac{6}{\beta} \<2>_K \Big(- \frac{4}{\beta}\<30>_K + U_K + g_K\Big)^2 dx
\\
&=
\intx \frac{6}{\beta} \<2>_K \Big( \frac{16}{\beta^2}\<30>_K^2 - \frac{8}{\beta} (U_K+g_K)\<30>_K 
\\
&\quad\quad\quad
U_K^2 + 2U_Kg_K + g_K^2 \Big) dx
\\
&\approx
\intx \frac{96}{\beta^3} \<2>_K \<30>_K^2 + \frac{12}{\beta}\<2>_K\Big(-\frac{4}{\beta}\<30>_K \Big)U_K
\\
&\quad\quad\quad
+ \frac 6\beta \<2>_K U_K^2 + \frac {12}\beta \<2>_K g_K U_K dx
\end{split}
\end{equs}
where we have used that $\EE [\<2>_K g_K] = 0$ and, by Wick's theorem, $\EE \big[ \<2>_K \<30>_K \big] = 0$. This establishes \eqref{perturb1.5}.
\end{proof}

The divergences encountered in Lemmas \ref{lem: perturb 1} and \ref{lem: perturb 1.5} that are independent of the drift are killed by the energy renormalisation $\delta_K$ since, by definition,
\begin{equs}\label{eq: energy renorm cancellation}
	\delta_K 
	\approx 
	\intx  - \int_0^K \frac{8}{\beta^2}(\cJ_k\<3>_k)^2 dk -\frac{256}{\beta^4}\<1>_K \Big( \<30>_K \Big)^3 + \frac{96}{\beta^3}\Big(\<30>_K\Big)^2\<2>_K dx.
\end{equs}

\subsubsection{Mass renormalisation} \label{subsec: log divergences}
The integrals on the righthand side of \eqref{perturb1.5} that involve the drift cannot be bounded uniformly as $K \rightarrow \infty$. We isolate divergences using a paraproduct decomposition and expand the drift entropy using \eqref{eq: full paracontrolled ansatz} to mostly cancel them. This is done in Lemma \ref{lem: perturb 2}. The remaining divergences are then killed in Lemma \ref{lem: mass renorm} using the mass renormalisation.

\begin{lem}\label{lem: perturb 2}
	\begin{equs}\label{perturb2}
	\begin{split}
	\intx - \frac{48}{\beta^2}\<2>_K\<30>_K U_K &+ \frac 6\beta \<2>_K U_K^2 + \frac {12}\beta \<2>_K g_K U_K dx + \frac 12\intt u_k^2 dk dx
	\\
	&\approx
	\cR^3_K + \frac 12 \intx \intt r_k^2 dk dx 
	\\
	&\quad +\intx \frac{96}{\beta^3} \<2>_K \<30>_K^2 - \frac{48}{\beta^2} \<2>_K \pe \<30>_K U_K dx 
	\\ 
	&\quad
	- \intx \intt \frac{72}{\beta^2} (\cJ_k \<2>_k \pe \cJ_k \<2>_k)(V_k^\flat + g_k)^2 dkdx.
	\end{split}
	\end{equs}
\end{lem}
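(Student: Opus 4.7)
The plan combines a Bony paraproduct decomposition of the bilinear products involving $\<2>_K$ with an It\^o-formula expansion across the scales $[0,K]$ and the modified drift entropy produced by the change of variables \eqref{eq: full paracontrolled ansatz}. Pathwise singular products are isolated and cancelled, leaving only the well-defined contributions constituting $\cR^3_K$ plus the advertised residues.

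First I would apply Bony's decomposition to each of the three products $\<2>_K \<30>_K$, $\<2>_K U_K$ (viewing $\<2>_K U_K^2$ as $(\<2>_K \cdot U_K)\cdot U_K$), and $\<2>_K g_K$ on the left-hand side of \eqref{perturb2}. The $\pl$- and $\pe$-pieces are paracontrolled; they match the terms $-\frac{48}{\beta^2}(\<2>_K \pl \<30>_K)U_K$, $\frac{6}{\beta}(\<2>_K \pl U_K + \<2>_K \pe U_K)U_K$ and $\frac{12}{\beta}(\<2>_K \pl g_K + \<2>_K \pe g_K)U_K$ appearing in $\cR^3_K$, while the genuinely singular $-\frac{48}{\beta^2}\<2>_K \pe \<30>_K U_K$ is passed through unchanged. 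Using the identity $U_K = V_K + \frac{4}{\beta}\<30>_K$ from \eqref{eq: intermediate integrated ansatz}, the residue collapses to the single $\pg$-term $\frac{12}{\beta}\intx \<2>_K \pg (V_K + g_K) U_K\, dx$.

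Next, I split $V_K + g_K = (V_K^\flat + g_K^\flat) + (V_K + g_K - V_K^\flat - g_K^\flat)$. The high-frequency remainder matches the boundary contribution $\frac{12}{\beta}\<2>_K \pg (V_K + g_K - V_K^\flat - g_K^\flat) U_K$ in $\cR^3_K$ immediately. For the low-frequency part I apply It\^o's formula to the process $k \mapsto \intx \<2>_k \pg (V_k^\flat + g_k^\flat) U_k\, dx$; since $V_k^\flat, g_k^\flat, U_k$ are absolutely continuous in $k$ with vanishing initial data, the only stochastic contribution comes from the $d\<2>_k$ increment and is $\approx 0$ by an argument analogous to Lemma \ref{lem: martingales}, giving
\begin{equs}
\frac{12}{\beta}\intx \<2>_K \pg (V_K^\flat + g_K^\flat) U_K\, dx
&\approx \frac{12}{\beta}\intx \intt \<2>_k \pg (\partial_k V_k^\flat + \partial_k g_k^\flat) U_k\, dk\, dx \\
&\quad + \frac{12}{\beta}\intx \intt \cJ_k\big(\<2>_k \pg (V_k^\flat + g_k^\flat)\big) u_k\, dk\, dx,
\end{equs}
where I used $\partial_k U_k = \cJ_k u_k$ and the self-adjointness of $\cJ_k$. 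The first summand is precisely the $\partial_k V^\flat$ term in $\cR^3_K$.

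For the final step I combine the second summand above with the drift entropy $\frac{1}{2}\intt \intx u_k^2\, dx\, dk$. Substituting $u_k = r_k - \frac{12}{\beta}\cJ_k(\<2>_k \pg (V_k^\flat + g_k^\flat))$ from \eqref{eq: full paracontrolled ansatz}, the linear cross terms between $r_k$ and $\cJ_k(\<2>_k \pg (V_k^\flat + g_k^\flat))$ cancel exactly between the two sources, and the quadratic contributions combine to $-\frac{72}{\beta^2}\intt \intx (\cJ_k(\<2>_k \pg (V_k^\flat + g_k^\flat)))^2\, dx\, dk$. This matches the corresponding piece of $\cR^3_K$, leaving the $\cJ_k \<2>_k \pe \cJ_k \<2>_k$ compensator isolated as the stated residue on the right-hand side of \eqref{perturb2}; the $\frac{96}{\beta^3}\<2>_K \<30>_K^2$ term inherited from \eqref{perturb1.5} simply passes through. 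The main obstacle is justifying the martingale property that licenses dropping the $d\<2>_k$ stochastic integral: one must bound its quadratic variation in $L^1(\PP)$ using the a priori bound $\int_0^K \intx v_k^2\, dx\, dk \leq M$ built into $\HH_{b,K}$ together with Proposition \ref{prop: diagrams}, exactly in the spirit of Lemma \ref{lem: martingales}.
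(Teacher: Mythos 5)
Your route is the same as the paper's: Bony-decompose the singular products, rewrite the $\pg$-piece in terms of $V_K+g_K$ using \eqref{eq: intermediate integrated ansatz}, pass to the $\flat$-truncation, apply It\^o/Lemma \ref{lem: martingales} to decompose the paraproduct over scales, and finally substitute the change of variables \eqref{eq: full paracontrolled ansatz} into the drift entropy. Your final step — the algebraic identity
\[
\tfrac 12 u_k^2 + A_k u_k = \tfrac 12 r_k^2 - \tfrac 12 A_k^2, \qquad A_k = \tfrac{12}{\beta}\cJ_k\big(\<2>_k\pg(V_k^\flat+g_k^\flat)\big),
\]
and the martingale justification — are correct and match the paper.

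However, there is a bookkeeping error in the reduction of the $\pg$-residue. The raw $\pg$-pieces after Bony's decomposition are
\[
\frac{12}{\beta}\Big(\<2>_K\pg\big(-\tfrac 4\beta\<30>_K + g_K\big)\Big)U_K + \frac{6}{\beta}\big(\<2>_K\pg U_K\big)U_K,
\]
since $\frac 6\beta\<2>_K U_K^2$ only contributes the coefficient $\frac 6\beta$ to the $\<2>_K\pg U_K$ term, not $\frac{12}\beta$. Rewriting $-\tfrac 4\beta\<30>_K + g_K = V_K + g_K - U_K$ therefore leaves
\[
\frac{12}{\beta}\big(\<2>_K\pg(V_K+g_K)\big)U_K - \frac{6}{\beta}\big(\<2>_K\pg U_K\big)U_K,
\]
and not simply the single $\pg$-term you claim. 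The surplus $-\frac{6}{\beta}(\<2>_K\pg U_K)U_K$ is exactly the part the paper collects into $\cR^3_K$ as $\frac 6\beta(\<2>_K\pe U_K - \<2>_K\pg U_K)U_K$; your listing of $\cR^3_K$ as containing $\frac 6\beta(\<2>_K\pl U_K + \<2>_K\pe U_K)U_K$ is not the defined quantity. You drop $-\frac 6\beta(\<2>_K\pg U_K)U_K$ from both the residue and from your description of $\cR^3_K$, so the two slips cancel and the stated equality still balances, but the intermediate identifications as written are incorrect; you should track that additional $\pg$-term through explicitly.
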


\begin{proof}

We write
\begin{equs}
- \frac{48}{\beta^2} \<2>_K \<30>_K U_K + \frac {12}{\beta}\<2>_K U_K g_K
=
\frac{12}{\beta} \<2>_K \Big( - \frac 4\beta \<30>_K + g_K \Big) U_K.	
\end{equs}
Thus, using \eqref{eq: intermediate integrated ansatz} and a paraproduct decomposition on the most singular products,
\begin{equs} \label{perturb2 eq2}
\begin{split}
\intx &\frac{12}{\beta} \<2>_K \Big(-\frac{4}{\beta}\<30>_K + g_K\Big)U_K + \frac{6}{\beta}\<2>_KU_K^2 dx
\\
&=
\intx \frac{12}{\beta} \Bigg( \<2>_K \pg \Big( - \frac 4\beta \<30>_K + g_K \Big) \Bigg) U_K + \frac 6\beta (\<2>_K \pg U_K) U_K
\\
&\quad\quad\quad
+ \frac {12}\beta \Bigg( \<2>_K \pe \Big( - \frac 4\beta \<30>_K + g_K \Big) \Bigg) U_K
+ \frac 6\beta (\<2>_K \pe U_K) U_K
\\
&\quad\quad\quad
+ \frac{12}\beta \Bigg( \<2>_K \pl \Big( - \frac 4\beta \<30>_K + g_K \Big) \Bigg) U_K
+ \frac 6\beta (\<2>_K \pl U_K) U_K
\\
&=
\intx \frac{12}{\beta} \Big(\<2>_K \pg (V_K+g_K)\Big) U_K - \frac{48}{\beta^2}\Big( \<2>_K \pe \<30>_K \Big) U_K 
\\
&\quad\quad\quad
+\frac{12}{\beta} (\<2>_K \pe g_K) U_K +\frac 6\beta (\<2>_K \pe U_K - \<2>_K \pg U_K)U_K
\\
&\quad\quad\quad
- \frac{48}{\beta^2} \Big(\<2>_K \pl \<30>_K\Big) U_K + \frac{12}\beta (\<2>_K \pl g_K) U_K + \frac 6\beta (\<2>_K \pl U_K)U_K dx.
\end{split}
\end{equs}
All except the first two integrals are absorbed into $\cR_K^3$.

For the first integral, we use the (drift-dependent) change of variables \eqref{eq: full paracontrolled ansatz} in the drift entropy of $u$ to mostly cancel the divergence. Due to the paraproduct term, using It\^o's formula to decompose into scales requires us to control $V_k + g_k$ for $k < K$. In order to be able to do this, we replace $V_K + g_K$ by $V_K^\flat + g_K^\flat$ first. Then, applying It\^o's formula, Lemma \ref{lem: martingales}, and using the self-adjointness of $\cJ_k$,
\begin{equs} \label{perturb2 eq3}
\begin{split}
&\intx \frac{12}{\beta} \Big(\<2>_K \pg (V_K+g_K)\Big) U_K dx
\\
&=
\intx \frac{12}{\beta} \Big(\<2>_K \pg (V_K^\flat+g_K^\flat)\Big)U_K +  \frac{12}{\beta}\Big(\<2>_K \pg (V_K +g_K - V_K^\flat - g_K^\flat)\Big) U_K  dx
\\
&\approx
\intx \intt \frac{12}{\beta} \cJ_k \Big(\<2>_k \pg (V_k^\flat+g_k^\flat)\Big)u_k + \frac{12}{\beta} \Big(\<2>_k \pg (\partial_k V_k^\flat + \partial_k g_k^\flat)\Big)U_k dk dx 
\\
&
\quad\quad\quad+ 
\intx \frac{12}{\beta}\Big(\<2>_K \pg (V_K+g_K - V_K^\flat-g_K^\flat)\Big) U_K \Bigg] dx.
\end{split}
\end{equs}

From \eqref{eq: full paracontrolled ansatz} and \eqref{eq: intermediate ansatz}
\begin{equs}\label{perturb2 eq4}
\begin{split}
\intx \intt &\frac{12}{\beta} \cJ_k(\<2>_k \pg (V_k^\flat+g_k^\flat)) u_k + \frac 12 u_k^2 dk dx
\\
&= 
\intx \intt - \frac{72}{\beta^2}\Big(\cJ_k (\<2>_k \pg (V_k^\flat+g_k^\flat)) \Big)^2 + \frac 12 r_k^2 dk dx
\\
&=
 \intx \intt  - \frac{72}{\beta^2}\Big( \cJ_k \<2>_k \pe \cJ_k \<2>_k \Big) (V_k^\flat+g_k^\flat)^2 + \frac 12 r_k^2 
\\
&\quad\quad\quad
-
\frac{72}{\beta^2}\Bigg( \Big(\cJ_k (\<2>_k \pg (V_k^\flat+g_k^\flat)) \Big)^2 - \Big(\cJ_k \<2>_k \pe \cJ_k \<2>_k \Big)(V_k^\flat+g_k^\flat)^2 \Bigg) dk dx.
\end{split}
\end{equs}

Combining \eqref{perturb2 eq2}, \eqref{perturb2 eq3}, and \eqref{perturb2 eq4} yields \eqref{perturb2}.
\end{proof}

We now cancel the divergences in the last two terms of \eqref{perturb2} using the mass renormalisation.

\begin{lem}\label{lem: mass renorm}
	\begin{equs}\label{renorm}
	\begin{split}
	 \cR_K^4
	 &\approx
	 \intx - \frac{48}{\beta^2} \<2>_K \pe \<30>_K U_K dx
	 \\
	 & \quad
	 - \intx \intt \frac{72}{\beta^2}\Big( \cJ_k \<2>_k \pe \cJ_k \<2>_k \Big) (V_k^\flat+g_k^\flat)^2 dk dx
	 \\
	 &\quad
	 - \intx \frac{2\gamma_K}{\beta^2} \<1>_K V_K - \frac{\gamma_K}{\beta^2}(V_K+g_K)^2 dx
	 \end{split}
	\end{equs}
\end{lem}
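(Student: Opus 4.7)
To prove \eqref{renorm}, the plan is to unwind the explicitly renormalised diagrams $\<32>_K$ and $\<202>_k$ appearing in $\cR_K^4$, exposing their unrenormalised resonant products together with sunset counterterms, and then show that these sunset counterterms combine with the $\gamma$-dependent pieces of $\cR_K^4$ to reproduce the specified right-hand side modulo mean-zero terms. The crucial inputs are the two identities
\[
\<2>_K \pe \<30>_K \;=\; \<32>_K + 4\<sunset>_K \<1>_K,\qquad
\cJ_k\<2>_k\pe\cJ_k\<2>_k \;=\; \<202>_k + \tfrac{4}{3}\partial_k\<sunset>_k,
\]
both following from the definitions of $\<32>_k$ and $\<202>_k$ in Section \ref{subsec: diagrams} together with the fundamental theorem of calculus applied to $\<sunset>_k$. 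Since $\gamma_k = -48\<sunset>_k$, these identities convert the renormalised resonant products in $\cR_K^4$ into exactly those appearing on the right-hand side of \eqref{renorm}, modulo deterministic counterterms proportional to $\<sunset>_K \<1>_K U_K$ (localised at $k=K$) and $\partial_k\<sunset>_k(V_k^\flat+g_k^\flat)^2$ (integrated in time), which must then be absorbed into the $\gamma$-dependent pieces.

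The heart of the argument is an integration by parts in the auxiliary time variable $k$ applied to the time-integrated $\gamma$-piece of $\cR_K^4$. Using the product rule $2(V_k^\flat+g_k^\flat)(\partial_k V_k^\flat + \partial_k g_k^\flat) = \partial_k[(V_k^\flat+g_k^\flat)^2]$, this piece rewrites as $\tfrac{1}{\beta^2}\intt \gamma_k\, \partial_k\bigl[\intx(V_k^\flat+g_k^\flat)^2 dx\bigr] dk$. Integrating by parts and noting that $\gamma_0 = 0$ (since $\<sunset>_0 = 0$) and $V_0^\flat = g_0^\flat = 0$, the boundary at $k=0$ vanishes while the boundary at $k=K$ produces $\tfrac{\gamma_K}{\beta^2}\intx(V_K^\flat+g_K^\flat)^2 dx$. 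This boundary contribution combines with the spatial $\gamma$-pieces of $\cR_K^4$ via the algebraic identity $(V_K+g_K)^2 = (V_K^\flat+g_K^\flat)^2 + 2(V_K^\flat+g_K^\flat)(V_K+g_K - V_K^\flat - g_K^\flat) + (V_K+g_K - V_K^\flat - g_K^\flat)^2$ to yield $-\tfrac{\gamma_K}{\beta^2}\intx(V_K+g_K)^2 dx$, as required; the minor discrepancy between $g_K$ and $g_K^\flat$ appearing in the first spatial integrand of $\cR_K^4$ is handled by a short auxiliary calculation, using that the difference $g_K - g_K^\flat = (1-\tilde\rho_K)g_K$ is deterministic and concentrated at high frequencies, so that any crossed contribution has vanishing expectation. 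The remaining interior contribution from the integration by parts is $-\tfrac{1}{\beta^2}\intt \partial_k\gamma_k\intx(V_k^\flat+g_k^\flat)^2 dx dk = \tfrac{48}{\beta^2}\intt \partial_k\<sunset>_k\intx(V_k^\flat+g_k^\flat)^2 dx dk$, which is precisely calibrated to absorb the bulk sunset counterterm produced by rewriting $\<202>_k$.

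The final step is to convert the spatial sunset counterterm produced by rewriting $-\tfrac{48}{\beta^2}\<32>_K U_K$, which is proportional to $\<sunset>_K\<1>_K U_K$, into the required contribution $-\tfrac{2\gamma_K}{\beta^2}\<1>_K V_K$ on the right-hand side. This is achieved by the change of variables $U_K = V_K + \tfrac{4}{\beta}\<30>_K$ from \eqref{eq: intermediate integrated ansatz} together with orthogonality of Wiener chaoses: since $\<1>_K$ lies in the first chaos while $\<30>_K$ lies in the third chaos, the pointwise expectation $\EE[\<1>_K(x)\<30>_K(x)]$ vanishes, so the cross term $\intx \<sunset>_K\<1>_K\<30>_K dx$ is $\approx 0$. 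Collecting all contributions verifies \eqref{renorm}. The main technical obstacle I expect is the careful bookkeeping of the $g_K$ versus $g_K^\flat$ discrepancy throughout the integration-by-parts step, since the spatial integrand of $\cR_K^4$ involves $(V_K^\flat+g_K)$ rather than the fully symmetric $(V_K^\flat + g_K^\flat)$, and one must verify that this difference produces only corrections with zero expectation that can be absorbed into the $\approx$ relation.
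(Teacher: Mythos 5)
Your proposal follows the same route as the paper's proof: unwind $\<32>_K$ and $\<202>_k$ into resonant products plus sunset counterterms, apply the fundamental theorem of calculus in the auxiliary time variable to exchange the time-integrated $\gamma$-piece for a boundary term at $k=K$, use the algebraic expansion of $(V_K+g_K)^2$ in terms of $(V_K^\flat+g_K^\flat)^2$ and cross terms, and invoke $\EE[\<1>_K\<30>_K]=0$ (by chaos orthogonality/Wick's theorem) to kill the $\<1>_K\<30>_K$ remainder. The paper's proof organises this as three separate equalities (renorm eq1)--(renorm eq3) rather than a single integration-by-parts sweep, but the content is identical.

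One substantive flaw: your claimed resolution of the $g_K$ versus $g_K^\flat$ discrepancy in the first spatial integrand of $\cR_K^4$ --- ``any crossed contribution has vanishing expectation'' --- does not hold. Writing $(V_K^\flat+g_K)=(V_K^\flat+g_K^\flat)+(g_K-g_K^\flat)$, the extra pieces are
$\frac{2\gamma_K}{\beta^2}(g_K-g_K^\flat)(V_K-V_K^\flat)$ and $\frac{2\gamma_K}{\beta^2}(g_K-g_K^\flat)^2$.
The second is a strictly positive deterministic quantity, so it manifestly has nonzero expectation, and the first involves $\EE[V_K-V_K^\flat]$, which is not zero for a generic drift $v\in\HH_{b,K}$. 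Frequency-support considerations do not help either: the product of two high-frequency functions has a nonvanishing $n=0$ Fourier mode in general. The correct reading is that $g_K$ in the displayed definition of $\cR_K^4$ is a misprint for $g_K^\flat$; the paper's own algebraic identity (renorm eq2) in the proof uses $g_K^\flat$ throughout, confirming this. No auxiliary cancellation argument is needed or available. A secondary remark: you do not check the numerical prefactors in the sunset-counterterm bookkeeping, and if you trace them through (e.g.\ using the stated $\<2>_K\pe\<30>_K=\<32>_K+4\<sunset>_K\<1>_K$ with $\gamma_K=-48\<sunset>_K$) you will find a residual $\frac{2\gamma_K}{\beta^2}\<1>_KU_K$ term that does not cancel; to make the lemma close exactly one needs the counterterm to be $2\<sunset>_K\<1>_K$, as in the space-time definition of $\<32b>_K$, so careful verification of these constants is warranted in your auxiliary calculation step.
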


\begin{proof}
By the definition of $\<32>_K$ (see Section \ref{subsec: diagrams}),
\begin{equs}\label{renorm eq1}
\begin{split}
- \intx \frac{48}{\beta^2}&\<2>_K\pe\<30>_K U_K  - \frac{2\gamma_K}{\beta^2} \<1>_KV_K dx
\\
&=
-\intx \frac{48}{\beta^2} \<32>_KU_K + \frac{8\gamma_K}{\beta^3} \<1>_K\<30>_K dx
\\
&\approx
-\intx \frac{48}{\beta^2} \<32>_K U_K dx
\end{split}
\end{equs}
where we have used that, by Wick's theorem, $\EE \big[ \<1>_K \<30>_K \big] = 0$.

To renormalise the second integral in \eqref{renorm}, we need to rewrite the remaining counterterm in terms of $V_K^\flat$:
\begin{equs} \label{renorm eq2}
\begin{split}
-&\intx \frac{\gamma_K}{\beta^2} (V_K+g_K)^2 dx 
\\
&=
-\intx \frac{\gamma_K}{\beta^2} (V_K^\flat+g_K^\flat)^2  + 2(V_K^\flat+g_K^\flat)(V_K + g_K - V_K^\flat- g_K^\flat) 
\\
&\quad\quad\quad
+ (V_K + g_K -V_K^\flat - g_K^\flat)^2  dx.
\end{split}
\end{equs}

Using It\^o's formula on the first integral of the right hand side of \eqref{renorm eq2},
\begin{equs}
-&\intx \frac{\gamma_K}{\beta^2} (V_K^\flat+g_K^\flat)^2 dx
\\
&=
- \intx \intt \frac{\partial_k{\gamma}_k}{\beta^2} (V_k^\flat+g_k^\flat)^2 + \frac{2\gamma_k}{\beta^2}( \partial_k V_k^\flat + \partial_k g_k^\flat)(V_k^\flat+g_k^\flat) dk dx.
\end{equs}

By the definition of $\<202>_k$ (see Section \ref{subsec: diagrams}),
\begin{equs} \label{renorm eq3}
\begin{split}
\intx \intt \Big( - \frac{72}{\beta^2} \cJ_k \<2>_k \pe \cJ_k \<2>_k &- \frac{\partial_k \gamma_k}{\beta^2} \Big) (V_k^\flat+g_k^\flat)^2 dk dx 
\\
&= 
\intx \intt \frac{72}{\beta^2}\<202>_k (V_k^\flat+g_k^\flat)^2 dk dx.
\end{split}
\end{equs}

Hence, combining \eqref{renorm eq1}, \eqref{renorm eq2}, and \eqref{renorm eq3} establishes \eqref{renorm}.
\end{proof}

\begin{proof}[Proof of \eqref{eq: all the UV is here}]
Lemmas \ref{lem: perturb 1}, \ref{lem: perturb 1.5}, \ref{lem: perturb 2}, and \ref{lem: mass renorm}, together with \eqref{eq: energy renorm cancellation}, establish \eqref{eq: all the UV is here}.
\end{proof}

\subsection{Proof of \eqref{eq: remainder estimates}: Estimates on remainder terms}\label{subsec: estimates}

Define
\begin{equs}
\cR^a_K
&=
\cR^{a,1}_K + \cR^{a,2}_K + \cR^{a,3}_K
\end{equs}
where
\begin{equs}
\cR^{a,1}_K
&=
\intx - \frac 4\beta \vec{\<1>}_K V_K^3 + \frac 4\beta \<1>_K U_K^3 dx
\\
\cR^{a,2}_K
&=
\intx \intt \frac{12}\beta \Big( \<2>_k \pg (\partial_k V_k^\flat + \partial_k g_k^\flat) \Big) U_k + \frac{2\gamma_k}{\beta^2}( \partial_k V_k^\flat + \partial_k g_k^\flat)(V_k^\flat+g_k^\flat) dkdx
\\
\cR^{a,3}_K
&=
\intx \intt \frac{72}{\beta^2} \Bigg( \Big(\cJ_k (\<2>_k \pg (V_k^\flat+g_k^\flat)) \Big)^2 - \Big(\cJ_k \<2>_k \pe \cJ_k \<2>_k \Big)(V_k^\flat+g_k^\flat)^2 \Bigg) dk dx
\end{equs}
and let $\cR^b_K = \sum_{i=1}^4 \cR^i_K - \cR^a_K$.

$\cR^a_K$ contains the most difficult terms to bound, either due to analytic considerations or $\beta$-dependence; $\cR^b_K$ contains the terms that follow almost immediately from \cite[Lemmas 18-23]{BG19}.

\begin{prop}\label{prop: r cgw}
For any $\varepsilon > 0$, there exists $C=C(\varepsilon,\eta)>0$ such that, for all $\beta > 1$,
\begin{equs}
\begin{split}\label{eq: cgw cubic}
|\cR^{a,1}_K|	
&\leq
C N^\Xi_K + \varepsilon \Big( \intx \cV_\beta(V_K+g_K) dx + \frac 12 \intt \intx r_k^2 dx dk \Big)
\end{split}
\\
\begin{split}\label{eq: cgw scalespace}
|\cR^{a,2}_K|
&\leq
C N^\Xi_K + \varepsilon \Big( \intx \cV_\beta(V_K+g_K) dx + \frac 12 \intt \intx r_k^2 dx dk \Big)
	\end{split}
\\
\begin{split}\label{eq: cgw commutator}
|\cR^{a,3}_K|
&\leq
C N^\Xi_K + \varepsilon \Big( \intx \cV_\beta(V_K+g_K) dx + \frac 12 \intt \intx r_k^2 dx dk \Big).
\end{split}
\end{equs}
\end{prop}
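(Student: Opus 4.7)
The three estimates share a common template: by Besov duality each integrand splits into a diagram-only factor (absorbed into $CN^\Xi_K$ via Proposition \ref{prop: diagrams}) and a drift-only factor, which Young's inequality trades against the stability terms $\intx \cV_\beta(V_K+g_K)\,dx$ and $\intt \intx r_k^2\,dx\,dk$. Uniformity in $\beta$ results from balancing the explicit negative powers of $\beta$ in front of each $\cR^{a,i}_K$ against the powers of $\beta$ that Lemma \ref{lem: potential} puts on $L^p$ norms; crucially, the favourable $\beta^{-3}$ factor in Proposition \ref{prop: u and v bound} keeps $\|U_K\|_{H^{1-\kappa}}^2$ and $\|V_K\|_{H^{1/2-\kappa}}^2$ uniformly $O(1)$ in expectation whenever the drift entropy $\int r^2$ is controlled.

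For $\cR^{a,1}_K$ the first move is to exploit the cancellation between its two summands via $V_K = U_K - \tfrac{4}{\beta}\<30>_K$, which reveals
\[
\cR^{a,1}_K
= \tfrac{4}{\beta}\intx (\<1>_K - \vec{\<1>}_K)\, U_K^3\, dx
+ \sum_{j=1}^{3} \tfrac{C_j}{\beta^{j+1}} \intx \vec{\<1>}_K\, U_K^{3-j}\, \<30>_K^{j}\, dx
\]
for explicit constants $C_1, C_2, C_3$. This cancellation is essential: on a good block $V_K \approx -g_K$ has magnitude $\hfbeta$, so estimating either summand of $\cR^{a,1}_K$ in isolation would produce an unrecoverable $O(\beta^{1/2} N^3)$ error. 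The leading piece is then paired by Besov duality against $\|\<1>_K - \vec{\<1>}_K\|_{\cC^{-1/2-\kappa}}$ (bounded as $\|\<1>_K\|_{\cC^{-1/2-\kappa}}$ since block averaging is bounded in Besov norms); the cubic is bounded by a fractional Leibniz $\|U_K^3\|_{B^{1/2+\kappa}_{1,1}} \lesssim \|U_K\|_{L^4}^2 \|U_K\|_{H^{1/2+\kappa}}$; Sobolev embedding $H^{1-\kappa} \hookrightarrow L^4$ and Proposition \ref{prop: u and v bound} then close the estimate after a Young's inequality split. The subleading terms carry extra factors of $\beta^{-j}$ and are straightforward.

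For $\cR^{a,2}_K$ the new feature is the time integral over $k \in [0,K]$ of integrands containing $\partial_k V_k^\flat$ and $\partial_k g_k^\flat$. I would use a standard paraproduct bound of the form $\|\<2>_k \pg f\|_{\cC^{-1-\kappa}} \lesssim \|\<2>_k\|_{\cC^{-1-\kappa}}\|f\|_{L^\infty}$ together with the scale-decay $\|\partial_k V_k^\flat\|_{B^{s'}_{p,q}} \lesssim \langle k\rangle^{-1-s+s'}\|V_K\|_{B^s_{p,q}}$ from Lemma \ref{lem: flat drift bound} (and its analogue for $\partial_k g_k^\flat$ fed by Lemma \ref{lem: bounds on translation}), choosing the exponents so that the $k$-integrand is $\langle k\rangle^{-1-\alpha}$ for some $\alpha > 0$. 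The $\gamma_k/\beta^2$ summand is handled identically using $|\gamma_k|\lesssim \log\langle k\rangle$ from Remark \ref{rem: growth of renorm}. The $k$-integrated output is controlled by $\|V_K\|_{H^{1/2-\kappa}}$ and $\sup_k\|U_k\|_{H^{1-\kappa}}$, both absorbed by Young's and Proposition \ref{prop: u and v bound}.

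For $\cR^{a,3}_K$ the integrand is precisely the renormalised-squared-paraproduct commutator treated in \cite[Lemma 21, Proposition 6]{BG19}: the Leibniz-type identity $(\cJ_k(\<2>_k \pg f))^2 - (\cJ_k\<2>_k \pe \cJ_k\<2>_k)f^2$ has effective Besov regularity one order higher than a naive product of $\<2>_k$ with $f^2$, with norm controlled by $\|\<202>_k\|_{B^{-\kappa}_{p,q}}\|f\|_{L^4}^2$ plus integrable-in-$k$ remainders. Taking $f = V_k^\flat + g_k^\flat$, Lemmas \ref{lem: flat drift bound} and \ref{lem: bounds on translation} yield $\sup_k \|f\|_{L^4}\lesssim \|V_K+g_K\|_{L^4} + \hfbeta$; Lemma \ref{lem: potential} and the explicit $\beta^{-2}$ prefactor then absorb this into $\varepsilon \intx \cV_\beta(V_K+g_K)\,dx$. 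The main obstacle throughout is preserving $\beta$-uniformity, which in each case relies on the explicit $\beta^{-1},\beta^{-1},\beta^{-2}$ prefactors together with the $\beta^{-3}$ in Proposition \ref{prop: u and v bound} defeating the $O(\beta)$ growth produced by Lemma \ref{lem: potential}; in Step~1 it additionally requires the cancellation exposing the fluctuation field $\<1>_K-\vec{\<1>}_K$.
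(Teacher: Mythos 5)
The proposal's overall template (duality, Young's, absorption into the stability terms, exploiting the favourable $\beta^{-3}$ in Proposition~\ref{prop: u and v bound} to beat the $\beta$-growth from Lemma~\ref{lem: potential}) matches the paper's strategy, but the proposed route for $\cR^{a,1}_K$ rests on an incorrect premise, and the choice of function spaces used for the stochastic factors in both $\cR^{a,1}_K$ and $\cR^{a,2}_K$ would break the $N$-uniformity that the estimate needs.

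First, your claim that "estimating either summand of $\cR^{a,1}_K$ in isolation would produce an unrecoverable $O(\hfbeta N^3)$ error" is false --- this is precisely what the paper's proof in Section~\ref{subsec: cubic bounds} does. For the first summand, a pointwise Young split gives $\frac1\beta|\vec{\<1>}_K V_K^3|\le C_\varepsilon|\vec{\<1>}_K|^q+\varepsilon(|V_K|/\hfbeta)^{2+2\kappa}|V_K|^{1+\kappa}$, and the second term is then closed by iterating Lemma~\ref{lem: potential} after inserting $\pm g_K$ (equations \eqref{eq: coarsecube1.5}--\eqref{eq:coarsecube3}). The scenario you worry about --- $V_K\approx -g_K$ of size $\hfbeta$ --- does not break this: either $\cV_\beta(V_K+g_K)\approx\beta$ is large (so the budget $\varepsilon\intx\cV_\beta$ dominates the $\hfbeta N^3$ contribution by a factor $\hfbeta$), or $V_K+g_K\approx\pm\hfbeta$ with $\cV_\beta\approx0$, in which case $|V_K|\lesssim\hfbeta$ forces (modulo diagram pieces already in $N^\Xi_K$) $R_K$ to be of the same size, so $\intt\intx r_k^2\,dx\,dk\gtrsim\beta N^3$ and the $\varepsilon\cdot\frac12\intt r^2$ budget absorbs it. The budget is adaptive in exactly the right way; no extra cancellation is needed. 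Your algebraic identity exposing $\<1>_K-\vec{\<1>}_K$ is correct, and such a route may well be viable, but you have not shown that it yields a better or even an equally simple argument.

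Second, and more seriously, your proposed estimates repeatedly place the stochastic factors in $L^\infty$-based spaces: $\|\<1>_K-\vec{\<1>}_K\|_{\cC^{-1/2-\kappa}}$ in the $\cR^{a,1}_K$ argument, and $\|\<2>_k\|_{\cC^{-1-\kappa}}$, $\|f\|_{L^\infty}$ in the $\cR^{a,2}_K$ argument. Remark~\ref{rem: stochastic norms} states explicitly that such bounds are not uniform in $N$ --- for space-stationary Gaussian processes the expectation of the $\cC^s$ norm grows like $\sqrt{\log N}$ --- and the constant $C$ in Proposition~\ref{prop: r cgw} must be independent of $N$ because all these contributions have to be absorbed into $N^\Xi_K$, whose expectation is required to be $O(N^3)$. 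The paper consequently works entirely with $B^s_{p,q}$ for finite $p$ (with $p$ chosen via the scaling in Lemma~\ref{lem: potential} and interpolation), reserving $L^\infty$ only for the deterministic translation $g_K$, which Lemma~\ref{lem: bounds on translation} controls uniformly. As written, your $\cR^{a,1}_K$ and $\cR^{a,2}_K$ arguments would lose the $N$-uniformity of the estimate. The high-level description of $\cR^{a,3}_K$ is closer to the paper, though the paper does not invoke $\<202>_k$ here (that appears in $\cR^4_K\subset\cR^b_K$); instead it splits $\cR^{a,3}_K=I_1+I_2+I_3$ and applies the multiplier commutator estimate \eqref{tool: multiplier commutator} and the two paraproduct commutator lemmas \eqref{tool: commutator 1}, \eqref{tool: commutator 2}, always in $L^p$-based Besov spaces.
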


\begin{proof}
The estimates \eqref{eq: cgw cubic}, \eqref{eq: cgw scalespace}, and \eqref{eq: cgw commutator}	are established in Sections \ref{subsec: cubic bounds}, \ref{subsec: scalespace bounds}, and \ref{subsec: commutator bound} respectively. \eqref{eq: cgw scalespace} and \eqref{eq: cgw commutator} are established by a relatively straightforward combination of techniques in \cite[Lemmas 18-23]{BG19} together with Lemmas \ref{lem: potential} and \ref{lem: bounds on translation}. On the other hand, the terms with cubic dependence in the drift \eqref{eq: cgw cubic} require a slightly more involved analysis. 

Note that, since our norms on functions/distributions were defined using $\dbar x = \frac{dx}{N^3}$ instead of $dx$ to track $N$ dependence, in the proof we rewrite the integrals above in terms of $\dbar x$ by dividing both sides by $N^3$. 
\end{proof}

\begin{prop}\label{prop: bg bound}
For any $\varepsilon > 0$, there exists $C=C(\varepsilon,\eta)>0$ such that, for all $\beta > 1$,
\begin{equs}
|\cR^b_K|
\leq
C N^\Xi_K + \varepsilon \Big( \intx \cV_\beta(V_K+g_K) dx + \frac 12 \intt \intx r_k^2 dx dk \Big).
\end{equs}
\end{prop}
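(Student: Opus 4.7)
The plan is to bound each summand of $\cR^b_K = \sum_{i=1}^4 \cR^i_K - \cR^a_K$ separately, following the template of \cite[Lemmas 18-23]{BG19}. The unifying recipe is: write each integrand as a product of a stochastic factor (a polynomial in the diagrams $\Xi$) and a deterministic factor (a polynomial in $V_K$, $U_K$, $g_K$, and their $\flat$-regularisations); pair the two factors by duality in Besov spaces; control the stochastic side by $CN^\Xi_K$ using Proposition \ref{prop: diagrams}; control the deterministic side using Proposition \ref{prop: u and v bound} for $V_K$ and $U_K$, Lemma \ref{lem: flat drift bound} for flat-regularised quantities, and Lemma \ref{lem: bounds on translation} for $g_K$, $g_k^\flat$, $\partial_k g_k^\flat$; and finally invoke Young's inequality with an $\varepsilon$-weight to absorb the deterministic factor into $\intx \cV_\beta(V_K+g_K)dx + \frac 12 \intt \intx r_k^2 dk dx$.

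The terms in $\cR^1_K$ are monomials of the form $\beta^{-1}(\vec{\<1>}_K)^j F_K$ with $j \in \{1,2,3,4\}$ and $F_K$ a product of $V_K$ and $g_K$ of total degree $\leq 4-j$ (the cubic-in-drift monomial $\vec{\<1>}_K V_K^3$ having been moved to $\cR^{a,1}_K$), plus the lower-order $O(1)$ terms in $\vec{\<1>}_K$ and $\vec{\<1>}_K V_K$. Since $\1_\sBox$ belongs to $B^s_{p,q}$ for $s<1/p$, duality paired with Proposition \ref{prop: diagrams} gives $L^p$ control of $\vec{\<1>}_K$ for every finite $p$, bounded by the stochastic tail $CN^\Xi_K$. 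Lemma \ref{lem: bounds on translation} gives $\|g_K\|_\infty \leq C\hfbeta$, and Lemma \ref{lem: potential} yields
\begin{equs}
\|V_K+g_K\|_{L^p}
\leq
C\beta^{1/4}\Big(\intx \cV_\beta(V_K+g_K)\dbar x\Big)^{1/4} + C\hfbeta
\end{equs}
for $p \in [1,4]$. Young's inequality with appropriately chosen exponents then produces the desired $\varepsilon$-split, with the $\beta^{-1}$ prefactors in $\cR^1_K$ absorbing the positive powers of $\beta$ introduced by Lemma \ref{lem: potential}.

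For $\cR^2_K$, after removing $\cR^{a,1}_K$, one is left with products of the form $\<1>_K\<30>_K^2 U_K$, $\<1>_K\<30>_K U_K^2$, $\<1>_K g_K^2 U_K$ and $\<1>_K U_K$; each is handled by Besov duality, with $U_K$ estimated in $H^{1-\kappa}$ via Proposition \ref{prop: u and v bound} and the stochastic factor estimated in the corresponding negative-regularity Besov space (after a paraproduct decomposition of the $\<1>_K\<30>_K$-type products exploiting $\<31>_K$ from Proposition \ref{prop: diagrams}). For $\cR^3_K$ after removal of $\cR^{a,2}_K$ and $\cR^{a,3}_K$, the remaining paraproduct and resonant terms fit directly into the standard paraproduct and resonant product estimates of the appendix. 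For $\cR^4_K$, the $\<32>_K$ and $\<202>_k$ terms are paired by duality with $U_K$ and with $(V_k^\flat+g_k^\flat)^2$ respectively, while the two counterterms with prefactor $\gamma_K/\beta^2$ are controlled using the logarithmic growth $|\gamma_K| \lesssim \log K$ from Remark \ref{rem: growth of renorm}, together with the $\flat$-difference estimates implicit in Lemma \ref{lem: flat drift bound} and Lemma \ref{lem: bounds on translation}.

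The main obstacle I expect is the tension between preserving the non-convex structure of $\cV_\beta$, which rules out using $\intx V_K^4 dx$ as a coercive term as in \cite{BG19}, and simultaneously obtaining constants uniform in $\beta$. Both requirements are handled by applying Lemma \ref{lem: potential} systematically to trade $L^p$ control of $V_K+g_K$ for control of $(\intx \cV_\beta(V_K+g_K)\dbar x)^{1/4}$, after which Young's inequality matches the $\beta^{-j}$ prefactors in each term of $\cR^b_K$ against the positive powers of $\beta$ produced by Lemma \ref{lem: potential}. Because every monomial in $\cR^b_K$ carries at least one factor of $\beta^{-1}$ and at most quadratic dependence on $V_K+g_K$ after the removal of $\cR^a_K$, this bookkeeping yields a bound uniform in $\beta \geq 1$.
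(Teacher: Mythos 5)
Your proposal reproduces the paper's intended argument: the proof of Proposition~\ref{prop: bg bound} is deliberately omitted in the paper, which simply asserts that it follows from the same recipe as \cite[Lemmas~18--23]{BG19} supplemented by Lemmas~\ref{lem: potential} and~\ref{lem: bounds on translation}, and your term-by-term duality/Young scheme (stochastic side absorbed into $N^\Xi_K$ via Proposition~\ref{prop: diagrams}, deterministic side controlled via Proposition~\ref{prop: u and v bound} and the $\flat$-estimates, $L^p$ norms of $V_K+g_K$ traded for $(\intx\cV_\beta(V_K+g_K)\dbar x)^{1/4}$ via Lemma~\ref{lem: potential}) is exactly what is done explicitly for the harder terms $\cR^a_K$ in Sections~\ref{subsec: cubic bounds}--\ref{subsec: commutator bound}.

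One small inaccuracy in your closing bookkeeping argument: it is not true that every monomial in $\cR^b_K$ carries a factor of $\beta^{-1}$. The terms $\tfrac{\eta+2}{2}(\vec{\<1>}_K)^2$, $(\eta+2)\vec{\<1>}_K V_K$ in $\cR^1_K$ and $-(4+\eta)\<1>_K U_K$ in $\cR^2_K$ have $O(1)$ coefficients. This does not spoil the proof, because these terms are at most \emph{linear} in the drift: after duality and Young's inequality they produce $\varepsilon\|V_K\|_{H^{1/2-\kappa}}^2$ or $\varepsilon\|U_K\|_{H^{1-\kappa}}^2$, which Proposition~\ref{prop: u and v bound} controls with a $\beta^{-3}$ prefactor on the $\cV_\beta$-contribution that is uniformly bounded for $\beta>1$ without any help from the monomial's prefactor. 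The $\beta^{-1}$ prefactors are only genuinely needed to offset the positive powers of $\beta$ that Lemma~\ref{lem: potential} introduces when a term is quadratic (or higher) in $V_K+g_K$, and all such terms do in fact carry at least $\beta^{-1}$. So the bookkeeping closes, but the way you stated the invariant is not quite right.
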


\begin{proof}
Follows from a direct combination of arguments in \cite[Lemmas 18-23]{BG19} with Lemmas \ref{lem: potential} and \ref{lem: bounds on translation}. We omit it.	
\end{proof}

\begin{proof}[Proof of \eqref{eq: remainder estimates}]
Since $\sum_{i=1}^4 \cR_K^i = \cR^a_K + \cR^b_K$, Propositions \ref{prop: r cgw} and \ref{prop: bg bound} establish \eqref{eq: remainder estimates}.
\end{proof}

The proofs of Propositions \ref{prop: r cgw} and \ref{prop: bg bound} rely heavily on bounds on the drift established in Proposition \ref{prop: u and v bound}, so we prove this first in the next subsection. Throughout the remainder of this section, we use the notation $a \lesssim b$ to mean $a \leq Cb$ for some $C=C(\varepsilon,\eta)$, and we also allow for this constant to depend on other inessential parameters (i.e. not $\beta$, $N$, or $K$).

\subsubsection{Proof of Proposition \ref{prop: u and v bound}} \label{subsec: estimates on drift}

First, note that \eqref{eq: v estimate} is a direct consequence of \eqref{eq: u estimate} along with \eqref{eq: intermediate integrated ansatz} and bounds contained in Proposition \ref{prop: diagrams}.

We now prove \eqref{eq: u estimate}. Fix any $k' \in [0,K]$. As a consequence of \eqref{eq: intermediate integrated ansatz},
\begin{equs} \label{eq: u estimate 0}
\|U_{k'}\|_{H^{1-\kappa}}^2 
&\leq 
\frac{288}{\beta^2} \Big\| \int_0^{k'} \cJ_k^2 (\<2>_k \pg (V_k^\flat + g_k^\flat)) dk \Big\|_{H^{1-\kappa}}^2 + 2\|R_{k'} \|_{H^{1-\kappa}}^2.
\end{equs}

By Minkowski's integral inequality, Bernstein's inequality \eqref{tool: bernstein annulus}, the multiplier estimate on $\cJ_k$ \eqref{tool: multiplier estimate}, the paraproduct estimate \eqref{tool: paraproduct}, and the $\flat$-estimates \eqref{eq: a priori flat drift}, 
\begin{equs}
\begin{split}
\Big\| \int_0^{k'} \cJ_k^2 (\<2>_k \pg (V_k^\flat + g_k^\flat)) dk \Big\|_{H^{1-\kappa}}
&\lesssim
\int_0^{k'} \frac{\| \cJ_k^2 (\<2>_k \pg (V_k^\flat + g_k^\flat)) \|_{H^{-1-2\kappa}}}{\langle k \rangle^\kappa} dk
\\
&\lesssim
\int_0^{k'} \frac{\|\<2>_k \pg (V_k^\flat + g_k^\flat)\|_{H^{-1-2\kappa}}}{ \langle k \rangle^{1+\kappa}} dk
\\
&\lesssim
\Bigg( \int_0^{k'} \frac{\|\<2>_k\|_{B^{-1-2\kappa}_{4,\infty}}}{\langle k \rangle^{1+\kappa}} dk \Bigg) \|V_K+g_K\|_{L^4}.
\end{split}
\end{equs}

Hence, by Cauchy-Schwarz with respect to the finite measure $\frac{dk}{\langle k \rangle^{1+\kappa}}$, the potential bound \eqref{tool: potential bound norm}, and Young's inequality,
\begin{equs} \label{eq: u estimate eq2}
\begin{split}
\frac{1}{\beta^2} \Big\| &\int_0^{k'} \cJ_k^2 (\<2>_k \pg (V_k^\flat + g_k^\flat)) dk \Big\|_{H^{1-\kappa}}^2
\\
&\lesssim
\frac{1}{\beta^2} \Bigg(\int_0^{k'} \frac{ \|\<2>_k\|_{B^{-1-2\kappa}_{4,\infty}}}{\langle k \rangle^{1+\kappa}}dk \Bigg)^2\|V_K+g_K\|_{L^4}^2
\\
&\lesssim
\frac{1}{\beta^2} \Bigg( \int_0^{k'} \frac{ \|\<2>_k\|_{B^{-1-2\kappa}_{4,\infty}}^2}{\langle k \rangle^{1+\kappa}}dk \Bigg)\|V_K+g_K\|_{L^4}^2
\\
&\lesssim
 \int_0^{k'} \frac{\|\<2>_k\|_{B^{-1-2\kappa}_{4,\infty}}^2}{\langle k \rangle^{1+\kappa}}dk \Bigg( \frac{\Big(\intx \cV_\beta(V_K+g_K) \dbar x\Big)^\frac 12}{\beta^\frac 32} + \frac{1}{\beta} \Bigg)
 \\
 &\leq
 \frac 1\beta  \int_0^{k'} \frac{\|\<2>_k\|_{B^{-1-2\kappa}_{4,\infty}}^2}{\langle k \rangle^{1+\kappa}}dk  + \frac{1}{4\varepsilon}\Bigg(\int_0^{k'} \frac{\|\<2>_k\|_{B^{-1-2\kappa}_{4,\infty}}^2}{\langle k \rangle^{1+\kappa}}dk \Bigg)^2
 \\
 &\quad\quad\quad
 + \frac{\varepsilon}{\beta^3}\intx \cV_\beta(V_K+g_K) \dbar x.
\end{split}
\end{equs}

For the remaining term in \eqref{eq: u estimate 0}, note that by the trivial embedding $H^1 \hookrightarrow H^{1-\kappa}$ and the bound \eqref{eq: l2 drift bound} applied to $R_{k'}$,
\begin{equs} \label{eq: u estimate eq3}
\| R_{k'} \|_{H^{1-\kappa}}^2
\lesssim
\intt \intx r_k^2 \dbar x dk.	
\end{equs}

Inserting \eqref{eq: u estimate eq2} and \eqref{eq: u estimate eq3} into \eqref{eq: u estimate 0} establishes \eqref{eq: u estimate}.


\subsubsection{Proof of \eqref{eq: cgw cubic}} \label{subsec: cubic bounds}

We start with the first integral in $\cR^{a,1}_K$. Fix $\kappa > 0$ and let $q$ be such that $(1+\kappa)^{-1} + q^{-1} = 1$. Then, by Young's inequality (and remembering $\beta > 1$),
\begin{equs} \label{eq:coarsecube1}
\Big| \intx \frac{2}{\beta} \vec{\<1>}_K V_K^3 \dbar x \Big|
&\leq
C_\varepsilon \intx |\vec{\<1>}_K|^q \dbar x + \varepsilon \intx \Big(\frac{V_K}{\hfbeta}\Big)^{2+2\kappa} |V_K|^{1+\kappa} \dbar x.
\end{equs}
Adding and subtracting $g_K$ into the second term on the righthand side and using the pointwise potential bound \eqref{tool: potential bound}, 
\begin{equs} \label{eq: coarsecube1.5}
\begin{split}
\intx &\Big(\frac{|V_K|}{\hfbeta}\Big)^{2+2\kappa} |V_K|^{1+\kappa} \dbar x
\\
&\lesssim
\intx \Big( \frac{|V_K+g_K|^4}{\beta^2}^{(\frac{1+\kappa}{2})} +  \big| \frac{g_K}{\hfbeta}\big|^{2+2\kappa} \Big) |V_K|^{1+\kappa} \dbar x
\\
&\lesssim
\intx \Bigg(\Big( \frac{\cV_{\beta}(V_K+g_K)}{\beta}\Big)^\frac{1+\kappa}{2} + 1 + \big| \frac{g_K}{\hfbeta}\big|_\infty^{2+2\kappa} \Bigg) |V_K|^{1+\kappa} \dbar x
\end{split}
\end{equs}
where we recall that $|\cdot|_\infty$ is the supremum norm.

By the bounds on $g_K$ \eqref{tool: g bounds} and $V_K$ \eqref{eq: v estimate}, taking $\kappa<1$ yields
\begin{equs} \label{eq:coarsecube2}
\begin{split}
\intx \Big( 1+&\big| \frac{g_K}{\hfbeta}\big|_\infty^{2+2\kappa} \Big)|V_K|^{1+\kappa} \dbar x 
\\
&\leq
C(\varepsilon,\kappa,\eta) + \varepsilon \| V_K\|_{L^2}^2
\\
&\leq
C\frac{N^\Xi_K}{N^3} + \varepsilon \Big( \intx \cV_\beta(V_K+g_K)\dbar x + \frac 12 \intt \intx r_k^2 \dbar x dk \Big).
\end{split}
\end{equs}
Above, we recall that $N^\Xi_K$ can contain constants $C=C(\eta)>0$.

For the remaining term on the righthand side of \eqref{eq: coarsecube1.5}, we reorganise terms and iterate the preceding argument:
\begin{equs} \label{eq:coarsecube3}
\begin{split}
\intx &\cV_{\beta}(V_K+g_K)^\frac{1+\kappa}2 \Big( \frac{|V_K|}{\hfbeta} \Big)^{1+\kappa} \dbar x
\\
&\lesssim
\intx \cV_{\beta}(V_K+g_K)^\frac{1+\kappa}2 \Bigg(  \big| \frac{g_K}{\hfbeta}\big|_\infty^{1+\kappa} + 1 +\frac{ \cV_{\beta}(V_K+g_K)^\frac{1+\kappa}4 }{\beta^\frac{1+\kappa}4} \Bigg) \dbar x
\\
&\leq
C(\varepsilon,\kappa,\eta)+ \varepsilon \intx \cV_\beta(V_K+g_K)\dbar x 
\end{split}
\end{equs}
provided that $\kappa < \frac 14$. 

We now estimate the second integral in $\cR^{a,1}_K$. Let $\tilde\kappa > 0$ be sufficiently small. Let $q$ be such that $\frac{3-\tilde\kappa}{4(1+\tilde\kappa)(1-\tilde\kappa)} + \frac 1q = 1$. Moreover, let $\theta = \frac{2\tilde\kappa(1-\tilde\kappa)}{(1+\tilde\kappa)(1-2\tilde\kappa)}$. By duality \eqref{tool: duality}, the fractional Leibniz rule \eqref{tool: Leibniz} and interpolation \eqref{tool: interpolation},
\begin{equs} \label{eq:rem_cubic_1}
\begin{split}
\Big| \intx \frac{4}{\beta} \<1>_K U_K^3 \dbar x\Big|
&\lesssim
\frac{1}{\beta} \| \<1>_K \|_{B^{-\frac 12 - \kappa}_{q,\infty}} \|U_K^3\|_{B^{\frac 12 + \kappa}_{\frac{4(1+\tilde\kappa)(1-\tilde\kappa)}{3-\tilde\kappa},1}}	
\\
&\lesssim
\frac{1}{\beta} \| \<1>_K \|_{B^{-\frac 12 - \kappa}_{q,\infty}} \| U_K \|_{B^{\frac 12 + \kappa}_{2+\tilde\kappa,1}} \| U_K \|_{L^{4-2\tilde\kappa}}^2
\\
&\lesssim
\frac{1}{\beta} \| \<1>_K \|_{B^{-\frac 12 - \kappa}_{q,\infty}} \| U_K \|_{H^{1-\kappa}}^{1-\theta} \| U_K \|_{L^{4-2\tilde\kappa}}^{2+\theta}. 
\end{split}
\end{equs}

By the change of variables \eqref{eq: intermediate integrated ansatz} in reverse, reorganising terms, Young's inequality, the bound on $U_K$ \eqref{eq: u estimate}, and using $\varepsilon < 1$,
\begin{equs}\label{eq:rem_cubic_2}
\begin{split}
	\eqref{eq:rem_cubic_1}
	&\leq
	C\frac{N^\Xi_K}{N^3} + \varepsilon \| U_K \|_{H^{1-\kappa}}^2
	\\
	&\quad 
	+ \| \<1>_K \|_{B^{-\frac 12 - \kappa}_{q,\infty}} \| U_K \|_{H^{1-\kappa}}^{1-\theta} \Big(\frac{1}{\beta^\frac{1}{2+\theta}}\| V_K \|_{L^{4-2\tilde\kappa}}\Big)^{2+\theta}
	\\
	&\leq
	C\frac{N^\Xi_K}{N^3} + \varepsilon \| U_K \|_{H^{1-\kappa}}^2 + \frac{1}{\hfbeta^\frac{8}{2+\theta}}\intx V_K^4 \dbar x
	\\
	&\leq
	C \frac{N^\Xi_K}{N^3} + \varepsilon \Big( \intx \cV_\beta(V_K+g_K) \dbar x + \frac 12 \intt \intx r_k^2 \dbar x dk \Big)
	\\
	&\quad\quad\quad
	+ \frac{1}{\hfbeta^\frac{8}{2+\theta}}\intx V_K^4 \dbar x.
\end{split}
\end{equs}

For the last term on the righthand side of \eqref{eq:rem_cubic_2}, we iterate the potential bound \eqref{tool: potential bound} and bound on $g_K$ \eqref{tool: g bounds} as in the estimate of \eqref{eq: coarsecube1.5}:
\begin{equs} \label{eq:rem_cubic_3}
\begin{split}
	\frac{1}{\hfbeta^\frac{8}{2+\theta}}\intx V_K^4 \dbar x
	&=
	\intx \Big( \frac{|V_K|}{\hfbeta} \Big)^\frac{4}{1+\frac \theta 2} |V_K|^\frac{2\theta}{2+\frac \theta 2} \dbar x
	\\
	&\lesssim 
	\intx \Big( \big| \frac{g_K}{\hfbeta}\big|^\frac{4}{1+\frac\theta2} + 1 \Big) |V_K|^\frac{2\theta}{2+\frac\theta 2} \dbar x
	\\
	&\quad\quad\quad
	+ \intx \frac{\cV_\beta(V_K+g_K)^\frac{1}{1+\frac\theta 2}}{\beta^\frac{2}{1+\frac\theta 2}} |V_K|^\frac{2\theta}{2+\frac\theta 2} \dbar x
	\\
	&\lesssim
	C(\varepsilon,\eta) + \frac {\varepsilon\eta} 2 \intx |V_K|^2 \dbar x
	\\
	&\quad\quad\quad
	+ \intx \cV_\beta(V_K+g_K)^\frac{1}{1+\frac\theta2} \frac{|V_K|^\frac{2\theta}{2+\frac\theta2}}{\beta^\frac{2}{1+\frac\theta2}} \dbar x
	\\
	&\lesssim
	C(\varepsilon,\eta) + \varepsilon \| V_K\|_{H^{\frac 12 - \kappa}}^2 
	\\
	&\quad\quad\quad
	+ \intx \cV_\beta(V_K+g_K)^\frac{1}{1+\frac\theta 2}\Big(1+\big| \frac{g_K}{\hfbeta}\big|^\frac{2\theta}{2+\frac\theta2} \Big)\dbar x 
	\\
	&\quad\quad\quad
	+ \intx \cV_\beta(V_K+g_K)^{\frac{1}{1+\frac\theta2} + \frac{\theta}{4+\theta}} \dbar x
	\\
	&\leq
	C(\varepsilon,\eta) + \varepsilon \Big( \|V_K\|_{H^{\frac 12 - \kappa}}^2  + \intx \cV_\beta(V_K+g_K)\dbar x \Big)
	\\
	&\leq
	C \frac{N^\Xi_K}{N^3} + 2\varepsilon \Big( \intx \cV_\beta(V_K+g_K) \dbar x + \frac 12 \intt \intx r_k^2 \dbar x dk \Big)
\end{split}
\end{equs}
where in the penultimate line we used Young's inequality and in the last line we have used \eqref{eq: v estimate}.

Combining \eqref{eq:coarsecube1}, \eqref{eq:coarsecube2}, \eqref{eq:coarsecube3}, \eqref{eq:rem_cubic_2}, and \eqref{eq:rem_cubic_3} establishes \eqref{eq: cgw cubic}. 

\subsubsection{Proof of \eqref{eq: cgw scalespace}} \label{subsec: scalespace bounds}

For any $\theta \in (0,1)$ let $\frac 1p = \frac \theta 4 + \frac{1-\theta}2$ and let $\frac 1{p'} = 1- \frac 1p$. Then, by duality \eqref{tool: duality}, the paraproduct estimate \eqref{tool: paraproduct}, the Bernstein-type bounds on the derivatives of the drift \eqref{eq: a priori derivative drift}, and bounds on the $\partial_k g_k^\flat$ \eqref{tool: partial g flat},
\begin{equs} \label{scalespace eq1}
\begin{split}
\Big| \intx \intt &\frac{12}\beta \<2>_k \pg (\partial_k V_k^\flat + \partial_k g_k^\flat) U_k dk \dbar x \Big|
\\
&\lesssim
\frac 1\beta \intt \| \<2>_k \pg (\partial_k V_k^\flat + \partial_k g_k^\flat) \|_{H^{-1+\kappa}} \|U_k \|_{H^{1-\kappa}} dk
\\
&\lesssim
\frac 1\beta \intt \| \<2>_k \|_{B^{-1+\kappa}_{p',2}} \| \partial_k V_k^\flat + \partial_k g_k^\flat \|_{L^p} \| U_k \|_{H^{1-\kappa}} dk
\\
&\lesssim
\sup_{0 \leq k \leq K} \|U_k\|_{H^{1-\kappa}} \frac 1\beta \| V_K + g_K \|_{B^{3\kappa}_{p,1}} \intt \| \<2>_k \|_{B^{-1+\kappa}_{p',2}} \frac{dk}{\langle k \rangle^{1+3\kappa}}	
\end{split}
\end{equs}
where in the last inequality we have reordered terms.

Then,  
\begin{equs}\label{scalespace eq2}
\begin{split}
\eqref{scalespace eq1}
&\lesssim
\sup_{0 \leq k \leq K} \| U_k\|_{H^{1-\kappa}} \frac 1\beta \| V_K + g_K \|_{B^0_{4,\infty}}^\theta \|V_K + g_K \|_{B^{6\kappa}_{2,1}}^{1-\theta} 
\\
&\quad\quad\quad
\times \intt \| \<2>_k \|_{B^{-1-\kappa}_{p',2}} \frac{dk}{\langle k \rangle^{1+\kappa}}
\\
&\lesssim
\sup_{0 \leq k \leq K} \| U_k\|_{H^{1-\kappa}}\frac{\| V_K + g_K\|_{L^4}^\theta}{\beta^\theta} \Big(\frac {\|V_K\|^{1-\theta}_{H^{\frac 12 - \kappa}}}{\beta^{1-\theta}} + 1 \Big) 
\\
&\quad\quad\quad
\times \intt \| \<2>_k \|_{B^{-1-\kappa}_{p',2}} \frac{dk}{\langle k \rangle^{1+\kappa}}
\\
&\leq 
C(\varepsilon)\Bigg(1+ \Big( \intt \| \<2>_k \|_{B^{-1-\kappa}_{p',2}} \frac{dk}{\langle k \rangle^{1+\kappa}}\Big)^\frac{4}{4-\theta} \Bigg) 
\\
&\quad\quad\quad
+ \frac{\varepsilon}{2}\Bigg( \| V_K \|_{H^{\frac 12 - \kappa}}^2 + \sup_{0\leq k \leq K}\| U_k \|_{H^{1-\kappa}}^2 + \frac 1{\beta^4}\|V_K+g_K\|_{L^4}^4 \Bigg)
\\
&\leq
C(\varepsilon,\eta) \frac{N^\Xi_K}{N^3} + \varepsilon \Big( \intx \cV_\beta(V_K+g_K) \dbar x + \frac 12 \intt \intx r_k^2 \dbar x dk \Big)
\end{split}
\end{equs}
where in the first line we have used Bernstein's inequality \eqref{tool: bernstein ball}; in the second line we have used interpolation \eqref{tool: interpolation}; in the penultimate line we used Young's inequality; and in the last line we have used the bounds on $V_K$ \eqref{eq: v estimate}, $U_k$ \eqref{eq: u estimate}, together with the potential bound \eqref{tool: potential bound norm}.

In order to bound the second integrand in $\cR^{a,2}_K$, we use Fubini's theorem, the Cauchy-Schwarz inequality, the bounds on $V_k^\flat$ \eqref{eq: a priori flat drift} and $\partial_k V_K^\flat$ \eqref{eq: a priori derivative drift}, and the bounds on $g_K$ \eqref{tool: g bounds} to obtain
\begin{equs}\label{scalespace eq3}
\begin{split}
\Big| \intx \intt &\frac{2\gamma_k}{\beta^2} (\partial_k V_k^\flat + \partial_k g_k^\flat)(V_k^\flat + g_k^\flat) dk \dbar x \Big|
	\\
	&\lesssim
	\frac 1{\beta^2} \intt \gamma_k \| \partial_k V_k^\flat + \partial_k g_k^\flat \|_{L^2} \| V_k^\flat + g_k^\flat \|_{L^2} dk
	\\
	&\lesssim
	\frac{1}{\beta^2} \|V_K + g_K \|_{H^{2\kappa}} \| V_K + g_K \|_{L^2} \intt \frac{\gamma_k}{\langle k \rangle^{\kappa}} \frac{dk}{\langle k \rangle^{1+\kappa}}
	\\
	&\lesssim
	\Big( \frac{\| V_K \|_{H^{\frac 12 -\kappa}}}{\beta^2} + \frac{1}{\beta^\frac 32} \Big)\| V_K + g_K \|_{L^4} 
\end{split}
\end{equs}
where in the last inequality we have used the observation made in Remark \ref{rem: growth of renorm} that $|\gamma_k|\lesssim \log \langle k \rangle$.

Thus, by Young's inequality (applied to each term after expanding the sum), the potential bound \eqref{tool: potential bound norm}, and the bound on $V_K$ \eqref{eq: v estimate},
\begin{equs}\label{scalespace eq4}
\begin{split}
\eqref{scalespace eq3}
&\leq
C(\varepsilon,\eta) + \varepsilon \Bigg( \| V_K \|_{H^{\frac 12- \kappa}}^2 + \Big( \frac 1{\beta^8} + \frac 1{\beta^6} \Big) \| V_K+g_K\|_{L^4}^4 \Bigg)
\\
&\leq
C(\varepsilon,\eta) \frac{N^\Xi_K}{N^3} + \varepsilon \Big( \intx \cV_\beta(V_K+g_K) \dbar x + \frac 12 \intt \intx r_k^2 \dbar x dk \Big).
\end{split}	
\end{equs}

Combining \eqref{scalespace eq2} and \eqref{scalespace eq4} yields \eqref{eq: cgw scalespace}.

\subsubsection{Proof of \eqref{eq: cgw commutator}} \label{subsec: commutator bound}
We write $\cR^{a,3}_K = I_1 + I_2 + I_3$, where
\begin{equs}
I_1 
&=
\intx \intt \frac{72}{\beta^2} \Big( \cJ_k (\<2>_k \pg (V_k^\flat + g_k^\flat)) \Big)^2 - \Big( \cJ_k \<2>_k \pg (V_k^\flat + g_k^\flat) \Big)^2 dk dx
\\
I_2
&= 
\intx \intt \frac{72}{\beta^2}\Big(\cJ_k \<2>_k \pg (V_k^\flat + g_k^\flat) \Big)^2 
\\
&\quad\quad\quad\quad\quad
- \Big( \big( \cJ_k \<2>_k \pg (V_k^\flat + g_k^\flat) \big) \pe \cJ_k \<2>_k \Big) (V_k^\flat + g_k^\flat) dk dx
\\
I_3
&=
\intx \intt \frac{72}{\beta^2} \Big( \big(\cJ_k \<2>_k \pg (V_k^\flat + g_k^\flat) \big) \pe \cJ_k \<2>_k 
\\
&\quad\quad\quad\quad\quad
- (\cJ_k \<2>_k \pe \cJ_k \<2>_k)(V_k^\flat + g_k^\flat) \Big) \Big(V_k^\flat + g_k^\flat \Big) dk \dbar x.
\end{equs}

Let $\theta \in (0,1)$ be sufficiently small and let $\frac 1p = \frac \theta 4 + \frac{1-\theta}2$, $\frac 1q = \frac{1-\theta}2$ and $\frac 1{p'} = \frac 12-\frac 1p$, $\frac 1{q'} = \frac 12 - \frac 1 q$. Then,
\begin{equs}\label{commutator I_1 eq1}
\begin{split}
	|I_1|
	&\lesssim
	\frac{1}{\beta^2} \intt \| \cJ_k \big( \<2>_k \pg (V_k^\flat + g_k^\flat) \big) - \cJ_k \<2>_k \pg (V_k^\flat + g_k^\flat) \|_{H^{2\kappa}}
	\\ 
	&\quad\quad\quad\quad\quad
	\times \|\cJ_k \big( \<2>_k \pg (V_k^\flat + g_k^\flat) \big) + \cJ_k \<2>_k \pg (V_k^\flat + g_k^\flat)\|_{H^{-2\kappa}} dk
	\\
	&\lesssim
	\frac{1}{\beta^2} \intt \| \<2>_k \|_{B^{-1-\kappa}_{p',q'}} \|V_k^\flat + g_k^\flat \|_{B^{4\kappa}_{p,q}} 
	\\
	&\quad\quad\quad\quad\quad
	\times \Big( \| \cJ_k\big(\<2>_k \pg (V_k^\flat + g_k^\flat) \big)\|_{H^{-2\kappa}} + \| \cJ_k \<2>_k \pg (V_k^\flat + g_k^\flat)\|_{H^{-2\kappa}} \Big)dk
	\\
	&\lesssim 
	\frac{1}{\beta^2} \intt \| \<2>_k \|_{B^{-1-\kappa}_{p',q'}} \|V_k^\flat + g_k^\flat \|_{B^{4\kappa}_{p,q}} \| \<2>_k \|_{B^{-1-2\kappa}_{4,2}} \| V_k^\flat + g_k^\flat \|_{L^4} \frac{dk}{\langle k \rangle}
	\end{split}
\end{equs}
where the first inequality is by duality \eqref{tool: duality}; the second inequality is by the commutator estimate \eqref{tool: multiplier commutator} and the triangle inequality; and the third inequality is by the multiplier estimate \eqref{tool: multiplier estimate} and the paraproduct estimate \eqref{tool: paraproduct}.

Thus,
\begin{equs}\label{commutator I_1 eq2}
\begin{split}
	\eqref{commutator I_1 eq1}
	&\lesssim
	\frac{1}{\beta^2} \intt \| \<2>_k \|_{B^{-1-\kappa}_{p',q'}} \| V_k^\flat + g_k^\flat \|_{B^{4\kappa}_{p,q}} \| \<2>_k \|_{B^{-1-\kappa}_{4,2}} \| V_k^\flat + g_k^\flat \|_{L^4} \frac{dk}{\langle k \rangle^{1+\kappa}}
	\\
	&\lesssim
	\frac{1}{\beta^2} \|V_K + g_K\|_{H^\frac{4\kappa}{1-\theta}}^{1-\theta}  \| V_K + g_K\|_{L^4}^{1+\theta} \intt \| \<2>_k \|_{B^{-1-\kappa}_{p',q'}} \| \<2>_k \|_{B^{-1-\kappa}_{4,2}} \frac{dk}{\langle k \rangle^{1+\kappa}}
\end{split}
\end{equs}
where the first inequality is by Bernstein's inequality \eqref{tool: bernstein ball}; and the second inequality is by the $\flat$-bounds applied to $V_k^\flat + g_k^\flat$ \eqref{eq: a priori flat drift}, interpolation \eqref{tool: interpolation}, and the trivial bound $\| V_K + g_K \|_{B^{4\kappa \theta}_{4,\infty}} \lesssim \| V_K+g_K\|_{L^4}$.

By applying Young's inequality, the potential bound \eqref{tool: potential bound norm}, and the bound on $V_K$ \eqref{eq: v estimate}, we have
\begin{equs}\label{commutator I_1 eq3}
\begin{split}
\eqref{commutator I_1 eq2}
&\leq
C\frac{N^\Xi_K}{N^3} + \varepsilon \Bigg( \|V_K + g_K\|_{H^{\frac{4\kappa}{1-\theta}}}^2 + \frac{1}{\beta^{\frac{8}{1+\theta}}} \|V_K + g_K\|_{L^4}^4 \Bigg)
\\
&\leq
C \frac{N^\Xi_K}{N^3} + \varepsilon \Big( \intx \cV_\beta(V_K+g_K) \dbar x + \frac 12 \intt \intx r_k^2 \dbar x dk \Big).
\end{split}
\end{equs}

Now consider $I_2$. Using the commutator estimate \eqref{tool: commutator 1} with $f = \cJ_k \<2>_k$, $g = V_k^\flat + g_k^\flat$ and $h = \cJ_k \<2>_k \pg (V_k^\flat + g_k^\flat)$, followed by the paraproduct estimate \eqref{tool: paraproduct}, we obtain 
\begin{equs}\label{commutator I_2 eq1}
\begin{split}
I_2
&\lesssim
\frac{1}{\beta^2} \intt \| \cJ_k \<2>_k \|_{B^{-2\kappa}_{6,\infty}} \|V_k^\flat + g_k^\flat \|_{H^{4\kappa}} \|\cJ_k \<2>_k \pg (V_k^\flat + g_k^\flat) \|_{B^{-2\kappa}_{3,2}} dk
\\
&\lesssim
\frac{\| V_K + g_K \|_{H^{4\kappa}} \| V_K + g_K \|_{L^4}}{\beta^2} \intt \| \<2>_k \|_{B^{-\kappa}_{12,2}}^2 \frac{dk}{\langle k \rangle^{1+2\kappa}}.
\end{split}
\end{equs}
By applying Young's inequality, the potential bound \eqref{tool: potential bound norm}, and the a priori bound on $V_K$ \eqref{eq: v estimate},
\begin{equs} \label{commutator I_2 eq2}
\begin{split}
\eqref{commutator I_2 eq1}
&\leq
C(\varepsilon,\eta)\frac{N^\Xi_K}{N^3} + \varepsilon \Bigg( \| V_K + g_K \|_{H^{4\kappa}}^2 + \frac 1{\beta^8}\|V_K+g_K\|_{L^4}^4 \Bigg)
\\
&\leq
C(\varepsilon,\eta) \frac{N^\Xi_K}{N^3} + \varepsilon \Big( \intx \cV_\beta(V_K+g_K) \dbar x + \frac 12 \intt \intx r_k^2 \dbar x dk \Big).
\end{split}	
\end{equs}
where the final inequality uses the multiplier estimate \eqref{tool: multiplier estimate}, the $\flat$-bounds applied to $V_K+g_K$ \eqref{eq: a priori flat drift}, and Bernstein's inequality \eqref{tool: bernstein annulus}.

For $I_3$, we apply duality \eqref{tool: duality}, the commutator estimate \eqref{tool: commutator 2} with $f=h = \cJ_k \<2>_k$ and $g=V_k^\flat + g_k^\flat$, followed by the $\flat$-bounds applied to $V_K+g_K$ \eqref{eq: a priori flat drift}, to obtain
\begin{equs} \label{commutator I_3 eq1}
\begin{split}
I_3
&\lesssim
\frac{1}{\beta^2} \intt \| \big(\cJ_k \<2>_k \pg (V_k^\flat + g_k^\flat) \big) \pe \cJ_k \<2>_k - (\cJ_k \<2>_k \pe \cJ_k \<2>_k)(V_k^\flat + g_k^\flat)\|_{B^\kappa_{\frac 43, \infty}} 
\\
&\quad\quad\quad\quad\quad
\times \| V_k^\flat + g_k^\flat \|_{B^{-\kappa}_{4,1}} dk
\\
&\lesssim
\frac{1}{\beta^2} \intt \| \cJ_k \<2>_k \|_{B^{-2\kappa}_{8,\infty}}^2 \| V_k^\flat + g_k^\flat \|_{B^{5\kappa}_{2,\infty}} \| V_k^\flat + g_k^\flat \|_{L^4} dk
\\
&\lesssim
\frac{1}{\beta^2} \| V_K + g_K \|_{H^{5\kappa}}\| V_K + g_K \|_{L^4} \intt \|\<2>_k\|^2_{B^{-\kappa}_{8,\infty}} \frac{dk}{\langle k \rangle^{1+2\kappa}}
\\
&\leq
C(\varepsilon,\eta) \frac{N^\Xi_K}{N^3} + \varepsilon \Big( \intx \cV_\beta(V_K+g_K) \dbar x + \frac 12 \intt \intx r_k^2 \dbar x dk \Big)
\end{split}
\end{equs}
where in the last line we have used Young's inequality, the potential bound \eqref{tool: potential bound norm}, and the bound on $V_K$ \eqref{eq: v estimate} as in \eqref{commutator I_2 eq2}.

Using that $\cR^{a,3}_K = I_1 + I_2 + I_3$, the estimates \eqref{commutator I_1 eq3}, \eqref{commutator I_2 eq2}, and \eqref{commutator I_3 eq1} establish \eqref{eq: cgw commutator}.

\subsection{A lower bound on the effective Hamiltonian} \label{subsec: pointwise bound on effective}

The following lemma, based on \cite[Theorem 3.1.1]{GJS76-3}, gives a $\beta$-independent lower bound on $\cH^{\rm eff}_K(Z_K)$ in terms of the $L^2$-norm of the fluctuation field $Z_K^\perp = Z_K - \vec Z_K$, where we recall $Z_K = \vec{\<1>}_K + V_K + g_K$ and $\vec Z_K (x) = Z_K(\Box)$ for $x \in \Box \in \BBN$. This is useful for us because the latter can be bounded in a $\beta$-independent way (see Section \ref{subsec: proof z lower}).

\begin{lem}\label{lem:pointwise}	
There exists $C>0$ such that, for any $\zeta > 0$ and $K \in (0,\infty)$,
\begin{equs} \label{eq: pointwise lem}
\cH^{\rm eff}_K (Z_K)
\geq
-CN^3 -\zeta \intx \big(Z_K^\perp \big)^2 dx	
\end{equs}
provided $\eta < \min\Big( \frac 1{32}, \frac {2\zeta}{49} \Big)$.
\end{lem}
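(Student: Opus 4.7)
The plan is to decompose the effective Hamiltonian block by block, apply an exact Taylor expansion of the quartic potential about the block average, and use a Jensen-type inequality to reduce to a one-variable analysis per block. Fix a unit block $\Box \in \BBN$, set $\tilde\sigma := \sigma(\Box)$ and $b := \hfbeta\tilde\sigma$ (so $h|_\Box = b$), and write $Z_K|_\Box = a + u$ with $a := Z_K(\Box)$, $u := Z_K - a$ satisfying $\int_\Box u\, dx = 0$. Write $P_\Box := \int_\Box u^2\, dx$, so that $\int_{\TTN}(Z_K^\perp)^2\, dx = \sum_\Box P_\Box$. An exact expansion of the quartic $\cV_\beta$ about $a$ using $\int_\Box u\, dx = 0$ and completing the square yields
\begin{equs}
\int_\Box \tfrac{1}{2}\cV_\beta(Z_K)\, dx = \tfrac{1}{2}\cV_\beta(a) + \int_\Box \Big[\tfrac{(u+2a)^2 u^2}{2\beta} + \tfrac{a^2-\beta}{\beta} u^2\Big]\, dx,
\end{equs}
while $\int_\Box(Z_K - h)^2\, dx = (a-b)^2 + P_\Box$. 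Hence $\cH^{\rm eff}_K(Z_K) = \sum_{\sBox \in \BBN}\cH_\Box$ with
\begin{equs}
\cH_\Box = M(a) + \int_\Box \tfrac{(u+2a)^2 u^2}{2\beta}\, dx + \Big(\tfrac{a^2-\beta}{\beta} - \tfrac{\eta}{2}\Big) P_\Box,
\end{equs}
where $M(a) := \tfrac{1}{2}\cV_\beta(a) - \tfrac{\eta}{2}(a-b)^2 - \log\chi_{\tilde\sigma}(a)$.

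Next I apply the Jensen/Cauchy--Schwarz inequality $\int_\Box (u+2a)^2 u^2\, dx = \int_\Box ((u+2a)u)^2\, dx \geq \big(\int_\Box (u+2a)u\, dx\big)^2 = P_\Box^2$, where the last equality uses $\int_\Box u\, dx = 0$. Combined with the block identity this gives
\begin{equs}
\cH_\Box + \zeta P_\Box \geq M(a) + \tfrac{P_\Box^2}{2\beta} + d(a)\, P_\Box, \quad d(a) := \tfrac{a^2 - \beta}{\beta} + \zeta - \tfrac{\eta}{2}.
\end{equs}
Infimising the quadratic in $P_\Box \geq 0$ yields $-\tfrac{d(a)_-^2 \beta}{2}$ when $d(a) < 0$. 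The key algebraic identity
\begin{equs}
\tfrac{1}{2}\cV_\beta(a) - \tfrac{d(a)^2 \beta}{2} = (\zeta - \eta/2)(\beta - a^2) - \tfrac{(\zeta - \eta/2)^2 \beta}{2},
\end{equs}
combined with the observation that $d(a) < 0$ forces $\beta - a^2 > (\zeta - \eta/2)\beta$, shows this difference is always nonnegative (in fact at least $\tfrac{(\zeta-\eta/2)^2\beta}{2}$).

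The main obstacle is then to verify the pointwise inequality $M(a) - \tfrac{d(a)_-^2 \beta}{2} \geq -C$ uniformly in $a \in \RR$ and $\beta \geq 1$, by a case analysis on $\mathrm{sgn}(a\tilde\sigma)$. On the ``right-sign'' side $a\tilde\sigma \geq 0$, the factorisation $\cV_\beta(a) = \tfrac{1}{\beta}(a-\hfbeta\tilde\sigma)^2(a+\hfbeta\tilde\sigma)^2$ together with $(a+\hfbeta\tilde\sigma)^2 \geq \beta$ yields $\cV_\beta(a) \geq (a-b)^2$, which absorbs $-\tfrac{\eta}{2}(a-b)^2$ when $\eta < 1$; pairing this with the algebraic identity above and a short parabola analysis in the variable $s := 1 - a/(\hfbeta\tilde\sigma)$ produces a bound $\geq \tfrac{(\zeta-\eta/2)^2\beta}{2}(1-O(\eta))$ valid under the algebraic condition $2(\zeta-\eta) \geq (\zeta-\eta/2)^2$, which is exactly what the hypothesis $\eta < \tfrac{2\zeta}{49}$ guarantees for the relevant range of $\zeta$. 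On the ``wrong-sign'' side $a\tilde\sigma < 0$, the Mills-ratio estimate $\chi_+(a) \leq \tfrac{1}{2|a|\sqrt\pi}e^{-a^2}$ for $a \leq -1$ gives $-\log\chi_{\tilde\sigma}(a) \geq a^2 + C_1$, and combined with $(a-b)^2 \leq 2(a^2+\beta)$ and $\cV_\beta(a) \geq 0$ this yields $M(a) \geq (1-2\eta)a^2 - \eta\beta - C_1$, bounded below as $|a|$ is forced close to $\hfbeta$ in this regime; the residual subcase $|a| \leq 1$ is handled using $\cV_\beta(a) \geq \tfrac{\beta}{4}$ together with $-\log\chi_{\tilde\sigma}(a) \geq \log 2$, which is where the constraint $\eta < \tfrac{1}{32}$ is used. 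Tracking the numerical thresholds through this case analysis is the most delicate part of the argument; in particular, the non-convexity of $\cV_\beta$ means any naive pointwise bound loses factors of $\beta$, and only the careful interplay between $\tfrac{1}{2}\cV_\beta(a)$, the Jensen term $\tfrac{P_\Box^2}{2\beta}$, and the phase-label penalty $-\log\chi_{\tilde\sigma}$ avoids $\beta$-dependent loss. Summing $\cH_\Box + \zeta P_\Box \geq -C$ over the $N^3$ unit blocks then gives the claim.
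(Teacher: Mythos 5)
Your approach is genuinely different from the paper's. The paper proves a \emph{pointwise} inequality $I(x) + \zeta Z_K^\perp(x)^2 \geq -C$ at each $x\in\TTN$: it observes that $\frac12\cV_\beta(Z_K(x)) - \frac{\eta}{2}(Z_K(x)-h)^2 = \frac{1}{2\beta}(Z_K-h)^2\big((Z_K+h)^2-\eta\beta\big)$ is nonnegative unless $Z_K(x)$ lies in a small window around the opposite well, and in that case the excursion must be caused either by $\vec Z_K(x)$ (penalised by $-\log\chi_{\pm}$) or by $Z_K^\perp(x)$ (penalised by the $\zeta$-term). You instead work block by block, exploiting the exact Taylor expansion of $\cV_\beta$ about $a=Z_K(\Box)$, the Jensen bound $\int_\sBox(u+2a)^2u^2\,dx\geq P_\Box^2$ with $P_\Box=\int_\sBox u^2\,dx$, and minimisation of the quadratic $\frac{P^2}{2\beta}+d(a)P$ over $P\geq 0$. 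This is a clean reformulation: the algebraic identity $\frac12\cV_\beta(a)-\frac{d(a)^2\beta}{2}=(\beta-a^2)(\zeta-\eta/2)-\frac{(\zeta-\eta/2)^2\beta}{2}$ is correct and decisive, and it reduces the lemma to the one-variable estimate $M(a)-\frac{d(a)_-^2\beta}{2}\geq -C$. The right-sign analysis via $s = 1 - a/(\hfbeta\tilde\sigma)$, where the downward parabola $f(s) = -(\alpha+\eta/2)s^2 + 2\alpha s - \alpha^2/2$ (with $\alpha = \zeta - \eta/2$) is shown nonnegative at the endpoints of the relevant interval, is also essentially correct.

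There is, however, a concrete gap on the wrong-sign side. You split by $|a|\lessgtr 1$ (validity of the Mills-ratio estimate) and assert that for $|a|\geq 1$ the bound $M(a)\geq(1-\eta)a^2-\eta\beta+C_1$ is controlled because ``$|a|$ is forced close to $\hfbeta$ in this regime.'' That is not true. When $\zeta-\eta/2\geq 1$ the constraint $d(a)\geq 0$ is vacuous, so there is no quadratic-minimisation compensation, and $a$ is free to sit anywhere with $1\leq |a|\ll\hfbeta$; in that range your displayed bound tends to $-\infty$ as $\beta\to\infty$ because you discarded $\frac12\cV_\beta(a)$ entirely. The correct split is by $a^2\lessgtr\beta/2$, not by $|a|\lessgtr 1$: for $a^2\leq\beta/2$ the potential supplies $\frac12\cV_\beta(a)\geq\beta/8$, which dominates $-\frac{\eta}{2}(a-b)^2\geq -2\eta\beta$ for small $\eta$; for $a^2>\beta/2$ (and $\beta\geq 2$, so $|a|>1$ and Mills applies) the term $-\log\chi_{\tilde\sigma}(a)\geq a^2+C_1>\beta/2+C_1$ is of the same order as the $\eta\beta$ loss and wins. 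With that correction, and with the $d(a)<0$ subcase tracked through the algebraic identity as you do on the right-sign side, the argument closes. Be aware, though, that the numerical thresholds on $\eta$ that emerge from this bookkeeping are not manifestly the same as the stated $\frac{1}{32}$ and $\frac{2\zeta}{49}$ (which in the paper's proof come from a specific choice of the Gaussian-tail parameter $\theta$ and the window $(-\frac{4\hfbeta}{3},-\frac{2\hfbeta}{3})$); you would need to verify that your constraints are implied by those hypotheses, or else state the lemma with your own admissible range of $\eta$.
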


\begin{proof}

First, we write
\begin{equs}
\cH^{\rm eff}_K(Z_K)
=
\sum_{\sBox \in \BBN} \int_\sBox \frac 12 \cV_{\beta,N,K}(Z_K) - \frac \eta 2 (Z_K-h)^2 - \log \Big(\chi_{\sigma(\sBox)} \big(Z_K(\Box)\big) \Big) dx.	
\end{equs}
Fix $x \in \Box \in \BBN$. Without loss of generality, assume $\sigma(x) = 1$ and, hence, $h(x) = \hfbeta$. Define
\begin{equs}
I(x) 
= 
\frac 12 \cV_\beta(Z_K(x)) - \frac{\eta}{2} (Z_K(x)-\hfbeta)^2 - \log \chi_{+}(\vec Z_K(x)).
\end{equs}
In order to show \eqref{eq: pointwise lem}, it suffices to show that, for some $C > 0$,
\begin{equs}
I(x) + \zeta Z_K^\perp(x)^2 \geq -C.	
\end{equs}

The fundamental observation is that  $Z_K(x) \mapsto \frac 12\cV_\beta(Z_K(x))$ can be approximated from below near the minimum at $Z_K(x) = \hfbeta$ by the quadratic $Z_K(x) \mapsto \frac \eta 2 (Z_K(x)-\hfbeta)^2$ provided $\eta$ is taken sufficiently small. Indeed, we have
\begin{equs}
\frac 12\cV_\beta(Z_K(x)) - \frac \eta 2 (Z_K(x) - \hfbeta)^2 
&=
\frac 1{2\beta}(Z_K(x) -\hfbeta)^2 \Big( (Z_K(x) + \hfbeta)^2 - \eta \beta \Big)	
\end{equs}
which is non-negative provided $|Z_K(x) + \hfbeta| \geq \sqrt{\eta \beta}$. Thus, this approximation is valid except for the region near the opposite potential well satisfying $(-1-\sqrt\eta)\hfbeta < Z_K(x) < (-1+\sqrt\eta)\hfbeta$ (see Figure \ref{fig: potential}). When $Z_K(x)$ sits in this region, we split $Z_K(x) = \vec Z_K(x) + Z_K^\perp(x)$ and observe that:
\begin{itemize}
\item either the deviation to the opposite well is caused by $\vec Z_K(x)$, which is penalised by the logarithm in $I(x)$;
\item or, the deviation is caused by $Z_K^\perp(x)$, which produces the integral involving $Z_K^\perp$ in \eqref{eq: pointwise lem}.
\end{itemize}

\begin{figure}[!htb]
\centering
	\begin{tikzpicture}
\begin{axis}[
	axis lines = middle,
	width = 10cm, height=8cm,
	xmax=3.6,
	ymax=1.2,
    samples=100,
    xtick={-1.41421,  1.41421},
    xticklabels ={-$\sqrt{\beta}$, $\sqrt{\beta}$},
    ytick={1.1},
    ytick style={draw=none},
    yticklabel={$\frac \beta 2$},
    enlarge y limits={rel=0.7},
    enlarge x limits = {rel=0.05}
	]
\addplot [domain=-2.2:2.15, color=black,thick] {0.25*(x^4) - 1*(x^2) + 1} node[right, font=\small] {$\frac 12 \mathcal{V}_\beta (Z_K(x))$};
\addplot [domain = 2.02:2.2, color=black,thick] {0.25*(x^4) - 1*(x^2) + 1};
\addplot [domain=-3.3:2.2, color=blue,thick] {0.1*(x-1.41421)^2} node[above, pos=1.09, font=\small] {\color{blue} $\frac \eta 2 (Z_K(x) - \sqrt{\beta})^2$};
\draw[gray, dashed, name path = A] (-2.04666, 0) -- (-2.04666, 3);
\draw[gray, dashed, name path = B] (-0.781758,0) -- (-0.781758, 3);
\addplot[gray, fill opacity=0.20] fill between [of=A and B,soft clip= 
{domain=-12:2}];

\end{axis}
\end{tikzpicture}

	\caption{Plot of $\cV_\beta(Z_K(x))$ and $\frac \eta 2 (Z_K(x) - \hfbeta)^2$.} 
	\label{fig: potential}
\end{figure}

Motivated by these observations, we split the analysis of $I(x)$ into two cases. First we treat the case $Z_K(x) \in \RR\setminus\Big( -\frac{4\hfbeta}3, -\frac{2\hfbeta}3 \Big)$. Under this condition, we have 
\begin{equs}
\frac 12 \cV_\beta(Z_K(x)) \geq \eta (Z_K(x)-\hfbeta)^2
\end{equs}
provided that $\eta \leq \frac 1 9$. 
Since $\chi_+(\cdot) \leq 1$, $-\log\chi_+(\cdot) \geq 0$. It follows that $I(x) \geq 0$.

Now let $Z_K(x) \in \Big( -\frac{4\hfbeta}3, -\frac{2\hfbeta}3  \Big)$. Necessarily, either $\vec Z_K(x) \leq - \frac \hfbeta 3$ or $Z_K^\perp(x) \leq - \frac\hfbeta 3$. 

We first assume that $\vec Z_K(x) \leq - \frac \hfbeta 3$. By standard bounds on the Gaussian error function (see e.g. \cite[Lemma 2.6.1]{GJS76-3}), for any $\theta \in (0,1)$ there exists $C=C(\theta)>0$ such that 
\begin{equs}
-\log \chi_+\big(Z_K(\Box)\big) \geq - \theta (\vec Z_K(x))^2 + C.
\end{equs}
Applying this with $\theta \in (\frac 12, 1)$ and that, by our assumption, $\vec Z_K(x) - \hfbeta > 4\vec Z_K(x)$,
\begin{equs}
I(x) + \zeta (Z_K^\perp(x))^2 
&\geq
-\frac \eta 2 ( Z_K^\perp(x) + \vec Z_K (x) - \hfbeta)^2 - \log\chi_+\big(Z_K(\Box)\big) + \zeta (Z_K^\perp(x))^2
\\
&\geq
(\zeta-\eta) (Z_K^\perp(x))^2 - 16\eta(\vec Z_K(x))^2 - \tilde\theta (\vec Z_K(x)^2) - C
\\
&\geq
-C
\end{equs}
provided $\eta < \min\Big( \zeta, \frac{1}{32} \Big)$. 

Finally, assume that $Z_K^\perp(x) < - \frac\hfbeta 3$. Since $Z_K(x) - \hfbeta \in \Big( -\frac{7\hfbeta}3, -\frac{-5\hfbeta}3 \Big)$, we have
\begin{align}
\begin{split}
I(x) + \zeta (Z_K^\perp(x))^2
&\geq
- \frac{49\eta}{18} \beta + \zeta (Z_K^\perp(x))^2
\geq 0
\end{split}
\end{align}
provided that $\eta \leq \frac{2\zeta}{49}$.

\end{proof}

\subsection{Proof of Proposition \ref{prop:zbounds}}

\subsubsection{Proof of the lower bound on the free energy \eqref{eq:zboundlower}} \label{subsec: proof z lower}

We derive bounds uniform in $\sigma$ for each term in the expansion \eqref{partition expansion}. Since there are $2^{N^3}$ terms, this is sufficient to establish \eqref{eq:zboundlower}. Fix $\sigma \in \{\pm 1\}^\BBN$.

Recall 
\begin{equs}\label{eq:recall}
-\log \sZ_{\beta,N,K}^\sigma = -\log \EE_N e^{-\cH_{\beta,N,K}^\sigma} + F^\sigma_{\beta,N,K}.
\end{equs}

Let $C_P > 0$ be the sharpest constant in the Poincar\'e inequality \eqref{tool: poincare} on unit boxes. Note that $C_P$ is independent of $N$. Fix $\zeta < \frac 1{8C_P}$ and let $\varepsilon = 1-8C_P\zeta > 0$. By Proposition \ref{prop: killing} and Lemma \ref{lem:pointwise} there exists $C=C(\zeta,\eta)>0$ such that, for every $v \in \HH_{b,K}$,
\begin{equs}
\Psi_K(v)
&=
\cH_{\beta,N,K}^\sigma(\<1>_K + V_K) + \frac 12 \intx\intt v_k^2 dk dx 
\\
&\approx
\sum_{i=1}^4 \cR^i_K + \cH^\mathrm{eff}_K(Z_K) + \frac 12 \intx \cV_\beta(V_K+g_K) dx + \frac 12 \intx \intt r_k^2 dk dx
\\
&\geq
-C(\varepsilon) N^\Xi_K + \cH^\mathrm{eff}_K + \frac{1-\varepsilon}2 \Big( \intx \cV_\beta(V_K+g_K) dx + \frac 12\intx\intt r_k^2 dk dx \Big).
\\
&\geq
-C(\zeta) N^\Xi_K - \zeta \intx (Z_K^\perp)^2 dx 
\\
&\quad\quad\quad\quad\quad
+ 4\zeta C_P \Big( \intx \cV_\beta(V_K+g_K) dx + \frac 12\intx\intt r_k^2 dk dx \Big)
\end{equs}
provided $\eta < \frac{2\zeta}{49} < \frac 1{196 C_P}$.

Note that for any $f \in L^2$, $\intx (f^\perp)^2 dx \leq \intx f^2 dx$. Therefore, using the inequality $(a_1 + a_2 + a_3 + a_4)^2 \leq 4(a_1^2 + a_2^2 + a_3^2 + a_4^2)$ and that $Z_K^\perp(x) = (V_K+g_K)^\perp(x)$, we have
\begin{equs}
\intx (Z_K^\perp)^2 dx 
&\leq
4\intx \frac{16}{\beta^2} \Big(\<30>_K\Big)^2  + \frac{144}{\beta^2} \Bigg( \intt \cJ_k^2 \<2>_k \pg (V_k^\flat + g_k)) dk \Bigg)^2 
\\
&\quad\quad\quad
+ (R_K^\perp)^2 + (g_K^\perp)^2 dx.
\end{equs}

Arguing as in \eqref{eq: u estimate eq2},
\begin{equs}
4\intx \frac{16}{\beta^2} \Big(\<30>_K\Big)^2  + \frac{144}{\beta^2} \Bigg( \intt \cJ_k^2 \<2>_k \pg (V_k^\flat + g_k)) dk \Bigg)^2  dx
\\
\leq
C(\zeta, C_P) N^\Xi_K + \frac{4\zeta C_P}{\beta^3} \intx \cV_\beta(V_K+g_K)dx.
\end{equs}

By the Poincar\'e inequality \eqref{tool: poincare} on unit boxes,
\begin{equs}
\intx (R_K^\perp)^2 dx
&=
\sum_{\Box \in \BBN} \int_\Box \Big( R_K - \int_\Box R_K dx \Big)^2 dx
\\
&\leq
C_P\sum_{\Box \in \BBN} \int_\Box |\nabla R_K|^2 dx
\\
&\leq
C_P\intx \intt r_k^2 dk dx
\end{equs}
where in the last inequality we used that $\intx |\nabla R_K|^2 dx \leq \| R_K\|_{H^1}^2$ and Lemma \ref{lem: drift bound} (applied to $R_K$).

Similarly, by the Poincar\'e inequality \eqref{tool: poincare} and the (trivial) bound $\|\nabla g_K\|_{L^2}^2 \leq \| \nabla \tilde g_K \|_{L^2}^2$ \eqref{tool: grad g g tilde},
\begin{equs}
\intx (g_K^\perp)^2 dx 
\leq 
C_P\intx |\nabla g_K|^2 dx
\leq
C_P\intx |\nabla \tilde g_K|^2 dx.
\end{equs}

Then, recalling that $\beta > 1$,
\begin{equs}
\EE \Psi_K(v)
&\geq
\EE \Bigg[ - C N^\Xi_K + 4\zeta C_P\Big( 1 - \frac{1}{\beta^3} \Big) \intx \cV_\beta(V_K+g_K) dx
\\
&\quad\quad\quad
+ \Big( 4\zeta C_P - 4\zeta C_P \Big) \intx \intt r_k^2 dk dx - 4\zeta C_P \intx |\nabla \tilde g_K|^2 dx \Bigg]
\\
&\geq 
\EE \Bigg[ - C N^\Xi_K - 4\zeta C_P \intx |\nabla \tilde g_K|^2 dx \Bigg]
\end{equs}
from which, by Proposition \ref{prop: bd}, we obtain
\begin{equs}
-\log\EE_N e^{-\cH_{\beta,N,K}^\sigma} 
\geq 
-CN^3 -4\zeta C_P \intx |\nabla \tilde g_K|^2 dx.
\end{equs}

Inserting this into \eqref{eq:recall} and using that $F_{\beta,N,K}^\sigma \geq \frac 12 \intx |\nabla \tilde g_K|^2 dx$ (see \eqref{def: f}) yields:
\begin{equs}
-\log\sZ_{\beta,N,K}^\sigma 
\geq 
-CN^3 + \Big( \frac 12 - 4\zeta C_P \Big) \intx |\nabla \tilde g_K|^2 dx
\geq 
-CN^3
\end{equs}
which establishes \eqref{eq:zboundlower}.

\subsubsection{Proof of the upper bound on the free energy \eqref{eq:zboundupper}} \label{subsec: proof upper bound}

We (globally) translate the field to one of the minima of $\cV_\beta$: this kills the constant $\beta$ term. Thus, under the translation $\phi = \psi + \hfbeta$,
\begin{equs}
\sZ_{\beta,N,K} 
= 
\EE_N e^{-\cH_{\beta,N,K}^+(\psi_K)}
\end{equs}
where
\begin{equs}
\cH_{\beta,N,K}^+(\psi_K) 
&=
\intx \cV_\beta^+(\psi_K) - \frac{\gamma_K}{\beta^2}:(\psi_K + \sqrt \beta)^2: - \delta_K - \frac \eta 2 :\psi_K^2:dx 
\end{equs}
and
\begin{equs}
\cV_\beta^+(a) = \frac{1}{\beta} a^2(a+2\hfbeta)^2 =\frac{1}{\beta}a^4 + \frac{4}{\hfbeta}a^3 + 4a^2. 
\end{equs}

We apply the Proposition \ref{prop: bd} to $\sZ_{\beta,N,K}$ with the infimum taken over $\HH_K$. In order to obtain an upper bound, we choose a particular drift in the corresponding stochastic control problem \eqref{eq: boue dupuis}. Following \cite{BG19}, we seek a drift that satisfies sufficient moment/integrability conditions with estimates that are extensive in $N^3$, as formalised in Lemma \ref{lem: bg drift} below. Such a drift is constructed using a fixed point argument, hence the need to work in the Banach space $\HH_K$ as opposed to $\HH_{b,K}$.

\begin{lem}\label{lem: bg drift}
There exist processes $\cU_{\leq} \<2>_\bullet$ and $\cU_{>} \<2>_\bullet$ satisfying $\cU_{\leq>} \<2>_\bullet + \cU_{\geq} \<2>_\bullet = \<2>_\bullet$ and a unique fixed point $\check{v} \in \HH_K$ of the equation 
\begin{equs} \label{eq: the bg drift}
\check{v}_k
=	
-\frac 4\beta \cJ_k \<3>_k - \frac{12}\hfbeta \cJ_k \<2>_k - \frac{12}{\beta} \cJ_k ( \cU_{>}\<2>_k \pg \check{V}_k^\flat)
\end{equs}
where $\check{V}_K = \intt \cJ_k \check{v}_k dk$, such that the following estimate holds: for all $p \in [1,\infty)$, there exists $C=C(p,\eta)>0$ such that, for all $\beta > 1$,
\begin{equs} \label{bg drift nice}
	\EE \Bigg[ \intx |\check{V}_K|^p dx + \frac 12 \intx \intt \check{r}_k^2 dk dx \Bigg]
	\leq
	CN^3
\end{equs}
where $\check{r}_k = -\frac{12}{\beta} \cJ_k(\cU_{\leq} \<2>_k \pg \check{V}_k^\flat)$.
\end{lem}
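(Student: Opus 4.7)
Following \cite[Lemma 25]{BG19}, we define the splitting $\<2>_\bullet = \cU_{\leq}\<2>_\bullet + \cU_{>}\<2>_\bullet$ via a sharp frequency cutoff at scale $L \geq 1$: $\cU_{>}\<2>_k$ is the projection of $\<2>_k$ onto Fourier modes of magnitude greater than $L$, and $\cU_{\leq}\<2>_k$ is the complementary low-frequency projection. The threshold $L = L(\beta, \eta)$ will be chosen sufficiently large below. Since the first two terms on the right hand side of \eqref{eq: the bg drift} are independent of $v$, existence and uniqueness of $\check v$ reduces to showing that the map
\begin{equs}
T: v \mapsto -\frac{4}{\beta}\cJ_\bullet \<3>_\bullet - \frac{12}{\hfbeta}\cJ_\bullet \<2>_\bullet - \frac{12}{\beta}\cJ_\bullet(\cU_{>}\<2>_\bullet \pg V_\bullet^\flat)
\end{equs}
is a contraction on an appropriate Banach ball in $\HH_K$, with ambient norm chosen so as to simultaneously control the $L^p$-moments of $V_K$ required for \eqref{bg drift nice}.

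For the contraction, fix two drifts $v^1, v^2 \in \HH_K$ with integrated drifts $V^1, V^2$; then $T(v^1)_k - T(v^2)_k = -\frac{12}{\beta}\cJ_k(\cU_{>}\<2>_k \pg (V^1_k - V^2_k)^\flat)$. Combining the multiplier estimate \eqref{tool: multiplier estimate} on $\cJ_k$, the paraproduct bound \eqref{tool: paraproduct}, and the $\flat$-estimate \eqref{eq: a priori flat drift}, one obtains
\begin{equs}
\|\cJ_k(\cU_{>}\<2>_k \pg (V^1_k - V^2_k)^\flat)\|_{L^2} \lesssim \frac{\|\cU_{>}\<2>_k\|_{B^{-1-\kappa}_{p',q'}}}{\langle k \rangle^{1+\kappa}} \|V^1_K - V^2_K\|_{L^p}
\end{equs}
for an appropriate H\"older pair $(p, p')$ and $\kappa > 0$ small. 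Integrating in $k$ and taking expectations, the key point is that the Besov norms of $\cU_{>}\<2>_k$ vanish as $L \to \infty$, a straightforward consequence of Proposition \ref{prop: diagrams} applied to the high-pass projection. Taking $L$ large (polynomial in $\beta$ suffices) thus renders $T$ a strict contraction, and the Banach fixed point theorem yields the unique $\check v \in \HH_K$.

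The moment bound \eqref{bg drift nice} is obtained by decomposing
\begin{equs}
\check V_K = -\frac{4}{\beta}\<30>_K - \frac{12}{\hfbeta}\int_0^K \cJ_k^2 \<2>_k \, dk - \frac{12}{\beta}\int_0^K \cJ_k^2(\cU_{>}\<2>_k \pg \check V_k^\flat) \, dk
\end{equs}
using \eqref{eq: the bg drift}. The $p$-th moments of the first two deterministic stochastic terms are extensive in $N^3$ uniformly in $K$ by Proposition \ref{prop: diagrams}, arguing as in Lemma \ref{lem:trident}; the third is absorbed into the left hand side via the contraction estimate, giving the required control on $\EE \intx |\check V_K|^p dx$. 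For the drift entropy, Bernstein's inequality \eqref{tool: bernstein ball} exploits the frequency support $|n| \leq L$ of $\cU_{\leq}\<2>_k$ to upgrade its regularity at the cost of polynomial factors in $L$; combining this with the paraproduct estimate, the $\flat$-estimate, and the $L^p$-bound on $\check V_K$ just established yields the bound on $\intt\intx \check r_k^2 \, dk \, dx$.

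The main obstacle is the competing demands placed on $L$: it must be large enough for the contraction to close, yet the bound on $\check r$ degrades polynomially with $L$ (since $\|\cU_{\leq}\<2>\|$ grows with $L$). This tension is not problematic here because the lemma is used only in the upper bound on the free energy (Section \ref{subsec: proof upper bound}), where the moments \eqref{bg drift nice} are required only to be extensive in $N^3$ and we may allow the implicit constants to depend on $\beta$ and $\eta$; consequently, a choice $L = L(\beta, \eta)$ polynomial in $\beta$ simultaneously secures the contraction and the $\check r$-bound.
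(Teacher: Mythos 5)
There is a genuine gap in your proposal, and it is precisely the obstacle the paper flags in its explanatory note following the lemma. You propose a \emph{deterministic} sharp frequency cutoff at scale $L = L(\beta,\eta)$ and claim that "the Besov norms of $\cU_{>}\<2>_k$ vanish as $L\to\infty$, a straightforward consequence of Proposition \ref{prop: diagrams}." But Proposition \ref{prop: diagrams} controls moments only for $p' < \infty$ (see Remark \ref{rem: stochastic norms}: for $L^\infty$-based Besov norms there is no $N$-uniform control of space-stationary processes). To close the fixed-point contraction in an $L^p(\dbar x)$-based norm of $\check V$ \emph{for a single fixed $p$}, the paraproduct estimate \eqref{tool: paraproduct} forces you to pair $\|\check V_k^\flat\|_{L^p}$ with the Besov-\emph{H\"older} norm $\|\cU_{>}\<2>_k\|_{\cC^{-1-\kappa}}$; if you use any $p'<\infty$ instead, the H\"older exponents cascade, requiring control of $\check V$ in $L^q$ for $q > p$, and the scheme never closes. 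And $\|\cU_{>}\<2>_k\|_{\cC^{-1-\kappa}}$ does \emph{not} decay to zero uniformly in $N$ under a deterministic truncation: it grows (logarithmically) with $N$, so a deterministic $L=L(\beta,\eta)$ independent of $N$ cannot render the contraction factor pathwise small, and letting $L$ depend on $N$ wrecks the $N^3$-extensivity of the bound on $\check r$.

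This is exactly what the paper's note addresses: the decomposition of \cite[Lemma 6]{BG19} takes $\cU_{\leq}\<2>_k$ to be a \emph{random} truncation of the Fourier series of $\<2>_k$, with the truncation frequency $L = L(\omega)$ chosen as an increasing function of the (random) Besov--H\"older norm of $\<2>_k$. This guarantees $\|\cU_{>}\<2>_k\|_{\cC^{-1-\kappa}} \leq 1$ (say) almost surely, which is what makes the fixed-point map a pathwise strict contraction with a contraction constant uniform in $N$. The price is that $L(\omega)$ is random, so the Bernstein-type bound on $\|\cU_{\leq}\<2>_k\|$ (and hence on $\check r_k$) picks up powers of $L(\omega)$; but since $L(\omega)$ is controlled by the H\"older norm of $\<2>_k$, which has all moments bounded extensively in $N^3$ by the $p'<\infty$ case of Proposition \ref{prop: diagrams}, the final estimate \eqref{bg drift nice} survives. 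Your proof omits this randomization entirely, and the hedging in your final paragraph ("allow implicit constants to depend on $\beta,\eta$") does not resolve the issue because the constant cannot be allowed to depend on $N$.
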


\begin{proof}
See \cite[Lemma 6]{BG19}. Note that the key difficulty lies in obtaining the right $N$ dependence in \eqref{bg drift nice}. Due to the paraproduct in the definition of \eqref{eq: the bg drift}, one can show that this requires finding a decomposition of $\<2>_k$ such that $\cU_{>} \<2>_k$ has Besov-H\"older norm that is uniformly bounded in $N^3$ (see Proposition \ref{prop:paraproduct}). Such a bound is not true for $\<2>_k$ (see Remark \ref{rem: stochastic norms}). This is overcome by defining $\cU_{\leq} \<2>_k$ to be a random truncation of the Fourier series of $\<2>_k$, where the location of the truncation is chosen to depend on the Besov-H\"older norm of $\<2>_k$.  
\end{proof}

For $v \in \HH_K$, let
\begin{equs}
\Psi_K^+(v) 
=
\cH^+_{\beta,N,K}(\<1>_K + V_K) + \frac 12 \intx\intt v_k^2 dk dx
\end{equs}
and define $\cR_K^+$ by
\begin{equs}
\Psi_K^+(v)
=
\cR_K^+ - \frac \eta 2 \intx V_K^2 dx + \intx \cV_\beta^+(V_K) dx + \frac 12 \intx\intt v_k^2\ dk dx.
\end{equs}
We observe
\begin{equs} \label{upperbound eq1}
\Psi_K^+(v)
\leq
\cR_K^+ + \intx \cV_\beta^+(V_K) dx + \frac 12 \intx\intt v_k^2 dkdx.	
\end{equs}
Thus, unlike the lower bound, the negative mass $-\frac \eta 2 \intx V_K^2 dx$ can be ignored in bounding the upper bound on the free energy. 

Now fix $\check v$ as in \eqref{eq: the bg drift}. Arguing as in Proposition \ref{prop: killing}, there exists $\tilde \cR^+_K$ such that
\begin{equs} \label{upperbound eq2}
\cR^+_K + \frac 12 \intx\intt \check v_k^2 dk dx 
\approx
\tilde\cR_K^+ + \frac 12 \intx \intt \check r_k^2 dk dx
\end{equs}
and $\tilde \cR_K^+$ satisfies the following estimate: for every $\varepsilon > 0$, there exists $C=C(\varepsilon,\eta) > 0$ such that, for all $\beta > 1$,
\begin{equs} \label{upperbound eq3}
|\tilde\cR_K^+| 
\leq 
C N^\Xi_K + \varepsilon \Big( \intx \cV_\beta^+(\check V_K) dx + \frac 12 \intx \intt \check r_k^2 dk dx \Big).
\end{equs}
Above, we have used that the moment conditions \eqref{bg drift nice} are sufficient for conclusions of Lemma \ref{lem: martingales} to apply to $\check v$. 

Thus, by \eqref{upperbound eq1}, \eqref{upperbound eq2}, and \eqref{upperbound eq3},
\begin{equs} \label{upperbound eq4}
\EE [\Psi_K^+(\check v)] 
\leq 
CN^3  + (1+\varepsilon) \EE \Big[ \intx \cV_{\beta}^+(\check V_K) + \frac 12 \intx \intt \check r_k^2 dk dx \Big].
\end{equs}

By Young's inequality, $\frac{1}{\beta}a^4 + \frac{4}{\hfbeta}a^3 + 4a^2 \leq 3a^4 + 6a^2 \leq 9a^4 + 9$ for all $\beta > 1$ and $a \in \RR$. Thus,
\begin{equs} 
\intx \cV_\beta^+(\check V_K) dx 
\leq 
9 \intx \check V_K^4 dx + 9N^3.
\end{equs}
Inserting this into \eqref{upperbound eq4} and using the moment estimates on the drift \eqref{bg drift nice} yields
\begin{equs}
\EE [\Psi_K^+(\check v)]
\leq
CN^3 + (1+\varepsilon) \EE \Big[ 9 \intx \check V_K^4 dx + \frac 12 \intx \intt \check r_k^2 dk dx \Big]
\leq
CN^3.
\end{equs}
Hence, by Proposition \ref{prop: bd},
\begin{equs}
-\log\sZ_{\beta,N,K}
=
\inf_{v \in \HH_K} \EE \Psi^+_K(v)
\leq 
\EE \Psi^+_K(\check v)
\leq
CN^3
\end{equs}
thereby establishing \eqref{eq:zboundupper}.

\subsection{Proof of Proposition \ref{prop: q bound main}} \label{subsec: proof of q bound}

We begin with two propositions, the first of which is a type of It\^o isometry for fields under $\nubn$ and the second of characterises functions against which the Wick square field can be tested against. Together, they imply that the random variables in Proposition \ref{prop: q bound main} are integrable and that these expectations can be approximated using the cutoff measures $\nu_{\beta,N,K}$. Recall also Remarks \ref{rem:block_av} and \ref{rem: testing wick square}.

\begin{prop}\label{prop: testing phi4}
Let $f \in H^{-1+\delta}$ for some $\delta > 0$. For every $K \in (0,\infty)$, let $\phi^{(K)} \sim \nu_{\beta,N,K}$ and $\phi \sim \nubn$.
	
The random variables $\{ \intx \phi^{(K)} f dx \}_{K > 0}$ converge weakly as $K \rightarrow \infty$ to a random variable
	\begin{equs}
	\phi(f) 
	= 
	\intx \phi f dx \in L^2(\nubn).
	\end{equs}
	
Moreover, for every $c> 0$,
	\begin{equs}
	\big\langle \exp{\big(c\phi(f)^2\big)} \big\rangle_{\beta,N} 
	< 
	\infty.	
	\end{equs}
 
\end{prop}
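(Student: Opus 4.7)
The plan is to establish the uniform-in-$K$ exponential moment bound
\begin{equs}\label{eq: testing phi4 key}
\sup_{K > 0}\big\langle \exp(c(\phi_K(g))^2)\big\rangle_{\beta,N,K} \leq \exp\big(C(c,\beta,\eta,N)\max(1,\|g\|^2_{H^{-1+\delta}})\big),
\end{equs}
valid for every $c > 0$ and $g \in H^{-1+\delta}$, via the Bou\'e--Dupuis framework of Sections \ref{sec: bd}--\ref{sec: free energy}, and then to pass to the limit $K \to \infty$ using the weak convergence $\nu_{\beta,N,K} \to \nu_{\beta,N}$ of Proposition \ref{prop: phi4 existence} together with approximation of $f$ by smooth test functions.

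To prove \eqref{eq: testing phi4 key}, I would write the expectation as a ratio of Gaussian expectations, apply Proposition \ref{prop: bd} (with a total energy cutoff to handle the unboundedness of the exponent, removed at the end via monotone convergence), and use $-\log\sZ_{\beta,N,K} \leq CN^3$ from Proposition \ref{prop:zbounds}. It remains to control $c((\<1>_K + V_K)(g))^2 \leq 2c(\<1>_K(g))^2 + 2c(V_K(g))^2$. The Gaussian term satisfies $\EE(\<1>_K(g))^2 \leq \|g\|^2_{H^{-1}_\eta} \leq \|g\|^2_{H^{-1+\delta}}$. For the drift term, use the decomposition $V_K = -\tfrac{4}{\beta}\<30>_K + U_K$ from \eqref{eq: intermediate integrated ansatz}: the Fourier-side variance bound $\EE|\cF\<30>_K(n)|^2 \leq CN^3/\langle n\rangle^4$ from Lemma \ref{lem:trident} gives $\EE(\<30>_K(g))^2 \leq C\|g\|^2_{H^{-2}} \leq C\|g\|^2_{H^{-1+\delta}}$, while duality combined with Proposition \ref{prop: u and v bound} (applied with $\kappa < \delta$) bounds $(U_K(g))^2 \leq \|U_K\|^2_{H^{1-\kappa}}\|g\|^2_{H^{-1+\delta}}$ in terms of the stability terms $\intx\cV_\beta(V_K+g_K)dx$ and $\iint r_k^2$ from $\Psi_K(v)$ after the expansion of Section \ref{subsec: coarsegrain}. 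Choosing parameters carefully and invoking Lemma \ref{lem: potential} (to convert $L^2$-control into $L^4$-control, absorbing the resulting contributions into the quartic stability) delivers \eqref{eq: testing phi4 key} with constant extensive in $N^3$.

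The bound \eqref{eq: testing phi4 key} yields uniform $L^p$-moments of $\phi^{(K)}(g)$ under $\nu_{\beta,N,K}$ for every $p \geq 1$. For smooth $g$, continuity of $\phi \mapsto \phi(g)$ on $S'(\TTN)$ combined with weak convergence of measures gives $\phi^{(K)}(g) \to \phi(g)$ in distribution. For general $f \in H^{-1+\delta}$, I would approximate by smooth $f_n \to f$ in $H^{-1+\delta}$ and apply \eqref{eq: testing phi4 key} to $g = f_n - f$:
\begin{equs}
\sup_K \big\langle (\phi(f_n - f))^2\big\rangle^{1/2}_{\beta,N,K} \leq C\|f_n - f\|_{H^{-1+\delta}} \to 0,
\end{equs}
extending the convergence in distribution to all $f \in H^{-1+\delta}$. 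The same estimate shows $\phi_K(f) = \phi(\rho_K f)$ is Cauchy in $L^2(\nu_{\beta,N})$, defining $\phi(f) \in L^2(\nu_{\beta,N})$ as its limit. Fatou applied to \eqref{eq: testing phi4 key} then yields $\langle \exp(c(\phi(f))^2)\rangle_{\beta,N} < \infty$ for every $c > 0$.

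The main technical obstacle is the control of the quadratic functional $c(V_K(g))^2$ for \emph{arbitrary} $c > 0$: the drift entropy only provides quadratic control of $V_K$, so for large $c$ this cannot be absorbed by $\tfrac 12\iint r_k^2$ alone. The key input is to exploit the quartic stability $\intx\cV_\beta(V_K+g_K)dx$ from the low-temperature expansion of Section \ref{subsec: coarsegrain} via Lemma \ref{lem: potential}, which allows absorption at arbitrary polynomial cost in $c$, $\|g\|_{H^{-1+\delta}}$ and $\beta$. No further structural ideas are required beyond those developed in Sections \ref{sec: bd}--\ref{sec: free energy}.
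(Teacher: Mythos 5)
Your approach shares the same technical core as the paper's proof: represent the expectation via Bou\'e--Dupuis, use the decomposition $\<1>_K + V_K = \<1>_K - \tfrac{4}{\beta}\<30>_K + U_K$, bound the Gaussian piece by It\^o isometry, the trident piece by the Fourier-side variance bound of Lemma \ref{lem:trident}, and the $U_K$ piece by duality against $\|U_K\|_{H^{1-\kappa}}$. However, the logical packaging differs in a way that opens a gap. You aim for a single uniform bound $\sup_K \langle \exp(c(\phi_K(g))^2)\rangle_{\beta,N,K} \le \exp(C(c,\beta,\eta,N)\max(1,\|g\|^2_{H^{-1+\delta}}))$ valid for \emph{all} $g\in H^{-1+\delta}$ and \emph{all} $c>0$, and then deduce both the Cauchy property and the exponential integrability from it. The paper instead proves the Cauchy property directly, exploiting that $\delta f = f_n-f_m$ has arbitrarily small $H^{-1+\delta}$-norm.

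The gap is in the absorption of $c\,(U_K(g))^2$. You estimate this by $c\|g\|^2_{H^{-1+\delta}}\|U_K\|^2_{H^{1-\kappa}}$ and then invoke Proposition~\ref{prop: u and v bound} together with Lemma~\ref{lem: potential}. But Proposition~\ref{prop: u and v bound} bounds $\|U_K\|^2_{H^{1-\kappa}}$ by $C N^\Xi_K/N^3 + \varepsilon\beta^{-3}\intx\cV_\beta\,\dbar x + C\intt\intx r_k^2\,\dbar x\,dk$, and the drift-entropy term carries a \emph{fixed} constant $C$. In the variational functional the entropy appears only with coefficient $\tfrac12$, so once $c\|g\|^2_{H^{-1+\delta}}\cdot C > \tfrac12$ you cannot absorb $c\|g\|^2_{H^{-1+\delta}}\cdot C\intt\intx r_k^2$. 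Lemma~\ref{lem: potential} and the quartic stability only help with $L^p$-norms of $V_K$ (or $V_K+g_K$), not with the drift entropy, so ``arbitrary polynomial cost in $c$'' does not follow from the tools you cite. The contribution of $R_K$ (equivalently the drift entropy) is precisely the part that quartic stability does not touch.

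The paper's Cauchy argument avoids this issue by design: with $\|\delta f\|_{H^{-1+\delta}} < \varepsilon/N^3$ the effective coefficient on $\EE\|U_K\|^2$ becomes $O(\varepsilon)$, which is absorbed for $\varepsilon$ small regardless of $K$. For the exponential-integrability claim $\langle\exp(c\phi(f)^2)\rangle_{\beta,N}<\infty$ with general $f$ and general $c$ (which the paper dispatches as ``similar arguments''), a genuine additional step is needed: split $\phi(f) = \phi(f-f_n) + \phi(f_n)$, handle $\phi(f-f_n)$ by the small-norm mechanism above, and handle the smooth piece $\phi(f_n)$ by $L^2$ duality $|V_K(f_n)| \lesssim \|V_K\|_{L^2}\|f_n\|_{L^2}$, which \emph{can} be absorbed into quartic stability by Young's inequality at arbitrary polynomial cost. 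Your sketch is missing this separation of $g$ into a small remainder plus a smooth (hence $L^2$) piece; as written, the claim that the uniform bound holds for arbitrary $g$ and $c$ is not established by the estimates you quote.
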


\begin{proof}
Let $\{ f_n \}_{n \in \NN} \subset C^\infty(\TTN)$ such that $f_n \rightarrow f$ in $H^{-1+\delta}$. We first show that $\{ \phi(f_n) \}$ is Cauchy in $L^2(\nubn)$. 

Let $\varepsilon > 0$. Choose $n_0$ such that, for all $n,m > n_0$, $\| f_n - f_m \|_{H^{-1+\delta}} < \frac{\varepsilon}{N^3}$.

Fix $n,m > n_0$ and let $\delta f = f_n - f_m$. Then,
\begin{equs} \label{eq:cauchy}
|\phi(f_n) - \phi(f_m)|^2
=
\varepsilon \cdot \frac 1\varepsilon \phi(\delta f)^2
\leq
\varepsilon e^{\frac 1\varepsilon \phi(\delta f)^2 }.
\end{equs}

By Proposition \ref{prop:zbounds}, there exists $C=C(\eta)>0$ such that
\begin{equs}
\Big\langle e^{\frac 1\varepsilon \phi(\delta f)^2 } \Big\rangle_{\beta,N}
&=
\lim_{K \rightarrow \infty} \frac{1}{\sZ_{\beta,N,K}} \EE_N e^{-\cH_{\beta,N,K}(\phi_K) + \frac 1\varepsilon \phi_K(\delta f)^2}
\\
&\leq
e^{CN^3} \limsup_{K \rightarrow \infty}\EE_N e^{-\cH_{\beta,N,K}(\phi_K) + \frac 1\varepsilon\phi_K(\delta f)^2}.	
\end{equs}
We apply Proposition \ref{prop: bd} to the expectation on the righthand side (with total energy cutoff suppressed, see Remark \ref{rem: local martingale issue} and the paragraph that precedes it).

For $v \in \HH_{b,K}$, define
\begin{equs}
\Psi^{\delta f}_K(v)
=
\cH_{\beta,N,K}(\<1>_K + V_K) - \frac 1\varepsilon \Big( \intx (\<1>_K + V_K) \delta f dx \Big)^2 + \frac 12 \intx\intt v_k^2 dk dx.  	
\end{equs}

Expanding out the second term (and ignoring the prefactor $\frac 1\varepsilon$ for the moment), we obtain:
\begin{equs} \label{eq: phi4 itoproof 1}
	\EE \Bigg[ \Big(\intx \<1>_K \delta f dx \Big)^2 + \Big( \intx \<30>_K \delta f dx \Big)^2 + \Big( \intx U_K \delta f dx \Big)^2 \Bigg]. 
\end{equs}

Consider the first integral in \eqref{eq: phi4 itoproof 1}. By Parseval's theorem, the Fourier coefficients of $\<1>_K$ (see \eqref{def: lollipop}), and It\^o's isometry,
\begin{equs} \label{eq: phi4 itoproof 1 a}
\begin{split}
\EE \Big[\intx \<1>_K \delta f dx\Big]^2
&=
\frac{1}{N^6} \sum_{n,m} \EE [\cF \<1>_K (n) \cF \<1>_K (m)] \cF\delta f(m) \cF \delta f(n)
\\
&\lesssim
\frac{1}{N^3}\sum_{n} \frac{|\cF \delta f(n)|^2}{\langle n \rangle^2} 
\lesssim
N^3 \| \delta f \|_{H^{-1+\delta}}^2
\end{split}
\end{equs}
where sums are taken over frequencies $n_i \in (N^{-1}\ZZ)^3$. Above, the $N$ dependency in the last inequality is due to our Sobolev spaces being defined with respect to normalised Lebesgue measure $\dbar x$.  

For the second term in \eqref{eq: phi4 itoproof 1}, by Parseval's theorem, It\^o's isometry, and the Fourier coefficients of $\<30>_K$ (see \eqref{eq: trident fourier bound}), we obtain
\begin{equs} \label{eq: phi4 itoproof 1 b}
\begin{split}
\EE \Big( \intx \<30>_K \delta f dx \Big)^2
&=
\frac{1}{N^6}\EE \Big( \sum_{n} \cF \<30>_K (n) \cF \delta f (n) \Big)^2
\\
&=
\frac{1}{N^6}\sum_n |\cF \delta f(n)|^2 \EE \Big|\cF\<30>_K(n)\Big|^2
\\
&\lesssim
\sum_n \frac{|\cF \delta f(n)|^2}{\langle n \rangle^4}
\lesssim 
N^6 \| \delta f \|_{H^{-1+\delta}}^2.
\end{split}
\end{equs}

For the final term in \eqref{eq: phi4 itoproof 1}, by duality \eqref{tool: duality} 
\begin{equs} \label{eq: phi4 itoproof 1 c}
\Big(\intx U_K \delta f dx \Big)^2	
\leq
N^6 \| \delta f \|_{H^{-1+\delta}}^2 \| U_K \|_{H^{1-\delta}}^2.
\end{equs}

Therefore, using that $\| \delta f \|_{H^{1-\delta}}^2 \leq \frac{\varepsilon^2}{N^6}$, the estimates \eqref{eq: phi4 itoproof 1 a}, \eqref{eq: phi4 itoproof 1 b}, and \eqref{eq: phi4 itoproof 1 c} yield: 
\begin{equs} \label{eq: phi4 ito c}
\begin{split}
\EE  &\Bigg[ \frac{1}{\varepsilon} \Big( \intx (\<1>_K +V_K)\delta f dx \Big)^2 \Bigg] 
\\
&\leq
C(\eta)N^6( N^{-3} + 1) \frac{\|\delta f \|_{H^{-1+\delta}}^2}{\varepsilon} 
\\
&\quad\quad\quad
+ C(\eta)N^6\frac{\|\delta f \|_{H^{-1+\delta}}^2}{\varepsilon}\EE \Big[ \|U_K\|_{H^{1-\delta}}^2 \Big]
\\
&\leq
C(\eta)\varepsilon (N^{-3} + 1 + \EE \| U_K\|_{H^{1-\delta}}^2).	
\end{split}
\end{equs}

Using arguments in Section \ref{subsec: proof z lower}, it is straightforward to show that there exists $C=C(\eta,\beta)>0$ such that, for $\varepsilon$ sufficiently small,
\begin{equs}
\EE \Psi_K^{\delta f}(v) 
\geq 
- CN^3
\end{equs}
for every $v \in \HH_{b,K}$ (note that $\beta$ dependence is not important here).

Inserting this into Proposition \ref{prop: bd} gives
\begin{equs} \label{eq:cauchy2}
\limsup_{K \rightarrow \infty} \langle e^{-\cH_{\beta,N,K}(\phi_K) + \frac{1}{\varepsilon} \phi_K(\delta f)^2} \rangle_{\beta,N,K}
\leq
e^{CN^3}.
\end{equs}
Taking expectations in \eqref{eq:cauchy} and using \eqref{eq:cauchy2} finishes the proof that $\{ \phi(f_n) \}$ is Cauchy in $L^2(\nubn)$. 

Similar arguments can be used to show exponential integrability of the limiting random variable, $\phi(f)$ and that,
\begin{equs}
\sup_{K > 0} |\langle |\phi^{(K)}(f_n) - \phi^{(K)}(f)| \rangle_{\beta,N,K} \rightarrow 0 \quad \text{as }	n \rightarrow \infty.
\end{equs}

We now show that $\phi^{(K)}(f)$ converges weakly to $\phi(f)$ as $K \rightarrow \infty$. Let $G : \RR \rightarrow \RR$ be bounded and Lipschitz with Lipschitz constant $|G|_{\rm Lip}$, and let $\varepsilon > 0$. Choose $n$ sufficiently large so that
\begin{equs}
	\sup_{K > 0} |\langle |\phi^{(K)}(f_n) - \phi^{(K)}(f)| \rangle_{\beta,N,K} 
	< 
	\frac{\varepsilon}{2|G|_{\rm Lip}}
\end{equs}
and
\begin{equs}
\langle |\phi(f_n)) - \phi(f)| \rangle_{\beta,N}	
<
\frac{\varepsilon}{2|G|_{\rm Lip}}.
\end{equs}

Then,
\begin{equs}
|\langle G(\phi^{(K)}(f)) \rangle_{\beta,N,K} - \langle G(\phi(f)) \rangle_{\beta,N}
&\leq
\sup_{K > 0} |\langle G(\phi^{(K)}(f_n)) - G(\phi^{(K)}(f)) \rangle_{\beta,N,K} |
\\
&\quad
+ 	| \langle G(\phi^{(K)}(f_n)) \rangle_{\beta,N,K} - \langle G(\phi(f_n)) \rangle_{\beta,N}|
\\
&\quad
+ |\langle G(\phi(f_n)) - G(\phi(f)) \rangle_{\beta,N}|
\\
&\leq 
| \langle G(\phi^{(K)}(f_n)) \rangle_{\beta,N,K} - \langle G(\phi(f_n)) \rangle_{\beta,N}| + \varepsilon.
\end{equs}
The first term on the righthand side goes to zero as $K \rightarrow \infty$ since $f_n \in C^\infty$. Thus,
\begin{equs}
	\lim_{K \rightarrow \infty} |\langle G(\phi(f)) \rangle_{\beta,N,K} - \langle G(\phi(f)) \rangle_{\beta,N}
	\leq
	\varepsilon.
\end{equs}

Since $\varepsilon$ is arbitrary, we have shown that $\phi^{(k)}(f)$ converges weakly to $\phi(f)$. 
\end{proof}

\begin{prop} \label{prop: testing wick square}
Let $f \in B^s_{\frac 43,1} \cap L^{2}$ for some $s > \frac 12$. For every $K \in (0,\infty)$, let $\phi^{(K)} \sim \nu_{\beta,N,K}$ and $\phi \sim \nubn$.

The random variables $\{ \intx :(\phi^{(K)})^2: f dx \}_{K>0}$ converge weakly as $K \rightarrow \infty$ to a random variable
\begin{equs}
:\phi^2:(f)	
= 
\intx :\phi^2: f dx \in L^2(\nubn).
\end{equs}

Moreover, for $c > 0$,
\begin{equs}
\big\langle \exp \big( c:\phi^2:(f) \big) \big\rangle_{\beta,N}
<
\infty.	
\end{equs}
\end{prop}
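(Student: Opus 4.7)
The plan is to mirror the strategy of Proposition \ref{prop: testing phi4} line by line, replacing the linear pairing $\phi \mapsto \intx \phi f\,dx$ with the quadratic pairing $\phi \mapsto \intx :\phi^2: f\,dx$. First choose smooth approximations $f_n \to f$ in $B^s_{4/3,1} \cap L^2$ and show that $\{:(\phi^{(K)})^2:(f_n)\}$ is Cauchy in $L^2(\nu_{\beta,N,K})$, uniformly in $K$, so that a limit exists in $L^2(\nubn)$. For $\delta f = f_n - f_m$ I would use the elementary bound $|:\phi^2:(\delta f)|^2 \leq \varepsilon\, e^{(1/\varepsilon)|:\phi^2:(\delta f)|^2}$ with $\varepsilon$ chosen proportionally to a suitable norm of $\delta f$ (tending to zero as $n,m \to \infty$), then combine Proposition \ref{prop:zbounds} with Proposition \ref{prop: bd} (and a total energy cutoff as in the proof of Proposition \ref{prop: testing phi4}) to reduce matters to a lower bound on the variational problem
\begin{equs}
\inf_{v \in \HH_{b,K}} \EE\Big[ \Psi_K(v) - \tfrac{1}{\varepsilon}\big(:\phi_K^2:(\delta f)\big)^2 \Big] \geq -CN^3.
\end{equs}
The exponential integrability $\langle e^{c:\phi^2:(f)}\rangle_{\beta,N} < \infty$ is obtained by the same scheme applied to the linear perturbation $c:\phi_K^2:(f)$ in place of the quadratic one.

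The central computation is the expansion
\begin{equs}
:\phi_K^2:(f)
=
\intx \<2>_K f\,dx + 2\intx \<1>_K V_K f\,dx + \intx V_K^2 f\,dx
\end{equs}
coming from $\phi_K = \<1>_K + V_K$ and the definition of the Wick square. The three contributions are then bounded by $C N_K^\Xi + \varepsilon_1(\intx \cV_\beta(V_K+g_K)\,dx + \tfrac12 \intt\intx r_k^2\,dx\,dk)$ using the ingredients already present in Section \ref{subsec: estimates}: the drift bound \eqref{eq: v estimate} on $V_K$, the potential bound \eqref{tool: potential bound norm}, and the bounds on $g_K$ from Lemma \ref{lem: bounds on translation}. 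For the quartic obstruction $(\intx V_K^2 f\,dx)^2$ I would use $|\intx V_K^2 f\,dx| \leq \|V_K\|_{L^4}^2 \|f\|_{L^2}$ and absorb the resulting $\|V_K\|_{L^4}^4$ into $\intx \cV_\beta(V_K+g_K)\,dx$ via Lemma \ref{lem: potential}; for the cross term $\intx \<1>_K V_K f\,dx$ I would use duality with $\<1>_K \in B^{-1/2-\kappa}_{p',q}$ against the product $V_K f$, exactly as in Section \ref{subsec: cubic bounds}.

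The delicate term is the purely stochastic one, $\intx \<2>_K f\,dx$. Here the classical Besov regularity of $\<2>_K$, namely $B^{-1-\kappa}_{p',q}$ from Proposition \ref{prop: diagrams}, does not suffice to pair against $f \in B^s_{4/3,1}$ with $s > 1/2$ by duality alone. Instead I would exploit the Gaussian chaos structure directly, as in the proof of Proposition \ref{prop: testing phi4}: by Parseval and It\^o isometry,
\begin{equs}
\EE \Big( \intx \<2>_K f\,dx \Big)^2
=
2 \iint G_K(x-y)^2 f(x) f(y) \, dx\, dy,
\end{equs}
where $G_K$ is the covariance of $\<1>_K$. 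Since in 3D $G_K^2 \in L^p_{\mathrm{loc}}$ for any $p < 3/2$ uniformly in $K$, Hardy--Littlewood--Sobolev combined with the Besov embedding $B^s_{4/3,1} \hookrightarrow L^r$ (valid for some $r > 12/7$ when $s > 1/2$) gives the required uniform-in-$K$ bound $\EE(\intx \<2>_K f\,dx)^2 \leq C\|f\|_{B^s_{4/3,1}}^2$. Analogous bounds (still via Wick's theorem and the explicit covariance) provide convergence of $\intx \<2>_K f\,dx$ to a random variable $\<2>(f)$ and furnish the term $a^2$ in the expansion of $(\intx :\phi^2_K:(\delta f))^2$.

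The main obstacle is precisely this last step: establishing the uniform-in-$K$ control of $\intx \<2>_K f\,dx$ with the given regularity of $f$, which is why the hypothesis is stated via the $B^s_{4/3,1}$ scale rather than a Besov--H\"older scale (and is tuned, via Lemma 1.1 of \cite{FR12}, to admit indicator functions of unit blocks as in Remark \ref{rem: testing wick square}). Once this is achieved, the combination of the Gaussian-level bound with the variational/paracontrolled machinery of Section \ref{sec: free energy} closes the estimate, and the rest of the argument (passage from the cutoff measures to $\nubn$, weak convergence, and exponential integrability) is a straightforward adaptation of the closing lines of the proof of Proposition \ref{prop: testing phi4}.
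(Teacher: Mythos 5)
Your overall scaffolding is right — expand $:\phi_K^2:(f) = \intx \<2>_K f + 2\<1>_K V_K f + V_K^2 f\,dx$ under the Bou\'e--Dupuis change of variables and absorb the bad terms into the stability terms — and your handling of $\intx V_K^2 f\,dx$ via Young's inequality and Lemma \ref{lem: potential} coincides with the paper's. But your treatment of the two remaining contributions diverges from the actual proof in ways worth flagging.

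First, for the purely stochastic term $\intx \<2>_K f\,dx$, your HLS-plus-chaos argument is more machinery than is needed and also misdiagnoses the role of the hypothesis on $f$. The paper simply observes that $\intx \<2>_K f\,dx$ is \emph{drift-independent} and has $\EE$-expectation zero (since $\<2>_K$ is Wick-ordered), so upon taking $\EE[\cdot]$ in the variational problem it drops out entirely — no covariance kernel estimate, no HLS. Your closing paragraph attributes the hypothesis $f \in B^s_{4/3,1}$, $s>1/2$, to making this term tractable, but that is not where the hypothesis is used.

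Second, and this is the genuine gap, the cross term $\intx \<1>_K V_K f\,dx$ is where all the difficulty actually lives, and ``duality with $\<1>_K$ against the product $V_K f$'' does not close. The product $V_K f$ is not a priori well-controlled in a dual space to $\cC^{-1/2-\kappa}$ given only $V_K \in H^{1/2-\kappa}$ uniformly; you would first have to decompose it, at which point you are redoing Bony's decomposition anyway. The paper instead paraproduct-decomposes $\<1>_K f = \<1>_K \pg f + \<1>_K \pe f + \<1>_K \pl f$ and then tests each piece against $V_K$. The resonant product $\<1>_K \pe f$ is the term forcing the hypothesis: since $\<1>_K \in \cC^{-1/2-\kappa}$ and $f \in B^s_{4/3,1}$ with $s > 1/2$, the resonant estimate \eqref{tool: resonant} gives $\<1>_K \pe f \in L^{4/3}$, which then pairs with $V_K \in L^4$ by Young. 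This is why the hypothesis is phrased in the $p=4/3$ scale rather than a weaker $L^2$-based condition. Your write-up should replace the HLS discussion with this paraproduct analysis and relocate the explanation of the hypothesis to the resonant piece.

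Finally, a smaller point: you put $\tfrac{1}{\varepsilon}|:\phi^2:(\delta f)|^2$ inside the exponential, which is quartic in $\phi$ and threatens to compete with $\intx \cV_\beta$; since the stated conclusion is exponential integrability of the \emph{linear} functional $c:\phi^2:(f)$, it is cleaner (and what the paper implicitly does) to work with linear perturbations $c:\phi_K^2:(\delta f)$ in the exponent throughout, e.g. via $|x|^2 \leq 2\varepsilon \cosh(x/\sqrt\varepsilon)$ rather than $|x|^2 \leq \varepsilon e^{x^2/\varepsilon}$, so that the perturbation of the Hamiltonian stays quadratic in $\phi$.
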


\begin{proof}
The proof of Proposition \ref{prop: testing wick square} follows the same strategy as the proof of Proposition \ref{prop: testing phi4}, so we do not give all the details. The only real key difference is the analytic bounds required in the stochastic control problem. Indeed, these require one to tune the integrability assumptions on $f$ in order to get the required estimates. 

It is not too difficult to see that the term we need to control is the integral
\begin{equs} \label{testing wick square: eq 1}
\intx \<2>_K f + 2 \<1>_K V_K f + V_K^2 f dx.	
\end{equs}
Strictly speaking, we need to control the above integral with $f$ replaced by $\delta f = f_n - f_m$, where $\{ f_n \}_{n \in \NN}\subset C^\infty(\TTN)$ such that $f_n \rightarrow f$ in $B^s_{\frac 34, 1} \cap L^2$, but the analytic bounds are the same.

Note that $\EE \intx \<2>_K f dx = \intx \EE \<2>_K f dx = 0$. Moreover by Young's inequality and the additional integrability assumption $f \in L^2$, for any $\varepsilon > 0$ we have
\begin{equs}
\intx V_K^2 f dx
\lesssim 
\frac{1}{\varepsilon} \intx f^2 dx + \varepsilon \intx V_K^4 dx
\end{equs}
which can be estimated as in the proof of Proposition \ref{prop: testing phi4}. Thus, we only need to estimate the second integral in \eqref{testing wick square: eq 1}. Note that the product $\<1>_K f$ is a well-defined distribution from a regularity perspective as $K \rightarrow \infty$ since $f \in B^{s}_{\frac 43,1}$ for $s > \frac 12$. The difficulty in obtaining the required estimates comes from integrability issues.

We split the integral into three terms by using the paraproduct decomposition $\<1> f = \<1> \pg f + \<1> \pe f + \<1> \pl f$. The integral associated to $\<1> \pl f$ is straightforward to estimate, so we focus on the first two terms. Since $f \in L^2$ and $\<1>_K \in \cC^{- \frac 12 - \kappa}$, by the paraproduct estimate \ref{tool: paraproduct} we have $\<1> \pg f \in H^{-\frac 12 -\kappa}$. Thus, the integral $\intx (\<1>_K \pg f)V_K dx$ can be treated similarly as in the proof of Proposition \ref{prop: testing phi4}. Note that, in this proposition the use of H\"older-Besov norms is fine because we are not concerned with issues of $N$ dependency. Moreover, note that if we just used that $f \in B^s_{\frac 43,1}$ the resulting integrability of $\<1>_K \pg f$ is not sufficient to justify testing against $V_K$, which can be bounded in $L^2$-based Sobolev spaces. For the final integral, by the resonant product estimate \eqref{tool: resonant} we have $\<1>_K \pe f \in L^{\frac 43}$. Hence, we can use Young's inequality to estimate $\intx (\<1>_K \pe f)V_K dx$ and then argue similarly as in the proof of Proposition \ref{prop: testing phi4}.
\end{proof}

Without loss of generality, we assume $a_0 = a = 1$ in Proposition \ref{prop: q bound main} and we split its proof into Lemmas \ref{lem: Q1}, \ref{lem: Q2}, and \ref{lem: Q3}.

\begin{lem} \label{lem: Q1}
There exists $\beta_0 > 1$ and $C_Q > 0$ such that, for any $\beta > \beta_0$, 
\begin{equs}
-\frac{1}{N^3} \log \Big\langle \prod_{\sBox \in \BBN} \exp Q_1(\Box) \Big\rangle_{\beta,N}
\geq
-C_Q.	
\end{equs}
\end{lem}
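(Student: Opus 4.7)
The plan is to reduce Lemma \ref{lem: Q1} to the uniform-in-$\beta$ free energy bounds of Proposition \ref{prop:zbounds}, applied to a slightly perturbed potential. First, I will use Proposition \ref{prop: testing wick square} with $f = \1_\sBox$ to interpret
\[
\sum_{\sBox \in \BBN} Q_1(\Box) \;=\; \hfbeta N^3 \;-\; \frac{1}{\hfbeta}\intx :\phi^2: dx
\]
as an exponentially integrable random variable under $\nubn$, and to conclude that $\langle e^{\sum Q_1}\rangle_{\beta,N,K} \to \langle e^{\sum Q_1}\rangle_{\beta,N}$ as $K \to \infty$. It therefore suffices to prove the claimed bound for $\nu_{\beta,N,K}$ uniformly in $K$.

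The second step is to recognise that incorporating $\sum Q_1$ into the Gibbs weight is equivalent to a bounded mass perturbation of the potential. A direct computation, using $:\phi^2_K: = \phi_K^2 - \<tadpole>_K$ to reabsorb the tadpole into an inessential constant, yields
\[
\cH_{\beta,N,K} \;-\; \sum_{\sBox} Q_1(\Box) \;=\; \tilde\cH_{\beta,N,K},
\]
where $\tilde\cH_{\beta,N,K}$ has exactly the form \eqref{eq: renorm hamiltonian} with $\cV_\beta$ replaced by
\[
\tilde\cV_\beta(a) := \cV_\beta(a) + \frac{a^2}{\hfbeta} - \hfbeta = \frac{1}{\beta}\Big(a^2 - (\beta - \tfrac{\hfbeta}{2})\Big)^2 - \frac{1}{4}.
\]
Denoting $\tilde\sZ_{\beta,N,K} = \EE_N e^{-\tilde\cH_{\beta,N,K}(\phi_K)}$, we have $\langle e^{\sum Q_1}\rangle_{\beta,N,K} = \tilde\sZ_{\beta,N,K}/\sZ_{\beta,N,K}$, so by the lower bound \eqref{eq:zboundlower} on $\sZ_{\beta,N,K}$ it suffices to prove an upper bound $\tilde\sZ_{\beta,N,K} \leq e^{CN^3}$ uniformly in $\beta$ large and in $K$.

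The key observation is that $\tilde\cV_\beta$ retains every structural feature exploited in Section \ref{subsec: proof z lower}: it is a symmetric double well with minima at $\pm\sqrt{\beta - \hfbeta/2}$ (differing from $\pm\hfbeta$ by $O(1)$), has quartic growth $\frac{1}{\beta}a^4$ at infinity, and satisfies $\tilde\cV_\beta \geq -\tfrac{1}{4}$ uniformly in $\beta$. I therefore intend to repeat the proof of the lower bound in Proposition \ref{prop:zbounds} with $\tilde\cV_\beta + 1$ in place of $\cV_\beta$ throughout, adjusting only inessential constants. The analog of Lemma \ref{lem: potential} (used only for $L^p$ power bounds on $V_K$) is immediate from the completed square above, and the coercive term $\tfrac{1}{2}\intx\tilde\cV_\beta(V_K + g_K) dx$ arising from the coarse-graining \eqref{hamiltonian_split} serves the same stability role as before in the renormalisation analysis of Propositions \ref{prop: killing}, \ref{prop: r cgw}, and \ref{prop: bg bound}, none of which exploits the precise location of the minima of the potential.

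The main obstacle I anticipate is the pointwise bound Lemma \ref{lem:pointwise}, whose proof hinges on a case-by-case analysis of $\tfrac{1}{2}\cV_\beta(Z_K) - \tfrac{\eta}{2}(Z_K - \hfbeta)^2$. I expect this bound to survive with $\tilde\cV_\beta$ replacing $\cV_\beta$ once $\eta$ is chosen sufficiently small: on the complement of the opposite-well neighborhood $Z_K \in (-\tfrac{4\hfbeta}{3}, -\tfrac{2\hfbeta}{3})$, the quadratic comparison $\tfrac{1}{2}\tilde\cV_\beta(a) - \eta(a - \hfbeta)^2 \geq -C$ follows from the same algebraic manipulation as in Lemma \ref{lem:pointwise}, since the $O(1)$ shift in the location and value of the minima is subleading against the $\cO(\beta)$ scale of $\tilde\cV_\beta$ outside this region; on the opposite-well neighborhood, the logarithm term $-\log\chi_+(\vec Z_K)$ and the $\zeta(Z_K^\perp)^2$ stability handle the two subcases exactly as before. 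With this pointwise bound established, the argument of Section \ref{subsec: proof z lower} yields $-\log \tilde\sZ_{\beta,N,K} \geq -CN^3$ uniformly in $\beta$ and $K$, which combined with Proposition \ref{prop:zbounds} completes the proof.
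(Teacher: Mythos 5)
Your proposal matches the paper's argument: both reduce the claim to a free-energy lower bound for a perturbed potential obtained by completing the square in the Wick-ordered quadratic (keeping $\gamma_K,\delta_K$ unchanged since the quartic coefficient is untouched), and both then observe that the $O(1)$ shift in the location of the wells is harmless as $\beta\to\infty$, so the argument of Section \ref{subsec: proof z lower} — including the pointwise bound Lemma \ref{lem:pointwise} — goes through with only inessential constants changed. Incidentally, your algebra $\tilde\cV_\beta(a)=\tfrac{1}{\beta}\bigl(a^2-(\beta-\tfrac{\hfbeta}{2})\bigr)^2-\tfrac14$ is correct and fixes a sign slip in the paper's displayed $\cV_\beta^{Q_1}$, which has $\beta+\tfrac{\hfbeta}{2}$ where the completed square actually gives $\beta-\tfrac{\hfbeta}{2}$.
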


\begin{proof}

For any $K \in (0,\infty)$, define
\begin{equs}
\cH_{\beta,N,K}^{Q_1}(\phi_K)
=
\intx :\cV_\beta^{Q_1}(\phi_K): - \frac{\gamma_K}{\beta^2}:\phi_K^2: - \delta_K - \frac \eta 2 :\phi_K^2: dx	
\end{equs}
where
\begin{equs}
\cV^{Q_1}_\beta(a)
=
\cV_\beta(a) - \frac 1\hfbeta (\beta - a^2) - \frac 14
=
\frac 1\beta\Bigg(a^2 - \Big(\beta + \frac\hfbeta 2 \Big) \Bigg)^2.
\end{equs}

Then, by Propositions \ref{prop: testing wick square} and \ref{prop:zbounds}, there exists $C=C(\eta)>0$ such that
\begin{equs}
\Big\langle \prod_{\sBox \in \BBN} \exp Q_1(\Box) \Big\rangle_{\beta,N}
&=
\lim_{K \rightarrow \infty} \Big\langle \exp \Big( \frac 1\hfbeta \intx \beta - :\phi_k^2: dx \Big) \Big\rangle_{\beta,N,K}
\\
&\leq
e^{\frac 14 N^3} \lim_{K \rightarrow \infty} \frac{1}{\sZ_{\beta,N,K}} \EE_N e^{-\cH^{Q_1}_{\beta,N,K}(\phi_K)}
\\
&\leq
e^{\big(C+\frac 14\big)N^3} \limsup_{K \rightarrow \infty} \EE_N e^{-\cH_{\beta,N,K}^{Q_1}(\phi_K)}
\end{equs}
where

Therefore, we have reduced the problem to proving Proposition \ref{prop:zbounds} for the potential $\cV_\beta^{Q_1}$ instead of $\cV_\beta$. The proof follows essentially word for word after two observations: first, the same $\gamma_K$ and $\delta_K$ works for both $\cV_\beta$ and $\cV_\beta^{Q_1}$ since the quartic term is unchanged. Second, since $\sqrt{\beta + \frac \hfbeta 2} = \hfbeta + o(\hfbeta)$ as $\beta \rightarrow \infty$, the treatment of $\beta$-dependence of the estimates in Section \ref{subsec: estimates} is exactly the same.
\end{proof}

\begin{lem} \label{lem: Q2}
There exists $\beta_0 > 1$ and $C_Q > 0$ such that, for any $\beta > \beta_0$,
\begin{equs}\label{q2 bound}
-\frac{1}{N^3} \log \Big\langle \prod_{\sBox \in \BBN} \exp Q_2(\Box) \Big\rangle_{\beta,N}
\geq
-C_Q.
\end{equs}
\end{lem}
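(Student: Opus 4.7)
Our plan is to adapt the approach of Lemma~\ref{lem: Q1} to the extra complication that $\sum_\Box Q_2(\Box) = \frac{1}{\hfbeta} \intx :\phi_K^2: dx - \frac{1}{\hfbeta} \sum_\Box \phi_K(\Box)^2$ contains the block-average-squared term in addition to the integrated Wick square. Absorbing only the Wick square piece into a modified potential produces a spurious prefactor $e^{\hfbeta N^3}$, which must be exactly cancelled by the remaining quadratic penalty.

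By Proposition~\ref{prop: testing wick square} and the lower bound $\sZ_{\beta,N,K} \geq e^{-CN^3}$ from Proposition~\ref{prop:zbounds}, it suffices to show that $\limsup_{K \to \infty} \EE_N e^{-\cH_{\beta,N,K}(\phi_K) + \sum_\Box Q_2(\Box)} \leq e^{CN^3}$ uniformly in $K$. A direct computation gives $\cV_\beta(a) - \frac{a^2}{\hfbeta} = \cV_\beta^{Q_2}(a) - \hfbeta - \frac{1}{4}$, where $\cV_\beta^{Q_2}(a) := \frac{1}{\beta}(a^2 - (\beta + \hfbeta/2))^2$ is a non-negative double-well potential with minima at $\pm\sqrt{\beta + \hfbeta/2} = \pm\hfbeta + O(1)$. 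Setting $\cH^{Q_2}_{\beta,N,K}$ to be $\cH_{\beta,N,K}$ with $\cV_\beta$ replaced by $\cV_\beta^{Q_2}$ (the renormalisation constants $\gamma_K, \delta_K$ are unchanged because only the $a^2$ coefficient is modified), we obtain
\begin{equs}
\EE_N e^{-\cH_{\beta,N,K}(\phi_K) + \sum_\Box Q_2(\Box)} = e^{(\hfbeta + \tfrac 14) N^3} \EE_N e^{-\cH^{Q_2}_{\beta,N,K}(\phi_K) - \frac{1}{\hfbeta} \sum_\Box \phi_K(\Box)^2}.
\end{equs}
Thus the task reduces to proving the $\hfbeta$-strong lower bound $-\log \EE_N e^{-\cH^{Q_2}(\phi_K) - \frac{1}{\hfbeta} \sum_\Box \phi_K(\Box)^2} \geq (\hfbeta - C) N^3$.

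This lower bound we would establish by mimicking Section~\ref{subsec: proof z lower}: expand into phase labels, translate $\phi_K = \psi_K + g_K$ (the $O(1)$ shift of the minima from $\pm\hfbeta$ to $\pm\sqrt{\beta + \hfbeta/2}$ only produces $O(N^3)$ corrections), and apply Proposition~\ref{prop: bd}. Since $\cV_\beta^{Q_2}$ has the same qualitative structure as $\cV_\beta$, the renormalisation and remainder analysis of Sections~\ref{subsec: killing divergences}--\ref{subsec: estimates}, along with the analogue of Lemma~\ref{lem:pointwise} for $\cV_\beta^{Q_2}$, carry through essentially unchanged. The new ingredient is to verify that the extra penalty $\frac{1}{\hfbeta}\sum_\Box(\vec{\<1>}_K + V_K + g_K)(\Box)^2$ contributes $\hfbeta N^3 + O(N^3)$ in expectation. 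With $W = V_K + g_K$ and $\vec Z_K = \vec{\<1>}_K + \vec W$ as in Section~\ref{subsec: coarsegrain}, the identity $\sum_\Box \vec Z_K(\Box)^2 = \intx Z_K^2 dx - \intx(W^\perp)^2 dx$, together with a Young-type estimate on the cross terms involving $\vec{\<1>}_K$ (using $\EE\intx \vec{\<1>}_K^2 dx \leq CN^3$ from Proposition~\ref{prop: diagrams}), reduces matters to lower-bounding $\frac{1}{\hfbeta} \intx W^2 dx$. The pointwise identity $(W^2 - (\beta + \hfbeta/2))^2 = \beta \cV_\beta^{Q_2}(W)$ yields $W^2 \geq (\beta + \hfbeta/2) - \sqrt{\beta} \sqrt{\cV_\beta^{Q_2}(W)}$, and after integration, Cauchy-Schwarz, and Young's inequality,
\begin{equs}
\frac{1}{\hfbeta} \intx W^2 dx \geq \Big(\hfbeta + \tfrac 12\Big) N^3 - c \intx \cV_\beta^{Q_2}(W) dx - \tfrac{N^3}{4c}
\end{equs}
for any $c > 0$.

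The main obstacle will be a delicate balancing of constants: the stability term $\frac 12 \intx \cV_\beta^{Q_2}(W) dx$ from the coarse-graining \eqref{hamiltonian_split} must simultaneously absorb (i) the $\varepsilon \intx \cV_\beta^{Q_2}(W) dx$ cost arising from Proposition~\ref{prop: killing}, (ii) the $c \intx \cV_\beta^{Q_2}(W) dx$ cost from the previous display, and (iii) the $\zeta \intx (Z_K^\perp)^2 dx$ cost from the effective Hamiltonian estimate. Choosing $\varepsilon, c, \zeta$ small with $\varepsilon + c + \zeta C_P < \tfrac 12$ (for instance $c = \tfrac 13$), using Proposition~\ref{prop: u and v bound} together with Poincar\'e on unit boxes to control the $(W^\perp)^2$ fluctuation, and absorbing the resulting $\intx|\nabla g_K|^2 dx$ into the leftover $\tfrac{1}{2} \intx |\nabla \tilde g_K|^2 dx$ contribution from $F^\sigma_{\beta,N,K}$ should close the argument and yield the desired $\geq (\hfbeta - C)N^3$ bound. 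Summing over the $2^{N^3}$ phase labels adds only $(\log 2) N^3$ to the constant.
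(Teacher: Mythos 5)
Your proof takes a genuinely different route from the paper's, and one can argue it is the more transparent one. The paper keeps $\cV_\beta$ fixed and estimates the two extra terms $+\frac{1}{\hfbeta}\sum_\Box \phi_K(\Box)^2 - \frac{1}{\hfbeta}\intx :\phi_K^2: dx$ of $\Psi_K^{Q_2}$ \emph{separately}, appealing to the potential bound \eqref{tool: potential bound} to control $\frac{1}{\hfbeta}\intx(V_K+g_K)^2\,dx$. If read literally this produces a constant that is $O(\hfbeta)$ rather than $O(1)$ (take $V_K = 0$, $g_K = \hfbeta$: then $\frac{1}{\hfbeta}\intx W^2\,dx = \hfbeta N^3$ while $\intx\cV_\beta(W)\,dx=0$), so the step only closes because of the implicit cancellation between the block-average penalty $\frac{1}{\hfbeta}\sum_\Box\phi_K(\Box)^2$ and the $\frac{1}{\hfbeta}\intx W^2$ piece sitting inside $\frac{1}{\hfbeta}\intx:\phi_K^2:\,dx$; after cancellation the difference reduces to the fluctuation quantities $\frac{1}{\hfbeta}\intx\vec{\<1>}_K^2\,dx$, $-\frac{2}{\hfbeta}\intx\<1>_K W^\perp dx$, $-\frac{1}{\hfbeta}\intx(W^\perp)^2 dx$ and the zero-mean $-\frac{1}{\hfbeta}\intx\<2>_K dx$, all of which are genuinely $O(N^3)$.

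Your approach makes this cancellation explicit: you absorb the Wick square into the potential (just as in Lemma~\ref{lem: Q1}), which shifts the wells to $\pm\sqrt{\beta + \hfbeta/2}$ and \emph{produces} the offending factor $e^{(\hfbeta+1/4)N^3}$; you then show that the block-average penalty \emph{supplies} $\hfbeta N^3 + O(N^3)$ via the pointwise estimate $\frac{1}{\hfbeta}W^2 \geq \hfbeta + \tfrac 12 - c\,\cV_\beta^{Q_2}(W) - \tfrac{1}{4c}$ (an exact consequence of the algebraic identity $(W^2 - (\beta+\hfbeta/2))^2 = \beta\cV_\beta^{Q_2}(W)$). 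What your route buys is structural uniformity with Lemma~\ref{lem: Q1} and a bookkeeping that keeps every term visibly $O(N^3)$ at each stage; what it costs is (i) the cross-term Young inequality for $Z_K^2 \geq (1-\lambda)W^2 - \tfrac{1-\lambda}{\lambda}\vec{\<1>}_K^2$ must be applied with $\lambda\sim 1/\hfbeta$ to avoid shaving off a $\lambda\hfbeta N^3$ piece of the leading term (this forces $\beta$-dependent parameter choices that you should state explicitly rather than leave implicit in ``a Young-type estimate''), and (ii) one must re-verify Lemma~\ref{lem:pointwise} and the remainder bounds for $\cV_\beta^{Q_2}$, whose wells sit $O(1)$ away from $\pm\hfbeta$; your remark that these ``carry through essentially unchanged'' is plausible but would need to be checked, in particular the analogue of the potential bound in Lemma~\ref{lem: potential} with $\hfbeta$ replaced by $\sqrt{\beta+\hfbeta/2}$. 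Modulo those two details, the argument is sound.
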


\begin{proof}
By Propositions \ref{prop: testing phi4}, \ref{prop: testing wick square}, and \ref{prop:zbounds}, there exists $C=C(\eta)>0$ such that, for $\beta$ sufficiently large,
\begin{equs}
\Big\langle \prod_{\Box \in \BBN} \exp Q_2(\Box) \Big\rangle_{\beta,N}
&=
\lim_{K \rightarrow \infty} \Big\langle \exp \Big( \frac 1\hfbeta \sum_{\Box \in \BBN} \phi_K(\Box)^2 - \frac 1\hfbeta \intx :\phi_K^2: dx \Big\rangle_{\beta,N,K}
\\
&\leq
e^{CN^3} \limsup_{K \rightarrow \infty} \EE_N e^{-\cH_{\beta,N,K}^{Q_2}(\phi_K)}
\end{equs}
where
\begin{equs}
\cH_{\beta,N,K}^{Q_2}(\phi_K)
=
\cH_{\beta,N,K}(\phi_K) + \frac 1\hfbeta \sum_{\sBox \in \BBN} \phi_K(\Box)^2 - \frac 1\hfbeta \intx :\phi_K^2: dx.
\end{equs}

As in Section \ref{sec: translation}, we perform the expansion
\begin{equs} \label{q2 low temp expansion}
-\log \EE_N e^{-\cH_{\beta,N,K}^{Q_2}(\phi_K)}
=
\sum_{\sigma \in \{ \pm 1 \}^\BBN} e^{-F_{\beta,N,K}^\sigma}\EE_N e^{-\cH_{\beta,N,K}^{Q_2,\sigma}(\phi_K)} 	
\end{equs}
where $F_{\beta,N,K}^\sigma$ is defined in \eqref{def: f} and
\begin{equs}
\cH_{\beta,N,K}^{Q_2,\sigma}(\phi_K)
=
\cH_{\beta,N,K}^{Q_2}(\phi_K + g_K) - \sum_{\sBox \in \BBN} \log \Big( \chi_{\sigma(\sBox)} \big( (\phi_K+g_K)(\Box) \big) \Big)
\end{equs}

Fix $\sigma \in \{\pm 1\}^\BBN$. For $v \in \HH_{b,K}$, define
\begin{equs}\label{def: psi q_2}
\Psi^{Q_2}_K(v)
&=
\Psi_K(v) + \frac 1\hfbeta \sum_{\Box \in \BBN} \Big( \int_\Box \<1>_K + V_K + g_K dx \Big)^2
\\
&\quad\quad\quad
- \frac 1\hfbeta \intx :(\<1>_K + V_K + g_K)^2: dx
\end{equs}
where $\Psi_K=\Psi_K^\sigma$ is defined in \eqref{def: psi}.

We estimate second term in \eqref{def: psi q_2}. First, note that
\begin{equs}
	\frac 1\hfbeta \sum_{\sBox \in \BBN} &\Big( \int_\sBox \<1>_K + V_K + g_K dx \Big)^2
	\\
	&\leq
	\sum_{\sBox \in \BBN} \frac 2\hfbeta \Big( \int_\sBox \<1>_K dx \Big)^2 + \frac 2\hfbeta \Big( \int_\sBox V_K + g_K dx \Big)^2.
\end{equs}
By a standard Gaussian covariance calculation, there exists $C=C(\eta)>0$ such that
\begin{equs}
\sum_{\sBox \in \BBN} \EE \Big( \int_\sBox \<1>_K dx \Big)^2
=
\sum_{\sBox \in \BBN} \int_\sBox \int_\sBox \EE[ \<1>_K(x) \<1>_K(x')] dxdx'
\leq 
CN^3.
\end{equs}
For the other term, by the Cauchy-Schwarz inequality followed by bounds on the potential \eqref{tool: potential bound} and $g_K$ \eqref{tool: g bounds}, the following estimate holds: for any $\zeta > 0$, 
\begin{equs}
\frac 2\hfbeta \sum_{\sBox \in \BBN} \Big( \int_\sBox V_K + g_K dx \Big)^2
&\leq
\intx \frac 2\hfbeta (V_K+g_K)^2 dx
\\
&\leq C(\zeta,C_P) N^3 + \zeta C_P \intx \cV_\beta(V_K+g_K) dx
\end{equs}
where $C_P > 0$ is the Poincar\'e constant on unit boxes \eqref{tool: poincare}.

We now estimate the third term in \eqref{def: psi q_2}. Since $\EE \<2>_K = \EE[ \<1>_K g_K] = 0$,
\begin{equs}\label{q2 bound eq1}
\frac 1\hfbeta \intx :(\<1>_K + V_K +g_K)^2: dx 
\approx 
\frac 1\hfbeta \intx 2\<1>_K V_K + (V_K + g_K)^2 dx.	
\end{equs}

For the first integral on the righthand side of \eqref{q2 bound eq1}, by change of variables \eqref{eq: intermediate integrated ansatz}, and the paraproduct decomposition \eqref{tool: bony decomp}, we have
\begin{equs}
\frac 1\hfbeta \intx 2\<1>_K V_K dx
&=
 \intx -\frac{8}{\beta^\frac 52}(\<1>_K \pl \<30>_K + \<31>_K + \<1>_K \pg \<30>_K) + \frac 2\hfbeta \<1>_K U_K dx.
\end{equs}
Hence, by \eqref{q2 bound eq1}, Proposition \ref{prop: diagrams}, duality \eqref{tool: duality}, the potential bounds \eqref{tool: potential bound}, and the bounds on $U_K$ \eqref{eq: u estimate},  for any $\varepsilon > 0$ there exists $C=C(\varepsilon,\eta)>0$ such that
\begin{equs}
\Big| \frac 1\hfbeta \intx 2\<1>_K V_K dx \Big|
\leq 
CN^\Xi_K + \varepsilon \Big( \intx \cV_\beta(V_K+g_K) dx + \frac 12 \intx \intt r_k^2 dk dx \Big).
\end{equs}

For the second integral on the righthand side of \eqref{q2 bound eq1}, again by \eqref{tool: potential bound} and \eqref{tool: g bounds}, there exists an inessential constant $C>0$ such that
\begin{equs}
	\intx \frac 1\hfbeta (V_K+g_K)^2 dx
	\leq
	CN^3 + \zeta C_P \intx \cV_\beta(V_K+g_K) dx.
\end{equs}

Arguing as in Section \ref{subsec: proof z lower} and taking into account the calculations above, the following estimate holds: let $\zeta < \frac 1{8C_P}$ and $\varepsilon = 1-8C_P\zeta > 0$ as in Section \ref{subsec: proof z lower}. Then, provided $\eta < \frac{1}{196C_P}$ and $\beta > 1$,
\begin{equs}
\EE \Psi_K^{Q_2}(v)
&\geq
\EE \Bigg[ -C(\varepsilon,\zeta,\eta) N^\Xi_K + \Big(\frac{1-\varepsilon}{2} - \frac{4C_P\zeta}{2\beta^3} - 2C_P \zeta \Big) \intx \cV_\beta(V_K+g_K) dx
\\
&\quad\quad\quad
+ \Big(\frac{1-\varepsilon}2 - 4\zeta C_P \Big) \intx \intt r_k^2 dk dx - 4\zeta C_P \intx |\nabla \tilde g_K |^2 dx \Bigg]
\\
&\geq
-CN^3 -4\zeta C_P \intx |\nabla \tilde g_K|^2 dx.	
\end{equs}
Hence, by Proposition \ref{prop: bd} applied with the Hamiltonian $\cH_{\beta,N,K}^{Q_2,\sigma}(\phi_K)$ with total energy cutoff suppressed (see Remark \ref{rem: local martingale issue}),
\begin{equs}
F_{\beta,N,K}^\sigma - \log \EE_N e^{-\cH_{\beta,N,K}^{Q_2,\sigma}}
\geq
-CN^3 + \Big( \frac 12 - 4\zeta C_P\Big) \intx |\nabla \tilde g_K|^2 dx \geq - CN^3	
\end{equs}
This estimate is uniform in $\sigma$, thus summing over the $2^{N^3}$ terms in the expansion \eqref{q2 low temp expansion} yields \eqref{q2 bound}.
\end{proof}

\begin{lem} \label{lem: Q3}
There exists $\beta_0 > 1$ and $C_Q > 0$ such that, for any $\beta > \beta_0$,
\begin{equs}\label{q3 bound}
-\frac{1}{N^3} \log \Big\langle \prod_{\{\sBox,\sBox'\} \in B} \exp |Q_3(\Box,\Box')| \Big\rangle_{\beta,N}
\geq
-C_Q
\end{equs}
where $B$ is a set of unordered pairs of nearest-neighbour blocks that partitions $\BBN$. 
\end{lem}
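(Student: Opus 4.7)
\textbf{Proof plan for Lemma \ref{lem: Q3}.} The strategy parallels Lemmas \ref{lem: Q1} and \ref{lem: Q2}, with the key new ingredient being a discrete integration-by-parts that exploits the partition structure of $B$. First, I would linearise the absolute values via $e^{|a|} \leq e^a + e^{-a}$: after fixing an arbitrary ordering on each pair in $B$,
\begin{equs}
\prod_{\{\Box,\Box'\} \in B} e^{|Q_3(\Box,\Box')|}
\leq
\sum_{s \in \{\pm 1\}^B} e^{\intx F_s \phi dx},
\quad
F_s
=
\sum_{\{\Box,\Box'\} \in B} s_{\{\Box,\Box'\}} (\1_{\Box} - \1_{\Box'}).
\end{equs}
Since $B$ partitions $\BBN$, the function $F_s$ is piecewise constant on blocks with $|F_s| \equiv 1$. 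By Proposition \ref{prop: testing phi4} and a Fatou argument (analogous to the passage to the limit in Lemma \ref{lem: Q2}), it suffices to prove the uniform-in-$K$ bound $\EE_N e^{-\cH_{\beta,N,K}(\phi_K) + \intx F_s \phi_K dx} \leq e^{CN^3}$; the $2^{|B|} \leq 2^{N^3/2}$ choices of $s$ and the factor $\sZ_{\beta,N,K}^{-1} \leq e^{CN^3}$ from Proposition \ref{prop:zbounds} are then absorbed into the constant.

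Next, I would perform the low-temperature expansion and translation of Section \ref{sec: translation}, producing a sum over $\sigma \in \{\pm 1\}^\BBN$ of terms
\begin{equs}
e^{-F_{\beta,N,K}^\sigma + \intx F_s g_K dx}\EE_N e^{-\cH^\sigma_{\beta,N,K}(\psi_K) + \intx F_s \psi_K dx}.
\end{equs}
The main obstacle is the deterministic factor $e^{\intx F_s g_K dx}$: a naive bound $|\intx F_s g_K dx| \leq |g_K|_\infty N^3 \lesssim \hfbeta N^3$ would be unbeatable. The essential observation that resolves this is that $F_s$ has \emph{discrete gradient} structure, so for any smooth $f$,
\begin{equs}
\Big|\intx F_s f dx\Big|
=
\Big|\sum_{\{\Box,\Box'\} \in B} s_{\{\Box,\Box'\}} (f(\Box) - f(\Box'))\Big|
\leq
\intx |\nabla f| dx,
\end{equs}
using $|f(\Box) - f(\Box')| \leq \int_{\Box \cup \Box'}|\nabla f| dx$ for adjacent blocks together with the partition property of $B$. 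Applied to $f = g_K$ and combined with $\intx|\nabla g_K|^2 dx \leq \intx |\nabla \tilde g_K|^2 dx \leq 2 F_{\beta,N,K}^\sigma$ from \eqref{tool: grad g g tilde} and \eqref{def: f}, Cauchy-Schwarz and Young's inequality give $|\intx F_s g_K dx| \leq \varepsilon F_{\beta,N,K}^\sigma + C_\varepsilon N^3$ \emph{uniformly in $\beta$}.

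For each fixed $\sigma$, I would apply Bou\'e-Dupuis to the remaining expectation, adapting the argument of Section \ref{subsec: proof z lower}. The additional term $-\intx F_s(\<1>_K + V_K) dx$ inside the variational expression is handled as follows: $\EE\intx F_s \<1>_K dx = 0$ since $\<1>_K$ has zero mean and is independent of the drift, while the same discrete integration-by-parts applied to $f = V_K$ gives $|\intx F_s V_K dx| \leq \intx|\nabla V_K| dx \leq N^{3/2}\sqrt{2 E}$, where $E = \tfrac 12 \intt \intx v_k^2 dx dk$ is the drift entropy; this is absorbable into a small fraction of $E$ at the cost of $CN^3$. Reserving a small fraction of the drift entropy in the lower bound from Section \ref{subsec: proof z lower} for these absorbing steps yields
\begin{equs}
-\log \EE_N e^{-\cH^\sigma_{\beta,N,K}(\psi_K) + \intx F_s \psi_K dx}
\geq
-CN^3 - C' \intx|\nabla \tilde g_K|^2 dx,
\end{equs}
with $C' < \tfrac 12$ attainable by tuning parameters. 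Combining with the estimate on $\intx F_s g_K dx$ and with $F_{\beta,N,K}^\sigma \geq \tfrac 12 \intx|\nabla \tilde g_K|^2 dx$ makes the net coefficient of $F_{\beta,N,K}^\sigma$ non-positive for $\varepsilon$ small, so each $\sigma$-summand is bounded by $e^{CN^3}$. Summing over the $2^{N^3}$ choices of $\sigma$ and the $2^{N^3/2}$ choices of $s$ gives the claimed bound \eqref{q3 bound}. The main obstacle throughout is precisely the $\intx F_s g_K dx$ term; the discrete integration-by-parts identity is what renders it harmless uniformly in $\beta$.
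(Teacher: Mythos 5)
Your linearisation $e^{|a|}\leq e^a+e^{-a}$, replacing the absolute values by a sum over $2^{|B|}\leq 2^{N^3/2}$ sign patterns $s$, is a genuine alternative to the paper's route: the paper instead keeps $-\sum_{\{\Box,\Box'\}\in B}|Q_3(\Box,\Box')|$ as part of the Hamiltonian in the Bou\'e-Dupuis functional and estimates it there. Both are viable, as the $2^{N^3/2}$ sign choices are absorbed into the extensive constant. Your key observation -- that $F_s$ is a discrete gradient, so $|\intx F_s f\,dx|\leq\intx|\nabla f|\,dx$ via $|f(\Box)-f(\Box')|\leq\int_{\Box\cup\Box'}|\nabla f|\,dx$ and disjointness of the pairs -- is precisely the fundamental-theorem-of-calculus step in the paper's proof (WLOG $\Box'=\Box+e_1$, then FTC), and applying it to $g_K$ together with $\intx|\nabla g_K|^2\leq\intx|\nabla\tilde g_K|^2\leq 2F^\sigma_{\beta,N,K}$ is exactly what defeats the $\hfbeta$-scaling of $g_K$.

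However, the treatment of $\intx F_s V_K\,dx$ has a gap. You bound $|\intx F_s V_K\,dx|\leq\intx|\nabla V_K|\,dx\leq N^{3/2}\sqrt{2E}$ with $E=\frac12\intt\intx v_k^2\,dx\,dk$ and propose to ``reserve a small fraction of $E$.'' Reserving $\varepsilon E$ before the change of variables breaks the exact completion of the square in Lemma \ref{lem: perturb 1}: replacing $\frac12 v_k^2$ by $(1-\varepsilon)\frac12 v_k^2$ in $\intx\frac4\beta\<3>_KV_K+\frac{1-\varepsilon}2\intt\intx v_k^2$ produces $-\frac{8}{(1-\varepsilon)\beta^2}(\cJ_k\<3>_k)^2$ instead of $-\frac8{\beta^2}(\cJ_k\<3>_k)^2$. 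The latter is what $\delta_K$ (which is fixed by the definition of $\nubn$) cancels, so a residual $\frac{\varepsilon}{1-\varepsilon}\frac8{\beta^2}\intt\intx(\cJ_k\<3>_k)^2\,dx\,dk$ survives; its expectation diverges as $K\to\infty$, and the lower bound on $-\log\EE_Ne^{-\cH^{Q_3,\sigma}_{\beta,N,K}}$ becomes vacuous in the limit. The correct move, as in the paper, is to decompose $V_K=-\frac4\beta\<30>_K-\frac{12}\beta\int_0^K\cJ_k^2\big(\<2>_k\pg(V_k^\flat+g_k^\flat)\big)\,dk+R_K$ and treat each piece separately: the $\<1>_K$ and $\<30>_K$ contributions are bounded by $N^\Xi_K$ (using $|F_s|\leq 1$ and Proposition \ref{prop: diagrams}, no gradient needed), the paraproduct piece via the estimate of \eqref{eq: u estimate eq2} using $\intx\cV_\beta(V_K+g_K)$, and the gradient bound is applied only to $R_K+g_K$, whose contribution absorbs into the \emph{residual} entropy $\frac12\intt\intx r_k^2\,dk\,dx$ through $\|R_K\|_{H^1}^2\leq\intt\intx r_k^2\,\dbar x\,dk$. (Alternatively, since $|F_s|\leq 1$ you can bypass the gradient estimate on $V_K$ entirely and use $|\intx F_s V_K\,dx|\leq\intx|V_K|\,dx\leq N^3\|V_K\|_{L^2(\dbar x)}$ together with Proposition \ref{prop: u and v bound}; the FTC/discrete-gradient trick is only truly essential for $g_K$.)
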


\begin{proof}
By Propositions \ref{prop: testing phi4} and \ref{prop:zbounds} there exists $C=C(\eta)>0$ such that, for $\beta$ sufficiently large,
\begin{equs}
\Big\langle \prod_{\{\sBox,\sBox'\} \in B} \exp |Q_3(\Box,\Box')| \Big\rangle_{\beta,N}
&=
\lim_{K \rightarrow \infty} \Big\langle \exp \Big( \sum_{\{\sBox,\sBox'\} \in B} \Big| \int_{\sBox} \phi_K dx - \int_{\sBox'} \phi_K dx \Big| \Big\rangle_{\beta,N,K}
\\
&\leq
e^{CN^3} \limsup_{K \rightarrow \infty} \EE_N e^{-\cH_{\beta,N,K}^{Q_3}(\phi_K)}
\end{equs}
where
\begin{equs}
\cH_{\beta,N,K}^{Q_3}(\phi_K) 
= 
\cH_{\beta,N,K}^{Q_3}(\phi_K)	- \sum_{\{\sBox,\sBox'\} \in B} \Big| \int_{\sBox} \phi_K dx - \int_{\sBox'} \phi_K dx \Big|.
\end{equs}

We expand
\begin{equs}
-\log \EE_N e^{-\cH_{\beta,N,K}^{Q_3}(\phi_K)}
=
\sum_{\sigma \in \{ \pm 1 \}^\BBN} e^{-F_{\beta,N,K}^\sigma}\EE_N e^{-\cH_{\beta,N,K}^{Q_3,\sigma}}	
\end{equs}
where $F_{\beta,N,K}^\sigma$ is defined in \eqref{def: f} and
\begin{equs}
\cH_{\beta,N,K}^{Q_3,\sigma}(\phi_K)
=
\cH_{\beta,N,K}^{Q_3}(\phi_K + g_K) - \sum_{\sBox \in \BBN} \log \Big( \chi_{\sigma(\sBox)}\big( (\phi_K + g_K)(\Box) \big) \Big).
\end{equs}

Fix $\sigma \in \{\pm 1\}^\BBN$. For $v \in \HH_{b,K}$, define
\begin{equs}
\Psi_K^{Q_3}(v)
=
\Psi_K(v) - \sum_{\{ \sBox, \sBox' \} \in B} \Big| \int_\sBox \<1>_K + V_K + g_K dx - \int_{\sBox'} \<1>_K + V_K +g_K dx \Big|
\end{equs}
where $\Psi_K(v) = \Psi_K^\sigma(v)$ is defined in \eqref{def: psi}.

A standard Gaussian calculation yields $\EE |\<1>_K| \leq CN^3$ for some constant $C=C(\eta)>0$. Hence, by the triangle inequality, Proposition \ref{prop: diagrams} and the Cauchy-Schwarz inequality,  
\begin{equs}
	\sum_{\{ \sBox, \sBox' \} \in B} &\Big| \int_\sBox \<1>_K + V_K + g_K dx - \int_{\sBox'} \<1>_K + V_K +g_K dx \Big|
	\\
	&\lesssim
	CN^\Xi_K + \frac{1}{\beta^2} \Big| \intx \intt \cJ_k\big(\<2>_k \pg (V_k^\flat +g_k^\flat) \big) dk dx	\\
	&\quad\quad\quad
	+ \sum_{\{ \sBox, \sBox' \} \in B} \Big| \int_\sBox (R_K + g_K) dx - \int_{\sBox'} (R_K + g_K) dx \Big|.
\end{equs}
The integral with the paraproduct can be estimated as in \eqref{eq: u estimate eq2} to establish: for any $\zeta > 0$,
\begin{equs}
	\frac{1}{\beta^2} \Big| \intx \intt \cJ_k\big(\<2>_k \pg (V_k^\flat +g_k^\flat) \big) dk dx
	\leq
	C(\zeta,C_P)N^3 + \frac{2\zeta C_P}{\beta^3} \intx \cV_\beta(V_K+g_K) dx
\end{equs}
where $C_P>0$ is the Poincar\'e constant on unit blocks \eqref{tool: poincare}.

We now estimate the remaining integral. Assume without loss of generality that $\Box' = \Box + e_1$. Then, by the triangle inequality and the fundamental theorem of calculus,
\begin{equs}
\Big| \int_\sBox (R_K + g_K) dx &- \int_{\sBox'} (R_K + g_K) dx \Big|
\\
&=
\int_\sBox \Big( R_K(x) - R_K(x+e_1) + g_K(x) - g_k(x+e_1) \Big) dx
\\
&\leq
\int_0^1 \int_\sBox |\nabla R_K(x+te_1)| + |\nabla g_K(x+te_1)| dx dt
\\
&\leq
\int_{\sBox \cup \sBox'} |\nabla R_K| + |\nabla g_K| dx.
\end{equs}

Hence, by the Cauchy-Schwarz inequality, the bound on the drift \eqref{eq: l2 drift bound} and the bound on $\nabla g_K$ \eqref{tool: grad g g tilde}, we have the following estimate: for any $\zeta > 0$,
\begin{equs}
	\sum_{\{ \sBox, \sBox' \} \in B} \Big| \int_\sBox (R_K + g_K) dx &- \int_{\sBox'} (R_K + g_K) dx \Big|
	\\
	&\leq
	C(\zeta,C_P)N^3 + 4\zeta C_P \Big( \intx |\nabla R_K|^2 dx + \intx |\nabla g_K|^2 dx \Big)
	\\
	&\leq
	C(\zeta,C_P)N^3 + 4\zeta C_P \Big( \intx \intt r_k^2 dk dx + \intx |\nabla \tilde g_K|^2 dx \Big).
\end{equs}

Thus, by arguing as in Section \ref{subsec: proof z lower}, one can show the following estimate: let $\zeta < \frac 1{16C_P}$ and $\varepsilon = 1 - 8\zeta C_P > 0$. Then, provided $\eta < \frac{1}{392 C_P}$ and $\beta > 1$,
\begin{equs}
\EE \Psi_K^{Q_3}(v)
&\geq
\EE \Bigg[ -CN^\Xi_K + \Big( \frac{1-\varepsilon}{2} - \frac{2\zeta C_P}{\beta^3} - \frac{2\zeta C_P}{\beta^3} \Big) \intx \cV_\beta(V_K+g_K) dx
\\
&\quad\quad\quad
+ \Big( \frac{1-\varepsilon}2 - 4\zeta C_P - 4\zeta C_P \Big) \intx \intt r_k^2 dk dx
\\
&\quad\quad\quad- (4\zeta C_P + 4\zeta C_P) \intx |\nabla \tilde g_K|^2 dx \Bigg]
\\
&\geq
-CN^3 - 8\zeta C_P \intx |\nabla \tilde g_K|^2 dx. 
\end{equs}

Applying Proposition \ref{prop: bd} with Hamiltonian $\cH_{\beta,N,K}^{Q_3,\sigma}(\phi_K)$, with total energy cutoff suppressed (see Remark \ref{rem: local martingale issue}), yields
\begin{equs}
F_{\beta,N,K}^\sigma - \log \EE_N e^{-\cH_{\beta,N,K}^{Q_3}(\phi_K)} 
\geq 
- CN^3 + \Big( \frac 12 - 8\zeta C_P \Big) \intx |\nabla \tilde g_K|^2 dx
\geq
-CN^3.	
\end{equs}
This estimate is uniform over all $2^{N^3}$ choices of $\sigma$, hence establishing \eqref{q3 bound}.
\end{proof}

\section{Chessboard estimates} \label{sec: chessboard estimates}

In this section we prove Proposition \ref{prop: cosh} using the chessboard estimates of Proposition \ref{prop:chessboard_estimates} and the estimates obtained in Section \ref{sec: free energy}. In addition, we establish that $\nubn$ is reflection positive.

\subsection{Reflection positivity of $\nubn$} \label{subsec: rp}
We begin by defining reflection positivity for general measures on spaces of distributions following \cite{S86} and \cite{GJ87}.

For any $a \in \{0, \dots, N-1\}$ and $\{i,j,k\} = \{ 1,2,3 \}$, let 
\begin{equs}
\cR_{\Pi_{a,i}}(x) = (2a-x_i)e_i + e_j + e_k
\end{equs}
where $x = x_i e_i + x_j e_j + x_k e_j \in \TTN$ and addition is understood modulo $N$. Define
\begin{equs} \label{def:hyperplane}
	 \Pi_{a,i}
	 =
	 \{ x \in \TTN : \cR_{\Pi_{a,i}}(x)=x\}.
\end{equs}
Note that for any $x \in \Pi_{a,i}$, $x_i = a \text{ or } a+\frac N2$. We say that $\cR_{\Pi_{a,i}}$ is the reflection map across the hyperplane $\Pi_{a,i}$. 

Fix such a hyperplane $\Pi$. It separates $\TTN = \TTN^+ \sqcup \Pi \sqcup \TTN^-$ such that $\TTN^+ = \cR_\Pi \TTN^-$. For any $f \in C^\infty(\TTN)$, we say $f$ is $\TTN^+$-measurable if $\text{supp} f \subset \TTN^+$. The reflection of $f$ in $\Pi$ is defined pointwise by $\cR_\Pi f(x) = f(\cR_\Pi x)$. For any $\phi \in S'(\TTN)$, we say that $\phi$ is $\TTN^+$-measurable if $\phi(f) = 0$ unless $f$ is $\TTN^+$ measurable, where $\phi(f)$ denotes the duality pairing between $S'(\TTN)$ and $C^\infty(\TTN)$. For any such $\phi$, we define $\cR_\Pi \phi$ pointwise by $\cR_\Pi \phi(f) = \phi(\cR_\Pi f)$. 

Let $\nu$ be a probability measure on $S'(\TTN)$. We say that $F \in L^2(\nu)$ is $\TTN^+$-measurable if it is measurable with respect to the $\sigma$-algebra generated by the set of $\phi \in S'(\TTN)$ that are $\TTN^+$-measurable. For any such $F$, we define $\cR_\Pi F$ pointwise by $\cR_\Pi F(\phi) = F(\cR_\Pi \phi)$.

The measure $\nu$ on $S'(\TTN)$ is called \textit{reflection positive} if, for any hyperplane $\Pi$ of the form \eqref{def:hyperplane},
\begin{equs}
    \int_{S'(\TTN)} F(\phi) \cdot \cR_\Pi F(\phi) d\nu(\phi)
    \geq 
    0
\end{equs}
for all $F \in L^2(\nu)$ that are $\TTN^+$-measurable.

\begin{prop}\label{prop:rp}
The measure $\nubn$ is reflection positive.	
\end{prop}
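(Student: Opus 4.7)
The strategy is to approximate $\nubn$ by lattice measures that are manifestly reflection positive, and then pass this property to the limit via weak convergence. For each $\varepsilon = 1/M$ with $M \in \NN$ dividing $N$, I would introduce a lattice-regularised measure $\nubn^\varepsilon$ supported on fields $\phi:\TTN \cap (\varepsilon\ZZ)^3 \to \RR$, defined by the discrete analogue of \eqref{def: formal nubn} in which the nearest-neighbour discrete Dirichlet form replaces $\frac12 |\nabla\phi|^2$ and appropriately tuned counterterms $\delta m^2(\varepsilon)$ are inserted in the potential.

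The reflection positivity of $\nubn^\varepsilon$ is classical and follows from the standard Osterwalder--Schrader / Glimm--Jaffe argument: the discrete Gaussian Dirichlet form associated to the nearest-neighbour Laplacian splits as a sum of edge contributions internal to $\TTN^+$, edge contributions internal to $\TTN^-$, and contributions from edges crossing the hyperplane $\Pi$; the last piece can be completed as a perfect square of the form $\tfrac12 \sum_{\{x,\cR_\Pi x\}}(\phi(x)-\phi(\cR_\Pi x))^2$ plus a sum that couples $\phi$ to $\cR_\Pi \phi$ only through a positive semi-definite kernel. The local, site-wise potential $\cV_\beta(\phi(x))$ together with its lattice counterterms factors across $\TTN^\pm$ and therefore preserves reflection positivity. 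This gives $\int F \cdot \cR_\Pi F \, d\nubn^\varepsilon \geq 0$ for every bounded $\TTN^+$-measurable cylindrical $F$.

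The main step is then to show that $\nubn^\varepsilon \to \nubn$ weakly as $\varepsilon \to 0$ (with $\delta m^2(\varepsilon)$ chosen to match, up to the $O(\beta^{-1})$ bounded shift discussed in Remark \ref{rem: eta new}, the Fourier-cutoff counterterms used in the definition of $\nubn$). For this I would use the fact, advertised in the introduction, that both regularisations can be placed in a common framework via the dynamical $\phi^4_3$ equation \eqref{def: dynamical phi4}: the Glauber dynamics of $\nubn^\varepsilon$ is a lattice SPDE which, by the stability theory for singular SPDEs (Hairer--Matetski, Mourrat--Weber, Zhu--Zhu), converges to the unique solution of \eqref{def: dynamical phi4} as $\varepsilon \to 0$. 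Combining this with exponential ergodicity of both the lattice and continuum dynamics towards their respective invariant measures, together with tightness of $\{\nubn^\varepsilon\}$ extracted from uniform moment bounds in suitable Besov spaces (obtainable by a variant of the Bou\'e--Dupuis analysis of Section \ref{sec: bd} adapted to the lattice setting), identifies $\nubn$ as the unique weak limit.

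Once the convergence is in hand, reflection positivity passes immediately to $\nubn$: for any bounded continuous cylindrical $\TTN^+$-measurable $F$, the functional $\phi \mapsto F(\phi)\cR_\Pi F(\phi)$ is bounded and continuous on $S'(\TTN)$, so weak convergence yields $\int F \cdot \cR_\Pi F\, d\nubn \geq 0$, and a standard density argument extends this to all $F \in L^2(\nubn)$ that are $\TTN^+$-measurable. The main obstacle in this plan is the SPDE-based identification $\nubn^\varepsilon \to \nubn$ with correctly matched renormalisation constants; the rest of the argument is either classical (lattice RP) or soft (closedness of RP under weak convergence).
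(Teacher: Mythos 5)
Your proposal is correct and follows essentially the same route as the paper: lattice regularisation yields reflection positivity by the classical argument, the lattice and Fourier regularisations are identified via the dynamical $\phi^4_3$ equation (lattice SPDE convergence plus uniqueness of the invariant measure via the strong Feller property), and reflection positivity is closed under weak limits. The only minor deviation is that you propose to establish tightness of the lattice measures by adapting the Bou\'e--Dupuis analysis to the lattice, whereas the paper simply cites existing tightness results for the lattice $\phi^4_3$ measures (\cite{P75,BFS83,GH18}); either works.
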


\subsubsection{Proof of Proposition \ref{prop:rp}}
In general, Fourier approximations to $\nubn$ (such as $\nu_{\beta,N,K}$) are not reflection positive. Instead, we prove Proposition \ref{prop:rp} by considering lattice approximations to $\nubn$ for which reflection positivity is straightforward to show.

Let $\TT_N^{\varepsilon} = (\varepsilon \ZZ / N \ZZ)^3$ be the discrete torus of sidelength $N$ and lattice spacing $\varepsilon > 0$. In order to use discrete Fourier analysis, we assume that $\varepsilon^{-1} \in \NN$. Note that any hyperplane $\Pi$ of the form \eqref{def:hyperplane} is a subset of $\TT_N^\varepsilon$. 

For any $\varphi \in (\RR)^{\TT_N^\varepsilon}$, define the lattice Laplacian
\begin{equs}
\Delta^\varepsilon \varphi(x)
=
\frac 1{\varepsilon^2} \sum_{\substack{y \in \TT_N^\varepsilon \\ |x-y| = \varepsilon}} (\varphi(y) - \varphi(x)).
\end{equs}
Let $\tilde\mu_{N,\varepsilon}$ be the Gaussian measure on $\RR^{\TT_N^\varepsilon}$ with density
\begin{equs}
d\tilde\mu_{N,\varepsilon}(\varphi)
\propto
\exp\Big(-\frac {\varepsilon^3}{2} \sum_{x \in \TT_N^\varepsilon} \varphi(x) \cdot (-\Delta^\varepsilon + \eta)\varphi(x) \Big) \prod_{x \in \TT_N^\varepsilon} d\varphi(x)
\end{equs}
where $d\vphi(x)$ is Lebesgue measure. 

A natural lattice approximation to $\nubn$ is given by the probability measure $\tilde\nu_{\beta,N,\varepsilon}$ with density proportional to
\begin{equs}
d\tilde\nu_{\beta,N,\varepsilon}(\varphi)
\propto
e^{-\tilde\cH_{\beta,N,\varepsilon}(\varphi)} d\tilde{\mu}_{N,\varepsilon}(\varphi)
\end{equs}
where
\begin{equs}
\tilde\cH_{\beta,N,\varepsilon}(\varphi)
=
\varepsilon^3 \sum_{x \in \TT_n^\varepsilon} \cV_\beta(\varphi(x)) - \Big(\frac \eta 2 + \frac 12 \delta m^2(\varepsilon, \eta) \Big) \varphi(x)^2 	
\end{equs}
where $\frac 12 \delta m^2(\varepsilon,\eta)$ is a renormalisation constant that diverges as $\varepsilon \rightarrow 0$ (see Proposition \ref{prop: lattice convergence}). Note two things: first, the renormalisation constant is chosen dependent on $\eta$ for technical convenience. Second, no energy renormalisation is included since we are only interested in convergence of measures.

\begin{rem}
By embedding $\RR^{\TTN^\varepsilon}$ into $S'(\TTN)$, we can define reflection positivity for lattice measures. We choose this embedding so that the pushforward of $\tilde\nu_{\beta,N,\varepsilon}$ is automatically reflection positive, but other choices are possible. 
\end{rem}

For any $\varphi \in \RR^{\TT_N^\varepsilon}$, we write $\rext^\varepsilon \varphi$ for its unique extension to a trigonometric polynomial on $\TTN$ of degree less than $\varepsilon^{-1}$ that coincides with $\varphi$ on lattice points (i.e. in $\TT_N^\varepsilon$). Precisely,
\begin{equs}
\rext^\varepsilon(\varphi)(x)
=
\frac{\varepsilon^3}{N^3} \sum_{n} \sum_{ y \in \TT_N^\varepsilon} e_n(y-x) \varphi(y)	
\end{equs}
where the sum ranges over all $n=(a_1,a_2,a_3) \in (N^{-1}\ZZ)^3$ such that $|a_i| \leq \varepsilon^{-1}$, and we recall $e_n(x) = e^{2\pi i n \cdot x}$.

\begin{lem}\label{lem: reflection positivity of lattice measures}
Let $\varepsilon > 0$ such that $\varepsilon^{-1} \in \NN$. Denote by $\rext^\varepsilon_* \tilde\nu_{\beta,N,\varepsilon}$ the pushforward of $\tilde\nu_{\beta,N,\varepsilon}$ by the map $\rext^\varepsilon$. Then, the measure $\rext^\varepsilon_* \tilde\nu_{\beta,N,\varepsilon}$ is reflection positive. 	
\end{lem}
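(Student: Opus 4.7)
The plan is to deduce reflection positivity of the pushforward from the classical lattice reflection positivity via the reflection-equivariance of $\rext^\varepsilon$. Since $a \in \ZZ$, $N \in 4\NN$, and $\varepsilon^{-1} \in \NN$, the hyperplane $\Pi = \Pi_{a,i}$ meets $\TTN^\varepsilon$ in a set fixed pointwise by $\cR_\Pi$, so $\cR_\Pi$ restricts to an involution of $\TTN^\varepsilon$. Moreover, $\rext^\varepsilon$ is built from the Fourier basis, which is reflection-covariant, so the equivariance $\rext^\varepsilon(\varphi \circ \cR_\Pi) = (\rext^\varepsilon \varphi) \circ \cR_\Pi$ holds identically.

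Writing $\varphi_\pm := \varphi|_{\TTN^\varepsilon \cap \TTN^\pm}$, I would first decompose the lattice energy
\begin{equs}
\tilde H_\varepsilon(\varphi)
:=
\tilde\cH_{\beta,N,\varepsilon}(\varphi) + \tfrac{\varepsilon^3}{2} \sum_{x \in \TTN^\varepsilon} \varphi(x)(-\Delta^\varepsilon + \eta)\varphi(x)
\end{equs}
as $\tilde H_\varepsilon = H_+(\varphi_+) + H_-(\varphi_-) + H_\Pi(\varphi) + C(\varphi)$, where $H_\pm$ depend only on $\varphi_\pm$ (with $H_- = H_+ \circ \cR_\Pi$), $H_\Pi$ is $\cR_\Pi$-symmetric, and $C(\varphi) = -c \sum_{x} \varphi(x)\varphi(\cR_\Pi x)$ with $c > 0$ arises from the nearest-neighbour bonds of $-\Delta^\varepsilon$ crossing $\Pi$. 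The on-site potential $\cV_\beta$ and the counterterm split trivially; expanding $(\varphi(x) - \varphi(\cR_\Pi x))^2 = \varphi(x)^2 + \varphi(\cR_\Pi x)^2 - 2\varphi(x)\varphi(\cR_\Pi x)$ absorbs the diagonal contributions into $H_\pm + H_\Pi$ and yields $C$. The positivity expansion
\begin{equs}
e^{-C(\varphi)}
=
\sum_{k \geq 0} \frac{c^k}{k!} \sum_{x_1, \ldots, x_k} \prod_{j=1}^k \varphi(x_j) \cdot \prod_{j=1}^k (\varphi \circ \cR_\Pi)(x_j)
\end{equs}
then presents $d\tilde\nu_{\beta,N,\varepsilon}(\varphi)$ as a positive superposition of rank-one $\cR_\Pi$-covariant terms $P_k(\varphi_+)(P_k \circ \cR_\Pi)(\varphi_-)\,e^{-H_+ - H_- - H_\Pi}\,d\lambda(\varphi)$, where $P_k$ is a lattice-$\TTN^{\varepsilon,+}$-local monomial, which is the standard Osterwalder--Schrader representation.

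For the transfer to the pushforward, given $\TTN^+$-measurable $F$ the equivariance gives
\begin{equs}
\int F(\phi) \cR_\Pi F(\phi)\, d(\rext^\varepsilon_* \tilde\nu_{\beta,N,\varepsilon})(\phi)
=
\int G(\varphi) G(\varphi \circ \cR_\Pi)\, d\tilde\nu_{\beta,N,\varepsilon}(\varphi),
\end{equs}
with $G := F \circ \rext^\varepsilon$. Substituting the representation above and applying the Fubini-type change of variables $\varphi_- = \cR_\Pi \tilde\varphi_+$ (which is $\lambda$-measure-preserving and converts $H_-(\varphi_-)$ into $H_+(\tilde\varphi_+)$) reduces each summand to a non-negative term; expanding $F$ in elementary monomials $\phi(f_1) \cdots \phi(f_n)$ with $\supp f_i \subset \TTN^+$ and writing $(\rext^\varepsilon \varphi)(f_i) = \sum_y \varphi(y) \int_{\TTN} L_y^\varepsilon(x) f_i(x)\, dx$ (for Lagrange-type interpolants $L_y^\varepsilon$) writes the right-hand side as a positive semidefinite combination of the lattice two-point covariances controlled by the representation of the first step.

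\textbf{The main obstacle.} The technical heart of the argument is the transfer step: the pullback $G = F \circ \rext^\varepsilon$ is not $\TTN^{\varepsilon,+}$-local on $\RR^{\TTN^\varepsilon}$, because the Fourier interpolation couples all lattice sites, so the classical OS factorisation does not apply directly. The resolution relies on the specific structure of $\TTN^+$-measurable functionals: the fact that $F$ factors through $\phi|_{\TTN^+}$ combined with the equivariance $\rext^\varepsilon \circ \cR_\Pi = \cR_\Pi \circ \rext^\varepsilon$ forces $G(\varphi)$ and $G(\varphi \circ \cR_\Pi)$ to pair symmetrically under the change of variables $\varphi_+ \leftrightarrow \cR_\Pi \varphi_-$, producing the square that underlies lattice reflection positivity.
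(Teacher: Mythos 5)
Your plan shares the paper's skeleton: exploit the equivariance $\rext^\varepsilon \circ \cR_\Pi = \cR_\Pi \circ \rext^\varepsilon$ to rewrite the continuum reflection pairing for $\rext^\varepsilon_*\tilde\nu_{\beta,N,\varepsilon}$ as a lattice pairing, then invoke lattice reflection positivity. Your re-derivation of lattice RP via the $H_+ + H_- + H_\Pi + C$ decomposition and the positivity expansion of $e^{-C}$ is fine but unnecessary, since the paper simply cites \cite[Theorem 2.1]{S86} at that step. The genuinely valuable part of your write-up is the "main obstacle" paragraph: you correctly observe that the pullback $G = F\circ\rext^\varepsilon$ of a $\TTN^+$-measurable $F$ is \emph{not} $\TT_{N,\varepsilon}^+$-local, because the Fourier (Dirichlet-kernel) interpolant couples all lattice sites. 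This is a real gap, and notably it is one the paper's own proof does not address — the paper asserts that $F\circ\rext^\varepsilon$ is $\TT_{N,\varepsilon}^+$-measurable before invoking lattice RP, which is precisely the claim your paragraph casts doubt on.

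The resolution you then sketch does not bridge the gap. Saying the equivariance "forces $G(\varphi)$ and $G(\cR_\Pi\varphi)$ to pair symmetrically under $\varphi_+\leftrightarrow\cR_\Pi\varphi_-$, producing the square" is a hope rather than an argument: the lattice RP form $\int G_1\, \cR_\Pi G_2\, d\tilde\nu_{\beta,N,\varepsilon}$ is positive semi-definite only on the subspace of $\TT_{N,\varepsilon}^+$-local observables, and $G$ generically leaves it. The non-locality in fact appears fatal to the lemma for the Fourier extension. The image of $\rext^\varepsilon$ is the finite-dimensional space of trigonometric polynomials of degree $< \varepsilon^{-1}$, and since no nonzero such polynomial vanishes on the open set $\TTN^+$, the functionals $\phi \mapsto \phi(f)$ with $\text{supp}\, f \subset \TTN^+$ already span the entire dual of that space. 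Hence $\TTN^+$-measurability is vacuous modulo $\rext^\varepsilon_*\tilde\nu_{\beta,N,\varepsilon}$-null sets, and one can pick $g$ with $\text{supp}\, g \subset \TTN^+$ such that $g + \cR_\Pi g$ is orthogonal to those polynomials while $g$ itself is not; then $F(\phi) = \phi(g)$ satisfies $\cR_\Pi F = -F$ on the support of the pushforward, so $\int F\cdot\cR_\Pi F\, d\rext^\varepsilon_*\tilde\nu_{\beta,N,\varepsilon} < 0$. The robust way to close the gap — for you and for the paper — is to use a \emph{local} extension operator in place of $\rext^\varepsilon$ (piecewise constant or piecewise multilinear on $\varepsilon$-cells), for which the pullback of a $\TTN^+$-measurable observable genuinely is $\TT_{N,\varepsilon}^+$-local so the transfer from lattice RP is immediate, and then check that this family of pushforwards also converges weakly to $\nubn$.
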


\begin{proof}
Fix a hyperplane $\Pi$ of the form \eqref{def:hyperplane} and recall that $\Pi$ separates $\TTN=\TTN^+ \sqcup \Pi \sqcup \TTN^-$. Write $\TT_{N,\varepsilon}^+ = \TTN^+ \cap \TT_N^\varepsilon$. 

Since the measure $\tilde\nu_{\beta,N,\varepsilon}$ is reflection positive on the lattice by \cite[Theorem 2.1]{S86}, the following estimate holds: let $F^\varepsilon \in L^2(\tilde\nu_{\beta,N,\varepsilon})$ be $\TT_{N,\varepsilon}^+$-measurable - i.e. $F^\varepsilon(\varphi)$ depends only on $\varphi(x)$ for $x \in \TT_{N,\varepsilon}^+$. Then,
\begin{equs}\label{eq: rp of lattice}
\int F^\varepsilon(\varphi) \cdot \cR_\Pi F^\varepsilon(\varphi) d\tilde\nu_{\beta,N,\varepsilon}(\varphi)
\geq
0.	
\end{equs}

Let $F \in L^2(\rext^\varepsilon_*\tilde\nu_{\beta,N,\varepsilon})$ be $\TTN^+$-measurable. Then, $F \circ \rext^\varepsilon \in L^2(\tilde\nu_{\beta,N,\varepsilon})$ is $\TT_{N,\varepsilon}^+$-measurable. Using that $\rext^\varepsilon$ and $\cR_\Pi$ (the reflection across $\Pi$) commute,
\begin{equs}
\int F(\phi) \cdot \cR_\Pi F(\phi) d \rext^\varepsilon_* \tilde\nu_{\beta,N,\varepsilon}(\phi)
&=
\int (F \circ \rext^\varepsilon)(\varphi) \cdot  (F \circ \cR_\Pi \circ \rext^\varepsilon) (\varphi) d\tilde\nu_{\beta,N,\varepsilon}(\varphi)
\\
&=
\int (F\circ \rext^\varepsilon) (\varphi) \cdot (F \circ \rext^\varepsilon) (\cR_\Pi \varphi) d\tilde\nu_{\beta,N,\varepsilon}(\varphi)
\\
&\geq
0
\end{equs}
where the last inequality is by \eqref{eq: rp of lattice}. Hence, $\rext^\varepsilon_*\tilde\nu_{\beta,N,\varepsilon}$ is reflection positive. 
\end{proof}

\begin{prop} \label{prop: equivalence of cutoffs}
There exist constants $\frac 12 \delta m^2(\bullet,\eta)$ such that $\rext^\varepsilon_*\vec\nu_{\beta,N,\varepsilon} \rightarrow \nubn$ weakly as $\varepsilon \rightarrow \infty$.
\end{prop}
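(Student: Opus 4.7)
The plan is to argue indirectly via the dynamical $\phi^4_3$ equation. Both the Fourier-regularised measures $\nu_{\beta,N,K}$ and the pushforwards $\rext^\varepsilon_* \tilde\nu_{\beta,N,\varepsilon}$ of the lattice measures can be realised as invariant measures of natural regularised versions of \eqref{def: dynamical phi4}, both of which converge to the continuum dynamical $\phi^4_3$ SPDE as the cutoff is removed. Since that SPDE has $\nubn$ as its unique invariant measure by \cite{HMb18, HS19, ZZb18}, any weak limit point of either sequence of regularised measures must coincide with $\nubn$. Proposition \ref{prop: phi4 existence} already gives $\nu_{\beta,N,K} \Rightarrow \nubn$; the content of the present proposition is that the lattice regularisation has the same limit once $\delta m^2(\varepsilon, \eta)$ is chosen appropriately.

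First I would prove tightness of $\{ \rext^\varepsilon_* \tilde\nu_{\beta,N,\varepsilon} \}_{\varepsilon}$ in $\cC^{-\frac 12 - \kappa}(\TTN)$ for any small $\kappa > 0$. The key input is a lattice analogue of the Bou\'e-Dupuis representation: one decomposes $-\log \tilde{\sZ}_{\beta,N,\varepsilon}$ as a discrete stochastic control problem driven by a finite-dimensional Brownian motion indexed by $\TT_N^\varepsilon$, applies the change of drift variables \eqref{eq: full paracontrolled ansatz} using lattice analogues of the diagrams $\Xi$ built from the discrete heat semigroup $e^{t(\Delta^\varepsilon - \eta)}$, and chooses $\delta m^2(\varepsilon, \eta)$ so that the divergent terms in the drift entropy cancel against the counterterm $\intx \frac12 \delta m^2(\varepsilon,\eta) \varphi^2$ in $\tilde\cH_{\beta,N,\varepsilon}$. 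The uniform-in-$\varepsilon$ moment bounds on $\|\varphi\|_{\cC^{-\frac 12 - \kappa}}$ follow by arguments parallel to those in Section \ref{sec: free energy}, and by Kolmogorov's criterion this yields tightness. Next, one introduces the lattice Langevin dynamics
\begin{equs}
(\partial_t - \Delta^\varepsilon + \eta) \Phi^\varepsilon = -\frac{4}{\beta}(\Phi^\varepsilon)^3 + (4 + \eta + \delta m^2(\varepsilon, \eta))\Phi^\varepsilon + \sqrt{2}\,\xi^\varepsilon,
\end{equs}
which is reversible with respect to $\tilde\nu_{\beta,N,\varepsilon}$, and shows (using the results of \cite{ZZb18} on convergence of lattice approximations to the dynamical $\phi^4_3$ equation) that for any initial data converging in $\cC^{-\frac 12 - \kappa}$ to some $\phi_0$, the extended solutions $\rext^\varepsilon \Phi^\varepsilon$ converge in law to the solution of \eqref{def: dynamical phi4} with initial condition $\phi_0$. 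Finally, by a standard invariance-passing-to-the-limit argument, any subsequential weak limit $\nu^*$ of $\rext^\varepsilon_* \tilde\nu_{\beta,N,\varepsilon}$ must be invariant for the semigroup $(\cP_t^{\beta,N})_{t \geq 0}$; ergodicity then forces $\nu^* = \nubn$, which together with tightness gives the full weak convergence.

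The main obstacle is the matching of renormalisation constants: the lattice $\delta m^2(\varepsilon, \eta)$ must be tuned so that the lattice counterparts of $\<tadpole>_K$, $\gamma_K$, and (a finite shift of) $\delta_K$ produce the same continuum Wick ordering and the same second-order mass renormalisation as in \eqref{eq: renorm hamiltonian}. Concretely, one must verify that
\begin{equs}
\delta m^2(\varepsilon, \eta) = \tilde{\<tadpole>}_\varepsilon - \frac{2\tilde\gamma_\varepsilon}{\beta^2} + O(1)
\end{equs}
where $\tilde{\<tadpole>}_\varepsilon = \EE_N^\varepsilon [\varphi(0)^2]$ under the lattice Gaussian measure and $\tilde\gamma_\varepsilon$ is the lattice analogue of \eqref{def: mass renorm}, with the $O(1)$ term fixed by the Remark \ref{rem: eta new} normalisation. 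Once this bookkeeping is settled, the lattice versions of the commutator and paraproduct estimates used in Section \ref{subsec: estimates} go through with minor modifications based on standard discrete Besov theory, and the rest of the dynamical argument is essentially black-box.
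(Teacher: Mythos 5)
Your proposal follows the same overall strategy as the paper's proof: pass through the lattice Langevin dynamics, use convergence of the regularised dynamics to the continuum SPDE \eqref{def: dynamical phi4} (the paper's Proposition \ref{prop: lattice convergence}), argue that any weak limit point of $\rext^\varepsilon_*\tilde\nu_{\beta,N,\varepsilon}$ is invariant for the continuum semigroup, and then invoke uniqueness of the invariant measure via the strong Feller property (Propositions \ref{prop: strong feller} and \ref{prop: stochastic quantisation}). The one genuine departure is your proposal to establish tightness of the lattice measures by re-developing a discrete Bou\'e--Dupuis representation and lattice analogues of the diagrams $\Xi$; the paper instead outsources tightness to Proposition \ref{prop: tightness of lattice measures}, citing \cite{P75, BFS83, GH18}, since the lattice construction of $\phi^4_3$ is classical. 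Your route would make the tightness step self-contained but requires re-deriving lattice paraproduct and commutator estimates, which is substantial extra work for a fact already in the literature. Similarly, the matching $\delta m^2(\varepsilon,\eta) = \tilde{\<tadpole>}_\varepsilon - \frac{2\tilde\gamma_\varepsilon}{\beta^2} + O(1)$ that you sketch is the right shape, but the paper treats the existence of the appropriate $\delta m^2(\varepsilon,\eta)$ as part of Proposition \ref{prop: lattice convergence}, cited from \cite{ZZa18, HMb18}, rather than verifying it directly. One technical precision worth making explicit: to turn the dynamical convergence into convergence of the invariant laws, the paper picks $\varphi_\varepsilon \sim \tilde\nu_{\beta,N,\varepsilon}$ on a common probability space where $\rext^\varepsilon\varphi_\varepsilon \to \phi$ almost surely (Skorokhod) and independently of the driving noise, which is exactly the hypothesis \eqref{eq: lattice initial condition} required by Proposition \ref{prop: lattice convergence}; your phrase ``for any initial data converging in $\cC^{-\frac 12 - \kappa}$'' should be understood in that precise sense, as the hypothesis is almost-sure convergence, not convergence in law.
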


\begin{proof}
The existence of a weak limit of $\rext^\varepsilon_*\tilde\nu_{\beta,N,\varepsilon}$ as $\varepsilon \rightarrow 0$ was first established in \cite{P75}. The fact the lattice approximations and the Fourier approximations (i.e. $\nu_{\beta,N,K}$) yield the same limit as the cutoff is removed is not straightforward in 3D because of the mutual singularity of $\nubn$ and $\mu_N$ \cite{BG20}. Previous approaches have relied on Borel summation techniques to show that the correlation functions agree with (resummed) perturbation theory \cite{MS77}.

In Section \ref{subsec: spde} we give an alternative proof using stochastic quantisation techniques. The key idea is to view $\nubn$ as the unique invariant measure for a singular stochastic PDE with a local solution theory that is robust under different approximations. This allows us to show directly that $\rext^\varepsilon_*\tilde\nu_{\beta,N,\varepsilon}$ converges weakly to $\nubn$ and avoids the use of Borel summation and perturbation theory. The strategy is explained in further detail at the beginning of that section. 
\end{proof}

\begin{proof}[Proof of Proposition \ref{prop:rp} assuming Proposition \ref{prop: equivalence of cutoffs}]

Proposition \ref{prop:rp} is a direct consequence of Lemma \ref{lem: reflection positivity of lattice measures} and Proposition \ref{prop: equivalence of cutoffs} since reflection positivity is preserved under weak limits.
\end{proof}

\subsection{Chessboard estimates for $\nubn$}\label{subsec: chessboard}
Let $B \subset \BBN$ be either a unit block or a pair of nearest-neighbour blocks. Recall the natural identification of $B$ with the subset of $\TTN$ given by the union of blocks in $B$. $\TTN$ can be written as a disjoint union of translates of $B$. Let $\BB_N^B$ be the set of these translates; its elements are also identified with subsets of $\TTN$. Note that if $B = \Box \in \BBN$, then $\BB_N^B = \BBN$. 

We say that $f \in C^\infty(\TTN)$ is $B$-measurable if $\text{supp} f \subset B$ and $\text{supp} f \cap \partial B = \emptyset$. We say that $\phi \in S'(\TTN)$ is $B$-measurable if $\phi(f) = 0$ for every $f \in C^\infty(\TTN)$ unless $f$ is $B$-measurable. We say that $F \in L^2(\nubn)$ is $B$-measurable if it is measurable with respect to the $\sigma$-algebra generated by $\phi \in S'(\TTN)$ that are $B$-measurable.

\begin{prop}\label{prop:chessboard_estimates}
Let $N \in 4\NN$. Let $\{ F_{\tilde B} : \tilde B \in \BB_N^B \}$ be a given set of $L^2(\nubn)$-functions such that each $F_{\tilde B}$ is $\tilde B$-measurable. 

Fix $\tilde B \in \BB_N^B$ and define an associated set of $L^2(\nubn)$-functions $\{ F_{\tilde B, B'} : B' \in \BB_N^B \}$ by the conditions: $F_{\tilde B, \tilde B} = F_{\tilde B}$; and, for any $B', B'' \in \BB_N^B$ such that $B'$ and $B''$ share a common face, 
\begin{equs}
F_{\tilde B, B'} = \cR_\Pi F_{\tilde B, B''}	
\end{equs}
where $\Pi$ is the unique hyperplane of the form \eqref{def:hyperplane} containing the shared face between $B'$ and $B''$. 

Then,
\begin{equs}
\Big| \Big\langle \prod_{\tilde B \in \BB_N^B} F_{\tilde B} \Big\rangle_{\beta,N} \Big| 
\leq
\prod_{\tilde B \in \BB_N^B} \Big| \Big\langle \prod_{B' \in \BB_N^B} F_{\tilde B, B'} \Big\rangle_{\beta,N} \Big|^ \frac{|B|}{N^3}.
\end{equs}
\end{prop}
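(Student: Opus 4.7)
The plan is to derive the chessboard estimate as a consequence of the reflection positivity of $\nubn$ (Proposition \ref{prop:rp}), by iterating the Cauchy--Schwarz inequality across a family of bisecting hyperplanes. This is the standard route from reflection positivity to chessboard estimates pioneered in \cite{FSS78}, as expounded in \cite{S86}, but here one has to be attentive to the fact that the elementary tile $B$ may be a nearest-neighbour pair rather than a single block.

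The basic building block is the following instance of Cauchy--Schwarz: for any hyperplane $\Pi$ of the form \eqref{def:hyperplane} and any pair of $\TTN^+$-measurable functions $F,G \in L^2(\nubn)$, reflection positivity gives
\begin{equs}
\big| \langle F \cdot \cR_\Pi G \rangle_{\beta,N} \big|^2
\leq
\langle F \cdot \cR_\Pi F \rangle_{\beta,N} \cdot \langle G \cdot \cR_\Pi G \rangle_{\beta,N}.
\end{equs}
Applied to the $\TTN^+$ and $\TTN^-$ halves of the product $\prod_{\tilde B} F_{\tilde B}$ (after writing the $\TTN^-$ half as a reflection of a $\TTN^+$-measurable function), this replaces one expectation by the geometric mean of two expectations in which the blocks on the negative side of $\Pi$ now carry the reflection of the functions that were on the positive side. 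The hypothesis $N \in 4\NN$ enters precisely here: when $B$ is a nearest-neighbour pair, only hyperplanes $\Pi_{a,i}$ with $a$ even are compatible with the tiling $\BB_N^B$, and we also need $a+N/2$ to be even so that $\Pi_{a,i}$ does not cut through a pair, which forces $N/2$ to be even.

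The argument then proceeds by induction. Fix a direction $i \in \{1,2,3\}$ and apply the above inequality successively across the $N/|B|_i$ admissible bisecting hyperplanes in that direction (where $|B|_i$ is $1$ or $2$), doubling the number of geometric-mean terms at each step and halving the amount of distinct data in each term. Repeating this in all three coordinate directions, after $\log_2(N^3/|B|)$ reflections in total one reaches a state in which each factor is of the form $\langle \prod_{B' \in \BB_N^B} F_{\tilde B, B'} \rangle_{\beta,N}$ for a single "seed" tile $\tilde B$, with every other tile carrying the appropriate composition of reflections of $F_{\tilde B}$. Collecting exponents gives exactly $|B|/N^3$ for each seed $\tilde B$, matching the claimed bound.

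The main technical obstacle is bookkeeping: one has to check that the chain of reflections used to transport $F_{\tilde B}$ to each $B'$ in the iterated Cauchy--Schwarz matches the family $\{F_{\tilde B, B'}\}$ prescribed in the statement. This is where path-independence on the torus becomes relevant, and it is ultimately guaranteed by the fact that on $\TTN$ any two reflection paths between tiles differ by an even number of reflections across parallel hyperplanes, whose combined action is a translation of $\TTN$ under which $\nubn$ is invariant. A secondary, essentially technical, point is to justify that the $L^2(\nubn)$-measurability of $F_{\tilde B}$ and $\TTN^+$-measurability descend through the reflections; this follows from the definitions given in Section~\ref{subsec: rp} together with the fact that reflections act by bijections on test functions supported off the hyperplane.
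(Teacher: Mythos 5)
Your overall strategy matches the paper's exactly: the paper proves the proposition by citing \cite[Theorem~2.2]{S86}, whose content is precisely the iterated Cauchy--Schwarz / reflection-positivity argument you sketch, and your basic building-block inequality $|\langle F\,\cR_\Pi G\rangle_{\beta,N}|^2 \le \langle F\,\cR_\Pi F\rangle_{\beta,N}\,\langle G\,\cR_\Pi G\rangle_{\beta,N}$ is the right one. Your reading of the $N\in 4\NN$ hypothesis (a hyperplane $\Pi_{a,i}=\{x_i=a\}\cup\{x_i=a+N/2\}$ must avoid cutting a nearest-neighbour pair, which forces both $a$ and $a+N/2$ even, hence $N/2$ even) and your observation about path-independence on the torus for the well-definedness of $\{F_{\tilde B, B'}\}$ are both correct and in fact flesh out what the paper only asserts in one sentence.

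There is, however, a gap in the iteration step. The scheme you describe --- double the number of geometric-mean factors and halve the distinct data at each step, arriving at a fully spread configuration after $\log_2(N^3/|B|)$ reflections --- is the dyadic bisection argument, and it terminates as claimed only if $N/|B|_i$ is a power of two in each direction, i.e.\ only if $N$ is a power of two. The proposition, however, is stated for all $N\in 4\NN$. The general case requires the argument in \cite{S86} going back to Fr\"ohlich, Israel, Lieb and Simon: after normalising each $F_{\tilde B}$ by the $\frac{|B|}{N^3}$-th power of its homogeneous (chessboard) expectation, one considers the supremum of $|\langle\prod_{B'}g_{B'}\rangle_{\beta,N}|$ over all assignments $g_{B'}$ drawn from the normalised $F_{\tilde B}$'s compatible with the reflection structure; a single Cauchy--Schwarz step at any admissible hyperplane, combined with maximality, shows a maximiser must be invariant under that reflection, and running over all admissible hyperplanes forces the maximiser to be a single-seed homogeneous configuration, on which the normalised expectation equals one. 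Your version would suffice for this paper's applications, where $N$ is always taken dyadic, but as written it does not prove the proposition in the stated generality.
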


\begin{proof}
	This is a consequence of the reflection positivity of $\nubn$. The condition $N \in 4\NN$ guarantees $F_{\tilde B, B'}$ is well-defined. See \cite[Theorem 2.2]{S86}.
\end{proof}

\subsection{Proof of Proposition \ref{prop: cosh}} \label{subsec: proof cosh prop}

In order to be able to apply Proposition \ref{prop:chessboard_estimates} to the random variables $Q_i$ of Proposition \ref{prop: cosh}, we need the following lemma.
\begin{lem}\label{lem:expQdefn}
Let $N \in \NN$ and $\beta > 0$. Then, for any $\Box \in \BBN$, $\exp Q_1(\Box), \exp Q_2(\Box) \in L^2(\nubn)$ is $\Box$-measurable. 

In addition, for any nearest neighbours $\Box, \Box' \in \BBN$, $\exp Q_3(\Box, \Box') \in L^2(\nubn)$ is $\Box \cup \Box'$-measurable.
\end{lem}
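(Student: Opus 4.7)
The plan is to establish integrability and measurability in parallel, approximating the indicator functions $\1_\sBox$ and $\1_\sBox - \1_{\sBox'}$ by smooth test functions compactly supported in the interiors of the relevant blocks and then invoking Propositions \ref{prop: testing phi4} and \ref{prop: testing wick square}.

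For the integrability statement, I would reduce each $\exp(Q_i) \in L^2(\nubn)$ to exponential moment estimates of the form already provided by those two propositions. Concretely, $Q_1(\Box) = \frac{1}{\hfbeta}(\beta - :\phi^2:(\1_\sBox))$ and $\1_\sBox \in B^s_{4/3,1} \cap L^2$ for any $s \in (1/2,3/4)$ (cf.\ Remark \ref{rem:block_av}), so Proposition \ref{prop: testing wick square} supplies $\langle \exp(c \cdot :\phi^2:(\1_\sBox))\rangle_{\beta,N} < \infty$ for all $c > 0$. For $Q_2$, I would factorise $\exp(2Q_2) = \exp\bigl(\tfrac{2}{\hfbeta}:\phi^2:(\1_\sBox)\bigr) \cdot \exp\bigl(-\tfrac{2}{\hfbeta}\phi(\Box)^2\bigr)$; the first factor is in $L^1(\nubn)$ by the same proposition and the second is deterministically bounded by $1$. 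For $Q_3 = \phi(\1_\sBox - \1_{\sBox'})$ the function $\1_\sBox - \1_{\sBox'}$ lies in $L^2 \subset H^{-1+\delta}$, so Proposition \ref{prop: testing phi4} yields $\langle \exp(c\, \phi(\1_\sBox-\1_{\sBox'})^2)\rangle_{\beta,N} < \infty$, and the elementary inequality $2|a| \leq 1 + a^2$ then gives $\exp(\pm 2 Q_3) \in L^1(\nubn)$.

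For measurability, write $B$ for $\Box$ in the first two cases and $\Box \cup \Box'$ in the third. I would construct a sequence $\{f_n\}_{n \in \NN} \subset C^\infty(\TTN)$ with $\mathrm{supp}(f_n) \subset B$ and $\mathrm{supp}(f_n) \cap \partial B = \emptyset$, such that $f_n \to \1_\sBox$ (resp.\ $\1_\sBox - \1_{\sBox'}$) simultaneously in $B^s_{4/3,1} \cap L^2$ and in $H^{-1+\delta}$. Such a sequence exists by first mollifying at scale $\varepsilon$ and then multiplying by a smooth cutoff that is $1$ on a block shrunken by $O(\varepsilon)$ and $0$ outside a block shrunken by a smaller amount. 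For each $n$, the random variables $\phi(f_n)$ and $:\phi^2:(f_n)$ are by definition $B$-measurable in the sense introduced in Section \ref{subsec: chessboard}, and by Propositions \ref{prop: testing phi4} and \ref{prop: testing wick square} they converge in $L^2(\nubn)$ to $\phi(\1_\sBox)$ and $:\phi^2:(\1_\sBox)$ respectively. Since $L^2$-limits preserve measurability with respect to a fixed sub-$\sigma$-algebra, the limits are $B$-measurable, and hence so are $Q_1(\Box)$, $Q_2(\Box)$, $Q_3(\Box,\Box')$ and their exponentials.

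The only genuinely delicate step is the simultaneous approximation of $\1_\sBox$ by smooth functions whose supports avoid $\partial \Box$. This is a standard density assertion in the Besov and Sobolev spaces considered (all three have $q,p < \infty$ at the required regularities), provable by combining the mollifier with the shrinking-support cutoff and checking that the error introduced by the cutoff vanishes in the relevant norms as the width of the boundary strip tends to zero. Once this is in place, the remainder of the argument is bookkeeping.
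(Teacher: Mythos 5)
Your proposal follows the paper's route in both parts: approximate the block indicators by smooth test functions supported strictly inside the blocks, pass to the $L^2(\nubn)$-limit via Propositions \ref{prop: testing phi4} and \ref{prop: testing wick square}, and observe that $L^2$-limits preserve measurability with respect to a fixed sub-$\sigma$-algebra. You are in fact slightly more careful than the paper's proof, which cites only Proposition \ref{prop: testing phi4} for the integrability statement even though $Q_1$ and $Q_2$ involve the Wick square, so Proposition \ref{prop: testing wick square} is genuinely needed; your $\exp(2Q_2)$ factorisation and the $2|a|\le 1+a^2$ step for $Q_3$ make the integrability reduction explicit.

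One step that both you and the paper elide is the assertion that $:\phi^2:(f_n)$ is $\Box$-measurable ``by definition'' for smooth $f_n$ supported inside $\Box$. In Proposition \ref{prop: testing wick square}, $:\phi^2:(f_n)$ is constructed as a limit of $:\phi_K^2:(f_n)$ with $\phi_K=\rho_K\phi$, and the Fourier cutoff $\rho_K$ is non-local, so for every finite $K$ the approximation depends on $\phi$ away from $\Box$ and is not $\Sigma_\sBox$-measurable. To close this one needs to know that replacing $\rho_K$ by a compactly supported spatial mollifier (at scale small relative to $\mathrm{dist}(\mathrm{supp}\,f_n,\partial\Box)$) produces the same $L^2(\nubn)$-limit, so that the cutoff approximations are genuinely $\Box$-measurable. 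This is what the paper's parenthetical about ``regularisation of the distance function'' is gesturing at, but it is not spelled out there either, so this is an inherited gloss rather than an error specific to your argument.
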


\begin{proof}
The fact that $\exp Q_1(\Box), \exp Q_2(\Box), \exp Q_3(\Box,\Box') \in L^2(\nubn)$ follows from estimates obtained in Proposition \ref{prop: testing phi4}. The $\Box$ and $\Box\cup\Box'$ measurability of these observables comes from taking approximations to indicators which are supported on blocks (e.g. using some appropriate regularisation of the distance function) and estimates obtained in Proposition \ref{prop: testing phi4}. 	
\end{proof}

\begin{proof}[Proof of Proposition \ref{prop: cosh}]

Let $B_1, B_2 \subset \BBN$ and $B_3$ be a set of unordered pairs of nearest neighbour blocks in $\BBN$. Then,
\begin{equs}\label{eq:coshreduction1}
\begin{split} 
    \cosh &Q_1(B_1) \cosh Q_2(B_2) \cosh Q_3(B_3)
    \\
    &= 
    2^{-|B_1|-|B_2|-|B_3|} \prod_{\sBox_1 \in B_1}\prod_{\sBox_2 \in B_2}\prod_{\{\sBox_3,\sBox_3'\} \in B_3} \Big(e^{Q_1(\sBox_1)} + e^{-Q_1(\sBox_1)} \Big)
    \\
    &\quad
    \times \Big(e^{Q_2(\sBox_2)} + e^{-Q_2(\sBox_2)} \Big)\Big(e^{Q_3(\sBox_3,\sBox_3')} + e^{Q_3(\sBox_3',\sBox_3)} \Big)
    \\
 &\leq
2^{-|B_1|-|B_2|} \sum_{B_1^+,B_1^-, B_2^+, B_2^-}\prod_{i=1}^2 \Bigg( \prod_{\sBox_i^+ \in B_i^+} e^{Q_i(\sBox_i^+)}\prod_{\sBox_i^- \in B_i^-}e^{-Q_i(\sBox_i^-)} \Bigg) 
\\
&\quad
\times
\prod_{\{\sBox_3,\sBox_3'\} \in B_3} e^{|Q_3(\sBox_3,\sBox_3')|}
\end{split}
\end{equs}
where $\cosh Q_i(B_i)$ is defined in \eqref{eq:cosh} and the sum is over all partitions $B_1^+ \sqcup B_1^- = B_1$ and $B_2^+\sqcup B_2^- = B_2$.

It suffices to prove that there exists $\tilde C_Q>0$ such that, for any $B_1^\pm, B_2^\pm$ and $B_3$ as above,
\begin{equs}\label{eq:coshreduction2}
\begin{split}
\Big\langle \prod_{i=1}^2 \Bigg(\prod_{\sBox_i^+ \in B_i^+} e^{Q_1(\sBox_i^+)}\prod_{\sBox_i^- \in B_i^-} &e^{- Q_2(\sBox_i^-)} \Bigg)\prod_{\{\sBox_3,\sBox_3'\} \in B_3} e^{|Q_3(\sBox_3,\sBox_3')|} \Big\rangle_{\beta,N}
\\
&\leq
e^{\tilde C_Q(|B_1|+|B_2|+|B_3|)}.
\end{split}
\end{equs}

Then, taking expectations in \eqref{eq:coshreduction1} and using \eqref{eq:coshreduction2}
\begin{equs}
\Big\langle \cosh &Q_1(B_1)  \cosh Q_2(B_2) \cosh Q_3(B_3) \Big\rangle_{\beta,N}
\\
&\leq
 2^{|B_1|+|B_2|}\sum_{B_1^+,B_1^-}\sum_{B_2^+,B_2^-}
\\
&\quad
\Big\langle \prod_{i=1}^2 \Bigg( \prod_{\sBox_i^+ \in B_i^+} e^{Q_1(\sBox_i^+)}\prod_{\sBox_i^- \in B_i^-} e^{- Q_2(\sBox_i^-)} \Bigg)\prod_{\{\sBox_3,\sBox_3'\} \in B_3} e^{|Q_3(\sBox_3,\sBox_3')|} \Big\rangle_{\beta,N}
\\
&\leq
e^{\tilde C_Q(|B_1| + |B_2| + |B_3|)}
\end{equs}
which yields Proposition \ref{prop: cosh} with $C_Q = \tilde C_Q$.

To prove \eqref{eq:coshreduction2}, first fix $B_1^\pm$ and $B_2^\pm$. Then, by H\"older's inequality, 
\begin{equs} \label{eq:q_holder}
\begin{split}
\Big\langle \prod_{i=1}^2 &\Bigg( \prod_{\sBox_i^+ \in B_i^+} e^{Q_1(\sBox_i^+)}\prod_{\sBox_i^- \in B_i^-} e^{- Q_2(\sBox_i^-)} \Bigg) \prod_{\{\sBox_3,\sBox_3'\} \in B_3} e^{|Q_3(\sBox_3,\sBox_3')|}
\Big\rangle_{\beta,N}
\\
&\leq
\prod_{i=1,2} \Bigg( \Big\langle \prod_{\sBox_i^+ \in B_i^+} e^{5 Q_i(\sBox_i^+)}\Big\rangle_{\beta,N}^\frac 15  \Big\langle \prod_{\sBox_i^- \in B_i^-}e^{5Q_i(\sBox_i^-)}\Big\rangle_{\beta,N}^\frac 15 \Bigg)
\\
&\quad\quad\quad
\times \Big\langle \prod_{\{\sBox_3,\sBox_3'\} \in B_3} e^{5|Q_3(\sBox_3,\sBox_3')|} \Big\rangle_{\beta,N}^
\frac 15.
\end{split}
\end{equs}

Let $i=1,2$. Without loss of generality, we use Proposition \ref{prop:chessboard_estimates} to estimate
\begin{equs}
\Big\langle \prod_{\Box \in B_i^+} e^{5Q_i(\Box)} \Big\rangle_{\beta,N}.
\end{equs}

Define $F_\sBox = e^{5Q_i(\sBox)}$ if $\Box \in B_i^+$ and $1$ otherwise. For each $\Box \in \BBN$, we generate the family of functions $\{ F_{\sBox,\sBox'} : \sBox' \in \BBN \}$ as in Proposition \ref{prop:chessboard_estimates}. Note that for $\Box, \Box' \in \BBN$ such that $\Box$ and $\Box'$ are nearest-neighbours,
\begin{equs}
\cR e^{5Q_i(\sBox)}
=
e^{5Q_i(\sBox')}.
\end{equs}
where $\cR$ is the reflection across the unique hyperplane containing the shared face of $\Box$ and $\Box'$. Thus, we have $F_{\sBox,\sBox'} = e^{5Q_i(\sBox')}$ for every $\Box \in B_i^+$ and $\Box' \in \BB_N^B$. If $\Box \not\in B_i^+$, we have $F_{\sBox,\sBox'}=1$ for every $\Box' \in \BBN$.  

Lemma \ref{lem:expQdefn} ensures that $F_\sBox \in L^2(\nubn)$ is $\Box$-measurable for every $\Box \in \BBN$. Hence, by Proposition \ref{prop:chessboard_estimates}, we obtain
\begin{equs}
\Big\langle \prod_{\sBox \in B_i^+} e^{5Q_i(\sBox)} \Big\rangle_{\beta,N} 
\leq 
\prod_{\sBox \in B_i^+} \Big\langle \prod_{\sBox' \in \BBN} e^{5Q_i(\sBox')}\Big\rangle_{\beta,N}^\frac{1}{N^3}.
\end{equs}

Therefore, by Proposition \ref{prop: q bound main}, there exists $C_Q'>0$ such that, for all $\beta$ sufficiently large,
\begin{equs} \label{eq:q_iterm}
\Big\langle \prod_{\sBox \in B_i^+}e^{5Q_i(\sBox)} \Big\rangle_{\beta,N} 
\leq 
e^{C_Q'|B_i^+|}.
\end{equs}

For the remaining term involving $Q_3$, partition $B_3 = \bigcup_{k=1}^6 B_3^{(k)}$ such that each $B_3^{(k)}$ is a set of disjoint pairs of nearest neighbour blocks, all with same orientation. Then, by H\"older's inequality,
\begin{equs} \label{eq:q_3nonoverlap}
	\Big\langle \prod_{\{\sBox,\sBox'\} \in B_3}e^{5|Q_3(\sBox,\sBox')|} \Big\rangle_{\beta,N} 
	\leq 
	\prod_{k=1}^6 \Big\langle \prod_{\{\sBox,\sBox'\}\in B_3^{(k)}}e^{30|Q_3(\sBox,\sBox')|} \Big\rangle_{\beta,N}^\frac 16. 
\end{equs}
Assuming that we have established that there exists $C_Q'>0$ such that
\begin{equs}
	\Big\langle \prod_{\{\sBox,\sBox'\}\in B_3^{(k)}}e^{30|Q_3(\sBox,\sBox')|} \Big\rangle_{\beta,N}
	\leq
	e^{C_Q'|B_3^{(k)}|}
\end{equs}
for every $k \in \{1,\dots,6\}$, then \eqref{eq:q_3nonoverlap} yields
\begin{equs}
	\Big\langle \prod_{\{\sBox,\sBox'\}\in B_3}e^{5|Q_3(\sBox,\sBox')|} \Big\rangle_{\beta,N}
	\leq
	e^{\frac{C_Q'}{6}|B_3|}.
\end{equs}

Hence, without loss of generality, we may assume $B_3$ is a set of disjoint pairs of nearest neighbour blocks, all of the same orientation. 

Define $F_B = e^{5 |Q_3(\sBox,\sBox')|}$ for any $B=\{\Box,\Box'\} \in B_3$ and $1$ otherwise. Note that for any two pairs of nearest-neighbour blocks, $\{ \Box,\Box'\},\{\tilde\Box,\tilde\Box'\} \subset \BBN$,
\begin{equs}
\cR e^{5|Q_3(\sBox,\sBox')|}
= 
e^{5|Q_3(\tilde\sBox,\tilde\sBox')|}
\end{equs}
where $\cR$ is the reflection across the unique hyperplane containing the shared face of $\Box \cup \Box'$ and $\tilde\Box \cup \tilde \Box'$. Thus, for any $B=\{\Box,\Box'\} \in B_3$ and $B'=\{\tilde\Box,\tilde\Box'\} \in \BB_N^B$, we have $F_{B,B'} = e^{5|Q_3(\tilde\sBox,\tilde\sBox')|}$. If $B \not\in B_3$, then we have $F_{B,B'}=1$ for all $B' \in \BB_N^B$.

Lemma \ref{lem:expQdefn} ensures that $\exp(|Q_3(\Box,\Box')|)$ is $\Box\cup\Box'$-measurable. Thus, applying Propositions \ref{prop:chessboard_estimates} and \ref{prop: q bound main}, there exists $C_Q'>0$ such that, for all $\beta$ sufficiently large,
\begin{equs} \label{eq:q_3_end}
\begin{split}
\Big\langle \prod_{B=\{\sBox,\sBox'\} \in B_3} &e^{5|Q_3(\{\sBox,\sBox'\})|} \Big\rangle_{\beta,N}
\\
&\leq
\prod_{B = \{\sBox,\sBox'\} \in B_3} \Big\langle \prod_{B'=\{\tilde\sBox,\tilde\sBox'\} \in \BB_N^B} e^{5| Q_3(\tilde\sBox,\tilde\sBox')|} \Big\rangle_{\beta,N}^\frac{2}{N^3}
\\
&\leq 
e^{2C_Q'|B_3|}.
\end{split}
\end{equs}

Inserting \eqref{eq:q_iterm} and \eqref{eq:q_3_end} into \eqref{eq:q_holder}, and taking into account \eqref{eq:q_3nonoverlap}, yields \eqref{eq:coshreduction2} with $\tilde C_Q = \frac{C_Q'}{15}$, thereby finishing the proof. 
\end{proof}

\subsection{Equivalence of the lattice and Fourier cutoffs} \label{subsec: spde}

This section is devoted to a proof of Proposition \ref{prop: equivalence of cutoffs} using stochastic quantisation techniques. In Section \ref{subsec: giving meaning}, we give a rigorous interpretation to \eqref{def: dynamical phi4} via the change of variables \eqref{def: dynamical phi4 ansatz}. Subsequently, in Section \ref{subsec: stochastic quantisation}, we establish that $\nubn$ is the unique invariant measure of \eqref{def: dynamical phi4}, see Proposition \ref{prop: stochastic quantisation}. In Section \ref{subsec: proof of rp}, we first establish that local solutions of spectral Galerkin and lattice approximations to \eqref{def: dynamical phi4} converge to the same limit (see Propositions \ref{prop: local sg to phi4} and \ref{prop: lattice convergence}); these approximations admit unique invariant measures given by $\nu_{\beta,N,K}$ and $\tilde\nu_{\beta,N,\varepsilon}$, respectively. Then, using the global existence of solutions and uniqueness of the invariant measure of \eqref{def: dynamical phi4}, we show that both of these measures converge to $\nubn$ as the cutoffs are removed.

\subsubsection{Giving a meaning to \eqref{def: dynamical phi4}} \label{subsec: giving meaning}

Let $\xi$ be space-time white noise on $\TTN$ defined on a probability space $(\Omegab, \PPb)$. This means that $\xi$ is a Gaussian random distribution on $\Omega$ satisfying
\begin{equs}
\EEb [\xi(\Phi) \xi(\Psi)]
= 
\int_0^\infty \intx \Phi \Psi dx dt	
\end{equs}
where $\Phi,\Psi\in C^\infty(\RR_+\times\TTN)$ and $\EEb$ denotes expectation with respect to $\PPb$. We use the colour blue here to distinguish between the space random processes defined in Section \ref{sec: bd} and the space-time random processes that we consider here.

We interpret \eqref{def: dynamical phi4} as the limit of renormalised approximations. For every $K \in (0,\infty)$, the Glauber dynamics of $\nu_{\beta,N,K}$ is given by the stochastic PDE
\begin{equs}\label{def: spectral galerkin}
\begin{split}
(\partial_t -\Delta + \eta)	\Phi_K
&=
-\frac 4\beta \rho_K (\rho_K \Phi_K)^3 
\\
&\quad
+ \Big(4+\eta+ \frac{12}\beta \<tadpole>_K + \frac {2\gamma_K}{\beta^2} \Big) \rho^2_K \Phi_K + \sqrt 2 \xi.
\end{split}
\end{equs}
Above, $\rho_K$ is as in Section \ref{sec: model} and we recall $\rho_K^2 \neq \rho_K$; $\<tadpole>_K$ is defined in \eqref{def: tadpole}; and $\gamma_K = -4^2 \cdot 3 \<sunset>_K$, where $\<sunset>_K$ is defined in \eqref{def: sunset}.

\begin{rem}
Recall that the Glauber dynamics for the measure $\nu$ with formal density $d\nu(\phi) \propto e^{-\cH(\phi)}\prod_{x\in \TTN} d\phi(x)$ is given by the (overdamped) Langevin equation 
\begin{equs}
\partial_t \Phi(t)
=
\partial_\phi \cH(\Phi(t)) + \sqrt 2 \xi	
\end{equs}
where $\partial_\phi \cH$ denotes the functional derivative of $\cH$.
\end{rem}

For fixed $K$, the (almost sure) global existence and uniqueness of mild solutions to \eqref{def: spectral galerkin} is standard (see e.g. \cite[Section III]{DPZ88}). Moreover, $\nu_{\beta,N,K}$ is its unique invariant measure (see \cite[Theorem 2]{Z89}). The approximations \eqref{def: spectral galerkin}, which we call spectral Galerkin approximations, are natural in our context since $\nubn$ is constructed as the weak limit of $\nu_{\beta,N,K}$ as $K \rightarrow \infty$. 

The difficulty in obtaining a local well-posedness theory that is stable in the limit $K \rightarrow \infty$ lies in the roughness of the white noise $\xi$. The key idea is to exploit that the small-scale behaviour of solutions to \eqref{def: spectral galerkin} is governed by the Ornstein-Uhlenbeck process
\begin{equs}
\<1b>
=
(\partial_t - \Delta + \eta)^{-1} \sqrt 2 \xi.	
\end{equs}
This allows us to obtain an expansion of $\Phi_K$ in terms of explicit (renormalised) multilinear functions of $\<1b>$, which give a more detailed description of the small-scale behaviour of $\Phi_K$, plus a more regular remainder term. Given the regularities of these explicit stochastic terms, the local solution theory then follows from deterministic arguments. 

\begin{rem}
We are only concerned with the limit $K \rightarrow \infty$ in \eqref{def: spectral galerkin}. We do not try to make sense of the joint $K,N \rightarrow \infty$ limit. 	
\end{rem}

We use the paracontrolled distribution approach of \cite{MW17}, which is modification of the framework of \cite{CC18} (both influenced by the seminal work of \cite{GIP15}). In this approach, the expansion of $\Phi_K$ is given by an ansatz, see \eqref{def: spectral galerkin ansatz}, that has similarities to the change of variables encountered in Section \ref{subsec: change of variables}. See Remark \ref{rem: ansatz comparison}. There are also related approaches via regularity structures \cite{H14,H16, MW18} and renormalisation group \cite{K16}, but we do not discuss them further. 

For every $K \in (0,\infty)$, define
\begin{equs}
\<1b>_K 
&=
\rho_K \<1b>
\\
\<2b>_K
&=
\<1b>_K^2 - \<tadpole>_K
\\
\<3b>_K
&=
\<1b>_K^3 - 3 \<tadpole>_K \<1b>_K
\\
\<20b>_K
&=
(\partial_t -\Delta + \eta)^{-1} \rho_K \<2b>_K
\\
\<30b>_K
&=
(\partial_t -\Delta + \eta)^{-1} \rho_K \<3b>_K
\\
\<31b>_K
&=
\<1b>_K \pe \rho_K \<30b>_K
\\
\<22b>_K
&=
\<2b>_K \pe \rho_K \<20b>_K - \frac 23 \<sunset>_K
\\
\<32b>_K
&=
\<2b>_K \pe \rho_K \<30b>_K - 2 \<sunset>_K \<1b>_K.
\end{equs}
We recall that the colour blue is used to distinguish between the above space-time diagrams and the space diagrams of Section \ref{subsec: diagrams}.

For any $T>0$, the vector $\Xib_K = \Big( \<1b>_K, \<2b>_K, \<30b>_K, \<31b>_K, \<22b>_K, \<32b>_K \Big)$ is space-time stationary and almost surely an element of the Banach space
\begin{equs}
\cX_T
&=
C([0,T]; \cC^{-\frac 12 - \kappa})\times C([0,T]; \cC^{-1-\kappa})
\\
&\quad
\times \Big( C([0,T]; \cC^{\frac 12 - \kappa}) \cap C^\frac 18([0,T]; \cC^{\frac 14 - \kappa}) \Big)
\\
&\quad
\times C([0,T]; \cC^{-\kappa})\times C([0,T]; \cC^{-\kappa}) \times C([0,T]; \cC^{-\frac 12 -\kappa})
\end{equs}
where the norm on $\cX_T$ is given by the maximum of the norms on the components. Above, for any $s \in \RR$, $C([0,T];\cC^s)$ consists of continuous functions $\Phi :[0,T] \rightarrow \cC^s$ and is a Banach space under the norm $\sup_{t\in[0,T]}\| \cdot \|_{\cC^s}$. In addition, for any $\alpha \in (0,1)$, $C^\alpha([0,T];\cC^s)$ consists of $\alpha$-H\"older continuous functions $\Phi : [0,T] \rightarrow \cC^s$ and is a Banach space under the norm $\| \cdot \|_{C([0,T];\cC^s)} + | \cdot |_{\alpha, T}$ where
\begin{equs}
| \Phi |_{\alpha,T}
=
\sup_{0<s<t<T} \frac{\|\Phi(t) - \Phi(s)\|_{\cC^s}}{|t-s|^\alpha}.	
\end{equs}

\begin{prop}\label{prop: blue diagrams}
There exists a stochastic process $\Xib = (\<1b>, \<2b>, \<30b>, \<31b>, \<22b>, \<32b>)$ such that, for every $T > 0$, $\Xib \in \cX_T$ almost surely and 
\begin{equs}
\lim_{K \rightarrow \infty}\EEb \| \Xib_K - \Xib \|_{\cX_T}
=
0.
\end{equs}
\end{prop}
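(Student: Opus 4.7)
The plan is to apply the standard strategy for constructing stochastic terms in singular SPDEs via Wiener chaos analysis and Kolmogorov's continuity criterion, following closely \cite{MW17, CC18}. First, I would observe that each component of $\Xib_K$ belongs to a finite, inhomogeneous Wiener chaos (of orders $1,2,3,3,4,4$ respectively for $\<1b>_K, \<2b>_K, \<30b>_K, \<31b>_K, \<22b>_K, \<32b>_K$). By Nelson's hypercontractivity estimate, all $L^p(\PPb)$ norms on a fixed Wiener chaos are equivalent up to constants depending only on $p$ and the order of the chaos, so it suffices to control second moments uniformly in $K$ and to show they vanish appropriately for differences $\Xib_K - \Xib_L$.

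Next, for each diagram I would derive an explicit iterated-integral representation against the cylindrical white noise $\xib$, in direct analogy with \eqref{eq: iterated integral third wick} in the space setting. The Ornstein--Uhlenbeck kernel $(\partial_t-\Delta+\eta)^{-1}$ plays the role that $\cJ_k$ played in Section \ref{subsec: construction of stochastic objects}. Using It\^o isometry (equivalently, Wick's theorem), I would then compute $\EEb|\cF\<Ab>_K(t,n)|^2$, together with second moments of spatial and temporal increments, for each $\<Ab>_K \in \Xib_K$. For the elementary diagrams $\<1b>_K, \<2b>_K, \<30b>_K, \<20b>_K$ this is a direct generalisation of Lemma \ref{lem:trident}. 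For the resonant products $\<31b>_K, \<22b>_K, \<32b>_K$, the Feynman-graph bookkeeping identifies precisely the subgraphs producing divergent contributions as $K\to\infty$: these are the ``tadpole'' subgraphs contributing $\<tadpole>_K$ (absorbed by the Wick ordering) and the ``sunset'' subgraphs contributing $\<sunset>_K$. The subtractions $\frac{2}{3}\<sunset>_K$ and $2\<sunset>_K\<1b>_K$ in the definitions of $\<22b>_K$ and $\<32b>_K$ are tuned to cancel these divergences exactly, leaving second-moment integrals that are finite and summable uniformly in $K$, with differences decaying like $K^{-\delta}$ for some small $\delta>0$.

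With these pointwise-in-Fourier bounds in hand, I would promote them to bounds on the Besov--H\"older norms appearing in $\cX_T$. Using space-time stationarity as in the proof of Lemma \ref{lem:trident}, one obtains bounds on $\EEb\|\Delta_j \<Ab>_K(t)\|_{L^p}^p$ from pointwise second moments via Nelson's estimate. Summing dyadically and losing an arbitrarily small amount of spatial regularity via the embedding $B^{s+\kappa}_{p,p}\hookrightarrow\cC^s$ for $p$ large yields the stated $\cC^s$ bounds. For the time regularity of $\<30b>_K$, in particular the $C^{1/8}([0,T];\cC^{1/4-\kappa})$ norm, I would bound second moments of $\<30b>_K(t)-\<30b>_K(s)$ in $\cC^{1/4-\kappa}$ by interpolating between an $L^\infty_t\cC^{1/2-\kappa}_x$ bound and a weaker bound obtained by commuting the time derivative past the heat kernel using Schauder estimates; the H\"older exponent $1/8$ arises from a standard Kolmogorov continuity argument applied to this interpolated moment bound. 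Convergence is then obtained by applying the same argument to differences $\Xib_K-\Xib_L$, which by the chaos decomposition are again in a fixed finite chaos.

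The main obstacle is the diagrammatic bookkeeping for the resonant products: verifying that the counterterms $\<tadpole>_K$ and $\<sunset>_K$ cancel exactly the divergent Feynman subgraphs in $\<31b>_K, \<22b>_K, \<32b>_K$, and that what remains is summable in a manner uniform in $K$. This calculation is essentially the one performed in \cite[Section~3]{MW17}; our only modifications are to work with the specific cutoff $\rho_K$ from Section \ref{sec: model} rather than a sharp Fourier truncation, and to keep track of the massive free field (with mass $\eta$) rather than the massless one. Neither change affects the structure of the argument, since $\rho_K$ is a smooth radial symbol with the same support properties used there, and the mass only improves convergence of the relevant momentum integrals.
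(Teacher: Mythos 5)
Your proposal follows essentially the same strategy as the paper's proof: both delegate the Wiener-chaos/Kolmogorov construction to \cite{MW17, CC18} and single out the verification that the static counterterms $\<tadpole>_K$ and $\<sunset>_K$ coincide with the second moments of the corresponding space-time diagrams (the paper's two explicit It\^o-isometry computations, giving $\EEb\<1b>_K^2 = \<tadpole>_K$ and $\EEb[\<2b>_K\,\rho_K\<20b>_K] = \tfrac23\<sunset>_K$) as the only non-citation step. One minor bookkeeping slip in your chaos accounting: $\<31b>_K$ lives in inhomogeneous Wiener chaos of order up to $4$ (product of a chaos-$1$ and a chaos-$3$ object) and $\<32b>_K$ in chaos up to $5$ (product of chaos-$2$ and chaos-$3$), not $3$ and $4$ as stated, though this does not affect the hypercontractivity argument.
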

\begin{proof}

The proof follows from \cite[Section 4]{CC18} (see also \cite{MWX17} and \cite[Section 10]{H14}). The only subtlety is to check that the renormalisation constants $\<tadpole>_K$ and $\<sunset>_K$, which were determined by the field theory $\nubn$, are sufficient to renormalise the \textit{space-time} diagrams appearing in the analysis of the SPDE. Precisely, it suffices to show $\EE [\<2b>_K^2(t,x)] = \<tadpole>_K$ and $\EE \Big[ \<2b>_K \rho_K \<20b>_K (t,x) \Big] = \frac 23 \<sunset>_K$ for every $(t,x) \in \RR_+\times\TTN$.

There exists a set of complex Brownian motions $\{ W^n(\bullet) \}_{n \in (N^{-1}\ZZ)^3}$ defined on $(\Omegab,\PPb)$, independent modulo the condition $W^n(\bullet) = \overline{W^{-n}(\bullet)}$, such that
\begin{equs}
\xi(\phi)
&=
\frac{1}{N^3} \sum_{n \in (N^{-1}\ZZ)^3} \int_\RR \cF(\phi)(t,n) N^\frac 32 dW^n(t)	
\end{equs}
for every $\phi \in L^2(\RR \times \TTN)$. 

For $t \geq 0$ and $n \in (N^{-1}\ZZ)^3$, let $H(t,n) = e^{-t \langle n \rangle^2}$ be the (spatial) Fourier transform of the heat kernel associated to $(\partial_t - \Delta + \eta)$. For any $K>0$, define $H_K(t,n) = \rho_K (n) H(t,n)$. We extend both kernels to $t \in \RR$ by setting $H(t,\cdot) = H_K(t,\cdot) = 0$ for any $t < 0$. Then
\begin{equs}
\cF \<1b>_K(t,n)
=
\sqrt 2 N^\frac 32 \int_{\RR} H_K(t-s,n) dW^n(s).	
\end{equs}

By Parseval's theorem and It\^o's isometry,
\begin{equs}
	\EEb & \<1b>_K^2(t,x)
	\\
	&=
	\frac{2}{N^3} \sum_{n_1,n_2 \in (N^{-1}\ZZ)^3} \EEb \Bigg[ \Bigg(\int_\RR H_K(t-s,n_1) dW^{n_1}(s) \Bigg) \Bigg(\int_\RR H_K(t-s, n_2) dW^{n_2}(s) \Bigg) \Bigg]
	\\
	&=
	\frac{2}{N^3} \sum_{n \in (N^{-1}\ZZ)^3} \rho_K^2(n) \int_{-\infty}^t e^{-2(t-s)\langle n \rangle^2} ds
	=
	\<tadpole>_K
\end{equs}
for all $(t,x) \in \RR_+\times\TTN$. With this observation the convergence of $\<1b>_K, \<2b>_K$, $\<30b>_K$ and $\<31b>_K$ follows from mild adaptations of \cite[Section 4]{CC18}. 

For the remaining two diagrams, one can show from arguments in \cite[Section 4]{CC18} that
\begin{equs}
\rho_K \<20b>_K \pe \<2b>_K - \EEb \Big[\rho_K \<20b>_K  \<2b>_K\Big] 
\text{ and } 
\rho_K \<30b>_K \pe \<2b>_K - 3 \EEb \Big[\rho_K \<20b>_K \<2b>_K \Big] \<1b>_K
\end{equs}
converge to well-defined space-time distributions. 

Writing
\begin{equs}
\<2b>(t,x)
=
\frac{1}{N^3}\sum_{n_1, n_2 \in (N^{-1}\ZZ)^3} e_{n_1+n_2}(x)\int_{\RR^2} H_K(t-s,n_1)H_K(t-r,n_2) dW^{n_1}(s) dW^{n_2}(r) 	
\end{equs}
we have, by Parseval's theorem and It\^o's isometry, 
\begin{equs}
\EEb &\Big[ \<20b>_K \rho_K \<2b> (t,x) \Big]
\\
&=
\frac{8}{N^6} \EEb \Bigg[ \sum_{\substack{n_1,n_2,n_3,n_4 \in (N^{-1}\ZZ)^3 \\ n_1 + n_3 = n_2 +n_4 = 0}} e_{n_1+n_2+n_3+n_4}(x) \rho_K(n_3+n_4)
\\
&\quad\quad\quad 
\times \int_{\RR^5} H_K(t-s, n_1 + n_2) H_K(s-u_1, n_1) H_K(s-u_2, n_2) H_K(t-u_3, n_3)
\\
&\quad\quad\quad\quad 
\times  H_K(t-u_4,n_4) dW^{n_1}(u_1) dW^{n_2}(u_2) dW^{n_3}(u_3) dW^{n_4}(u_4) ds \Bigg]
\\
&=
\frac{8}{N^6} \sum_{\substack{n_1,n_2,n_3,n_4 \in (N^{-1}\ZZ)^3 \\ n_1 = -n_3, n_2 = - n_4 }} \rho_K(n_3 + n_4) \int_{\RR^3} H_K(t-s,n_1+n_2) H_K(s-u_1,n_1) 
\\
&\quad\quad\quad\quad
\times H_K(s-u_2,n_2) H_K(t-u_1,n_1) H_K(t-u_2,n_2) du_1 du_2 ds
\\
&=
\frac{8}{N^6} \sum_{n_1, n_2 \in (N^{-1}\ZZ)^3} \rho_K^2(n_1+n_2)\rho_K^2(n_1)\rho_K^2(n_2) \int_{\RR} H(t-s, n_1+n_2)H(t-s,n_1)
\\
&\quad\quad\quad\quad
\times H(t-s,n_2) \int_{\RR^2} H(2(s-u_1),n_1)H(2(s-u_2),n_2) du_1 du_2 ds
\\
&=
\frac{2}{N^6} \sum_{n_1,n_2 \in (N^{-1}\ZZ)^3} \frac{\rho_K^2(n_1) \rho_K^2(n_2)\rho_K^2(n_1+n_2) }{\langle n_1 \rangle^2 \langle n_2 \rangle^2 (\langle n_1 + n_2 \rangle^2 + \langle n_1 \rangle^2 + \langle n_2 \rangle^2)}.
\end{equs}

By symmetry,
\begin{equs}
\frac{2}{N^6} &\sum_{n_1,n_2 \in (N^{-1}\ZZ)^3} \frac{\rho_K^2(n_1)\rho_K^2(n_2)\rho_K^2(n_1+n_2)}{\langle n_1 \rangle^2 \langle n_2 \rangle^2 (\langle n_1 + n_2 \rangle^2 + \langle n_1 \rangle^2 + \langle n_2 \rangle^2)}	
\\
&=
\frac {2}{3N^6} \sum_{n_1 + n_2 + n_3 = 0} \frac{\rho_K^2(n_1)\rho_K^2(n_2)\rho_K^2(n_3)}{\langle n_1 \rangle^2 + \langle n_2 \rangle^2 + \langle n_3 \rangle^2} \\
&\quad\quad\quad
\times \Big( \frac 1 {\langle n_1 \rangle^2 \langle n_2 \rangle^2} + \frac 1 {\langle n_2 \rangle^2 \langle n_1 + n_2 \rangle^2} + \frac 1 {\langle n_1 + n_2 \rangle^2 \langle n_1 \rangle^2} \Big)
\\
&=
\frac 23 \<sunset>_K
\end{equs}
thereby completing the proof.
\end{proof}

We return now to the solution theory for \eqref{def: dynamical phi4}/\eqref{def: spectral galerkin}. Fix $K \in (0,\infty)$. Using the change of variables
\begin{equs} \label{def: spectral galerkin ansatz}
\Phi_K
=
\<1b> - \frac{4}\beta \<30b>_K + \Upsilon_K + \Theta_K
\end{equs}
we say that $\Phi_K$ is a mild solution of \eqref{def: spectral galerkin} with initial data $\phi_0 \in \cC^{-\frac 12-\kappa}$ if $(\Upsilon_K, \Theta_K)$ is a mild solution to the system of equations
\begin{equs} \label{def: spectral galerkin system}
\begin{split}
(\partial_t - \Delta + \eta) \Upsilon_K
&= 
F_K(\Upsilon_K, \Theta_K; \Xib_K)
\\
(\partial_t - \Delta + \eta) \Theta_K
&=
G_K(\Upsilon_K,\Theta_K;  \Xib_K)
\end{split}
\end{equs}
where
\begin{equs}
F_K(\Upsilon_K,\Theta_K;\Xib_K)
&=
-\frac{4\cdot 3}{\beta}\rho_K \Big\{ \<2b>_K \pg \rho_K(\Phi_K - \<1b>) \Big\}
\\
G_K(\Upsilon_K,\Theta_K;\Xib_K)
&=
-\frac{4 \cdot 3}{\beta} \rho_K \Big\{ \<2b>_K \pe \Big( - \frac 4\beta \rho_K \<30b>_K + \rho_K(\Upsilon_K + \Theta_K) \Big) \Big\}
\\
&\quad
- \frac{4\cdot 3}{\beta} \rho_K \Big\{ \<2b>_K \pl \rho_K(\Phi_K - \<1b>) + \<1b>_K \big(\rho_K (\Phi_K - \<1b>) \big)^2 \Big\}
\\
&\quad
- \frac 4\beta \rho_K \big( \rho_K(\Phi_K - \<1b>) \big)^3 + \Big(4 + \eta + \frac{2\gamma_K}{\beta^2}\Big) \rho_K \Phi_K
\end{equs}
with initial data $(\Upsilon_K(0,\cdot),\Theta_K(0,\cdot)) = \Big(0,\phi_0 + \sqrt 2 \<xi>(0) - \frac{4\cdot(\sqrt 2)^3}\beta \<30b>_K(0) \Big)$.

We split $G_K(\Upsilon_K,\Theta_K;\Xib_K) = G^1_K(\Upsilon_K,\Theta_K;\Xib_K) + G^2_K(\Upsilon_K,\Theta_K;\Xib_K)$,
\begin{equs}
G^1_K(\Upsilon_K,\Theta_K;\Xib_K)
&=
\frac{4^2\cdot 3}{\beta^2} \rho_K \Big\{ \<32b>_K + 3\<22b>_K \rho_K (\Phi_K - \<1b>) \Big\} 
\\
&\quad
+ G^{1,a}_K(\Upsilon_K,\Theta_K;\Xib_K) + G^{1,b}_K(\Upsilon_K,\Theta_K;\Xib_K)
\\
G^2_K(\Upsilon_K,\Theta_K;\Xib_K)
&=
- \frac{4 \cdot 3}{\beta} \rho_K \Big\{ \<2b>_K \pe \rho_K \Theta_K + \<2b>_K \pl \rho_K (\Phi_K - \<1b>) 
\\
&\quad
+ \<1b>_K \big( \rho_K(\Phi_K - \<1b>) \big)^2 \Big\} - \frac 4\beta \rho_K \big( \rho_K(\Phi_K - \<1b>) \big)^3 + (4+\eta) \rho_K \Phi_K	
\end{equs}
where $G_K^{1,a}(\Upsilon_K,\Theta_K;\Xib_K)$ and $G_K^{2,a}(\Upsilon_K,\Theta_K;\Xib_K)$ are commutator terms defined through the manipulations
\begin{equs} \label{eq: spectral galerkin commutator}
\begin{split}
- &\frac{4 \cdot 3}\beta \rho_K \Big\{ \<2b>_K \pe \rho_K \Upsilon_K \Big\}
\\
&=
\frac{4^2 \cdot 3^2}{\beta^2} \rho_K \Big\{ \<2b>_K \pe \rho_K (\partial_t - \Delta + \eta)^{-1} \big( \rho_K \{ \<2b>_K \pg \rho_K (\Phi_K - \<1b>) \} \big) \Big\}
\\
&=
\frac{4^2 \cdot 3^2}{\beta^2} \rho_K \Big\{ \<2b>_K \pe \rho_K \Big( \<20b>_K \pg \rho_K(\Phi_K - \<1b>) \Big) \Big\} + G_K^{1,a}
\\
&=
\frac{4^2\cdot 3^2}{\beta^2} \rho_K \Big\{ \Big( \<2b>_K \pe \rho_K \<20b>_K \Big) \rho_K(\Phi_K - \<1b>) \Big\} + G_K^{1,a} + G_K^{1,b}.
\end{split}
\end{equs}

The precise choice of the splitting of $\Phi_K - \<1b> + \frac 4\beta \<30b>_K$ into $\Upsilon_K$ and $\Theta_K$ is explained in detail in \cite[Introduction]{MW17}. For our purposes, it suffices to note that $\Upsilon_K$ captures the small-scale behaviour of this difference.  On the other hand, $\Theta_K$ captures the large-scale behaviour: the term $G_K^2$ contains a cubic damping term in $\Theta_K$ (i.e. with a good sign). Finally, we note that there is a redundancy in the specification of initial condition: any choice such that $\Upsilon_K(0,\cdot) + \Theta_K(0,\cdot) = \phi_0 + \<1b>(0) - \frac{4}\beta \<30b>(0)$ is sufficient. Our choice is informed by Remark 1.3 in \cite{MW17}.

\begin{rem} \label{rem: ansatz comparison}
Rewriting \eqref{def: spectral galerkin ansatz} as
\begin{equs}
\Phi_K
=
\<1b> - \frac{4}\beta \<30b>_K - \frac{4\cdot 3}\beta (\partial_t - \Delta + \eta)^{-1} \rho_K \Big\{ \<2b>_K \pg \rho_K(\Phi_K -\<1b>) \Big\} + \Theta_K
\end{equs}
we note the similarity between the change of variables for the stochastic PDE given above and for the field theory in \eqref{eq: intermediate integrated ansatz}.
\end{rem}

Formally taking $K \rightarrow \infty$ in \eqref{def: spectral galerkin system} leads us to the following system:
\begin{equs} \label{def: dynamical phi4 system}
\begin{split}
(\partial_t - \Delta + \eta) \Upsilon
&=
F(\Upsilon, \Theta; \Xib)
\\
(\partial_t - \Delta + \eta) \Theta
&=
G(\Upsilon, \Theta; \Xib)	
\end{split}
\end{equs}
where
\begin{equs}
F(\Upsilon, \Theta; \Xib)
&=
- \frac{4 \cdot 3 }\beta \<2b> \pg \Big( - \frac{4}\beta \<30b> + \Upsilon + \Theta \Big)
\\
G(\Upsilon, \Theta; \Xib)
&=
G^1(\Upsilon, \Theta; \Xib) + G^2(\Upsilon, \Theta; \Xib)
\\
G^1(\Upsilon, \Theta; \Xib)
&=
\frac{4^2 \cdot 3}{\beta^2} \Bigg( \<32b> + 3\<22b> \Big( - \frac 4\beta \<30b> + \Upsilon + \Theta \Big) \Bigg)
\\
&\quad
+ G^{1,a}(\Upsilon,\Theta;\Xib) + G^{2,b}(\Upsilon,\Theta;\Xib)
\\
G^2(\Upsilon, \Theta;\Xib)
&=
- \frac{4\cdot 3}\beta \Bigg( \<2b> \pe \Theta + \<2b> \pl \Big( - \frac 4\beta \<30b> + \Upsilon + \Theta \Big) \Bigg)
\\
&\quad
- \frac{4\cdot 3}{\beta} \<1b> \Big( - \frac 4\beta \<30b> + \Upsilon + \Theta \Big)^2 
\\
&\quad
- \frac 4\beta \Big( - \frac 4\beta \<30b> + \Upsilon + \Theta \Big)^3 
\\
&\quad
+(4+\eta)\Big(1 - \frac 4\beta \<30b> + \Upsilon + \Theta \Big)
\end{equs}
and $G^{1,a}$ and $G^{1,b}$ are commutator terms defined analogously as in \eqref{eq: spectral galerkin commutator}. 

For every $T>0$, define the Banach space
\begin{equs}
\cY_T
&=
\Big[ C([0,T];\cC^{-\frac 35}) \cap C((0,T]; \cC^{\frac 12 + 2\kappa}) \cap C^\frac 18 ( (0,T]; L^\infty ) \Big]
\\
&\quad
\times \Big[ C([0,T]; \cC^{-\frac 35}) \cap C((0,T]; \cC^{1+2\kappa}) \cap C^\frac 18 ( (0,T]; L^\infty ) \Big]	
\end{equs}
equipped with the norm 
\begin{equs}
&\| (\Upsilon, \Theta) \|_{\cY_T}
\\
&\quad 
=
\max \Bigg\{ \sup_{0 \leq t \leq T} \| \Upsilon(t) \|_{\cC^{-\frac 35}}, \sup_{0 < t \leq T}t^\frac 35 \| \Upsilon(t) \|_{\cC^{\frac 12 + 2\kappa}}, \sup_{0 < s < t \leq T} s^\frac 12 \frac{\| \Upsilon(t) - \Upsilon(s) \|_{L^\infty}}{|t-s|^\frac 18},
\\
&\quad\quad
\sup_{0 \leq t \leq T} \| \Theta(t) \|_{\cC^{-\frac 35}}, \sup_{0 < t \leq T} t^\frac{17}{20} \| \Theta(t) \|_{\cC^{1+2\kappa}}, \sup_{0 < s < t \leq T} s^\frac 12 \frac{\| \Theta(t) - \Theta(s) \|_{L^\infty}}{|t-s|^\frac 18} \Bigg\}.
\end{equs}

\begin{rem}
The choice of exponents	in function spaces in $\cY_T$, as well as the choice of exponents in the blow-up at $t=0$ in $\| \cdot \|_{\cY_T}$, corresponds to the one made in \cite{MW17}. It is arbitrary to an extent: it depends on the choice of initial condition, which must have Besov-H\"older regularity strictly better than $-\frac 23$. 
\end{rem}

The local well-posedness of \eqref{def: dynamical phi4 system} follows from entirely deterministic arguments, so we state it with $\Xib$ replaced by any deterministic $\tilde\Xib$.
\begin{prop}\label{prop: lwp dynamical phi4 system}
Let $\tilde\Xib \in \cX_{T_0}$ for any $T_0>0$, and let $(\Upsilon_0,\Theta_0) \in \cC^{-\frac 35}\times \cC^{-\frac 35}$.  Then, there exists $T=T(\|\tilde\Xib\|_{\cX_{T_0}}, \|\Upsilon_0\|_{\cC^{-\frac 35}}, \|\Theta_0 \|_{\cC^{-\frac 35}}) \in (0, T_0]$ such that there is a unique mild solution $(\Upsilon, \Theta) \in \cY_T$ to \eqref{def: dynamical phi4 system} with initial data $(\Upsilon_0,\Theta_0)$. 

In addition, let $\tilde\Xib, \Xib' \in \cX_{T_0}$ such that $\| \tilde\Xib \|_{\cX_{T_0}}, \| \Xib' \|_{\cX_{T_0}} \leq R$ for some $R>0$, and let $(\Upsilon_0^1,\Theta_0^1),(\Upsilon_0^2,\Theta_0^2) \in 
\cC^{-\frac 35} \times \cC^{-\frac 35}$. Let the respective solutions to \eqref{def: dynamical phi4 system} be $(\Upsilon^1,\Theta^1) \in \cY_{T_1}$ and $(\Upsilon^2,\Theta^2) \in \cY_{T_2}$ and define $T=\min(T_1,T_2)$. Then there exists $C=C(R)>0$ such that
\begin{equs}
\| (\Upsilon^1,\Theta^1) - (\Upsilon^2,\Theta^2) \|_{\cY_T}
\leq
C \Big( \| \Upsilon_0^1 - \Upsilon_0^2 \|_{\cC^{-\frac 35}} + \| \Theta_0^1 - \Theta_0^2 \|_{\cC^{-\frac 35}} + \| \tilde\Xib - \Xib' \|_{\cX_{T_0}} \Big).	
\end{equs}
\end{prop}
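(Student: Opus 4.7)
The plan is a standard contraction mapping argument in the Banach space $\cY_T$, with the two parts of the proposition following from (i) a self-mapping and contraction estimate for the Duhamel operator associated with \eqref{def: dynamical phi4 system}, and (ii) a Lipschitz estimate in the enhanced data $(\tilde\Xib, \Upsilon_0, \Theta_0)$ obtained from the same bounds.

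First I would reformulate \eqref{def: dynamical phi4 system} as a fixed-point problem. Let $\cL$ denote the Duhamel operator for $\partial_t - \Delta + \eta$ and define
\begin{equs}
\Psi_{\tilde\Xib,(\Upsilon_0,\Theta_0)}(\Upsilon,\Theta)
=
\Bigl( e^{-t(-\Delta+\eta)} \Upsilon_0 + \cL F(\Upsilon,\Theta;\tilde\Xib),\ e^{-t(-\Delta+\eta)} \Theta_0 + \cL G(\Upsilon,\Theta;\tilde\Xib) \Bigr).
\end{equs}
The goal is to show that, for some small $T>0$ depending on $R_0 := \|\tilde\Xib\|_{\cX_{T_0}} + \|\Upsilon_0\|_{\cC^{-3/5}} + \|\Theta_0\|_{\cC^{-3/5}}$, this map sends a ball of $\cY_T$ of radius $\sim R_0 + 1$ into itself and is a strict contraction there.

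The key step is to derive the multilinear estimates on $F$ and $G$. Using Bony's decomposition together with the paraproduct estimate \eqref{tool: paraproduct}, the resonant product estimate \eqref{tool: resonant}, and the commutator estimates \eqref{tool: commutator 1}--\eqref{tool: commutator 2} from the appendix, I would establish bounds of the schematic form
\begin{equs}
\| F(\Upsilon,\Theta;\tilde\Xib)(t) \|_{\cC^{-1-\kappa}}
&\lesssim
\|\tilde\Xib\|_{\cX_{T_0}}\bigl( 1 + \|\Upsilon(t)\|_{\cC^{1/2+2\kappa}} + \|\Theta(t)\|_{\cC^{1/2+2\kappa}} \bigr),
\\
\| G(\Upsilon,\Theta;\tilde\Xib)(t) \|_{\cC^{-1/2-\kappa}}
&\lesssim
P\bigl( \|\tilde\Xib\|_{\cX_{T_0}}, \|\Upsilon(t)\|_{\cC^{1/2+2\kappa}}, \|\Theta(t)\|_{\cC^{1+2\kappa}}, \|\Upsilon(t)\|_{L^\infty}, \|\Theta(t)\|_{L^\infty} \bigr)
\end{equs}
for a polynomial $P$. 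The most delicate term in $G$ is $\<2b>\pe\Theta$, which uses that $\Theta(t)\in \cC^{1+2\kappa}$ so that $1+2\kappa + (-1-\kappa) > 0$; the renormalised terms $\<32b>$ and $\<22b>$ enter only through their $\cX_{T_0}$-norms; and the commutator contributions $G^{1,a}, G^{1,b}$ are bounded by the commutator lemmas together with the paraproduct bound on $\<20b>\pg w$. Combining these pointwise-in-time bounds with the Schauder-type smoothing for $\cL$ and the weights $t^{3/5}$, $t^{17/20}$ built into $\cY_T$ (the weights being precisely the ones that make $\int_0^T s^{-a} ds<\infty$ when integrating the Schauder factor against a blow-up of the norms of $\Upsilon,\Theta$ at $s=0$), one obtains
\begin{equs}
\| \Psi_{\tilde\Xib,(\Upsilon_0,\Theta_0)}(\Upsilon,\Theta) \|_{\cY_T}
\leq
C\bigl( R_0 + T^\theta P(\|(\Upsilon,\Theta)\|_{\cY_T},\|\tilde\Xib\|_{\cX_{T_0}}) \bigr)
\end{equs}
for some $\theta>0$, and by multilinearity the analogous estimate for the difference $\Psi(\Upsilon^1,\Theta^1) - \Psi(\Upsilon^2,\Theta^2)$. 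Choosing $T$ small enough depending on $R_0$ closes the contraction and yields existence and uniqueness.

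For the second part, the same multilinear estimates applied to the difference of the two fixed-point equations, together with the linearity of the Duhamel terms in the initial data, give the Lipschitz bound on $[0,T]$ where $T$ depends only on the uniform bound $R$ on the data. Iterating with the a priori bound on $\|(\Upsilon^i,\Theta^i)\|_{\cY_{T_i}}$ extends this up to $T=\min(T_1,T_2)$. The main obstacle I anticipate is the precise bookkeeping of time weights: one must check that each commutator/resonant estimate, once composed with Schauder, produces a factor $s^{-a}$ with $a<1$ so the time integral converges; this is where the regularity assignment in $\cX_T$ and the blow-up exponents in $\cY_T$ are tightly coupled, and getting the matching right is the principal technical hurdle.
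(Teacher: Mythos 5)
Your proposal outlines precisely the contraction-mapping strategy (self-mapping and contraction bound on a ball $\cY_{T,M}$ for small $T$, obtained via paraproduct, resonant, and commutator estimates combined with Schauder smoothing against the built-in time weights) that the paper invokes by deferring to \cite[Theorem 2.1]{MW17} and \cite[Theorem 3.1]{CC18}. So this is essentially the same approach; you have simply filled in the schematic multilinear estimates that the paper cites rather than reproduces.
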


\begin{proof}
Proposition \ref{prop: lwp dynamical phi4 system} is proven in Theorem 2.1 \cite{MW17} (see also Theorem 3.1 \cite{CC18}) by showing that the mild solution map 
\begin{equs}
(\Upsilon,\Theta)
&\mapsto
\Big( (\partial_t - \Delta + \eta)^{-1} \Upsilon_0, (\partial_t - \Delta + \eta)^{-1} \Theta_0 \Big) 
\\
&\quad\quad\quad
+ \Big( (\partial_t - \Delta + \eta)^{-1} F(\Upsilon,\Theta;\tilde\Xib), (\partial_t - \Delta + \eta)^{-1} G(\Upsilon,\Theta;\tilde\Xib) \Big)	
\end{equs}
is a contraction in the ball
\begin{equs}
\cY_{T,M}
=
\Big\{ (\tilde\Upsilon, \tilde\Theta) \in \cY_T : \| (\tilde\Upsilon, \tilde\Theta) \|_{\cY_T} \leq M \Big\}
\end{equs}
provided that $T$ is taken sufficiently small and $M$ is taken sufficiently large (both depending on the norm of the initial data and of $\|\tilde\Xib\|_{\cX_{T_0}}$). 
\end{proof}

We say that $\Phi \in C([0,T];\cC^{-\frac 12-\kappa})$ is a mild solution to \eqref{def: dynamical phi4} with initial data $\phi_0 \in \cC^{-\frac 12-\kappa}$ if
\begin{equs} \label{def: dynamical phi4 ansatz}
\Phi
=
\<1b> - \frac{4}\beta \<30b> + \Upsilon + \Theta 	
\end{equs}
where $(\Upsilon,\Theta) \in \cY_T$ is a solution to \eqref{def: dynamical phi4 system} with $\Xib$ as in Proposition \ref{prop: blue diagrams} and initial data $\Big(0,\phi_0 + \<1b>(0) - \frac{4}\beta \<30b>(0)\Big)$.

\begin{prop}\label{prop: local sg to phi4}
For any $\phi_0 \in \cC^{-\frac 12-\kappa}$, let $\Phi \in C([0,T];\cC^{-\frac 12-\kappa})$ be the unique solution of \eqref{def: dynamical phi4} with initial data $\phi_0$ up to time $T>0$. In addition, for any $K \in (0,\infty)$, let $\Phi_K \in C(\RR_+;\cC^{-\frac 12-\kappa})$ be the unique global solution of \eqref{def: spectral galerkin} with initial data $\rho_K\phi_0$. 

Then,
\begin{equs}
\lim_{K \rightarrow \infty} \EEb \| \Phi - \Phi_K \|_{C([0,T];\cC^{-\frac 12-\kappa})}
=
0.	
\end{equs}
\end{prop}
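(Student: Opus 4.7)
The plan is to treat \eqref{def: spectral galerkin system} and \eqref{def: dynamical phi4 system} as two instances of a single deterministic fixed-point problem driven by enhanced noise vectors in $\cX_{T_0}$, and then combine the convergence $\Xib_K \to \Xib$ from Proposition \ref{prop: blue diagrams} with the Lipschitz stability of Proposition \ref{prop: lwp dynamical phi4 system}. All arguments until the last step are pathwise.

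First, inspecting the proof of Proposition \ref{prop: lwp dynamical phi4 system}, one checks that the contraction estimates apply verbatim to the Galerkin system \eqref{def: spectral galerkin system} with constants that are uniform in $K$: the only structural difference is the insertion of the smooth Fourier multipliers $\rho_K$, which are uniformly bounded on every Besov space entering the analysis. Consequently, for every $R, M > 0$ there is a common local time $T^\ast = T^\ast(R, M) > 0$ such that, whenever the enhanced noise has norm at most $R$ in $\cX_{T_0}$ and the initial data have $\cC^{-\frac 35}$-norm at most $M$, both systems admit unique fixed points in $\cY_{T^\ast}$ depending Lipschitz-continuously on the pair (noise, data), with constants independent of $K$.

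Next, work on the full-measure event from Proposition \ref{prop: blue diagrams} on which $R_0 := \|\Xib\|_{\cX_{T_0}} \vee \sup_K \|\Xib_K\|_{\cX_{T_0}}$ is finite and $\Xib_K \to \Xib$ in $\cX_{T_0}$. Let $M_0 = 2 \sup_{t \in [0,T]} \|(\Upsilon(t), \Theta(t))\|_{\cC^{-\frac 35} \times \cC^{-\frac 35}}$, which is finite since $(\Upsilon, \Theta) \in \cY_T$. Partition $[0,T]$ into finitely many subintervals of length at most $T^\ast(R_0, M_0)$ and iterate the Lipschitz estimate
\[
\|(\Upsilon - \Upsilon_K, \Theta - \Theta_K)\|_{\cY_{T^\ast}} \le C \bigl( \|\Xib - \Xib_K\|_{\cX_{T_0}} + \text{(initial-data error)} \bigr)
\]
on each subinterval. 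On the first subinterval the initial-data error is $\|\rho_K \phi_0 - \phi_0\|_{\cC^{-\frac 35}}$, which tends to zero; on subsequent subintervals it is controlled by the conclusion of the preceding step via the $\cY$-norm evaluated at the new starting time. Substituting into the ansatz decompositions \eqref{def: dynamical phi4 ansatz} and \eqref{def: spectral galerkin ansatz}, using $\<30b>_K \to \<30b>$ in $C([0,T]; \cC^{\frac 12 - \kappa})$ for the explicit stochastic piece, and interpolating between the $\cC^{-\frac 35}$-control at $t=0$ and the improved regularities $\cC^{\frac 12 + 2\kappa}$, $\cC^{1+2\kappa}$ for $t > 0$ encoded in $\cY_T$, then yields pathwise convergence $\Phi_K \to \Phi$ in $C([0,T]; \cC^{-\frac 12 - \kappa})$.

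The upgrade to $L^1(\PPb)$-convergence is by uniform integrability: the Lipschitz constants in the contraction depend polynomially on $\|\Xib_K\|_{\cX_{T_0}}$, and Gaussian hypercontractivity gives uniform-in-$K$ moments of all orders for the latter, so $\EEb\|\Phi - \Phi_K\|_{C([0,T]; \cC^{-\frac 12 - \kappa})}^p$ is uniformly bounded in $K$ for some $p > 1$, and Vitali's theorem concludes. The main obstacle is keeping the number of subintervals in the iteration independent of $K$: this requires the inductive closeness of $(\Upsilon_K, \Theta_K)$ to $(\Upsilon, \Theta)$ so that the Galerkin solution cannot develop an inflated $\cY$-norm that would shrink the admissible local existence time. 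The $K$-uniform Lipschitz dependence established in the first step is precisely what makes this iteration go through.
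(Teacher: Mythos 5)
Your proposal follows the same route as the paper's proof, which is stated in one line: reduce to convergence of $(\Upsilon_K,\Theta_K)$ to $(\Upsilon,\Theta)$, invoke Proposition \ref{prop: blue diagrams} for the diagrams, and cite the stability machinery of \cite[Section 2]{MW17}. Your write-up usefully expands what ``mild adaptations'' means: a $K$-uniform local contraction (the $\rho_K$ multipliers are uniformly bounded on every Besov space involved and converge strongly to the identity), a Lipschitz estimate in the enhanced data and initial condition, an iteration over a $K$-independent partition of $[0,T]$, and finally passing from pathwise to $L^1(\PPb)$ convergence.

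One step deserves a caveat. You assert that ``the Lipschitz constants in the contraction depend polynomially on $\|\Xib_K\|_{\cX_{T_0}}$'' and conclude uniform integrability via hypercontractivity. The polynomial dependence is accurate on a \emph{single} subinterval whose length is fixed by the contraction step, but after patching $\sim T/T^*(R_0,M_0)$ subintervals — with $T^*$ shrinking polynomially in the enhanced-noise norm — the accumulated Lipschitz constant for the full solution map is exponential in a polynomial of $\|\Xib_K\|_{\cX_{T_0}}$, and the $\cY$-norm $M_0$ of the limiting solution on $[0,T]$ enters as well and is itself a random quantity without obvious polynomial control by $\|\Xib\|_{\cX_{T_0}}$. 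The clean way to close the $L^1$ upgrade is to invoke the Mourrat--Weber a priori bound (Proposition \ref{prop: gwp dynamical phi4} / \cite[Theorem~1.1]{MW17}), which gives a \emph{pathwise} bound on the solution at positive times that is polynomial in the noise norms, hence has moments of all orders; this supplies the uniform-in-$K$ $L^p$-bound you need, rather than the contraction constants alone. Since the paper also relies on \cite{MW17} for precisely this step, this is a refinement rather than a genuine gap, but as written the justification for uniform integrability is weaker than claimed.
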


\begin{proof}
It suffices to show convergence of $(\Upsilon_K,\Theta_K)$ to $(\Upsilon,\Theta)$ as $K \rightarrow \infty$. This follows from Proposition \ref{prop: blue diagrams} and mild adaptations of arguments in \cite[Section 2]{MW17}.
\end{proof}
 
Proposition \ref{prop: local sg to phi4} implies that $\Phi_K \rightarrow \Phi$ in probability in $C([0,T];\cC^{-\frac 12-\kappa})$. Local-in-time convergence is not sufficient for our purposes. 

The following proposition establishing global well-posedness of \eqref{def: dynamical phi4}.
\begin{prop}\label{prop: gwp dynamical phi4}
For every $\phi_0 \in \cC^{-\frac 12-\kappa}$ let $\Phi \in C([0,T^*);\cC^{-\frac 12-\kappa})$ be the unique solution to \eqref{def: dynamical phi4} with initial condition $\phi_0$ and where $T^*>0$ is the maximal time of existence. Then $T^* = \infty$ almost surely. 	
\end{prop}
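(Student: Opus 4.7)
The plan is to apply the blow-up criterion implicit in Proposition \ref{prop: lwp dynamical phi4 system}: since the local existence time depends only on $\|\Upsilon(t_0)\|_{\cC^{-\frac 35}} + \|\Theta(t_0)\|_{\cC^{-\frac 35}}$ and on $\|\Xib\|_{\cX_{T_0}}$ (the latter a.s.\ finite for every $T_0 < \infty$ by Proposition \ref{prop: blue diagrams}), iterating local well-posedness gives $T^* = \infty$ as soon as one has a priori bounds on $\|\Upsilon(t)\|_{\cC^{-\frac 35}}$ and $\|\Theta(t)\|_{\cC^{-\frac 35}}$ that are locally bounded on $[0,T^*)$.

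The key feature is the cubic damping for $\Theta$ in the equation $(\partial_t - \Delta + \eta)\Theta = G(\Upsilon, \Theta; \Xib)$: expanding $-\frac{4}{\beta}(-\frac{4}{\beta}\<30b> + \Upsilon + \Theta)^3$ inside $G^2$ produces a $-\frac{4}{\beta}\Theta^3$ contribution together with sub-cubic (in $\Theta$) remainder terms. Following the ``coming down from infinity'' strategy of Mourrat and Weber \cite{MW17}, I would test this equation against $|\Theta|^{2p-2}\Theta$ for a large even integer $p$, exploiting the smoothing built into $\cY_T$ (which gives $\Theta(t) \in \cC^{1+2\kappa}$ for $t > 0$) to justify the computation for $t > 0$. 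Integration by parts and the cubic term yield the identity
\begin{equs}
\frac{1}{2p}\partial_t \|\Theta\|_{L^{2p}}^{2p} + (2p-1)\int |\nabla \Theta|^2 \Theta^{2p-2}\, dx + \eta \|\Theta\|_{L^{2p}}^{2p} + \frac{4}{\beta}\|\Theta\|_{L^{2p+2}}^{2p+2} = \int \widetilde G(\Upsilon, \Theta; \Xib)\, \Theta|\Theta|^{2p-2}\, dx,
\end{equs}
where $\widetilde G$ collects the remaining terms of $G$. Using the paraproduct estimates of Appendix \ref{appendix: sub: paracontrolled} together with Young's inequality and interpolation, every term on the right can be dominated by small fractions of the gradient and cubic dissipations on the left plus a polynomial in $\|\Xib\|_{\cX_T}$ and $\|\Upsilon\|$ in suitable spaces. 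Integrating in time then gives an estimate of the form $\|\Theta(t)\|_{L^{2p}} \leq C(t^{-\alpha} + P(\|\Xib\|_{\cX_T}) + \sup_{s\leq t}\|\Upsilon(s)\|^\bullet)$ with $P$ polynomial.

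In parallel, the equation for $\Upsilon$ is of Schauder type with forcing $F = -\frac{12}{\beta}\<2b> \pg (-\frac{4}{\beta}\<30b> + \Upsilon + \Theta)$, so standard heat semigroup estimates bound $\|\Upsilon(t)\|_{\cC^{\frac 12 + 2\kappa}}$ in terms of $\|\Xib\|_{\cX_T}$ and $\|\Theta\|$ in a sufficiently strong $L^p$ topology. This closes a bootstrap producing uniform bounds on both $\Upsilon$ and $\Theta$ in $\cC^{-\frac 35}$ on any bounded interval $[0, T]$, which by the blow-up criterion forces $T^* = \infty$ almost surely.

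The main obstacle will be the careful absorption of the paracontrolled remainder terms, especially the resonant piece $\<2b> \pe \Theta$ and the commutators $G^{1,a}, G^{1,b}$, into the gradient and cubic dissipations: the available regularity budget is tight (e.g.\ $\<2b> \in \cC^{-1-\kappa}$ paired against $\Theta \in \cC^{1+2\kappa}$), and the analysis relies crucially on the precise form of the decomposition \eqref{def: dynamical phi4 ansatz}. This is worked out in detail in \cite{MW17}; the presence of $\eta, \beta > 0$ in our nonlinearity does not affect the qualitative picture, since no uniformity in these parameters is required here.
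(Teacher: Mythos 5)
Your proposal is correct and follows essentially the same route as the paper, which simply invokes the strong a priori bound of Mourrat--Weber \cite[Theorem 1.1]{MW17}; your $L^{2p}$ testing argument against $|\Theta|^{2p-2}\Theta$, exploiting the cubic damping and the smoothing encoded in $\cY_T$, together with the observation that the local existence time of Proposition \ref{prop: lwp dynamical phi4 system} depends only on $\|\Xib\|_{\cX_{T_0}}$ and the $\cC^{-\frac 35}$ norms of the data, is precisely an outline of that reference's ``coming down from infinity'' estimate.
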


\begin{proof}
Proposition \ref{prop: gwp dynamical phi4} is a consequence of a strong a priori bound on solutions to \eqref{def: dynamical phi4 system} established in \cite[Theorem 1.1]{MW17}.
\end{proof}

An immediate corollary of Proposition \ref{prop: gwp dynamical phi4} is a global-in-time convergence result sufficient for our purposes.
\begin{cor}
For every $\phi_0 \in \cC^{-\frac 12-\kappa}$, let $\Phi \in C(\RR_+;\cC^{-\frac 12-\kappa})$ be the unique global solution to \eqref{def: dynamical phi4} with initial condition $\phi_0$. For every $K \in (0,\infty)$, let $\Phi_K \in C(\RR_+;\cC^{-\frac 12-\kappa})$ be the unique global solution to \eqref{def: spectral galerkin} with initial condition $\rho_K \phi_0$.

For every $T>0$,
\begin{equs}
\lim_{K \rightarrow \infty} \EEb \| \Phi_K - \Phi \|_{C([0,T]; \cC^{-\frac 12-\kappa})}
=
0.	
\end{equs}
\end{cor}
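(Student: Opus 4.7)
The plan is to bootstrap the local-in-time convergence of Proposition \ref{prop: local sg to phi4} to global-in-time convergence, using the global existence provided by Proposition \ref{prop: gwp dynamical phi4} together with the stability estimate of Proposition \ref{prop: lwp dynamical phi4 system} iterated over a finite partition of $[0,T]$. The strategy is to first establish pathwise convergence on a large event, and then convert this to $L^1(\PPb)$ convergence via dominated convergence.

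First I would pass to a subsequence along which $\Xib_K \to \Xib$ almost surely in $\cX_T$ (such a subsequence exists by Proposition \ref{prop: blue diagrams}). On this subsequence, by Proposition \ref{prop: gwp dynamical phi4} the random variable
\begin{equs}
M
=
\| \Xib \|_{\cX_T} + \sup_{K} \| \Xib_K \|_{\cX_T} + \| \Phi \|_{C([0,T]; \cC^{-\frac 12 - \kappa})}
\end{equs}
is almost surely finite. For each fixed $R>0$, I would work on the event $\{ M \leq R \}$. Proposition \ref{prop: lwp dynamical phi4 system} then supplies a local existence time $T_0 = T_0(R) > 0$ that is uniform in $K$ and in the starting time within $[0,T]$ (since all initial data norms are controlled by $R$). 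Partition $[0,T]$ into at most $\lceil T/T_0 \rceil$ intervals of length $\leq T_0$. On the first interval, the stability part of Proposition \ref{prop: lwp dynamical phi4 system} combined with $\| \Xib_K - \Xib \|_{\cX_T} \to 0$ gives $\| (\Upsilon_K, \Theta_K) - (\Upsilon, \Theta) \|_{\cY_{T_0}} \to 0$, which yields $\Phi_K \to \Phi$ in $C([0,T_0]; \cC^{-\frac 12-\kappa})$ via the decomposition \eqref{def: dynamical phi4 ansatz} (and the analogue for $\Phi_K$) after embedding $\cC^{-\frac 35} \hookleftarrow \cC^{-\frac 12-\kappa}$ for $\kappa < \frac{1}{10}$. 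Restarting both \eqref{def: spectral galerkin} and \eqref{def: dynamical phi4} at time $T_0$ with initial data $\Phi_K(T_0)$ and $\Phi(T_0)$—whose $\cC^{-\frac 12-\kappa}$ norms are bounded by $R$ on $\{M \leq R\}$—allows the same argument to be applied on $[T_0, 2T_0]$, and iterating across the partition propagates the convergence to all of $[0,T]$.

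Second, I would convert the almost-sure convergence on $\{M \leq R\}$ into $L^1(\PPb)$ convergence for the full sequence. Given $\varepsilon > 0$, choose $R$ with $\PPb(M > R) < \varepsilon$, and split
\begin{equs}
\EEb \| \Phi_K - \Phi \|_{C([0,T]; \cC^{-\frac 12-\kappa})}
&\leq
\EEb \Big[ \| \Phi_K - \Phi \|_{C([0,T]; \cC^{-\frac 12-\kappa})} \1_{M \leq R} \Big]
\\
&\quad + \EEb \Big[ \| \Phi_K - \Phi \|_{C([0,T]; \cC^{-\frac 12-\kappa})} \1_{M > R} \Big].
\end{equs}
The first term tends to zero as $K \to \infty$ by dominated convergence, using the iteration above together with the uniform (in $K$) bound on $\| \Phi_K \|_{C([0,T]; \cC^{-\frac 12-\kappa})}$ supplied by the global a priori estimates underlying Proposition \ref{prop: gwp dynamical phi4}. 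The second term is controlled by Cauchy–Schwarz together with higher moment bounds on $\| \Xib \|_{\cX_T}$ and $\| \Phi \|_{C([0,T]; \cC^{-\frac 12-\kappa})}$, yielding a bound that is $o(1)$ as $R \to \infty$ uniformly in $K$. Taking first $K \to \infty$ and then $\varepsilon \to 0$ gives convergence in $L^1(\PPb)$ along the subsequence. Since the limit is deterministic and every subsequence has a further convergent subsequence with the same limit, the full sequence converges.

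The main obstacle is the bookkeeping required in the iteration step: the restarted system on $[t_i, t_{i+1}]$ must be re-expressed through the decomposition \eqref{def: dynamical phi4 ansatz} using time-shifted stochastic objects, and one must verify that convergence of $\Phi_K(t_i) \to \Phi(t_i)$ in $\cC^{-\frac 12-\kappa}$ (and hence in $\cC^{-\frac 35}$) provides the correct input to Proposition \ref{prop: lwp dynamical phi4 system} in order to iterate the stability bound without loss. Modulo this, and modulo the extraction of moment bounds used to establish uniform integrability, the corollary follows directly from the already-established ingredients.
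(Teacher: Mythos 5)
The paper itself offers no explicit proof of this corollary, merely declaring it "an immediate corollary" of Proposition \ref{prop: gwp dynamical phi4}, so there is no paper proof to compare against; your proposal supplies exactly the missing bootstrap argument that the authors are implicitly invoking, and the overall structure (iterate the local stability estimate over a finite partition of $[0,T]$ using the global a priori control, then pass to $L^1(\PPb)$ convergence via uniform integrability) is correct and is the standard route.

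Two points are worth tightening. First, Proposition \ref{prop: lwp dynamical phi4 system} as stated gives stability between two solutions of the \emph{same} limiting system \eqref{def: dynamical phi4 system} driven by different enhancements $\tilde\Xib, \Xib' \in \cX_{T_0}$; it does not directly compare a solution of the $K$-cutoff system \eqref{def: spectral galerkin system} (whose nonlinearities $F_K, G_K$ contain the extra $\rho_K$ multipliers) against a solution of the limiting system. That comparison is precisely the content of the "mild adaptations of arguments in \cite[Section 2]{MW17}" cited in the proof of Proposition \ref{prop: local sg to phi4}, and it is the iterable form of \emph{that} estimate you should invoke in your partition argument, rather than Proposition \ref{prop: lwp dynamical phi4 system} verbatim. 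Second, the dominated convergence and Cauchy--Schwarz steps require $p$-th moment bounds on $\|\Phi_K\|_{C([0,T];\cC^{-1/2-\kappa})}$ that are uniform in $K$; these are not stated in the paper but are available from the a priori estimates of \cite{MW17} (and from \cite{HMb18, ZZa18}, cited for Proposition \ref{prop: lattice convergence}), and it would be worth saying so explicitly rather than gesturing at "the global a priori estimates underlying Proposition \ref{prop: gwp dynamical phi4}," since that proposition is a pathwise non-explosion statement rather than a moment bound. With these citations made precise, and with the acknowledged bookkeeping of time-shifted stochastic objects carried out (which is routine: one restarts the fixed point in $\cY$ with the weighted-in-time norms rebased at $t_i$, using the same global objects $\Xib$ restricted to $[t_i, t_{i+1}]$), the argument is complete.
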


\begin{rem} \label{rem: spde infinity}
	The infinite constant in \eqref{def: dynamical phi4} represents the renormalisation constants of the approximating equation \eqref{def: spectral galerkin} going to infinity as $K \rightarrow \infty$. Note that there is a one-parameter family of distinct nontrivial "solutions" to \eqref{def: dynamical phi4} corresponding to taking finite shifts of the renormalisation constants. However, the use of $\Xib$ in the change of variables \eqref{def: dynamical phi4 ansatz} fixes the precise solution.
\end{rem}

\subsubsection{$\nubn$ is the unique invariant measure of \eqref{def: dynamical phi4 ansatz}} \label{subsec: stochastic quantisation}

Denote by $B_b(\cC^{-\frac 12 -\kappa})$ the set of bounded measurable functions on $\cC^{-\frac 12 -\kappa}$ and by $C_b(\cC^{-\frac 12 - \kappa}) \subset B_b(\cC^{-\frac 12-\kappa})$ the set of bounded continuous functions on $\cC^{-\frac 12 - \kappa}$.

Let $\Phi(\cdot;\cdot)$ be the solution map to \eqref{def: dynamical phi4}: for $\phi_0 \in \cC^{-\frac 12 - \kappa}$ and $t \in \RR_+$, $\Phi(t;\phi_0)$ is the solution at time $t$ to \eqref{def: dynamical phi4} with initial condition $\phi_0$. For every $t>0$, define $\cP^{\beta,N}_t:B_b(\cC^{-\frac 12-\kappa}) \rightarrow B_b(\cC^{-\frac 12-\kappa})$ by
\begin{equs}
(\cP^{\beta,N}_t F)(\phi_0)
=
\EEb F(\Phi(t;\phi_0))	
\end{equs}
for $F \in B_b(\cC^{-\frac 12 - \kappa})$, $\phi_0 \in \cC^{-\frac 12-\kappa}$, and $t \in \RR_+$.

\begin{prop} \label{prop: strong feller}
The solution $\Phi$ to \eqref{def: dynamical phi4} is a Markov process and its transition semigroup $(\cP^{\beta,N}_t)_{t\geq 0}$ satisfies the strong Feller property, i.e. $\cP_t:B_b(\cC^{-\frac 12-\kappa}) \rightarrow C_b(\cC^{-\frac 12-\kappa})$. 
\end{prop}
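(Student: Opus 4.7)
The plan is to separate the two claims. The Markov property follows more or less directly from the pathwise well-posedness theory already established in the paper. Indeed, the local Lipschitz continuity of the solution map in Proposition \ref{prop: lwp dynamical phi4 system}, together with the global existence guaranteed by Proposition \ref{prop: gwp dynamical phi4}, yields a continuous stochastic flow $(\phi_0,t,\omega)\mapsto \Phi(t;\phi_0)(\omega)$. Combined with the shift-invariance and independent increments of space-time white noise $\xi$, the cocycle identity $\Phi(t+s;\phi_0)=\Phi(t;\Phi(s;\phi_0))$ holds $\PPb$-almost surely; standard arguments (see e.g.\ \cite{DPZ88}) then promote this to the Markov property of $\cP^{\beta,N}_\bullet$ on $B_b(\cC^{-\frac 12-\kappa})$. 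Ordinary Feller continuity (mapping $C_b$ to $C_b$) is an immediate consequence of the continuous dependence on initial data in Proposition \ref{prop: lwp dynamical phi4 system} and bounded convergence.

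The strong Feller property is significantly harder, since it requires continuity of $\cP^{\beta,N}_t F$ for merely bounded measurable $F$. I would follow the Bismut--Elworthy--Li (BEL) strategy as adapted to singular SPDEs in \cite{HMb18, ZZb18}. The plan is to establish a gradient estimate of the form
\begin{equs}\label{eq:BEL}
\big| D \cP^{\beta,N}_t F(\phi_0)[\xi] \big|
\leq
\frac{C(t,\|\phi_0\|_{\cC^{-\frac 12-\kappa}})}{\sqrt{t}}\,\|F\|_\infty\,\|\xi\|_{\cC^{-\frac 12-\kappa}}
\end{equs}
for all $F\in B_b(\cC^{-\frac 12-\kappa})$, $\xi\in\cC^{-\frac 12-\kappa}$, and small $t>0$. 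A standard approximation argument (regularising $F$ by heat semigroup and using \eqref{eq:BEL} to show equicontinuity) then upgrades this to the strong Feller property. To derive \eqref{eq:BEL}, I would linearise \eqref{def: dynamical phi4 system} to obtain an equation for the Jacobian process $J_{s,t}\xi := D\Phi(t;\phi_0)[\xi]$ (with $\xi$ perturbing the initial condition at time $s$), and then write the BEL formula
\begin{equs}
D \cP^{\beta,N}_t F(\phi_0)[\xi]
=
\frac{1}{\sqrt 2}\,\EEb\Big[F(\Phi(t;\phi_0))\int_0^t \langle h_r,dW_r\rangle\Big]
\end{equs}
where $h_r$ is a variation in the noise chosen so that the resulting Malliavin derivative pushes $J_{0,t}\xi$ into the direction $\xi$ at the endpoint; the simplest choice is $h_r = \frac{1}{t}J_{0,r}\xi$, leading via It\^o isometry to the bound \eqref{eq:BEL} provided that $\EEb\int_0^t \|J_{0,r}\xi\|_{L^2}^2 dr < \infty$.

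The main obstacle will be making the linearisation rigorous and controlling $J$ uniformly in the singular setting. Differentiating the equation $(\partial_t-\Delta+\eta)\Phi = -\frac{4}{\beta}\Phi^3+(4+\eta+\infty)\Phi+\sqrt 2\xi$ in $\phi_0$ formally gives $(\partial_t-\Delta+\eta)J = -\frac{12}{\beta}\Phi^2 J + (4+\eta+\infty)J$, where $\Phi^2=\,:\Phi^2:$ is only a random distribution of regularity $-1-\kappa$. Hence the equation for $J$ must itself be interpreted via the paracontrolled ansatz \eqref{def: dynamical phi4 ansatz}: decomposing $J = -\frac{12}{\beta}(\partial_t-\Delta+\eta)^{-1}(\<2b>\pg J) + \tilde J$ and arguing exactly as in Proposition \ref{prop: lwp dynamical phi4 system} yields local well-posedness for $J$ and the needed estimates on $\EEb\int_0^t \|J_{0,r}\xi\|_{L^2}^2 dr$. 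The global bounds needed for the BEL formula on arbitrary time intervals then come from combining the a priori estimates of \cite[Theorem 1.1]{MW17} with coercivity of the linearisation in the regime where $\Phi$ is large (where $-\frac{12}{\beta}\Phi^2 J$ provides damping). Once the derivative estimate \eqref{eq:BEL} is in hand, the strong Feller property follows by a standard regularisation and dominated convergence argument.
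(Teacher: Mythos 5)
The paper does not prove this proposition itself; it simply invokes \cite[Theorem 3.2]{HMa18}, where Hairer and Mattingly establish the strong Feller property for a class of singular SPDEs including dynamical $\phi^4_3$. Their argument is \emph{not} via Bismut--Elworthy--Li: they deliberately avoid linearising the solution map and instead use a localised Girsanov-type argument combined with the local uniqueness theory, precisely because the gradient-estimate route faces serious obstructions in the $\phi^4_3$ setting. So your proposal is a genuinely different route from the one the paper relies on.

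There is, however, a real gap in your plan at the point where you argue for coercivity of the linearised equation. You write that $-\tfrac{12}{\beta}\Phi^2 J$ provides damping when $\Phi$ is large, but after renormalisation the coefficient is the Wick square $:\Phi^2:$, which does \emph{not} have a sign --- it is a distribution of regularity $-1-\kappa$ taking both signs. Consequently the sign-based coercivity you invoke to obtain $\EEb\int_0^t\|J_{0,r}\xi\|_{L^2}^2\,dr<\infty$ does not hold, and you cannot import the a priori bounds of \cite[Theorem 1.1]{MW17}, which rest entirely on the self-consistent cubic damping $-\Phi^3$ in the nonlinear equation and has no analogue for the linear $J$-equation. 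More fundamentally, it is unclear that the solution map of \eqref{def: dynamical phi4} is even Fr\'echet differentiable in $\phi_0$ at the relevant regularity: the paracontrolled ansatz \eqref{def: dynamical phi4 ansatz} involves the fixed stochastic objects $\Xib$ together with $\phi_0$-dependent remainders, and the differentiability of this composite in a topology strong enough to justify \eqref{eq:BEL} has, to our knowledge, not been established for $\phi^4_3$ (the Tsatsoulis--Weber BEL argument works in 2D, where the singularities are much milder). This is exactly the obstacle that led \cite{HMa18} to introduce their softer method, and it is the reason the paper cites them rather than running a gradient-estimate argument.
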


\begin{proof}
See \cite[Theorem 3.2]{HMa18}.
\end{proof}

\begin{prop} \label{prop: stochastic quantisation}
The measure $\nubn$ is the unique invariant measure of \eqref{def: dynamical phi4}.	
\end{prop}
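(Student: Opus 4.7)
The plan is to establish invariance and uniqueness separately, leveraging the approximation theory developed in the preceding subsection together with the strong Feller property of Proposition \ref{prop: strong feller}.

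\textbf{Invariance.} Each Galerkin measure $\nu_{\beta,N,K}$ is invariant for the semigroup $\cP_t^{\beta,N,K}$ of \eqref{def: spectral galerkin}. Fix $F \in C_b(\cC^{-\frac 12-\kappa})$ and $t > 0$; invariance reads
\begin{equs}
\int (\cP_t^{\beta,N,K} F)(\phi)\, d\nu_{\beta,N,K}(\phi) = \int F(\phi)\, d\nu_{\beta,N,K}(\phi).
\end{equs}
The right-hand side converges to $\int F\, d\nubn$ by Proposition \ref{prop: phi4 existence}. For the left-hand side, I would use Skorokhod's representation theorem, along a subsequence, to couple initial data $\phi_0^K \sim \nu_{\beta,N,K}$ and $\phi_0 \sim \nubn$ so that $\phi_0^K \to \phi_0$ almost surely in $\cC^{-\frac 12-\kappa}$. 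Combining the corollary after Proposition \ref{prop: gwp dynamical phi4} (convergence of Galerkin solutions to the limiting dynamics) with the continuous dependence on initial data in Proposition \ref{prop: lwp dynamical phi4 system} and the global a priori bounds of Proposition \ref{prop: gwp dynamical phi4}, one obtains $\Phi_K(t;\phi_0^K) \to \Phi(t;\phi_0)$ in probability, so $(\cP_t^{\beta,N,K}F)(\phi_0^K) \to (\cP_t^{\beta,N}F)(\phi_0)$ in probability. Bounded convergence then yields $\int (\cP_t^{\beta,N}F)\, d\nubn = \int F\, d\nubn$ for all $F \in C_b(\cC^{-\frac 12-\kappa})$, so $\nubn$ is invariant for $(\cP_t^{\beta,N})_{t \geq 0}$.

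\textbf{Uniqueness.} I would invoke the Doob--Khas'minskii theorem: any Markov semigroup on a Polish space that is strong Feller and topologically irreducible has at most one invariant probability measure. The strong Feller property is exactly Proposition \ref{prop: strong feller}. It remains to show irreducibility, i.e.\ that for every $\phi_0 \in \cC^{-\frac 12-\kappa}$, every non-empty open $U \subset \cC^{-\frac 12-\kappa}$, and some $t > 0$, one has $\cP_t^{\beta,N}(\phi_0, U) > 0$. The natural route is a support theorem for \eqref{def: dynamical phi4}: given any target $\psi$, one constructs a smooth control $h$ so that the deterministic system obtained by replacing the noise input in \eqref{def: dynamical phi4 system} by $h$ (suitably lifted to a surrogate enhancement $\Xib^h$) reaches a neighbourhood of $\psi$; Girsanov together with the Cameron--Martin theorem for white noise ensures that $\Xib$ is arbitrarily close to $\Xib^h$ with positive probability, and the stability in Proposition \ref{prop: lwp dynamical phi4 system} transfers the controllability to the actual solution.

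\textbf{Main obstacle.} The nontrivial point is irreducibility. Because the solution to \eqref{def: dynamical phi4} is defined only via the paracontrolled ansatz \eqref{def: dynamical phi4 ansatz} involving the renormalised package $\Xib$ of Proposition \ref{prop: blue diagrams}, a direct Stroock--Varadhan support argument is unavailable and one has to argue at the level of the enhanced noise. Concretely, I would first establish a support theorem for the law of $\Xib$ in $\cX_T$ (using that the individual diagrams can be approximated in $\cX_T$ by stochastic objects built from a smooth approximation of $\xi$, whose law is equivalent to the law of $\xi$ by Cameron--Martin), and then combine the continuity of the solution map of \eqref{def: dynamical phi4 system} with respect to $(\Upsilon_0,\Theta_0, \Xib)$ from Proposition \ref{prop: lwp dynamical phi4 system} with the global a priori bound underlying Proposition \ref{prop: gwp dynamical phi4} to deduce controllability on arbitrary time intervals. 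The combination of this controllability with the strong Feller property and the Markov property, via the Doob--Khas'minskii argument, produces the required uniqueness and completes the proof; this is essentially the strategy implemented in \cite{HMb18, HS19, ZZb18}, to which one can ultimately appeal.
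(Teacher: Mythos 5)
Your proposal is correct and follows essentially the same route as the paper: invariance is obtained by coupling initial data via Skorokhod's representation, pushing forward under the Galerkin dynamics \eqref{def: spectral galerkin}, and passing to the limit using the global convergence result that follows Proposition~\ref{prop: gwp dynamical phi4}; uniqueness then rests on the strong Feller property of Proposition~\ref{prop: strong feller}. The one place you add genuine content is in the uniqueness step: the paper's proof states only ``as a consequence of the strong Feller property\ldots'' and moves on, whereas you correctly observe that strong Feller alone yields only mutual singularity of distinct ergodic invariant measures, and that one must combine it with irreducibility via the Doob--Khas'minskii theorem. Your sketch of irreducibility through a support theorem for the enhanced noise $\Xib$ (Cameron--Martin at the level of the driving noise, then stability of the solution map from Proposition~\ref{prop: lwp dynamical phi4 system}) is exactly how this is done in the references the paper cites (\cite{HMb18, HS19, ZZb18}), so the gap you flag is a real one in the paper's exposition and your outline is the standard way to close it. In short: same strategy, with a more honest accounting of the uniqueness step.
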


\begin{proof}

By Proposition \ref{prop: phi4 existence} the measures $\nu_{\beta,N,K}$ converge weakly to $\nubn$ as $K \rightarrow \infty$. Hence, by Skorokhod's representation theorem \cite[Theorem 25.6]{B08} we can assume that there exists a sequence of random variables $\{\phi_K\}_{K \in \NN} \subset \cC^{-\frac 12 - \kappa}$ defined on the probability space $(\Omegab,\PPb)$, independent of the white noise $\xi$, such that $\phi_K \sim \nu_{\beta,N,K}$ and $\phi_K$ converges almost surely to a random variable $\phi \sim \nubn$.

 For every $K \in (0,\infty)$, recall that the unique invariant measure of \eqref{def: spectral galerkin} is $\nu_{\beta,N,K}$. Let $\Phi_K$ denote the solution to \eqref{def: spectral galerkin} with random initial data $\phi_K$. Hence, $\Phi_K(t) \sim \nu_{\beta,N,K}$ for all $t \in \RR_+$.
 
Denote by $\Phi$ the solution to \eqref{def: dynamical phi4} with initial condition $\phi$. By Proposition \ref{prop: gwp dynamical phi4}, $\Phi_K(t)$ converges in distribution to $\Phi(t)$ for every $t \in \RR$, which implies $\Phi(t) \sim \nubn$. Thus, $\nubn$ is an invariant measure of \eqref{def: dynamical phi4}. As a consequence of the strong Feller property in Proposition \ref{prop: strong feller}, we obtain that $\nubn$ is the unique invariant measure of \eqref{def: dynamical phi4}.
\end{proof}

\subsubsection{Proof of Proposition \ref{prop: equivalence of cutoffs}} \label{subsec: proof of rp}

The Glauber dynamics of $\tilde\nu_{\beta,N,\varepsilon}$ is given by the system of SDEs
\begin{equs}\label{def: lattice dynamical phi4}
\frac{d}{dt} \tilde\Phi 
&= 
\Delta^\varepsilon \tilde \Phi - \frac 4\beta \tilde\Phi^3 + (4+\delta m^2(\varepsilon,\eta))\tilde\Phi + \sqrt 2 \xi_\varepsilon
\\
\tilde\Phi(0,\cdot)
&=
\varphi(\cdot)
\end{equs}
where $\tilde\Phi : \RR_+\times\TT_N^\varepsilon \rightarrow \RR$, $\varphi \in \RR^{\TT_N^\varepsilon}$, and $\xi_\varepsilon$ is the lattice discretisation of $\xi$ given by
\begin{equs}
\xi_\varepsilon(t,x)
=
\frac{4^3}{\varepsilon^3} \intx \xi(t,x') \1_{|x-x'| \leq \frac \varepsilon 4} dx'.
\end{equs}
Note that the integral above means duality pairing between $\xi(t,\cdot)$ and $\1_{|x-\cdot|\leq \frac \varepsilon 4}$.

For each $\varepsilon > 0$, the global existence and uniqueness of \eqref{def: lattice dynamical phi4}, as well as the fact that $\tilde\nu_{\beta,N,\varepsilon}$ is its unique invariant measure, is well-known. 

The following proposition establishes a global-in-time convergence result for solutions of \eqref{def: lattice dynamical phi4} to solutions of \eqref{def: dynamical phi4}.
\begin{prop} \label{prop: lattice convergence}
For every $\varepsilon > 0$, denote by $\tilde\Phi^\varepsilon$ the unique global solution to \eqref{def: lattice dynamical phi4} with initial data $\varphi_{\varepsilon} \in \RR^{\TT_N^\varepsilon}$. In addition, denote by $\Phi$ the unique global solution to \eqref{def: dynamical phi4} with initial data $\phi \in \cC^{-\frac 12-\kappa}$. 

Then, there exists a choice of constants $\delta m^2(\varepsilon,\eta) \rightarrow \infty$ as $\varepsilon \rightarrow 0$ such that, for every $T > 0$,
\begin{equs}
	\lim_{\varepsilon \rightarrow 0}\EEb \| \Phi - \rext^\varepsilon \tilde\Phi^\varepsilon \|_{C([0,T];\cC^{-\frac 12-\kappa})} 
	=
	0
\end{equs}
provided that
\begin{equs} \label{eq: lattice initial condition}
\lim_{\varepsilon \rightarrow 0}\| \phi - \rext^\varepsilon \varphi_{\varepsilon} \|_{\cC^{-\frac 12-\kappa}}
=
0	
\end{equs}
almost surely.
\end{prop}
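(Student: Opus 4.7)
The plan is to mirror the paracontrolled decomposition used to give meaning to \eqref{def: dynamical phi4} at the lattice level, and then exploit the Lipschitz dependence of the solution map on its enhanced noise input. First I would construct the lattice stochastic data $\Xib^\varepsilon$ whose components are lattice analogues of those in $\Xib$: namely $\<1b>^\varepsilon$ solves $(\partial_t - \Delta^\varepsilon + \eta)\<1b>^\varepsilon = \sqrt 2 \xi_\varepsilon$, and the higher diagrams are built from $\<1b>^\varepsilon$ by lattice Wick ordering (subtracting the lattice tadpole $\EEb[(\<1b>^\varepsilon(0,0))^2]$) and by subtracting the appropriate lattice sunset constant. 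The lattice counterpart of Proposition \ref{prop: blue diagrams} is then the assertion that, for a suitable choice of $\delta m^2(\varepsilon,\eta)$ absorbing the mismatch between the lattice and continuum tadpole and sunset constants, $\rext^\varepsilon \Xib^\varepsilon \to \Xib$ in $L^p(\Omegab; \cX_T)$ for every $p \geq 1$ and $T > 0$. I expect this to be the main obstacle; it rests on standard second-moment computations using discrete Fourier analysis on $\TT_N^\varepsilon$, together with the facts that the symbol of $-\Delta^\varepsilon$ converges to $4\pi^2|n|^2$ pointwise and is comparable to it globally up to a constant factor uniformly in $\varepsilon$. The constant $\delta m^2(\varepsilon,\eta)$ is then fixed precisely to match the continuum renormalisation, and its divergence rate follows from an explicit computation of the lattice tadpole and sunset.

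Having established this convergence, I would substitute the ansatz $\tilde\Phi^\varepsilon = \<1b>^\varepsilon - \frac{4}{\beta}\<30b>^\varepsilon + \Upsilon^\varepsilon + \Theta^\varepsilon$ into \eqref{def: lattice dynamical phi4}. Extending via $\rext^\varepsilon$ produces a system on $\TTN$ of the same form as \eqref{def: dynamical phi4 system}, driven by $\rext^\varepsilon \Xib^\varepsilon$ and with an additional small error coming from the mismatch between $\rext^\varepsilon \Delta^\varepsilon$ and $\Delta \rext^\varepsilon$ on trigonometric polynomials of degree less than $\varepsilon^{-1}$. This error vanishes in the topology of $\cY_T$ as $\varepsilon \to 0$ because, on such polynomials, the Fourier symbol of $\Delta^\varepsilon - \Delta$ is of order $\varepsilon^2 |n|^4$, which can be traded for a small amount of regularity loss acceptable within $\cX_T$. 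The quantitative continuous dependence statement of Proposition \ref{prop: lwp dynamical phi4 system} then yields, given \eqref{eq: lattice initial condition}, convergence of $(\Upsilon^\varepsilon, \Theta^\varepsilon)$ to $(\Upsilon,\Theta)$ in $\cY_T$ on a (possibly random) time interval whose length depends only on $\|\Xib\|_{\cX_{T_0}}$ and the $\cC^{-3/5}$ norm of the initial data.

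The final step is to upgrade this local-in-time convergence to arbitrary $T > 0$. I would combine the strong global a priori bound underlying Proposition \ref{prop: gwp dynamical phi4} with a uniform-in-$\varepsilon$ lattice counterpart. The latter should follow by repeating the energy-type estimate used for the continuum $\Theta$ equation at the lattice level: one tests against a suitable nonlinear function and uses that the cubic damping term dominates; the relevant integration by parts with $\Delta^\varepsilon$, together with discrete Sobolev and interpolation inequalities, produce constants independent of $\varepsilon$. Combining this uniform global bound with the local Lipschitz continuity established above allows one to reinitialise at evenly spaced times and iterate the local convergence across $[0,T]$, with the number of iterations controlled by the a priori bound rather than by $\varepsilon$.

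The overall workload thus splits into a stochastic part (convergence of the enhanced noise, which fixes $\delta m^2(\varepsilon,\eta)$) and a deterministic part (stability of the paracontrolled fixed point under perturbation of the noise and of the Laplacian). Of these, the stochastic part is where one must be most careful, but it follows a well-established pattern for lattice approximations of singular SPDEs; given Propositions \ref{prop: lwp dynamical phi4 system} and \ref{prop: gwp dynamical phi4}, the remaining steps reduce to quantitative continuous dependence.
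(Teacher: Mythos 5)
The paper does not prove this proposition; it delegates entirely to \cite[Theorem 1.1]{ZZa18} and \cite[Theorem 1.1]{HMb18}. Your outline is a faithful high-level reconstruction of the strategy underlying those references (closest to the paracontrolled route of Zhu--Zhu; Hairer--Matetski reach the same conclusion inside regularity structures via a general discretisation framework), so it matches the approach the paper implicitly relies on. Two remarks on details.

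First, the way you handle the Laplacian mismatch is stated a little too optimistically. The bound $|\sigma^\varepsilon(n)-\sigma(n)|\lesssim\varepsilon^2|n|^4$ is a Taylor estimate that is informative only for $\varepsilon|n|\ll 1$; on the highest frequencies carried by $\rext^\varepsilon$, namely $|n|\sim\varepsilon^{-1}$, the two symbols differ multiplicatively by $O(1)$, so on its own that estimate gives nothing. What does the job is the interpolated bound $|\sigma^\varepsilon(n)-\sigma(n)|\lesssim\varepsilon^\theta|n|^{2+\theta}$, valid uniformly over $\varepsilon|n|\lesssim1$ for every $\theta\in[0,2]$ by interpolating the Taylor bound with the trivial bound $|\sigma^\varepsilon(n)-\sigma(n)|\lesssim|n|^2$. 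This says $(\Delta^\varepsilon-\Delta)$ costs $\theta$ derivatives for a prefactor $\varepsilon^\theta$, and parabolic smoothing of the heat semigroup in the mild formulation recovers those $\theta$ derivatives at the price of an integrable time singularity $t^{-\theta/2}$. This is the precise version of the trade-off you gesture at, and it is what the cited references use; your $\varepsilon^2|n|^4$ statement as written would not close the argument.

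Second, as a minor point, the global-in-time extension does not strictly require a uniform-in-$\varepsilon$ lattice analogue of the \cite{MW17} a priori bound, although proving one would also work and is indeed done in parts of the literature. Since $\Phi$ exists globally by Proposition \ref{prop: gwp dynamical phi4} and its paracontrolled components do not blow up on $[0,T]$, one can cover $[0,T]$ by an $\varepsilon$-independent finite number of intervals whose lengths are determined by $\|\Xib\|_{\cX_T}$ and the norms of the continuum solution, and then propagate closeness from one reinitialisation time to the next via the Lipschitz continuous dependence in Proposition \ref{prop: lwp dynamical phi4 system}. The number of iterations is fixed once $T$, $\|\Xib\|_{\cX_T}$, and the continuum solution's bound are fixed, so the cumulative error grows only by a bounded multiplicative factor and vanishes with $\varepsilon$ — no lattice a priori estimate is needed.
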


\begin{proof}
See \cite[Theorem 1.1]{ZZa18} or \cite[Theorem 1.1]{HMb18}.   
\end{proof}

The next proposition establishes that the lattice measures are tight. 
\begin{prop} \label{prop: tightness of lattice measures}
Let $\delta m^2(\bullet,\eta)$ be as in Proposition \ref{prop: lattice convergence}. Then, $\rext^\varepsilon_*\tilde\nu_{\beta,N,\varepsilon}$ converges weakly to a measure $\nu$ as $\varepsilon \rightarrow 0$. 	
\end{prop}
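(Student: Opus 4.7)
The plan is to establish tightness of $\{\rext^\varepsilon_*\tilde\nu_{\beta,N,\varepsilon}\}_{\varepsilon>0}$ on $\cC^{-\frac 12-\kappa}$, extract subsequential weak limits via Prokhorov's theorem, and then identify every such limit as $\nubn$ using the stochastic quantisation established in Proposition \ref{prop: stochastic quantisation}. Uniqueness of the limit point upgrades subsequential convergence to full weak convergence, which is the statement of the proposition (the identification $\nu = \nubn$ is precisely the content of Proposition \ref{prop: equivalence of cutoffs}).

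For tightness, the idea is to exploit invariance: if $\tilde\varphi_\varepsilon \sim \tilde\nu_{\beta,N,\varepsilon}$ and $\tilde\Phi^\varepsilon$ denotes the solution to \eqref{def: lattice dynamical phi4} with initial data $\tilde\varphi_\varepsilon$, then $\rext^\varepsilon \tilde\Phi^\varepsilon(t) \sim \rext^\varepsilon_*\tilde\nu_{\beta,N,\varepsilon}$ for every $t \geq 0$. Lattice adaptations of the strong a priori estimates underpinning Proposition \ref{prop: lattice convergence}---specifically the coming-down-from-infinity bounds of \cite{MW17} together with their discrete analogues developed in \cite{HMb18, ZZa18}---should yield a uniform-in-$\varepsilon$ estimate of the form
\begin{equs}
\sup_{\varepsilon>0}\EEb \| \rext^\varepsilon \tilde\Phi^\varepsilon(t_0) \|^p_{\cC^{-\frac 12 -\kappa'}} \leq C
\end{equs}
for some $\kappa' \in (0,\kappa)$, $p \geq 1$ and $t_0 > 0$. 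By stationarity this translates into a uniform moment bound on $\rext^\varepsilon_*\tilde\nu_{\beta,N,\varepsilon}$, and the compact embedding $\cC^{-\frac 12-\kappa'} \hookrightarrow \cC^{-\frac 12-\kappa}$ then gives tightness on $\cC^{-\frac 12-\kappa}$.

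To identify subsequential limits, fix a sequence $\varepsilon_n \to 0$ along which $\rext^{\varepsilon_n}_*\tilde\nu_{\beta,N,\varepsilon_n} \to \nu$ weakly and realise the convergence on a common probability space via Skorokhod's representation, obtaining $\tilde\varphi_{\varepsilon_n} \sim \tilde\nu_{\beta,N,\varepsilon_n}$ with $\rext^{\varepsilon_n} \tilde\varphi_{\varepsilon_n} \to \phi$ almost surely in $\cC^{-\frac 12-\kappa}$, where $\phi \sim \nu$. Running the lattice and continuum dynamics from these coupled initial data (and using an independent copy of the white noise $\xi$ on the enlarged probability space), Proposition \ref{prop: lattice convergence} applies and yields $\rext^{\varepsilon_n} \tilde\Phi^{\varepsilon_n}(t) \to \Phi(t;\phi)$ in probability in $\cC^{-\frac 12-\kappa}$ for every $t \geq 0$. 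Since the law of the left-hand side is $\rext^{\varepsilon_n}_*\tilde\nu_{\beta,N,\varepsilon_n}$, which converges weakly to $\nu$, we deduce $\Phi(t;\phi) \sim \nu$ for every $t$, i.e., $\nu$ is invariant for \eqref{def: dynamical phi4}. By Proposition \ref{prop: stochastic quantisation}, $\nubn$ is the unique such invariant measure, so $\nu = \nubn$; since every subsequential limit coincides, the full family converges weakly. The main obstacle will be the uniform-in-$\varepsilon$ moment bound: existing lattice global well-posedness results yield pathwise estimates that grow uncontrollably in $\varepsilon$ unless combined with a quantitative coming-down mechanism, and transferring such a coming-down from the continuum to the lattice requires discrete paracontrolled analysis together with uniform bounds on the lattice stochastic diagrams, both of which are delicate at the threshold regularity of $\phi^4_3$.
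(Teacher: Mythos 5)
The paper's proof of this proposition is a bare citation to \cite{P75, BFS83, GH18}, so your self-contained argument via stochastic quantisation is a genuinely different route --- though it is essentially the route taken in one of those references (Gubinelli--Hofmanov\'a \cite{GH18}), and it also recapitulates the argument that the paper later uses to prove Proposition \ref{prop: equivalence of cutoffs}. Two remarks on the logic and one on substance. First, a word of caution on your parenthetical ``the identification $\nu = \nubn$ is precisely the content of Proposition \ref{prop: equivalence of cutoffs}'': in the paper that proposition is proved \emph{using} the present one, so you cannot appeal to it here; fortunately you do not actually do so --- your argument re-derives the identification independently via Propositions \ref{prop: lattice convergence}, \ref{prop: strong feller}, and \ref{prop: stochastic quantisation}, so there is no circularity. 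But given the paper's layout it would be cleaner to only prove the tightness/convergence half here (which is all the statement asserts --- convergence to \emph{some} $\nu$), and leave the identification to Proposition \ref{prop: equivalence of cutoffs} as the paper does. Second, the Skorokhod step and the identification of subsequential limits as invariant measures is correct and mirrors the proof of Proposition \ref{prop: equivalence of cutoffs}; the extraction of tightness from a uniform moment bound via the compact embedding $\cC^{-\frac 12-\kappa'} \hookrightarrow \cC^{-\frac 12-\kappa}$ and stationarity is also sound.

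The real issue is the uniform-in-$\varepsilon$ moment bound $\sup_{\varepsilon > 0}\EEb\|\rext^\varepsilon\tilde\Phi^\varepsilon(t_0)\|^p_{\cC^{-\frac 12-\kappa'}} \leq C$, which you correctly flag as the crux but merely assert. This is where the entire weight of the result sits: Proposition \ref{prop: lattice convergence} gives \emph{local} convergence of lattice solutions to the continuum solution but does not supply uniform-in-$\varepsilon$ control at a fixed positive time started from lattice-stationary data. What is needed is a discrete analogue of the coming-down-from-infinity estimate of \cite{MW17}, with constants uniform over the lattice mesh, which in turn requires uniform bounds on the lattice stochastic diagrams and uniform commutator/paraproduct estimates at the $\phi^4_3$ regularity threshold. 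Such estimates exist in the literature (this is roughly what \cite{ZZa18, GH18} provide), but they are a substantial technical input and your proposal does not establish them or reduce them to something verifiable in the framework of this paper. As written, the argument has the right shape but rests on an unproven lemma of comparable difficulty to the proposition itself; the paper sidesteps this by citing the works in which that uniform control is carried out.
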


\begin{proof}
See \cite{P75, BFS83, GH18}.
\end{proof}

\begin{proof}[Proof of Proposition \ref{prop: equivalence of cutoffs}]
For every $\varepsilon>0$, let $\varphi_\varepsilon \sim \tilde\nu_{\beta,N,\varepsilon}$ be a random variable on $(\Omegab,\PPb)$ and independent of the white noise $\xi$. By Proposition \ref{prop: tightness of lattice measures} and in light of Skorokhod's representation theorem \cite[Theorem 25.6]{B08}, we may assume that $\rext^\varepsilon \varphi_\varepsilon$ converges almost surely to $\phi \sim \nu$ as $\varepsilon \rightarrow 0$. Reflection positivity is preserved by weak limits hence, by Lemma \ref{lem: reflection positivity of lattice measures}, $\nu$ is reflection positive.

Denote by $\tilde\Phi^\varepsilon$ the solution to \eqref{def: lattice dynamical phi4} with initial data $\varphi_\varepsilon$. Since $\tilde\nu_{\beta,N,\varepsilon}$ is the invariant measure of \eqref{def: lattice dynamical phi4}, $\tilde\Phi^\varepsilon(t) \sim \tilde\nu_{\beta,N,\varepsilon}$ for every $t \in \RR_+$. 

Denote by $\Phi$ the (global-in-time) solution to \eqref{def: dynamical phi4} with initial data $\phi$. For every $t>0$, $\rext^\varepsilon \tilde\Phi^\varepsilon(t) \rightarrow \Phi(t)$ in distribution as $\varepsilon \rightarrow 0$ as a consequence of Proposition \ref{prop: lattice convergence}. Hence, $\Phi(t) \sim \nu$ for every $t > 0$. Thus, $\nu$ is an invariant measure of \eqref{def: dynamical phi4}. By Proposition \ref{prop: strong feller} the invariant measure of \eqref{def: dynamical phi4} is unique. Therefore, $\nu = \nubn$.  
\end{proof}

\section{Decay of spectral gap}\label{sec: decay of gap}

\begin{proof}[Proof of Corollary \ref{cor: sg}]

The Markov semigroup $(\cP_t^{\beta,N})_{t \geq 0}$ associated to \eqref{def: dynamical phi4} is reversible with respect to $\nubn$ (see \cite[Corollary 1.3]{HMb18} or \cite[Lemma 4.2]{ZZb18}). Thus, one can express $\lambda_{\beta,N}$ as the sharpest constant in the Poincar\'e inequality
	\begin{equs} \label{eq:poincareinequal}
	\lambda_{\beta,N} 
	=
	\inf_{F \in D(\cE_{\beta,N})} \frac{\cE_{\beta,N}(F,F)}{\langle F^2 \rangle_{\beta,N} - \langle F \rangle_{\beta,N}^2}	
	>
	0
	\end{equs}
	where $\cE_{\beta,N}$ is the associated Dirichlet form with domain $D(\cE_{\beta,N}) \subset L^2(\nubn)$. See \cite[Corollary 1.5]{ZZb18}.
	
The proof of Corollary \ref{cor: sg} amounts to choosing the right test function in \eqref{eq:poincareinequal} and then using the explicit expression for $\cE_{\beta,N}$ for sufficiently nice functions due to \cite[Theorem 1.2]{ZZb18}.

Let $\mathrm{Cyl}$ be the set of $F \in L^2(\nubn)$ of the form
\begin{equs}
F(\cdot)
=
f\Big( l_1(\cdot),\dots,l_m(\cdot) \Big)
\end{equs}
where $m \in \NN$, $f \in C^1_b(\RR^m)$, $l_1,\dots,l_m$ are real trigonometric polynomials, and $l_i(\cdot)$ denotes the ($L^2$) duality pairing between $l_i$ and elements in $\cC^{-\frac 12-\kappa}$. For any $F \in \mathrm{Cyl}$, let $\partial_{l_i} F$ denote the G\^ateaux derivative of $F$ in direction $l_i$. Let $\nabla F: \cC^{-\frac 12 -\kappa} \rightarrow \RR$ be the unique function such that $\partial_{l_i} F(\phi) = \intx \nabla F(\phi) l_i dx$ for every $\phi \in \cC^{-\frac 12 - \kappa}$. In other words, $\nabla F$ is the representation of the G\^ateaux derivative with respect to the $L^2$ inner product. Then, for any $F,G \in \mathrm{Cyl}$,
\begin{equs}
	\cE_{\beta,N}(F,G)
	=
	\Big\langle \intx \nabla F \nabla G dx  \Big\rangle_{\beta,N}.
\end{equs}

Now we choose a test function in ${\rm Cyl}$ to insert into \eqref{eq:poincareinequal}. Take any $\zeta \in (0,1)$ and $m \in [0, (1-\zeta)\hfbeta)$. Let $\chi_m:\RR \rightarrow \RR$ be a smooth, non-decreasing odd function such that $\chi_m(a) = -1$ for $a \leq -m$ and $\chi_m(a) = 1$ for $a \geq m$. Define
	\begin{equs}
	F(\phi) = \chi_m(\fm_N(\phi)).	
	\end{equs}
	Then, $F \in \mathrm{Cyl}$ and $\langle F \rangle_{\beta,N}= 0$. Moreover, its Fr\'echet derivative $DF$ is supported on the set $\{ \fm_N \in [-m,m] \}$.

Thus, inserting $F$ into \eqref{eq:poincareinequal}, we obtain
\begin{equs} \label{testfunction}
\lambda_{\beta,N} 
\leq 
\frac{\cE_{\beta,N}(F,F)}{\langle F^2 \rangle_{\beta,N}} 
\leq 
\frac{\Big\| \intx |\nabla F|^2 dx \Big\|_{L^\infty(\nu_{\beta,N})}}{\langle F^2 \rangle_{\beta,N}} \nubn(\fm_N \in [-m,m]).	
\end{equs}

For any $g \in L^2(\TTN)$ and $\varepsilon > 0$, by the linearity of $\fm_N$ and the Cauchy-Schwarz inequality,
\begin{equs}
\frac{F(\phi+\varepsilon g)-F(\phi)}{\varepsilon}
&\leq
|\chi'_m|_\infty \Big|\frac{\fm_N(\phi + \varepsilon g) - \fm_N(\phi)}{\varepsilon} \Big|
\\
&\leq
|\chi'_m|_\infty \frac{\intx g dx}{N^3} 
\leq
|\chi'_m|_\infty \frac{ \Big(\intx g^2 dx\Big)^\frac 12}{N^\frac 32}
\end{equs}
where $\chi_m'$ is the derivative of $\chi_m$ and $|\cdot|_\infty$ denotes the supremum norm. Note that this estimate is uniform over $\phi \in \cC^{-\frac 12 -}$. Then, by duality and the definition of $\nabla F$,
\begin{equs} \label{testfn2}
\begin{split}
\Bigg\| \intx |\nabla F|^2 dx \Bigg\|_{L^\infty(\nubn)}
&=
\Bigg\| \Big( \sup_{g \in L^2: \intx g^2 dx = 1} \intx \nabla F g dx \Big)^2 \Bigg\|_{L^\infty(\nubn)}
\\
&\leq
\frac{|\chi'_m|_\infty^2}{N^3}.
\end{split}
\end{equs}

For the other term in \eqref{testfunction}, using that $F^2$ is identically $1$ on $\{ |\fm_N| \geq m \}$,
\begin{equs} \label{testfn3}
\begin{split}
	\langle F^2 \rangle_{\beta,N}
	&=
	\nubn(|\fm_N| \geq m) + \langle F^2 \1_{\fm_N \in (-m,m)} \rangle_{\beta,N}
	\\
	&\geq
	1-\nubn(\fm_N \in (-m,m)).
\end{split}
\end{equs}

We insert \eqref{testfn2} and \eqref{testfn3} into \eqref{testfunction} to give
\begin{equs}
	\lambda_{\beta,N} 
	\leq 
	\frac{|\chi_m|_\infty^2}{N^3} \frac{\nu_{\beta,N}(\fm_N \in [-m,m])}{1 - \nu_{\beta,N}( \fm_N \in (-m,m))}.
\end{equs}
By Theorem \ref{thm: ld}, there exists $C=C(\zeta,\eta)>0$ and $\beta_0 = \beta_0(\zeta,\eta) > 0$ such that, for all $\beta > \beta_0$,
	\begin{equs}
	\lambda_{\beta,N} 
	\leq 
	\frac{|\chi_m'|_\infty^2}{N^3} \frac{e^{-C\hfbeta N^2}}{1-e^{-C\hfbeta N^2}}
	\end{equs}
from which \eqref{eq: cor} follows.

\end{proof}

\appendix

\section{Analytic notation and toolbox} \label{appendix: toolbox}

\subsection{Basic function spaces on the torus} \label{appendix: sub: basic}

Let $\TTN = (\RR / N\ZZ)^3$ be the 3D torus of sidelength $N \in \NN$. Denote by $C^\infty(\TTN)$ the space of smooth functions on $\TTN$ and by $S'(\TTN)$ the space of distributions. For $\phi \in S'(\TTN)$ and $f \in C^\infty(\TTN)$, we write $\intx \phi f dx$ to denote their duality pairing. For any $p \in [1,\infty]$, let $L^p(\TTN)=L^p(\TTN,\dbar x)$ denote the Lebesgue space with respect to the normalised Lebesgue measure $\dbar x = \frac{dx}{N^3}$.

Let $\cF$ denote the Fourier transform, i.e. for any $f \in C^\infty(\TTN)$ and $n \in (N^{-1}\ZZ)^3$, 
\begin{equs}
\cF f(n)
=
\intx f e_{-n} d x, 
\quad 
f 
= 
\frac 1 {N^3} \sum_{n \in (N^{-1}\ZZ)^3} \cF f(n) e_n
\end{equs}
where $e_n(x) = e^{2\pi i n \cdot x }$. 

For any $\rho : \RR^3 \rightarrow \RR$, let $T_\rho$ be the Fourier multiplier with symbol $\rho(\cdot)$ defined on smooth functions via
\begin{equs}
T_\rho f
=
\frac {1}{N^3} \sum_{n \in (N^{-1}\ZZ)^3} \rho(n) \cF f(n) e_n.	
\end{equs}
When clear from context, we simply write $\rho f$ instead of $T_\rho f$. 

For $s \in \RR$, the inhomogeneous Sobolev space $H^s$ is the completion of $f \in C^\infty$ with respect to the norm
\begin{equs}
\| f \|_{H^s}
=
\| \langle \cdot \rangle^s f \|_{L^2}
\end{equs}
where $\langle \cdot \rangle = \sqrt{ \eta + 4\pi^2 | \cdot |^2}$ for a fixed $\eta > 0$ (see Section \ref{sec: model}). The norms depend on $\eta$ but they are equivalent for different choices.

\subsection{Besov spaces} \label{appendix: sub: besov}

In this section, we introduce Besov spaces on $\TTN$ and give some useful estimates. All of the results can be found in \cite[Section 2.7]{BCD11} stated for Besov spaces on $\RR^3$, but can be adapted to $\TTN$.

Let $B(x,r)$ denote the ball centred at $x \in \RR^3$ of radius $r > 0$ and let $A$ denote the annulus $B(0, \frac 43) \setminus B(0, \frac 38)$. Let $\tilde \Delta, \Delta \in C^\infty_c(\RR^3;[0,1])$ be radially symmetric and satisfy 
\begin{itemize}
\item $\mathrm{supp} \tilde \chi \subset B(0, \frac 43)$ and $\mathrm{supp} \chi \subset A$; 
\item $\sum_{k \geq -1} \chi_k = 1$, where $\chi_{-1} = \tilde \chi$ and $\chi_k(\cdot) = \chi(2^{-k}\cdot)$ for $k \in \NN \cup \{0\}$. 
\end{itemize}
Identify $\Delta_k$ with its Fourier multiplier. 

$\{ \Delta_k \}_{k \in \NN \cup \{-1\}}$ are called Littlewood-Paley projectors. For $f \in C^\infty(\TTN)$, we have
\begin{equs}
f
=
\sum_{k \geq -1} \Delta_k f. 	
\end{equs}
For $k \geq 0$, $\Delta_k f$ contains the frequencies of $f$ order $2^k$. $\Delta_{-1}$ contains all the low frequencies (i.e. of size less than order 1).	

For $s \in \RR$, $p,q \in [1,\infty]$, we define the Besov spaces $B^s_{p,q}(\TTN)$ to be the completion of $C^\infty(\TTN)$ with respect to the norm
\begin{equs}
    \| f \|_{B^s_{p,q}} 
    = 
    \Big\| \Big( 2^{ks} \|\Delta_k f \|_{L^p} \Big)_{k \geq -1} \Big\|_{l^q}
\end{equs}
where $l^q$ is the usual space of $q$-summable sequences, interpreted as a supremum when $q=\infty$. Note that these spaces are separable. Besov-H\"older spaces are denoted $B^s_{\infty,\infty}(\TTN) = \cC^{s}(\TTN)$ and are a strict subset of the usual H\"older spaces (which are not separable) for $s \in \RR_+ \setminus \NN$. Moreover, the $B^s_{2,2}(\TTN) = H^s(\TTN)$ and their norms are equivalent.

\begin{prop}[Duality]\label{prop: tool duality}
	Let $s \in \RR$ and $p_1,p_2,q_1,q_2 \in [1,\infty]$ such that $\frac 1{p_1} + \frac 1{p_2} = \frac 1{q_1} + \frac 1{q_2} = 1$. Then,
	\begin{equs} \label{tool: duality}
	\Big| \intx f g \dbar x \Big|
	\leq
	\| f \|_{B^{-s}_{p_1,q_1}} \|g \|_{B^s_{p_2,q_2}}	
	\end{equs}
	for $f,g \in C^\infty(\TTN)$.
\end{prop}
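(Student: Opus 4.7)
The plan is to exploit the (essentially) disjoint Fourier supports of the Littlewood--Paley blocks, then apply Hölder's inequality twice: once in the spatial integral at each scale, and once in the $\ell^q$ sum over scales.

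First I would decompose $f = \sum_{k \geq -1} \Delta_k f$ and $g = \sum_{j \geq -1} \Delta_j g$ and write
\begin{equs}
\intx fg \dbar x = \sum_{k, j \geq -1} \intx (\Delta_k f)(\Delta_j g) \dbar x.
\end{equs}
The next step is to observe, from the support conditions on $\tilde\chi$ and $\chi$ (namely $\mathrm{supp}\,\tilde\chi \subset B(0, \tfrac 43)$ and $\mathrm{supp}\,\chi \subset A$), that there exists a fixed integer $N_0$ such that the Fourier supports of $\chi_k$ and $\chi_j$ are disjoint whenever $|k-j| > N_0$. Hence $\intx (\Delta_k f)(\Delta_j g)\dbar x = 0$ for all $|k-j| > N_0$, by Parseval's identity.

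With this reduction, applying Hölder's inequality with exponents $p_1, p_2$ at each scale gives
\begin{equs}
\Big|\intx fg \dbar x\Big|
\leq
\sum_{|l| \leq N_0} \sum_{k \geq -1} \|\Delta_k f\|_{L^{p_1}} \|\Delta_{k+l} g\|_{L^{p_2}},
\end{equs}
where $l = j-k$. For each fixed $l$ I would insert the factor $1 = 2^{-ks} \cdot 2^{ks}$ in the form
\begin{equs}
\|\Delta_k f\|_{L^{p_1}} \|\Delta_{k+l} g\|_{L^{p_2}} = 2^{-ls} \Big(2^{-ks}\|\Delta_k f\|_{L^{p_1}}\Big) \Big(2^{(k+l)s}\|\Delta_{k+l} g\|_{L^{p_2}}\Big),
\end{equs}
and then apply Hölder's inequality in $k$ with exponents $q_1, q_2$. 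This bounds the inner sum by $2^{-ls}\|f\|_{B^{-s}_{p_1,q_1}}\|g\|_{B^s_{p_2,q_2}}$, and summing over the finite range $|l| \leq N_0$ yields the claimed estimate (up to a harmless constant absorbed into the equivalence class of Besov norms).

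There is no real analytic obstacle here: the only thing requiring care is the bookkeeping around the ``essentially disjoint supports'' step, which relies on the specific annular structure of $\chi$ and is a standard feature of Littlewood--Paley theory. The argument works uniformly in $N$ because Besov norms on $\TTN$ are defined with respect to the normalised Lebesgue measure $\dbar x$, so Hölder's inequality produces no volume factors.
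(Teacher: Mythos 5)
Your argument is correct, and it is a self-contained version of a result the paper proves only by citation to \cite[Lemma 2.1]{GOTW18}, so there is no in-paper proof to compare against: you are simply supplying the standard Littlewood--Paley argument (near-diagonal reduction via the annular supports and Parseval, spatial H\"older at each scale, discrete H\"older in the scale index). One point worth tightening is the parenthetical ``up to a harmless constant absorbed into the equivalence class of Besov norms.'' The constant you produce --- coming from the factor $\sum_{|l|\leq N_0} 2^{|l||s|}$ and the number of near-diagonal pairs --- is not absorbable by changing the norm, since the Littlewood--Paley decomposition (and hence the Besov norm) is fixed once and for all in Appendix~\ref{appendix: sub: besov}. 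So what you actually prove is $\big|\intx fg\,\dbar x\big| \leq C\,\|f\|_{B^{-s}_{p_1,q_1}}\|g\|_{B^s_{p_2,q_2}}$ with a constant $C = C(s)$, not the constant-free bound as literally written in Proposition~\ref{prop: tool duality}. This is almost certainly just a dropped $C$ in the paper's statement (the neighbouring Propositions~\ref{prop: tool Leibniz} and~\ref{prop: tool interp} all carry one, and so does the cited source), and it is immaterial for every application in the paper, but you should be careful not to present the bound as constant-free.
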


\begin{proof}
See \cite[Lemma 2.1]{GOTW18}.
\end{proof}

\begin{prop}[Fractional Leibniz estimate]\label{prop: tool Leibniz}
	Let $s \in \RR$, $p,p_1,p_2,p_3,p_4,q \in [1,\infty]$ satisfy $\frac 1p = \frac 1{p_1} + \frac 1{p_2} = \frac 1{p_3} + \frac 1{p_4}$. Then, there exists $C=C(s,p_1,p_2,p_3,p_4,q,\eta) > 0$ such that
	\begin{equs}\label{tool: Leibniz}
	\| fg \|_{B^s_{p,q}}
	\leq
	C\| f \|_{B^s_{p_1,q}} \| g \|_{L^{p_2}} + \| f \|_{L^{p_3}} \|g \|_{B^s_{p_4,q}}	
	\end{equs}
	for $f,g \in C^\infty(\TTN)$.
\end{prop}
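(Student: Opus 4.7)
The plan is to prove the estimate via Bony's paraproduct decomposition
\[
fg = T_f g + R(f,g) + T_g f,
\]
where $T_f g = \sum_{k \geq -1} S_{k-2} f \, \Delta_k g$ is the low-high paraproduct (with $S_{k-2} = \sum_{j \leq k-3} \Delta_j$ the standard low-frequency projector), $T_g f$ is its symmetric counterpart, and $R(f,g) = \sum_{|j-k| \leq 1} \Delta_j f \, \Delta_k g$ is the resonant part. I would then estimate the three pieces separately in $B^s_{p,q}$ and combine them.

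First I would handle the paraproduct $T_f g$. Since the Fourier support of $S_{k-2} f \cdot \Delta_k g$ is contained in an annulus of size $2^k$, only the Littlewood-Paley blocks $\Delta_l$ with $l \sim k$ can detect it. Applying H\"older's inequality with the exponent pair $(p_3,p_4)$ satisfying $\tfrac{1}{p} = \tfrac{1}{p_3} + \tfrac{1}{p_4}$, together with the uniform $L^{p_3}$-boundedness of the partial sums $S_{k-2}$, gives
\[
\| \Delta_l T_f g \|_{L^p} \lesssim \| f \|_{L^{p_3}} \sum_{k \sim l} \| \Delta_k g \|_{L^{p_4}}.
\]
Multiplying by $2^{ls}$ and taking the $\ell^q$-norm in $l$ yields $\| T_f g \|_{B^s_{p,q}} \lesssim \| f \|_{L^{p_3}} \| g \|_{B^s_{p_4,q}}$. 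A symmetric argument, swapping the roles of $f$ and $g$ with the exponent pair $(p_1,p_2)$, produces $\| T_g f \|_{B^s_{p,q}} \lesssim \| f \|_{B^s_{p_1,q}} \| g \|_{L^{p_2}}$.

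The main obstacle is the resonant term, whose $l$-th Littlewood-Paley block picks up contributions from all pairs $(j,k)$ with $k \geq l - N_0$ for some fixed $N_0$, rather than just $k \sim l$. Using H\"older with exponents $(p_1,p_2)$ together with the frequency localisation gives
\[
\| \Delta_l R(f,g) \|_{L^p} \lesssim \sum_{k \geq l - N_0} \| \Delta_k f \|_{L^{p_1}} \| \tilde\Delta_k g \|_{L^{p_2}},
\]
where $\tilde\Delta_k = \Delta_{k-1} + \Delta_k + \Delta_{k+1}$. Writing $2^{ls} = 2^{(l-k)s}\, 2^{ks}$ under the sum, the bound in $\ell^q$ reduces to a discrete convolution estimate: for $s > 0$ the factor $2^{(l-k)s}$ is summable over $l - k \leq N_0$, so Young's convolution inequality combined with the $L^{p_2}$-boundedness of $\tilde\Delta_k$ yields $\| R(f,g) \|_{B^s_{p,q}} \lesssim \| f \|_{B^s_{p_1,q}} \| g \|_{L^{p_2}}$. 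Combining this with the two paraproduct estimates establishes \eqref{tool: Leibniz} for $s > 0$; the remaining cases are handled by symmetrically distributing the Besov weight onto $g$ via the exponent pair $(p_3,p_4)$ and absorbing the resonant term into the other summand. The argument is standard and is carried out in detail in the reference already cited at the start of Appendix \ref{appendix: sub: besov}.
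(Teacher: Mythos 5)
Your proof via Bony decomposition is the standard one and is correct for $s > 0$. The paper's ``proof'' is just a citation to \cite[Lemma 2.1]{GOTW18}, so there is no internal argument to compare against; the reference does essentially what you do.

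However, your final sentence — that ``the remaining cases are handled by symmetrically distributing the Besov weight onto $g$'' and ``absorbing the resonant term into the other summand'' — does not work, and in fact cannot work. Symmetrising the resonant estimate to $\|R(f,g)\|_{B^s_{p,q}} \lesssim \|f\|_{L^{p_3}} \|g\|_{B^s_{p_4,q}}$ needs $s>0$ for exactly the same reason as the version you wrote out: the factor $2^{(l-k)s}$ must be summable over $l-k \leq N_0$. There is no version of the resonant estimate that survives $s\leq 0$, and the inequality \eqref{tool: Leibniz} is genuinely false for $s<0$. To see this take $f = g = e_K + e_{-K}$ with $K = 2^m$ large. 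Then $fg = 2 + e_{2K} + e_{-2K}$, so $\Delta_{-1}(fg) = 2$ and hence $\|fg\|_{B^s_{p,q}} \geq 2^{1-s}$, bounded below uniformly in $K$. But $\|f\|_{B^s_{p_i,q}} \sim K^s \|f\|_{L^{p_i}}$ and $\|f\|_{L^{p_i}} = O(1)$, so the right-hand side of \eqref{tool: Leibniz} is $O(K^s) \to 0$ when $s<0$. The culprit is the resonance at frequency zero, which the right-hand side cannot see once $s<0$ sends the high-frequency Besov norms to zero.

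In other words, the hypothesis ``$s\in\RR$'' in the paper's statement is itself an overclaim; it should read $s>0$, which is the only range in which the Proposition is actually invoked in the paper (see e.g.\ the use in \eqref{eq:rem_cubic_1} with $s = \tfrac12 + \kappa$). Your argument, up to but excluding the last sentence, proves exactly the correct statement. Delete the last sentence, or replace it with a remark that the restriction $s>0$ is necessary as shown by the example above.
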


\begin{proof}
See \cite[Lemma 2.1]{GOTW18}.
\end{proof}

\begin{prop}[Interpolation]\label{prop: tool interp}
	Let $s,s_1,s_2 \in \RR$ such that $s_1 < s < s_2$, $p,p_1,p_2,q,q_1,q_2 \in [1,\infty]$ and $\theta \in (0,1)$ satisfy
	\begin{equs} \label{tool: interp}
	s
	&=
	\theta s_1 + (1-\theta)s_2
	\\
	\frac 1p 
	&= 
	\frac \theta {p_1} + \frac{1-\theta}{p_2}
	\\
	\frac 1q
	&=
	\frac \theta {q_1} + \frac{1-\theta}{q_2}.
	\end{equs}
Then, there exists $C=C(s,s_1,s_2,p,p_1,p_2,q,q_1,q_2,\theta,\eta) > 0$ such that
\begin{equs}\label{tool: interpolation}
	\| f \|_{B^s_{p,q}}
	\leq 
	C\| f \|_{B^{s_1}_{p_1,q_1}}^\theta \| f \|_{B^{s_2}_{p_2,q_2}}^{1-\theta}
\end{equs}
for $f \in C^\infty(\TTN)$.
\end{prop}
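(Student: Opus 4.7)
The plan is to use the Littlewood-Paley characterization of the Besov norms directly and apply H\"older's inequality twice: first in the spatial $L^p$ norm within each dyadic block, and then in the $\ell^q$ summation over the dyadic index.

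First, for each $k \geq -1$, I would apply H\"older's inequality in $L^p(\TTN, \dbar x)$ with the conjugate exponents $\frac{p_1}{\theta p}$ and $\frac{p_2}{(1-\theta) p}$, which are conjugate precisely because of the hypothesis $\frac{1}{p} = \frac{\theta}{p_1} + \frac{1-\theta}{p_2}$. Writing $|\Delta_k f|^p = |\Delta_k f|^{\theta p} \cdot |\Delta_k f|^{(1-\theta) p}$ and integrating, this gives the pointwise-in-$k$ bound
\begin{equs}
\|\Delta_k f\|_{L^p} \leq \|\Delta_k f\|_{L^{p_1}}^\theta \|\Delta_k f\|_{L^{p_2}}^{1-\theta}.
\end{equs}

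Next, multiplying both sides by $2^{ks}$ and using $s = \theta s_1 + (1-\theta) s_2$ to factor $2^{ks} = (2^{k s_1})^\theta (2^{k s_2})^{1-\theta}$, I obtain
\begin{equs}
2^{ks} \|\Delta_k f\|_{L^p} \leq \bigl(2^{k s_1}\|\Delta_k f\|_{L^{p_1}}\bigr)^\theta \bigl(2^{k s_2}\|\Delta_k f\|_{L^{p_2}}\bigr)^{1-\theta}.
\end{equs}
Then I would take the $\ell^q$ norm in $k$ on both sides and apply H\"older's inequality on sequences, this time with exponents $\frac{q_1}{\theta q}$ and $\frac{q_2}{(1-\theta) q}$, which are conjugate by the corresponding hypothesis on the $q$'s. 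This yields
\begin{equs}
\|f\|_{B^s_{p,q}} \leq \|f\|_{B^{s_1}_{p_1,q_1}}^\theta \, \|f\|_{B^{s_2}_{p_2,q_2}}^{1-\theta},
\end{equs}
which is the desired estimate (with constant $C=1$ in this proof; the stated dependence on the various parameters is harmless).

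The boundary cases when one of the $p_i$'s or $q_i$'s equals $1$ or $\infty$ require the usual supremum/essential-supremum variants of H\"older but cause no real difficulty. There is no genuine obstacle here: the only point that requires care is verifying the purely algebraic fact that the pairs $\bigl(\tfrac{p_1}{\theta p}, \tfrac{p_2}{(1-\theta) p}\bigr)$ and $\bigl(\tfrac{q_1}{\theta q}, \tfrac{q_2}{(1-\theta) q}\bigr)$ are indeed H\"older conjugate under \eqref{tool: interp}, which is immediate. Alternatively, the result is a direct transcription of \cite[Proposition 2.22]{BCD11} (stated there for Besov spaces on $\RR^d$); the adaptation to $\TTN$ is routine since the Littlewood-Paley decomposition is defined by the same Fourier multipliers and the $L^p$ norms are taken with respect to the finite measure $\dbar x$.
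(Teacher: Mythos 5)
Your proof is correct. The paper's ``proof'' of this proposition is a bare citation to a reference (Brezis--Mironescu), so you are supplying the standard argument that the citation is implicitly invoking: two applications of H\"older's inequality, once in $L^p(\dbar x)$ within each Littlewood--Paley block using the exponent relation on the $p$'s, and once in $\ell^q$ over the dyadic index using the relation on the $q$'s, with the dyadic weight $2^{ks}$ factoring as $(2^{ks_1})^\theta(2^{ks_2})^{1-\theta}$ by the relation on the $s$'s. The conjugacy conditions you flag are indeed the only things to verify, and they hold as stated (one also needs $p_1 \geq \theta p$, $p_2 \geq (1-\theta)p$, $q_1 \geq \theta q$, $q_2 \geq (1-\theta)q$ so the outer exponents are $\geq 1$, but these are automatic from the hypothesized identities since each summand is nonnegative and they sum to one). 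Your constant $C=1$ is of course admissible. The only quibble is the reference: the paper cites \cite[Proposition 5.7]{BM18}, not \cite{BCD11}, but that is immaterial since your proof is self-contained.
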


\begin{proof}
See \cite[Proposition 5.7]{BM18}.
\end{proof}

\begin{prop}[Bernstein's inequality] \label{prop: tool bernstein}
For $R > 0$, denote $B_f(R) = \{ n \in (N^{-1}\ZZ)^3 : |n| \leq R \}$. Let $s_1, s_2 \in \RR$ such that $s_1 < s_2$, $p,q \in [1,\infty]$.
Then, there exists $C=C(s_2,s_2,p,q,\eta)>0$ such that
\begin{equs}
\begin{split} \label{tool: bernstein ball}
	\| f \|_{B^{s_2}_{p,q}}
	&\leq
	C R^{s_2 - s_1} \| f \|_{B^{s_1}_{p,q}}
\end{split}
\\
\begin{split} \label{tool: bernstein annulus}
	\| g \|_{B^{s_1}_{p,q}}
	&\leq
	CR^{s_1 - s_2}\| g \|_{B^{s_2}_{p,q}}
\end{split}
\end{equs}
for $f,g \in C^\infty(\TTN)$ such that $\mathrm{supp} (\cF f) \subset B_f(R)$ and $\mathrm{supp} (\cF g) \subset (N^{-1}\ZZ)^3 \setminus B_f(R)$. 
\end{prop}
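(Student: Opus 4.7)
The plan is to read the Besov norm directly off the Littlewood--Paley decomposition and observe that the spectral-support hypotheses collapse the dyadic sum to a half-infinite range. First I would extract explicit thresholds from the supports of the $\chi_k$'s: for $k \geq 0$, $\chi_k$ is supported on the annulus $\{|n| \in [3 \cdot 2^{k-3},\, 2^{k+2}/3]\}$, so $\mathrm{supp}(\cF f) \subset B_f(R)$ forces $\Delta_k f \equiv 0$ whenever $3 \cdot 2^{k-3} > R$, i.e.\ for $k > k_0$ with $2^{k_0} \sim R$. Symmetrically, $\mathrm{supp}(\cF g) \cap B_f(R) = \emptyset$ forces $\Delta_k g \equiv 0$ whenever $2^{k+2}/3 < R$, i.e.\ for $k < k_0'$ with $2^{k_0'} \sim R$.

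Given these truncations both estimates reduce to pure algebra on the Littlewood--Paley series. For \eqref{tool: bernstein ball} I would write, using $s_2 - s_1 > 0$,
\[
\|f\|_{B^{s_2}_{p,q}}^q \;=\; \sum_{k=-1}^{k_0} 2^{k(s_2-s_1)q} \bigl(2^{ks_1}\|\Delta_k f\|_{L^p}\bigr)^q \;\leq\; 2^{k_0(s_2-s_1)q}\|f\|_{B^{s_1}_{p,q}}^q \;\lesssim\; R^{(s_2-s_1)q}\, \|f\|_{B^{s_1}_{p,q}}^q,
\]
with the obvious replacement of $\sum$ by $\sup$ when $q=\infty$. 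The proof of \eqref{tool: bernstein annulus} is structurally identical: on the restricted range $k \geq k_0'$ the sign $s_1-s_2<0$ yields $2^{k(s_1-s_2)} \leq 2^{k_0'(s_1-s_2)} \lesssim R^{s_1-s_2}$, which is then pulled out of the $\ell^q$ norm.

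There is no real conceptual obstacle; this is a standard Littlewood--Paley lemma. The only care required is bookkeeping: making the constants in ``$2^{k_0} \sim R$'' explicit from the fixed choice of cutoffs $\chi,\tilde\chi$, and in particular handling the low-frequency block $k=-1$ (for which $\chi_{-1}$ is supported in $B(0,4/3)$ rather than an annulus) together with the degenerate regime $R \lesssim 1$ by enlarging $C$ to absorb them; in that regime \eqref{tool: bernstein ball}--\eqref{tool: bernstein annulus} degenerate to the trivial Besov embedding $B^{s_2}_{p,q} \hookrightarrow B^{s_1}_{p,q}$. Uniformity in $N$ is automatic, since the argument is blockwise and our Besov norms are built from the normalised Lebesgue measure $\dbar x$, so no volume factors intrude.
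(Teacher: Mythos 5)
Your proof is essentially the correct one and is the standard Littlewood--Paley argument; the paper itself does not write it out but simply cites \cite[Lemma 2.1]{BCD11} (which is the $L^p$--$L^q$ derivative form of Bernstein, applied blockwise exactly as you do). So you have supplied the computation that the citation leaves implicit: read off from the supports of $\chi_k$ that $\Delta_k f$ (resp.\ $\Delta_k g$) vanishes above (resp.\ below) a threshold $k_0$ with $2^{k_0}\sim R$, then pull $2^{k(s_2-s_1)}\leq 2^{k_0(s_2-s_1)}\lesssim R^{s_2-s_1}$ (or the reverse inequality with $s_1-s_2<0$) out of the $\ell^q$ sum. That is the whole content.

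One caveat: your handling of the degenerate regime $R\lesssim 1$ via ``the trivial Besov embedding $B^{s_2}_{p,q}\hookrightarrow B^{s_1}_{p,q}$'' is not quite right for \eqref{tool: bernstein ball}. That embedding bounds $\|\cdot\|_{B^{s_1}_{p,q}}$ by $\|\cdot\|_{B^{s_2}_{p,q}}$, which is the wrong direction; \eqref{tool: bernstein ball} asks for $\|f\|_{B^{s_2}_{p,q}}\leq C R^{s_2-s_1}\|f\|_{B^{s_1}_{p,q}}$, and as $R\to 0$ the right-hand side degenerates. Indeed, if $R<3/8$ the only nonzero block is $\Delta_{-1}f$, so $\|f\|_{B^{s_2}_{p,q}}/\|f\|_{B^{s_1}_{p,q}}=2^{s_1-s_2}$ is a fixed constant while $R^{s_2-s_1}\to 0$; no constant $C=C(s_1,s_2,p,q,\eta)$ uniform in $R$ can close the gap. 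So \eqref{tool: bernstein ball} as stated in the paper is only genuinely meaningful for $R\gtrsim 1$ (which is the only regime it is ever invoked in, with $R\sim\langle k\rangle$ or $R\sim 2^j$); \eqref{tool: bernstein annulus}, on the other hand, does reduce to the embedding for small $R$ because there $R^{s_1-s_2}\geq 1$. This is a blemish in the proposition's formulation rather than a flaw specific to your argument, but you should not claim the trivial embedding rescues \eqref{tool: bernstein ball} in the small-$R$ regime --- it doesn't, and the cleanest fix is simply to note that $R\geq 1$ (or $R\geq c$ for a fixed $c>0$) is implicitly assumed, at which point your thresholding argument closes the proof with constants depending only on the cutoff $\chi,\tilde\chi$.
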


\begin{proof}
See \cite[Lemma 2.1]{BCD11}	for a proof on $\RR^3$.
\end{proof}

\subsection{Paracontrolled calculus} \label{appendix: sub: paracontrolled}

Let $f,g \in C^\infty(\TTN)$. Define the paraproduct
\begin{equs}
f \pg g
=
\sum_{l < k-1} \Delta_k f \Delta_l g	
\end{equs}
and the resonant product
\begin{equs}
f \pe g
=
\sum_{|k-l| \leq 1} \Delta_k f \Delta_l g.	
\end{equs}
Then,
\begin{equs}\label{tool: bony decomp}
fg
=
f \pl g + f \pe g + f \pg g.	
\end{equs}

\begin{prop}[Paraproduct estimates] \label{prop:paraproduct}
Let $s \in \RR$ and $p,p_1,p_2,q \in [1,\infty]$ be such that $\frac 1p = \frac 1 {p_1} + \frac 1{p_2}$. Then, there exists $C=C(s,p,p_1,p_2,q,\eta)>0$ such that
\begin{equs} \label{tool: paraproduct}
	\|f \pg g\|_{B^{s}_{p,q}}
	\leq
	C\| f \|_{B^s_{p_1,q}} \| g \|_{L^{p_2}}
\end{equs}
for $f,g \in C^\infty(\TTN)$.
\end{prop}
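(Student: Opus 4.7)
The plan is to prove the estimate directly from the Littlewood-Paley decomposition by exploiting the frequency localization of the summands $\Delta_k f \cdot S_{k-1} g$, where $S_{k-1} g = \sum_{l < k-1} \Delta_l g$. Write
\begin{equs}
f \pg g = \sum_{k \geq -1} \Delta_k f \cdot S_{k-1} g.
\end{equs}
First I would verify the crucial spectral observation: since $\Delta_k f$ has Fourier support in an annulus of scale $2^k$ (for $k \geq 0$) and $S_{k-1} g$ has Fourier support in a ball of radius $\lesssim 2^{k-1}$, the product $\Delta_k f \cdot S_{k-1} g$ has Fourier support in an annulus of scale $2^k$. Consequently there is a constant $c$ (independent of everything) such that $\Delta_j(\Delta_k f \cdot S_{k-1} g) = 0$ whenever $|j - k| > c$, so that
\begin{equs}
\Delta_j (f \pg g) = \sum_{|k - j| \leq c} \Delta_j \big( \Delta_k f \cdot S_{k-1} g \big).
\end{equs}

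Next I would bound each term by Hölder's inequality and the uniform $L^{p_2}$-boundedness of the partial sum operator $S_{k-1}$ (which follows from Young's inequality for convolutions applied to the kernel of $S_{k-1}$, whose $L^1$ norm is bounded independently of $k$ thanks to the scaling of the Littlewood-Paley kernels). Combining with the $L^p$-boundedness of $\Delta_j$ uniformly in $j$, this gives
\begin{equs}
\| \Delta_j (f \pg g) \|_{L^p} \;\lesssim\; \|g\|_{L^{p_2}} \sum_{|k - j| \leq c} \| \Delta_k f \|_{L^{p_1}}.
\end{equs}

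Finally I would multiply both sides by $2^{js}$ and take the $\ell^q$ norm in $j$. Since the right-hand sum has only $2c+1$ terms, a shift of index together with the triangle inequality in $\ell^q$ yields
\begin{equs}
\Big\| 2^{js} \| \Delta_j (f \pg g) \|_{L^p} \Big\|_{\ell^q_j} \;\lesssim\; \|g\|_{L^{p_2}} \, \|f\|_{B^s_{p_1,q}},
\end{equs}
which is the desired estimate. The only mildly non-routine step is the uniform-in-$k$ bound on $S_{k-1}$ acting on $L^{p_2}$, but this is standard once one recalls that $S_{k-1}$ is convolution against a Schwartz function scaled at level $k$, whose $L^1$ norm is scale-invariant. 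No further ideas beyond classical Littlewood-Paley theory are needed.
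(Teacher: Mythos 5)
Your proof is correct and is exactly the standard Littlewood--Paley argument for the paraproduct bound; the paper does not reproduce a proof but simply cites \cite[Theorem~2.82]{BCD11}, where this same argument (frequency localization of $\Delta_k f \cdot S_{k-1}g$ in an annulus at scale $2^k$, H\"older plus uniform boundedness of $S_{k-1}$ and $\Delta_j$, then $\ell^q$ summation) is given. Your presentation fills in the reference, so nothing is missing and nothing is done differently.
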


\begin{proof}
See \cite[Theorem 2.82]{BCD11} for a proof on $\RR^3$.
\end{proof}

\begin{prop}[Resonant product estimate]\label{prop: resonant}
Let $s_1,s_2 \in \RR$ such that $s=s_1 + s_2 > 0$. Let $p,p_1,p_2,q \in [1,\infty]$ satisfy $\frac 1p = \frac 1{p_1} + \frac 1{p_2}$. Then, there exists $C=C(s_1,s_2,p,p_1,p_2,q,\eta) > 0$ such that
\begin{equs} \label{tool: resonant}
	\| f \pe g \|_{B^s_{p,q}}
	\leq
	C\| f \|_{B^{s_1}_{p_1,\infty}} \| g \|_{B^{s_2}_{p_2,q}}	
\end{equs}
for $f,g \in C^\infty(\TTN)$.	
\end{prop}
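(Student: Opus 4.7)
The plan is to exploit the Fourier-support geometry of the resonant product together with H\"older's inequality and a convolution estimate on sequences. Write
\begin{equs}
f \pe g = \sum_{|k-l|\leq 1} \Delta_k f\, \Delta_l g,
\end{equs}
and observe that since $\Delta_k f$ is spectrally supported in the annulus of scale $2^k$ and $\Delta_l g$ in the annulus of scale $2^l$, the product $\Delta_k f\,\Delta_l g$ (with $|k-l|\leq 1$) is spectrally supported in a ball of radius $O(2^k)$. Consequently there exists an absolute constant $N_0$ such that $\Delta_j(\Delta_k f\, \Delta_l g) = 0$ whenever $k < j - N_0$. Hence, for each $j\geq -1$,
\begin{equs}
\Delta_j(f \pe g) = \sum_{k \geq j - N_0}\,\sum_{|k-l|\leq 1} \Delta_j\bigl(\Delta_k f\, \Delta_l g\bigr).
\end{equs}

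Next I would estimate each term via H\"older's inequality with the exponent split $\frac1p = \frac1{p_1} + \frac1{p_2}$, using that $\Delta_j$ is $L^p$-bounded uniformly in $j$:
\begin{equs}
\|\Delta_j(\Delta_k f \,\Delta_l g)\|_{L^p} \lesssim \|\Delta_k f\|_{L^{p_1}}\,\|\Delta_l g\|_{L^{p_2}}.
\end{equs}
Writing $\|\Delta_k f\|_{L^{p_1}} \leq 2^{-k s_1}\|f\|_{B^{s_1}_{p_1,\infty}}$ and $\|\Delta_l g\|_{L^{p_2}} = 2^{-l s_2} c_l$ where $(c_l)_l \in \ell^q$ has $\ell^q$-norm bounded by $\|g\|_{B^{s_2}_{p_2,q}}$, and using $|k-l|\leq 1$ together with $s = s_1 + s_2$, I get
\begin{equs}
2^{js}\|\Delta_j(f\pe g)\|_{L^p} \lesssim \|f\|_{B^{s_1}_{p_1,\infty}} \sum_{k\geq j-N_0} 2^{(j-k)s}\,\tilde c_k,
\end{equs}
where $\tilde c_k \lesssim c_k + c_{k-1} + c_{k+1}$ still has $\ell^q$-norm controlled by $\|g\|_{B^{s_2}_{p_2,q}}$ (up to a constant).

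Finally, I would conclude by a convolution/Schur-test argument on sequences. The kernel $K(j,k) = 2^{(j-k)s}\mathbf{1}_{\{k \geq j - N_0\}}$ depends only on $j-k$ and is summable in each variable uniformly in the other, precisely because $s > 0$ (this is where the hypothesis $s_1+s_2 > 0$ is essential and nowhere else). Hence by Young's inequality for $\ell^q$, the sequence $\bigl(2^{js}\|\Delta_j(f\pe g)\|_{L^p}\bigr)_j$ lies in $\ell^q$ with norm controlled by $\|f\|_{B^{s_1}_{p_1,\infty}}\|g\|_{B^{s_2}_{p_2,q}}$, which is exactly the desired bound. The only genuinely delicate point is to verify the positivity condition $s > 0$ is used sharply in the geometric-series step; everything else is routine Littlewood--Paley bookkeeping, and the argument is insensitive to the discrete/toroidal setting since the spectral-support statement used in the first step holds verbatim on $\TTN$.
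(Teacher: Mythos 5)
Your proof is correct and is precisely the standard Littlewood--Paley argument (spectral support of $\Delta_k f\,\Delta_l g$ in a ball of radius $O(2^k)$, H\"older on each term, and a discrete Young/convolution inequality on the scale index using $s>0$) that the paper delegates to \cite[Theorem~2.85]{BCD11}. The paper gives no independent proof, only that citation, so your argument fills in exactly the details of the cited reference, and your closing remark that the spectral-support step transfers verbatim from $\RR^3$ to $\TTN$ correctly addresses the only point the paper's citation implicitly leaves to the reader.
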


\begin{proof}
See \cite[Theorem 2.85]{BCD11} for a proof on $\RR^3$.	
\end{proof}

We now state some useful commutator estimates.
\begin{prop} \label{prop:commutator1}
Let $s_1, s_3 \in \RR$, $s_2 \in (0,1)$ such that $s_1 + s_3 < 0$ and $s_1 + s_2 + s_3 = 0$. Moreover, let $p,p_1,p_2,q_1,q_2 \in [1,\infty]$ satisfy $\frac 1p + \frac 1{p_1} + \frac 1{p_2} = 1$ and $\frac 1{q_1} + \frac 1{q_2} = 1$. Then, there exists $C=C(s_1,s_2,s_3,p,p_1,p_2,q_1,q_2,\eta)>0$ such that
\begin{equs} \label{tool: commutator 1}
\Big| \intx (f \pg g)h - (f \pe h)g \dbar x \Big|
\leq
C\| f \|_{B^{s_1}_{p,\infty}} \|g\|_{B^{s_2}_{p_1,q_1}} \| h \|_{B^{s_3}_{p_2,q_2}}
\end{equs}
for $f,g,h \in C^\infty(\TTN)$.
\end{prop}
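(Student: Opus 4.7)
This is a Gubinelli--Imkeller--Perkowski type trilinear commutator estimate, phrased in integral form; the cancellation between the paraproduct $(f\pg g)h$ and the resonant-product term $(f\pe h)g$ is what produces the scaling-critical bound $s_1+s_2+s_3=0$. I would proceed by direct Littlewood--Paley expansion.

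First, expand
\begin{equation*}
\int (f \pg g)\,h\,dx = \sum_{i<j-1,\;k}\int (\Delta_i f)(\Delta_j g)(\Delta_k h)\,dx, \qquad
\int (f \pe h)\,g\,dx = \sum_{|i-k|\le 1,\;l}\int (\Delta_i f)(\Delta_l g)(\Delta_k h)\,dx.
\end{equation*}
Parseval restricts each triple integral to vanish unless the three Fourier supports can sum to zero, which forces the two largest indices among $\{i,j,k\}$ (resp.\ $\{i,l,k\}$) to differ by $O(1)$. In both sums this cuts the effective index set down to an essentially two-dimensional region, and in the first sum $j\approx k$, in the second $i\approx k$ with $l\approx k$.

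Second, subtract the two sums and reorganise. The indicator difference $\mathbf{1}_{i<j-1}-\mathbf{1}_{|i-k|\le 1}$, combined with the Fourier-support constraint $j\approx k$, isolates a commutator structure that can be written (up to harmless boundary terms involving the low-frequency block $\Delta_{-1}$) in the schematic form
\begin{equation*}
\sum_{j}\int\bigl[(S_{j-1}f)(\Delta_j g)-(\tilde\Delta_j f)(\tilde\Delta_j g)\bigr]\,(\tilde\Delta_j h)\,dx,
\end{equation*}
where $\tilde\Delta_j$ denotes a slightly fattened Littlewood--Paley projector. The bracketed expression is a commutator in the sense of GIP: since $f$ and $g$ interact at the same dyadic scale $2^j$ in both terms, a Taylor-type expansion of $f$ across the spatial scale $2^{-j}$ of $\Delta_j g$ yields an improvement by one derivative, and this gain is exactly captured by the hypothesis $s_2>0$ (which makes $\Delta_jg$ ``small enough'' at small scales to support the expansion). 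An equivalent, more abstract route is to use the identity $\int uv\,dx=\int u\pe v\,dx$ (which follows from Parseval, since $\int u\pg v\,dx=0$ modulo low-frequency corrections) to reduce both integrals to resonant products, and then to invoke the standard GIP commutator $(f \pg g)\pe h - f(g\pe h)$ together with symmetry of $\int fgh\,dx$; either route leans on the same underlying cancellation.

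Third, estimate each term by H\"older's inequality using the exponent relations $\frac1p+\frac1{p_1}+\frac1{p_2}=1$ and $\frac1{q_1}+\frac1{q_2}=1$, and convert the spectral localisation into Besov norms by Bernstein's inequality~\eqref{tool: bernstein ball}, $\|\Delta_j u\|_{L^r}\lesssim 2^{-js}\|u\|_{B^s_{r,q}}$. The remaining sum in $j$ is a geometric series whose exponent is $s_1+s_2+s_3=0$ after the commutator gain is absorbed; the auxiliary sum over the subsidiary index (indices smaller than $j$) converges precisely under $s_1+s_3<0$, which is the H\"older-dual of the integrated resonant-product regime. Summing gives the claimed bound.

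\textbf{Main obstacle.} The delicate step is the second one: correctly identifying the combinatorial cancellation between the two indicator regions after imposing the Parseval constraint, and recasting what survives in commutator form so that the gain from $s_2>0$ is visible. Once this reorganisation is made, the remainder is a mechanical application of H\"older, Bernstein and dyadic summation. The scaling-critical nature ($s_1+s_2+s_3=0$, with $s_2+s_3$ possibly non-positive so that $g\pe h$ need not be a well-defined distribution) means the trilinear form must be interpreted via its Littlewood--Paley decomposition throughout, rather than through well-defined intermediate products.
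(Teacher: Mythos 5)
The overall strategy—direct Littlewood--Paley expansion, exploitation of the Fourier support (Parseval) constraint, recasting the surviving terms in commutator form, and then H\"older/Bernstein/dyadic summation—is indeed the correct one, and it is precisely what the references cited by the paper ([BG19, Prop.\ 7] and [GUZ20, Lemma A.6]) do. However, there is a concrete error at the very start that propagates through the rest of your argument: you have the paper's paraproduct convention backwards.

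The paper defines
\begin{equation*}
f \pg g \;=\; \sum_{l<k-1}\Delta_k f\,\Delta_l g,
\end{equation*}
so in $f\pg g$ it is \emph{$f$} that sits at the high frequency $k$ and \emph{$g$} that is the low-frequency factor $l<k-1$; the symbol is meant to be read ``$f$ greater than $g$.'' Your displayed expansion $\int(f\pg g)h\,\dbar x=\sum_{i<j-1,\,k}\int(\Delta_i f)(\Delta_j g)(\Delta_k h)\,\dbar x$ uses the Gubinelli--Imkeller--Perkowski convention (where the \emph{first} argument of $\prec$ is the low one), which is the opposite of the paper's. This is not merely cosmetic: once you switch the roles of $f$ and $g$ inside $\pg$, the commutator structure you write down, $\sum_j\int[(S_{j-1}f)(\Delta_j g)-(\tilde\Delta_j f)(\tilde\Delta_j g)](\tilde\Delta_j h)\,\dbar x$, no longer reproduces either $\int(f\pg g)h$ or $\int(f\pe h)g$ under the paper's definitions, and the roles of the hypotheses become scrambled. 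With the paper's convention, both $\int(f\pg g)h$ and $\int(f\pe h)g$ expand as $\sum\int\Delta_k f\,\Delta_l g\,\Delta_m h$ with $f,h$ carrying the two high indices ($k\approx m$ after Parseval) and $g$ the low index $l$; the commutator gain is generated by comparing the two truncations of $g$ (roughly, $S_{k-2}g$ in the first term versus $S_{k+O(1)}g$ in the second), i.e.\ by Taylor-type control of the \emph{$g$}-factor, which is where $s_2\in(0,1)$ enters. Your text instead speaks of ``a Taylor-type expansion of $f$'' while attributing the gain to $s_2>0$—two statements that are mutually inconsistent, and that symptom traces back to the same convention swap.

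Two secondary remarks. First, you cite $s_1+s_3<0$ as controlling a ``subsidiary'' geometric sum; under the constraint $s_1+s_2+s_3=0$, the condition $s_1+s_3<0$ is \emph{identical} to $s_2>0$, so there is only one non-trivial sign hypothesis here, and one should say so rather than invoking it as an independent ingredient. Second, your alternative ``abstract route'' via the identity $\int uv\,\dbar x=\int u\pe v\,\dbar x$ (modulo low-frequency terms) and the standard GIP commutator $(f\pg g)\pe h-f(g\pe h)$ is genuinely tricky at this endpoint: since $s_1+s_2+s_3=0$, the commutator lands in $B^0_{\cdot,\cdot}$ and cannot be integrated by a blind appeal to Proposition~\ref{prop:commutator2} (which requires $s_1+s_2+s_3>0$), and, as you note yourself, $g\pe h$ need not exist. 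You are right to flag this, but it should be spelled out that the abstract route only becomes a proof once the borderline integration is handled directly via the dyadic decomposition, which returns you to the first route. In short: the plan is the right one, but step~1 needs to be redone with the correct orientation of $\pg$, and step~2 (the identification of the commutator structure) needs to be re-derived accordingly before it can be called a proof.
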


\begin{proof}
This is a modification of \cite[Lemma A.6]{GUZ20}. See \cite[Proposition 7]{BG19}.	
\end{proof}

\begin{prop}\label{prop:commutator2}
Let $s_1,s_3 \in \RR$, $s_2 \in (0,1)$ such that $s_1+s_3 < 0$ but $s_1+s_2+s_3 > 0$. Morover, let $p,p_1,p_2,p_3 \in [1,\infty]$ satisfy $\frac 1p = \frac 1{p_1} + \frac 1{p_2} + \frac 1{p_3}$. Then, there exists $C=C(s_1,s_2,s_3,p,p_1,p_2,\eta)>0$ such that
\begin{equs} \label{tool: commutator 2}
\| (f\pg g)\pe h - (f\pe h)g \|_{B^{s_1+s_2+s_3}_{p,\infty}}
&\leq
C\| f \|_{B^{s_1}_{p_1,\infty}} \|g \|_{B^{s_2}_{p_2,\infty}} \| h \|_{B^{s_3}_{p_3,\infty}} 
\end{equs}
for $f,g,h \in C^\infty(\TTN)$.
\end{prop}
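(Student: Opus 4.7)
The plan is to follow the Gubinelli--Imkeller--Perkowski template for commutator estimates in paracontrolled calculus, analogous to \cite[Lemma A.6]{GUZ20} used for Proposition \ref{prop:commutator1}, but upgrading the output from a duality pairing to a Besov norm bound. The key subtlety is that, because $s_1 + s_3 < 0$, the object $f \pe h$ is not a well-defined distribution on its own, so the argument must operate directly at the level of Littlewood--Paley blocks rather than through intermediate resonant product objects.

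First, apply Bony's decomposition to the second summand to rewrite
\begin{equs}
(f \pg g) \pe h - (f \pe h) g
&=
\bigl[(f \pg g) \pe h - (f \pe h) \pg g\bigr]
\\
&\quad
- (f \pe h) \pe g - (f \pe h) \pl g,
\end{equs}
where each of the three pieces is interpreted as a formal double Littlewood--Paley sum. For the two ``tame'' pieces, expand $f \pe h = \sum_{i \geq -1} \Delta_i f \, \tilde\Delta_i h$ (with $\tilde\Delta_i = \sum_{|j-i|\leq 1} \Delta_j$) inside the outer product with $g$. Frequency localization restricts the admissible range of $i$ in the $m$-th Littlewood--Paley block of the output: for the resonant piece $(f \pe h) \pe g$ one has $i \gtrsim m$, and the resulting geometric series converges because $s_1 + s_2 + s_3 > 0$; for the paraproduct piece $(f \pe h) \pl g = g \pg (f \pe h)$ one has $i \leq m$, and the sum converges because $s_1 + s_3 < 0$. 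In both cases H\"older's inequality applied to $\Delta_i f$, $\tilde\Delta_i h$ and $\Delta_l g$ (with $l \approx m$), combined with the Besov characterization of each factor, yields a bound of size $2^{-m(s_1 + s_2 + s_3)}$ times the product of the norms.

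The main commutator $(f \pg g) \pe h - (f \pe h) \pg g$ is handled by inserting the explicit definitions $f \pg g = \sum_k \Delta_k f \cdot S_{k-2} g$ and $f \pe h = \sum_i \Delta_i f \, \tilde\Delta_i h$. Up to harmless summation tails of bounded length, the two expressions become symmetric sums over the same index of products $\Delta_i f \cdot \tilde\Delta_i h \cdot S_{i-2} g$, and their difference reduces to terms of the form
\begin{equs}
\tilde\Delta_i h \cdot \bigl[\Delta_i(\Delta_i f \cdot S_{i-2} g) - \Delta_i f \cdot S_{i-2} g\bigr],
\end{equs}
i.e.\ the commutator between the projector $\Delta_i$ and pointwise multiplication by the low-frequency function $S_{i-2} g$. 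Since $g \in B^{s_2}_{p_2, \infty}$ with $s_2 \in (0,1)$, this commutator gains a factor $2^{-i s_2} \|g\|_{B^{s_2}_{p_2, \infty}}$, via a Taylor expansion of the convolution kernel of $\Delta_i$ combined with the Besov characterization of $g$. Summing in $l^\infty$ against the scale index using $s_1 + s_2 + s_3 > 0$ delivers the desired bound.

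The main obstacle lies in making the commutator gain rigorous when $g$ has regularity only $s_2 < 1$ rather than being Lipschitz: the standard $[\Delta_i, \psi]$ estimate for smooth $\psi$ must be replaced by a Besov analogue, which requires splitting $g$ into components above and below the cut-off scale $2^{i-2}$ and reincorporating the high-frequency remainder into sums controlled by the same mechanism as the tame pieces. Once this technical step is carried out, taking the supremum over the output block index $m$ (appropriate because the target $q=\infty$) closes the estimate.
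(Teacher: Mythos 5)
Your Bony split of $(f\pe h)g$ into $(f\pe h)\pg g$, $(f\pe h)\pe g$, and $(f\pe h)\pl g$ has a genuine gap: you claim the two ``tame'' pieces are individually bounded by the right-hand side of \eqref{tool: commutator 2}, and they are not. Each carries a Littlewood--Paley block $\Delta_l(f\pe h) = \sum_{|i-j|\leq 1,\, i\geq l-c}\Delta_l(\Delta_i f\,\Delta_j h)$; since $s_1+s_3<0$ the summand norms $\sim 2^{-i(s_1+s_3)}$ grow in $i$, and your localization ``$i\gtrsim m$'' is only a lower bound on $i$ --- the intermediate $l$-sum gives no upper cutoff. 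Concretely, take $g\equiv 1$ and $f = 2^{-is_1}e_n$, $h = 2^{-is_3}e_{-n}$ with $|n|\approx 2^i$: all three Besov norms on the right are of order one, the full commutator is essentially zero, yet $(f\pe h)\pe g = fh = 2^{-i(s_1+s_3)}\to\infty$. None of your three pieces is individually bounded (the ``main'' piece fails too, both through the factor $\Delta_l(f\pe h)$ in $(f\pe h)\pg g$ and because $(f\pg g)\pe h$ is itself unbounded when $s_1+s_3<0$), and the cancellation among them is genuine. Your reduction of the main piece to $[\Delta_i, S_{i-2}g]$ also tacitly identifies $\Delta_l(f\pe h)$ with $\Delta_l f\,\tilde\Delta_l h$, discarding exactly the divergent off-diagonal resonant contributions.

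The reference the paper cites, \cite[Lemma 2.4]{GIP15}, uses a split in which every piece is bounded. Writing $S_{i-1}g = \sum_{l<i-1}\Delta_l g$,
\begin{equs}
(f\pg g)\pe h - (f\pe h)g
&=
\sum_{|i-j|\leq 1}\bigl[\Delta_i(f\pg g) - S_{i-1}g\,\Delta_i f\bigr]\Delta_j h
\\
&\quad
+ \sum_{|i-j|\leq 1}(S_{i-1}g - g)\,\Delta_i f\,\Delta_j h.
\end{equs}
The decisive difference is that the second sum carries the single dyadic block $\Delta_i f\,\Delta_j h$, not a reprojected block of the full resonant product $f\pe h$. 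Expanding $S_{i-1}g - g = -\sum_{l\geq i-1}\Delta_l g$, the output at scale $m$ forces $l\geq m-c$; summing $i\leq l+1$ first uses $s_1+s_3<0$, and then summing $l\geq m-c$ uses $s_1+s_2+s_3>0$. The first sum is the $[\Delta_i, S_{i-1}g]$ commutator gain you correctly identify, using $g\in B^{s_2}_{p_2,\infty}$ with $s_2\in(0,1)$; the $L^p$ H\"older bookkeeping is as you describe. To repair the proof, replace your Bony decomposition of $(f\pe h)g$ with the one above.
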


\begin{proof}
This is a modification of \cite[Lemma 2.4]{GIP15}. See \cite[Proposition 6]{BG19}.	
\end{proof}

\subsection{Analytic properties of $\cJ_k$} \label{subsec: analytic cj}

The family of operators $\{ \cJ_k \}_{k \geq 0}$ defined in Section \ref{subsec: construction of stochastic objects} satisfies the following estimate: for every multi-index $\alpha \in \NN^3$, there exists $C=C(\alpha,\eta)$ such that
\begin{equs} \label{eq: multiplier bound}
	\Big| \partial^\alpha \cJ_k (x)|
	\leq
	\frac{C}{\langle k \rangle^\frac 12 (1+|x|)^{1+|\alpha|}}.
\end{equs}

\begin{prop}\label{prop: multiplier estimate}
	Let $s \in \RR$, $p,q\in [1,\infty]$. Then, there exists $C=C(s,p,q)>0$ such that
	\begin{equs} \label{tool: multiplier estimate}
	\| \cJ_k f \|_{B^{s+1}_{p,q}}
	\leq
	\frac{C}{\langle k \rangle^\frac 12}\| f \|_{B^s_{p,q}}	
	\end{equs}
	for every $f \in C^\infty(\TTN)$
\end{prop}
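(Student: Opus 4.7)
The plan is to exploit the sharp frequency localisation of the symbol $\mu_k(n) = \sqrt{\partial_k \rho_k^2(n)}/\langle n \rangle$ of $\cJ_k$. Since $\partial_k \rho_k^2$ is supported in the annulus $\{|n| \in (c_\rho k, C_\rho k)\}$, so is $\mu_k$. Consequently $\cJ_k$ commutes with every Littlewood--Paley projector $\Delta_j$, and $\cJ_k \Delta_j f$ vanishes unless $|j - \log_2 \langle k \rangle| \leq A$ for some integer $A = A(\rho)$. It follows that only $O(1)$ Littlewood--Paley blocks contribute to the Besov norm on the left of \eqref{tool: multiplier estimate}:
\begin{equs}
\|\cJ_k f\|_{B^{s+1}_{p,q}}^q \;=\; \sum_{|j - \log_2\langle k \rangle|\le A} 2^{j(s+1)q}\,\|\cJ_k \Delta_j f\|_{L^p}^q,
\end{equs}
with the obvious modification if $q = \infty$.

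The heart of the argument would then be the fixed-frequency $L^p$ estimate
\begin{equs}
\|\cJ_k g\|_{L^p} \;\leq\; \frac{C}{\langle k \rangle^{3/2}}\,\|g\|_{L^p}
\end{equs}
valid for any $g \in C^\infty(\TTN)$ with Fourier support in $\{|n|\sim k\}$. Writing $\cJ_k g = \cJ_k \ast g$ as a convolution with the physical-space kernel $\cJ_k(x)$, Young's convolution inequality reduces this to the kernel $L^1$ estimate $\|\cJ_k\|_{L^1(\TTN)} \leq C\langle k \rangle^{-3/2}$, which we would in turn extract from \eqref{eq: multiplier bound} by splitting the integration into the near region $\{|x| \lesssim \langle k \rangle^{-1}\}$ and the far region $\{|x| \gtrsim \langle k \rangle^{-1}\}$. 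On the near region the volume is $\sim \langle k \rangle^{-3}$ and we apply the $|\alpha|=0$ bound; on the far region we apply \eqref{eq: multiplier bound} with sufficiently large $|\alpha|$, combined with integration by parts on the symbol side adapted to the scale $\langle k \rangle$, to obtain enough decay in $|x|$ to integrate. Substituting the operator estimate back with $g = \Delta_j f$ and using that $2^j \sim \langle k \rangle$ for the relevant indices yields
\begin{equs}
\|\cJ_k f\|_{B^{s+1}_{p,q}}^q \;\leq\; \frac{C}{\langle k \rangle^{q/2}} \sum_{j \geq -1} 2^{jsq}\,\|\Delta_j f\|_{L^p}^q \;=\; \Big(\frac{C}{\langle k \rangle^{1/2}}\Big)^q \|f\|_{B^s_{p,q}}^q,
\end{equs}
which is the claimed estimate \eqref{tool: multiplier estimate}.

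The main technical obstacle is the sharp kernel $L^1$ bound $\|\cJ_k\|_{L^1} \leq C\langle k \rangle^{-3/2}$. The factor $\sqrt{\partial_k \rho_k^2}$ in the symbol has square-root singularities at the boundary of the support of $\partial_k \rho_k^2$, so a direct Mikhlin--H\"ormander-type application to $\mu_k$ would be delicate, as pointwise derivative bounds on $\mu_k$ degenerate near those endpoints. The uniform-in-$|\alpha|$ pointwise kernel estimates \eqref{eq: multiplier bound} are precisely what bypass this issue: they directly encode the correct scaling of both $\langle k \rangle$ and $|x|$ in the physical-space kernel, and thus feed cleanly into the near/far field decomposition above.
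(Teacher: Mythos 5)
Your overall plan --- frequency-localise using the annular support of $\cJ_k$, bound each contributing Littlewood--Paley block via Young's convolution inequality, and assemble the $\langle k\rangle^{-1/2}$ gain from the factor $2^{j}\sim\langle k\rangle$ together with the operator-norm gain of $\langle k\rangle^{-3/2}$ on the annulus --- is sound, and it is indeed the substance of the standard proof of the Fourier multiplier theorem on Besov spaces. The paper simply quotes that theorem (\cite[Proposition~2.78]{BCD11}) applied to \eqref{eq: multiplier bound}, so you are essentially reproving the cited result rather than finding a new route. Your observation that the square-root vanishing of $\sqrt{\partial_k\rho_k^2}$ at the edge of the annulus makes pointwise control of high symbol derivatives delicate is a good one, though for this proposition \eqref{eq: multiplier bound} is \emph{assumed}; verifying it is a separate task.

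The real issue is your reading of \eqref{eq: multiplier bound}. You describe it as a ``physical-space kernel estimate,'' but it is a \emph{symbol} estimate: $\cJ_k(\cdot)$ in the paper denotes the symbol $\sqrt{\partial_k\rho_k^2(\cdot)}/\langle\cdot\rangle$, so $\partial^\alpha\cJ_k(x)$ is the $\alpha$-th frequency derivative of that symbol at frequency $x$, and the $(1+|x|)^{-1-|\alpha|}$ factor is exactly an order-$(-1)$ H\"ormander--Mikhlin-type symbol bound with a uniform prefactor $\langle k\rangle^{-1/2}$. Interpreted instead as a pointwise bound on the convolution kernel, the $|\alpha|=0$ case of \eqref{eq: multiplier bound} is in fact \emph{false}: the symbol is supported on an annulus of radius $\sim k$ with sup norm $\sim \langle k\rangle^{-3/2}$, so the kernel near the origin has size $\sim\langle k\rangle^{-3/2}\cdot k^3=k^{3/2}$, two full powers of $k$ larger than $\langle k\rangle^{-1/2}$. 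Consequently your near-field step (``volume $\sim\langle k\rangle^{-3}$, apply the $|\alpha|=0$ bound'') would quietly rest on a wrong inequality. The correct route --- which your sketch already gestures at when you mention ``integration by parts on the symbol side'' --- is to derive the kernel $L^1$ bound $\|\cJ_k\|_{L^1}\lesssim\langle k\rangle^{-3/2}$ from the \emph{symbol} bounds \eqref{eq: multiplier bound}: on $|x|\lesssim 1/k$, use $|\cJ_k(x)|\le\int|\mu_k|\lesssim k^{3/2}$, and on $|x|\gtrsim 1/k$, integrate by parts $M>3$ times in the Fourier integral using the $|\alpha|=M$ case of \eqref{eq: multiplier bound} to get $|\cJ_k(x)|\lesssim k^{3/2}(k|x|)^{-M}$. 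With that correction the rest of your argument goes through, and is precisely the content of \cite[Proposition~2.78]{BCD11}.
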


\begin{proof}
This follows from \eqref{eq: multiplier bound} and \cite[Proposition 2.78]{BCD11}.
\end{proof}

We now state another useful commutator estimate.
\begin{prop} \label{prop: multiplier commutator}
	Let $s_1 \in \RR$, $s_2 \in  (0,1)$, $p,p_1,p_2,q,q_1,q_2 \in [1,\infty]$ such that $\frac 1p = \frac 1{p_1} + \frac 1{p_2}$ and $\frac 1q = \frac 1{q_1} + \frac 1{q_2}$. Then, for any $\kappa > 0$, there exists $C=C(s_1,s_2,p,p_1,p_2, q, \kappa, \eta)>0$ such that
	\begin{equs} \label{tool: multiplier commutator}
		\| \cJ_k( f \pg g) - \cJ_k f \pg g \|_{B^{s_1 + s_2 -\kappa}_{p,q}}
		\leq
		C\| f \|_{B^{s_1}_{p_1,\infty}}\| g \|_{B^{s_2}_{p_2,\infty}}
	\end{equs}
	for $f,g \in C^\infty(\TTN)$.
\end{prop}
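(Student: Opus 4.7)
The proof will follow the paracontrolled calculus commutator template used for Propositions \ref{prop:commutator1}--\ref{prop:commutator2} (cf.~\cite[Lemma A.6]{GUZ20}). The strategy is to Littlewood--Paley decompose the paraproduct, extract a derivative of the low-frequency factor via a Taylor expansion in the kernel representation of $\cJ_k$, and exploit the Fourier localisation of $\cJ_k$ on $|\xi| \sim k$ to control the resulting sum. Concretely, writing $f \pg g = \sum_j \Delta_j f \cdot S_{j-2} g$ with $S_{j-2} g = \sum_{l \leq j-2}\Delta_l g$ and using that $\cJ_k$ commutes with every Littlewood--Paley block, one has
\begin{equs}
\cJ_k(f \pg g) - \cJ_k f \pg g
=
\sum_j [\cJ_k, M_{S_{j-2} g}]\Delta_j f,
\end{equs}
where $M_\phi$ denotes multiplication by $\phi$.

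For each $j$, use the kernel representation of $\cJ_k$ and Taylor expand the difference $S_{j-2}g(x-z) - S_{j-2}g(x) = -\int_0^1 z \cdot \nabla S_{j-2}g(x-tz)\,dt$, then apply Minkowski's integral inequality in $z$ followed by H\"older's inequality in $x$ to obtain
\begin{equs}
\| [\cJ_k, M_{S_{j-2} g}]\Delta_j f \|_{L^p}
\leq
\|\Delta_j f\|_{L^{p_1}} \|\nabla S_{j-2}g\|_{L^{p_2}} \int|z|\,|J_k(z)|\,dz,
\end{equs}
where $J_k$ is the kernel of $\cJ_k$. Bernstein's inequality \eqref{tool: bernstein ball}--\eqref{tool: bernstein annulus} gives $\|\Delta_j f\|_{L^{p_1}} \lesssim 2^{-s_1 j}\|f\|_{B^{s_1}_{p_1,\infty}}$ and, crucially using $s_2 < 1$ so the geometric series converges, $\|\nabla S_{j-2} g\|_{L^{p_2}} \lesssim 2^{(1-s_2)j}\|g\|_{B^{s_2}_{p_2,\infty}}$. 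Since the Fourier support of $\cJ_k$ is contained in $|\xi| \sim k$, the individual commutator term vanishes unless $2^j \sim k$ and is then itself Fourier-supported at scale $\sim k$; hence only finitely many Littlewood--Paley blocks $\Delta_m$ with $2^m \sim k$ can contribute to the Besov norm. Assembling the estimates and summing in $\ell^q$ yields the claimed bound, with the small loss $\kappa > 0$ comfortably absorbing the constants in the dyadic windows.

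The main obstacle is the uniform in $N$ bound on $\int_{\TTN}|z|\,|J_k(z)|\,dz$. The estimate \eqref{eq: multiplier bound} controls the symbol $\cJ_k(\xi)$ and its derivatives, but the resulting pointwise decay of the position-space kernel $J_k$ does not transfer cleanly to a uniform bound on the torus. This gap is bridged by the fact that $\cJ_k$ is Fourier-localised on $|\xi| \sim k$ with symbol bounds $|\partial^\alpha \cJ_k(\xi)| \lesssim \langle k \rangle^{-|\alpha|-3/2}$: an integration-by-parts argument in the Fourier inversion formula (standard pseudodifferential calculus, cf.~\cite[Proposition 2.78]{BCD11}, adapted to $\TTN$ via Poisson summation) yields $|J_k(z)| \lesssim \langle k\rangle^{3/2}(1+k|z|)^{-L}$ for any $L \in \NN$, and a change of variable produces the uniform bound $\int|z|\,|J_k(z)|\,dz \lesssim \langle k\rangle^{-5/2}$, which is more than sufficient to close the estimate with a constant depending only on $s_1,s_2,p,p_1,p_2,q,\kappa,\eta$.
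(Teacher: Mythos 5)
Your proof is correct, and it is essentially the paper's proof with the black boxes opened up: the paper handles this proposition by citing \cite[Lemma~2.99]{BCD11} together with the symbol bound \eqref{eq: multiplier bound}, and the kernel-plus-Taylor-expansion commutator estimate you carry out on each Littlewood--Paley block is precisely the content of that cited lemma. Your additional observations — that $s_2<1$ makes the geometric series defining $\|\nabla S_{j-2}g\|_{L^{p_2}}$ convergent, that the compact Fourier support of $\cJ_k$ on $|\xi|\sim k$ reduces the sum over $j$ to $O(1)$ terms, and that the uniform-in-$N$ kernel bound $\int |z|\,|J_k(z)|\,dz\lesssim \langle k\rangle^{-5/2}$ follows from the symbol estimates via Poisson summation — are exactly the details one would need to supply when adapting that lemma to the torus and to the present $q$-summability, so the two arguments coincide.
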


\begin{proof}
This follows from \eqref{eq: multiplier bound} and \cite[Lemma 2.99]{BCD11}.
\end{proof}

\subsection{Poincar\'e inequality on blocks}

\begin{prop}\label{prop:poincare_blocks}
	There exists $C_P > 0$ such that, for any $N \in \NN$ and $\Box \subset \TTN$ a unit block, the following estimate holds for all $f \in C^\infty(\TTN)$:
	\begin{equs} \label{tool: poincare}
	\int_\sBox \big(f - f(\Box)\big)^2 dx
	\leq
	C_P \int_\sBox |\nabla f|^2 dx
	\end{equs}
	where $f(\Box) = \int_\sBox f dx$.
\end{prop}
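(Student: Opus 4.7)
The plan is to reduce \eqref{tool: poincare} to the classical Poincar\'e--Wirtinger inequality on the unit cube. By translation invariance of Lebesgue measure, both sides of \eqref{tool: poincare} are unchanged under translating $\Box$ within $\TTN$, so without loss of generality I take $\Box = [0,1]^3$, viewed as a subset of $\TTN$ in the natural way. Any $f \in C^\infty(\TTN)$ restricts to a smooth function on $[0,1]^3$, and the claim reduces to showing that there is an absolute constant $C_P$, independent of $N$ and of $\Box$, such that
\begin{equs}
\int_{[0,1]^3} (f - \bar f)^2 \, dx \leq C_P \int_{[0,1]^3} |\nabla f|^2 \, dx,
\end{equs}
where $\bar f = \int_{[0,1]^3} f \, dx$.

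To prove the displayed inequality, set $\phi_0 \equiv 1$ and $\phi_k(x) = \sqrt{2}\cos(k\pi x)$ for $k \geq 1$. The tensor product family $\{ \phi_{k_1} \otimes \phi_{k_2} \otimes \phi_{k_3} \}_{k_1,k_2,k_3 \geq 0}$ is an orthonormal basis of $L^2([0,1]^3)$ consisting of eigenfunctions of $-\Delta$ with Neumann boundary conditions, with eigenvalues $\pi^2(k_1^2 + k_2^2 + k_3^2)$. Convergence of the resulting expansion in $H^1([0,1]^3)$ is justified by even reflecting $f$ first to $[-1,1]^3$ and then extending periodically: the reflected function lies in $H^1$ on the $3$-torus of period $2$ (continuity across the gluing hyperplanes follows from evenness) and its Fourier series consists only of cosine modes. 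Expanding $f$ in this basis, the coefficient of the constant mode equals $\bar f$, and every other mode has $-\Delta$-eigenvalue at least $\pi^2$, so that Parseval applied to $f - \bar f$ and to $\nabla f$ yields \eqref{tool: poincare} with $C_P = \pi^{-2}$.

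The argument is entirely standard and contains no substantial obstacle; the only point worth emphasising is the $N$-independence of $C_P$, which depends only on the fixed geometry of the unit block and which is essential for the applications of \eqref{tool: poincare} in Sections~\ref{subsec: proof z lower} and~\ref{subsec: proof of q bound}.
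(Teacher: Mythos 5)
Your proof is correct. The paper itself does not supply an argument but simply cites \cite[(7.45)]{GT15}, where the Poincar\'e inequality for convex domains is obtained via a Riesz-potential representation formula. You instead give a direct spectral proof on the unit cube: even reflection to the $2$-periodic torus, expansion in the Neumann eigenbasis of tensor-product cosines, and Parseval. This route is slightly more involved to write out but is self-contained, makes the $N$-independence of $C_P$ completely transparent, and yields the sharp constant $C_P = \pi^{-2}$ (attained in the limit by the single-mode function $\cos(\pi x_1)$), whereas the cited potential-theoretic bound gives a non-sharp constant depending on the diameter and volume of the block. Both approaches are standard and either suffices for the applications in Sections~\ref{subsec: proof z lower} and~\ref{subsec: proof of q bound}, which only require some finite $C_P$ independent of $N$.
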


\begin{proof}
See \cite[(7.45)]{GT15}.
\end{proof}

\subsection{Bounds on discrete convolutions}

\begin{lem}\label{lem: discrete convolution bound}
Let $d \geq 1$ and $\alpha, \beta \in \RR$ satisfy 
\begin{equs}
\alpha + \beta > d \text{ and } \alpha, \beta < d.	
\end{equs}
Then, there exists $C=C(d,\alpha,\beta)>0$ such that, uniformly over $n \in (N^{-1}\ZZ)^d$,
\begin{equs}
\frac{1}{N^3} \sum_{\substack{n_1,n_2 \in (N^{-1}\ZZ)^d \\ n_1 + n_2 = n}} \frac{1}{\langle n_1 \rangle^\alpha \langle n_2 \rangle^\beta}
\lesssim
\frac{1}{\langle n \rangle^{\alpha + \beta - d}} 	
\end{equs}
\end{lem}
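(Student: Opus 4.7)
The plan is to treat this as a standard discrete convolution estimate, with the prefactor $\frac{1}{N^d}$ (the display appears to have a typo, writing $N^3$ where $N^d$ is meant, so that the factor matches the spacing of the lattice $(N^{-1}\ZZ)^d$) playing the role of a Riemann sum weight. In particular, one should expect the bound to reduce, up to constants uniform in $N$, to the continuum estimate
\begin{equs}
\int_{\RR^d} \frac{1}{\langle n_1 \rangle^\alpha \langle n - n_1 \rangle^\beta} dn_1 \lesssim \frac{1}{\langle n \rangle^{\alpha+\beta-d}}
\end{equs}
which is classical under the stated hypotheses $\alpha,\beta<d$ and $\alpha+\beta>d$.

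The main step is a partition of the lattice into three regions adapted to the geometry of the constraint $n_1+n_2 = n$: region (I) where $|n_1| \leq \tfrac{1}{2}\langle n \rangle$, region (II) where $|n_2| \leq \tfrac{1}{2}\langle n \rangle$, and region (III) where both $|n_1|, |n_2| > \tfrac{1}{2}\langle n \rangle$. In region (I) one has $\langle n_2 \rangle \gtrsim \langle n \rangle$, so the factor $\langle n_2 \rangle^{-\beta}$ pulls out as $\langle n \rangle^{-\beta}$, and the remaining Riemann sum
\begin{equs}
\frac{1}{N^d} \sum_{|n_1| \leq \frac{1}{2}\langle n \rangle} \frac{1}{\langle n_1 \rangle^\alpha} \lesssim \langle n \rangle^{d-\alpha}
\end{equs}
is controlled by comparing to the integral $\int_{|x|\leq C\langle n\rangle} \langle x \rangle^{-\alpha}\, dx$, which converges locally thanks to $\alpha<d$ and scales with the correct power. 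Region (II) is symmetric. For region (III), both $\langle n_1\rangle$ and $\langle n_2\rangle$ are comparable to or larger than $\langle n \rangle$, so bounding $\langle n_1\rangle^{-\alpha}\langle n_2\rangle^{-\beta} \leq \langle n\rangle^{-\alpha-\beta+d}\langle n_1\rangle^{-d}$ (or its symmetric analogue) and summing using $\alpha+\beta>d$ produces the required decay.

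The only mildly delicate point is making sure the comparison between the discrete sum and the continuum integral is uniform in $N$; this is standard since the integrand $\langle \cdot \rangle^{-\alpha}$ is smooth and slowly varying on scales of $N^{-1}$, so a boundedness-of-Riemann-sums argument (splitting each cube of side $N^{-1}$ and using a trivial supremum bound) gives a multiplicative constant depending only on $d,\alpha,\beta$, exactly as claimed. I expect no genuine obstacle, only a bit of case bookkeeping in region (III) to get the exponent $\alpha+\beta-d$ correctly in both the summation over the free variable and the uniform constant.
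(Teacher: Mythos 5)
The paper's own proof is a citation to \cite[Lemma 4.1]{MWX17} "keeping track of $N$ dependence," so there is no in-paper argument to compare against; your observation about $N^3$ versus $N^d$ is correct. Your overall strategy (split $n_1$-space into a region where $n_1$ is small, a symmetric one where $n_2$ is small, and a remainder, then compare the lattice sums to integrals) is the standard one and would work once two points are repaired.

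The first problem is the choice of threshold $\frac 12 \langle n\rangle$. With the paper's convention $\langle n\rangle = \sqrt{\eta + 4\pi^2|n|^2}$ one has $\langle n\rangle \geq 2\pi |n|$, so $|n_1| \leq \frac 12\langle n\rangle$ permits $|n_1| = |n|$, i.e.\ $n_1 = n$ and $n_2 = 0$; then $\langle n_2\rangle = \sqrt\eta$, which is \emph{not} $\gtrsim \langle n\rangle$ for large $|n|$. The split has to be at a threshold like $\frac 12 |n|$ (or $c\langle n\rangle$ with $c$ small enough that $c\langle n\rangle \leq \frac 12 |n|$ for $|n|\gtrsim 1$); only then does $|n_1|\leq \frac12|n|$ force $|n_2| \geq \frac 12|n|$ and hence $\langle n_2\rangle \gtrsim \langle n\rangle$. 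The second problem is in region (III): the pointwise inequality $\langle n_1\rangle^{-\alpha}\langle n_2\rangle^{-\beta}\lesssim \langle n\rangle^{-\alpha-\beta+d}\langle n_1\rangle^{-d}$ is true in the subcase $\langle n_1\rangle \leq \langle n_2\rangle$, but it discards too much: the remaining Riemann sum $\frac{1}{N^d}\sum_{|n_1| > \frac12|n|}\langle n_1\rangle^{-d}$ diverges (the exponent $d$ is exactly at the borderline of integrability). The hypothesis $\alpha+\beta > d$ must be used to make the \emph{sum} converge, not already consumed in the pointwise bound. The fix is to bound instead, in the subcase $\langle n_1\rangle \leq \langle n_2\rangle$, $\langle n_1\rangle^{-\alpha}\langle n_2\rangle^{-\beta} \leq \langle n_1\rangle^{-\alpha-\beta}$ (using $\beta > 0$, which the hypotheses force), and then $\frac{1}{N^d}\sum_{|n_1|>\frac12|n|}\langle n_1\rangle^{-\alpha-\beta} \lesssim \langle n\rangle^{d-\alpha-\beta}$ by comparison with the integral, precisely because $\alpha+\beta>d$; the symmetric subcase is treated the same way. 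With those two corrections the argument closes.
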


\begin{proof}
Follows from \cite[Lemma 4.1]{MWX17} and by keeping track of $N$ dependence.
\end{proof}

\endappendix

\bibliographystyle{Martin}
\bibliography{biblio.bib}

\end{document}